\newcommand{\lyxmathsym}[1]{\ifmmode\begingroup\def\b@ld{bold}
  \text{\ifx\math@version\b@ld\bfseries\fi#1}\endgroup\else#1\fi}
\font\calli=rsfs10 at 12 pt
\newtheorem{thm}{Theorem}[section]
\newtheorem*{thm*}{Theorem}
\newtheorem{cor}[thm]{Corollary}
\newtheorem*{cor*}{Corollary}
\newtheorem{lem}[thm]{Lemma}
\newtheorem*{lem*}{Lemma}
\newtheorem{prop}[thm]{Proposition}
\newtheorem*{prop*}{Proposition}
\newtheorem*{conjecture*}{Conjecture}
\newtheorem*{fact*}{Conjecture}
\newtheorem*{criterion*}{Criterion}
\newtheorem*{algorithm*}{Algorithm}
\newtheorem*{ax*}{Axiom}
\newtheorem*{assumption*}{Assumption}
\newtheorem*{question*}{Question}
\theoremstyle{remark}
\newtheorem{rem}[thm]{Remark}
\newtheorem*{rem*}{Remark}
\newtheorem{rems}[thm]{Remarks}
\newtheorem*{rems*}{Remarks}
\newtheorem*{claim*}{Claim}
\newtheorem*{exercise*}{Exercise}
\newtheorem*{note*}{Note}
\newtheorem{notation}[thm]{Notation}
\newtheorem*{notation*}{Notation}
\newtheorem*{summary*}{Summary}
\newtheorem*{acknowledgement*}{Acknowledgement}
\newtheorem*{conclusion*}{Conclusion}
\theoremstyle{definition}
\newtheorem{defn}[thm]{Definition}
\newtheorem*{defn*}{Definition}
\newtheorem{example}[thm]{Example}
\newtheorem*{example*}{Example}
\newtheorem*{examples*}{Examples}
\newtheorem*{problem*}{Problem}
\newtheorem*{xca*}{Exercise}
\newtheorem*{xcas*}{Exercises}
\newtheorem*{condition*}{Condition}
\theoremstyle{plain}
\newcommand{\A}{\mathcal{A}}
\newcommand{\B}{\mathcal{B}}
\newcommand{\E}{\mathcal{E}}
\newcommand{\F}{\mathcal{F}}
\renewcommand{\S}{\mathcal{S}}   %
\newcommand{\NN}{\mathbb{N}}
\newcommand{\ZZ}{\mathbb{Z}}
\newcommand{\QQ}{\mathbb{Q}}
\newcommand{\RR}{\mathbb{R}}
\newcommand{\CC}{\mathbb{C}}
\newcommand{\PP}{\mathbb{P}}
\renewcommand{\bar}[1]{\overline{#1}} %
\newcommand{\tld}[1]{\widetilde{#1}}
\DeclareMathOperator{\an}{an}
\newcommand{\PD}[3]{\frac{\partial^{#1}#2}{\partial {#3}^{#1}}}
\DeclareMathOperator{\dist}{dist}
\DeclareMathOperator{\vol}{vol}
\DeclareMathOperator{\naive}{naive}
\DeclareMathOperator{\INT}{Int}
\DeclareMathOperator{\Si}{Si}
\DeclareMathOperator{\abs}{abs}
\title[Oscillatory and Subanalytic Functions]{Integration of Oscillatory and Subanalytic Functions}
\author[Cluckers]{Raf Cluckers}
\address{Universit\'e de Lille,
Laboratoire Painlev\'e,
 CNRS - UMR 8524, Cit\'e Scientifique, 59655
Villeneuve d'Ascq Cedex, France, and,
KU Leuven, Department of Mathematics,
Celestijnenlaan 200B, B-3001 Leu\-ven, Bel\-gium}
\email{Raf.Cluckers@math.univ-lille1.fr}
\urladdr{http://rcluckers.perso.math.cnrs.fr/}
\author[Comte]{Georges Comte}
\address{   Univ. Grenoble Alpes, Univ. Savoie Mont Blanc, CNRS, LAMA, 73000 Chamb\'ery, France}
\email{Georges.Comte@univ-smb.fr}
\urladdr{http://gcomte.perso.math.cnrs.fr/}
\author[Miller]{Daniel~J.~Miller}
\address{Emporia State University, Department of Mathematics, Computer Science and Economics, 1200 Commercial Street, Campus Box 4027, Emporia, KS 66801, U.S.A.}
\email{dmille10@emporia.edu}
\author[Rolin]{Jean-Philippe Rolin}
\address{Universit\'e de Bourgogne Franche-Comt\'e, IMB,
CNRS UMR 5584,
9 Avenue Savary
BP47870,
F-21078 Dijon Cedex, France}
\email{jean-philippe.rolin@u-bourgogne.fr}
\urladdr{http://rolin.perso.math.cnrs.fr/}
\author[Servi]{Tamara Servi}
\address{Institut de Math\'ematiques de Jussieu - Paris Rive Gauche
Universit\'e Paris Diderot (Paris 7)
UFR de Math\'ematiques - B\^atiment Sophie Germain
Case 7012,
F-75205 Paris Cedex 13,
France}
\email{tamara.servi@math.univ-paris-diderot.fr}
\urladdr{http://www.logique.jussieu.fr/~servi/index.html}
\subjclass[2000]{Primary 26B15, 14P15, 32B20, 42B20, 42A38; Secondary 03C64, 14P10, 33B10}
\keywords{Stability under integration; Oscillatory integrals; Fourier transforms; Globally subanalytic functions; Constructible functions; Preparation Theorems; Uniformly distributed functions; oscillation index; families of exponential periods; o-minimality}
\begin{document}
\global\long\def\U{\mathcal{U}}
\global\long\def\C{\mathcal{C}}
\global\long\def\G{\mathcal{G}}
\maketitle
\begin{abstract}
We prove the stability under integration and under Fourier transform
of a concrete class of functions containing all globally subanalytic
functions and their complex exponentials. This paper extends the investigation
started in \cite{LiRo1998} and \cite{cluckers-miller:stability-integration-sums-products}
to an enriched framework including oscillatory functions. It provides
a new example of fruitful interaction between analysis and singularity
theory.
\end{abstract}
\maketitle

\tableofcontents

\section{Introduction}

\label{s:intro}

In this paper we prove the stability
under parameterized integration of a class of functions containing
all globally subanalytic functions and their complex exponentials,
with methods pertaining to subanalytic geometry.
Note that the theories of holonomic $D$-modules and holonomic distributions (and, further away, of $\ell$-adic cohomology and of motivic integration) have the richness of combining geometry with Fourier transforms, and that these theories all have found far reaching applications. Applications of our setting are to be expected, but are not the content of the present paper.
Let us just mention that, in the context of motivic and $p$-adic integration \cite{cluckers_loeser:constructible_exponential_fuctions},
similar stability results have found recent applications
in the Langlands program \cite{cluckers_hales_loeser:transfer_principle_fundamental}, \cite{CGH2}.
The stability under integration of certain classes of real functions
was already considered in \cite{LiRo1998}, \cite{clr}, \cite{cluckers-miller:stability-integration-sums-products},
\cite{kaiser:integration_semialgebraic_nash}, but none of these classes
allows oscillatory behavior, let alone stability under Fourier transforms.
Let us explain our results in detail.
\begin{defn}
\label{def: subanaltic} A set $X\subseteq\RR^{m}$ is \textbf{\emph{globally
subanalytic }} if in any standard euclidean chart $\RR^{m}$ of $\PP^{m}\left(\RR \right)$,
the image of $X$ in $\PP^{m}\left(\RR \right)$ is a subanalytic subset of this
chart in the sense of \cite{bm_semi_subanalytic} and \cite{stasica_denkowska}.
Equivalently, $X\subseteq\mathbb{R}^{m}$ is globally subanalytic
if it is the image under the canonical projection from $\mathbb{R}^{m+n}$
to $\mathbb{R}^{m}$, of a globally semianalytic subset of $\mathbb{R}^{m+n}$
(i.e. a set $Y\subseteq\mathbb{R}^{m+n}$ such that in a neighbourhood
of every point of $\mathbb{P}^{1}\left(\mathbb{R}\right)^{m+n}$,
$Y$ is described by a finite number of analytic equations and inequalities).
Given a set $X\subseteq\mathbb{R}^{m}$, a map $f:X\rightarrow\mathbb{R}^{n}$
is globally subanalytic if its graph is a globally subanalytic subset
of $\RR^{m+n}$ (this definition implies that $X$ is a globally subanalytic
subset of $\RR^{m}$, since the collection of globally subanalytic
sets is closed under projections).
\end{defn}
In model-theoretic terms, a set is globally subanalytic if and only
if it is definable in the structure $\RR_{\an}$, the expansion of
the ordered real field by all restricted analytic functions (as defined
in \cite{dmm:exp}). By \cite{gabriel:proj,vdd:tarski:general,vdd:elementary_theory_restricted_elementary},
this is an o-minimal structure and therefore the reader may refer
for instance to \cite{vdd:tame}, \cite{vdd:mill:omin} for the basic
geometric properties of globally subanalytic sets and functions that
we will use in the sequel.

For the sake of brevity, from now on we will use the word ``subanalytic''
as an abbreviation for the phrase ``globally subanalytic''. So in
this usage of the word, the natural logarithm $\log\colon\left(0,+\infty\right)\to\RR$
and the trigonometric functions $\sin\colon\RR\to\RR$ and $\cos\colon\RR\to\RR$
are not subanalytic, although the restriction of any one of these
functions to any compact subinterval of its domain is subanalytic.\medskip{}

Given a subanalytic set $X\subseteq\mathbb{R}^{m}$, we denote by
$\S\left(X\right)$ the algebra of all real-valued subanalytic functions on $X,$
and we write
\[
\S:=\left\{\S\left(X\right):m\in\mathbb{N},\ \ensuremath{X\subseteq\mathbb{R}^{m}}\text{\ subanalytic}\right\}
\]
for the system of all real-valued subanalytic functions.

Our aim is to provide a full description of the smallest system
\[
\E:=\{\E\left(X\right):m\in\mathbb{N},\ \ensuremath{X\subseteq\mathbb{R}^{m}}\text{\ subanalytic}\}
\]
such that $\E\left(X\right)$ is a $\CC$-algebra of complex-valued functions
on $X\subseteq\mathbb{R}^{m}$ satisfying
\begin{equation}
\S\left(X\right)\cup\{\mathrm{e}^{\mathrm{i}f}:f\in\S\left(X\right)\}\subseteq\E\left(X\right)\label{eq:E}
\end{equation}
and such that $\E$ is stable under integration.

Here stability under integration for $\E$ means that if $X\subseteq\mathbb{R}^{m}$
is a subanalytic set, $n\in\NN$, and $f\in\E\left(X\times\RR^{n}\right)$ is
such that $f\left(x,\cdot\right)\in L^{1}\left(\RR^{n}\right)$ for all $x\in X$, then
the function $F\colon X\to\CC$ defined by
\begin{equation}
F\left(x\right)=\int_{y\in\RR^{n}}f\left(x,y\right)\,\mathrm{d}y,\quad\text{for \ensuremath{x\in X},}\label{eq:paramInteg}
\end{equation}
is in $\E\left(X\right)$.

Note that the existence of $\mathcal{E}$ is guaranteed by the fact
that the collection on the left side of (\ref{eq:E}) is contained
in the class of all complex-valued measurable functions, a class stable
under parameterized integration.
 We will describe in detail the system $\E$ in the next section. Our
main result is that $\E$ coincides with the system $\C^{\text{exp}}$
of $\CC$-algebras $\C^{\text{exp}}\left(X\right)$ defined in Definition \ref{def:Cexp}
(see Remark \ref{rem: important}(\ref{enu:Cexp=00003DE}), for which
we have an explicit description of the generators (Definition \ref{def:generator}).
It is worth noting that the generators of the algebra $\mathcal{E}\left(X\right)$ are defined in terms of $1$-variable integrals of a particularly simple form (see Definition \ref{def: gamma function}).

A strong motivation to allow oscillatory functions in our system comes
from singularity theory, where oscillatory integrals have been heavily
investigated for decades (for an introduction, and among numerous
other references, see in particular \cite{arnold_gusein_varchenko:singularities_2},
\cite{malgrange:integrales_asymptotiques}, \cite{varchenko:newton_polyhedra_oscillatory_integrals}).
A series of preparation and monomialization results (\cite{parusinski:preparation},
\cite{lr:prep}, \cite{cluckers-miller:stability-integration-sums-products},
\cite{cluckers_miller:loci_integrability}, \cite{miller_dan:preparation_theorem_weierstrass_systems})
for subanalytic functions and their logarithms, provides a powerful
tool to study the nature of oscillatory integrals with subanalytic
phase and amplitude.

As indicated in the introduction of \cite{LiRo1998}, the idea of using
a preparation theorem to understand the integration of subanalytic
functions was suggested by L. van den Dries, and indeed successfully
used in \cite{LiRo1998} and \cite{clr}, where it is proved (use  \cite[Theorem 1]{LiRo1998} and \cite[Proposition 1]{clr}, or directly  \cite[Theorem 1']{clr}) that the parameterized
integrals of subanalytic functions belong to the class $\C:=\left(\C\left(X\right)\right)_{X}$
of constructible functions (the algebra $\C\left(X \right)$ of functions on the
subanalytic set $X$ is generated as a $\CC$-algebra by the subanalytic
functions on $X$ and their logarithm, see Definition \ref{def: constructible}).
In particular, the function volume of fibres of a subanalytic family and
the density function along a subanalytic set also belong to the class
$\C$ (see \cite{clr}).

The question of finding a system of $\CC$-algebras of functions containing
$\S$ and stable under parameterized integration has been attacked
and solved in \cite{cluckers-miller:stability-integration-sums-products}
(see also \cite{cluckers_miller:loci_integrability}), where the authors
show that the class $\C$ itself is stable under parameterized integration
(see \cite{kaiser:integration_semialgebraic_nash} for an interesting
subcollection of $\C$, also stable under integration). Here again
the main tool of proof is a preparation theorem for functions of $\C$.
Note that the class $\C$ is a class of functions definable in the
o-minimal structure $\mathbb{R}_{\text{an,exp}}$, the expansion of
$\mathbb{R}_{\text{an}}$ by the full real exponential function.

As already mentioned, the problem we address and solve here is the
problem of explicitly describing a system of $\CC$-algebras (actually
the smallest), stable under parameterized integration, containing
$\mathcal{S}$ and containing the complex-valued oscillatory functions
$\mathrm{e}^{\mathrm{i}f}$, for all subanalytic functions $f$. Since
we consider oscillatory functions, we are no longer in an o-minimal
setting. However, the preparation results mentioned above (see Section
\ref{s:prepSubConstr}) prove extremely useful and powerful even for
dealing with oscillatory functions. 
To prove our results, we combine
these preparation techniques with the theory of continuously uniformly distributed maps (see Section \ref{s: proof of main results}),
a new ingredient in this context.\medskip{}

Oscillatory integrals are central in many branches of mathematics
and physics. Following Stein \cite{stein:harmonic_analysis}, an oscillatory
integral of the first kind is a parameterized integral $I\left(x\right)$, $x\in\RR$,
defined by
\begin{equation}
I\left(x\right)=\int_{y\in\RR^{n}}f\left(y\right)\mathrm{e}^{\mathrm{i}x\Phi\left(y\right)}\,\mathrm{d}y,\label{eq:first kind}
\end{equation}
where the \emph{amplitude } $f$ and the \emph{phase} $\Phi$ are in
general $C^\infty$ functions. The principle of stationary phase asserts,
when the phase $\Phi$ has no critical point on the support of $f$
(assume for simplicity that $f$ has compact support), that $x\mapsto I\left(x\right)$
is in $\hbox{\calli S}\hskip0.6mm\left(\RR\right)$, the Schwartz space of rapidly
decreasing functions. As a consequence, the asymptotic behaviour of
$I\left(x\right)$ at $+\infty$, modulo $\hbox{\calli S}\hskip0.6mm\left(\RR\right)$,
presents some interest only at critical points of the phase. If the
phase is analytic, one can show that this asymptotic behaviour only
depends on the Taylor series of the amplitude function at critical
points of the phase, and that $I\left(x\right)$ can be expanded in an asymptotic
series
\[
\sum_{p}x^{-p/r}\sum_{k=0}^{n-1}c_{p,k}\log^{k}\left(x\right),
\]
where $r$ is a positive integer not depending on $f$ and $p$is $\in\NN\setminus\{0\}$
(see \cite{malgrange:integrales_asymptotiques} Section 7 and \cite{arnold_gusein_varchenko:singularities_2},
Chapter 7). Using Hironaka's resolution of singularities on the phase
function, one can prove this result by reducing to the case of a monomial
phase. The exponents $-p/r$ and $k$ are related to the monodromy
of the phase, in case the phase has an isolated singular point in
the complex domain: $\mathrm{e}^{2\pi\mathrm{i}\left(\frac{p}{r}-1\right)}$ is actually
an eigenvalue of multiplicity $\ge k+1$ of the monodromy operator
of the phase (see \cite{malgrange:integrales_asymptotiques} for more
details). Furthermore the principal part of the exponents $-p/r$,
called the oscillation index (see \cite{arnold_gusein_varchenko:singularities_2},
Section 6.1.9), can be computed in terms of Newton's diagram of the
Taylor expansion of the phase at its critical point (see \cite{arnold_gusein_varchenko:singularities_2},
\cite{varchenko:newton_polyhedra_oscillatory_integrals}).

Similarly, in this paper, we estimate and compare the asymptotics
at infinity of different terms appearing in our parameterized integrals,
namely integrals as in (\ref{eq:paramInteg}), and in this situation
the preparation theorem for constructible functions (Proposition \ref{prop:prepSubConstr})
appears as the counterpart of Hironaka's theorem. Of course in our
general context, no geometric interpretation for exponents appearing
in the asymptotics considered can be given, but there might be connections
with the classical cases still to be discovered.

An oscillatory integral of the second kind has the form
\begin{equation}
I\left(x\right)=\int_{y\in\RR^{n}}f\left(x,y\right)\mathrm{e}^{\mathrm{i}\Phi\left(x,y\right)}\,\mathrm{d}y,\label{eq:second kind}
\end{equation}
where now $x=\left(x_{1},\ldots,x_{m}\right)$ is a tuple of variables.
A classical example of oscillatory integral of the second kind is
given by Fourier transforms. A second more complicated example is
given by the Fourier Integral Operator (see \cite{hormander:fourier_integral_operators,stein:harmonic_analysis}),
which plays a role in approximating the solutions of a large class
of PDEs (for example, the wave equation). A natural question arises:
how to describe the nature of (\ref{eq:second kind}), according to
the nature of the amplitude and of the phase?

Note that in (\ref{eq:second kind}) the parameters $x$ are \textquotedblleft{}intertwined\textquotedblright{}
with the integration variables $y$ in the expressions for the amplitude
$f$ and the phase $\Phi$. If we consider oscillatory integrals of
the second kind with subanalytic amplitude and phase, then the aforementioned
preparation results prove a very powerful tool to monomialize the
phase while respecting the different nature of the variables $x$
and $y$. \medskip{}

The main result of this paper (Theorem \ref{thm:main interp}) implies
that oscillatory integrals (of the first and second kind) with subanalytic
phase and amplitude belong to the system $\E$. Moreover, still by
stability of $\E$ under integration, oscillatory integrals with subanalytic
phase and amplitude in $\E$, still belong to $\E$.

In particular, for $X\subseteq\mathbb{R}^{m}$ subanalytic, the algebra
$\E\left(X\times\mathbb{R}\right)$ is stable under taking parametric
Fourier transforms:

\begin{equation}
\begin{aligned}\text{if}\ f\left(x,t\right)\in\mathcal{E}\left(X\times\mathbb{R}\right)\ \text{and}\ \forall x\in X,\ f\left(x,\cdot\right)\in L^{1}\left(\mathbb{R}\right),\\
\text{then}\ \hat{f}\left(x,y\right)=\int_{\RR}f\left(x,t\right)\mathrm{e}^{-2\pi\mathrm{i} yt}\,\mathrm{d}t\in\mathcal{E}\left(X\times\mathbb{R}\right).
\end{aligned}
\label{eq: fourier tranf param}
\end{equation}

On the other hand, $\E$ can also be viewed as the smallest system
of $\CC$-algebras containing the class $\mathcal{C}$ of constructible
functions and stable under composition with subanalytic functions
and parametric Fourier transform (see Remark \ref{rem: important}(\ref{enu:fourier})).
Since there are not many systems (of algebras) of functions which
are stable under Fourier transforms, we would like to insist in this
introduction on the fact that $\E$ is such a system, which is moreover
fully described by its generators. \medskip{}

Like $\mathcal{E}\left(\mathbb{R}^{n}\right)$, the space of Schwartz
functions $\hbox{\calli S}\hskip0.6mm\left(\RR^{n}\right)$ is also an algebra
stable under taking Fourier transforms. Since the Fourier transform
operator
\[
\hbox{\calli F}\colon\left(\hbox{\calli S}\hskip0.6mm\left(\RR^{n}\right),\Vert~\Vert_{2}\right)\to\left(\hbox{\calli S}\hskip0.6mm\left(\RR^{n}\right),\Vert~\Vert_{2}\right)
\]
is continuous, using the density of $\hbox{\calli S}\hskip0.6mm\left(\RR^{2}\right)$
in the space $L^{2}\left(\RR^{n}\right)$, one can extend $\hbox{\calli F}\colon\hbox{\calli S}\hskip0.6mm\left(\RR^{n}\right)\to\hbox{\calli S}\hskip0.6mm\left(\RR^{n}\right)$
to
\begin{equation}
\widetilde{\hbox{\calli F}\ }\hskip-0.9mm\colon L^{2}\left(\RR^{n}\right)\to L^{2}\left(\RR^{n}\right).\label{eq: F tilda}
\end{equation}
One obtains thus the classical stability of $L^{2}\left(\RR^{n}\right)$ under
the Fourier-Plancherel extension $\widetilde{\hbox{\calli F}\ }$ of the Fourier transform
$\hbox{\calli F}$. In Section \ref{sec:Asymptotic-expansions-of}
we prove that $\E$ is even stable under the extension $\widetilde{\hbox{\calli F}\ }$
of the Fourier transform: the image of $\mathcal{E}\left(\mathbb{R}^{n}\right)\cap L^{2}\left(\mathbb{R}^{n}\right)$
under $\widetilde{\hbox{\calli F}\ }$ is $\mathcal{E}\left(\mathbb{R}^{n}\right)\cap L^{2}\left(\mathbb{R}^{n}\right)$
(Theorem \ref{cor:Plancherel}).
To this end, we need to develop in Section \ref{sub:-completeness-and-the}
elements of a theory of uniformly distributed family of maps.

\medskip{}

Let us also mention that, since the function $\mathrm{e}^{-|x|}$
is in $\mathcal{E}\left(\mathbb{R}\right)$ (see Example \ref{exa: fourier transform of flat exp}),
one may interpolate \emph{families of exponential periods} with functions
from $\mathcal{E}$. More precisely, following \cite{BlochEsn} and
Section 4.3 of \cite{kontsevitch_zagier:periods}, a real number $a$
is called an exponential period if there exist $\Delta\subset\RR^{n}$
(for some $n\in\mathbb{N}$) and functions $f,g\colon\Delta\rightarrow\mathbb{R}$
such that $\Delta,f,g$ are semi-algebraic over $\mathbb{Q}$ (i.e.
they are described by first order formulas in the language of ordered
rings with no other constant symbols than rational numbers) and
\[
a=\int_{y\in\Delta}f\left(y\right)\mathrm{e}^{g\left(y\right)}\,\mathrm{d}y.
\]
A natural version in families of this concept is the following: let
$X\subseteq\RR^{m}$, $\Delta\subseteq X\times\RR^{n}$ and $f,g\colon\Delta\to\RR$
be semi-algebraic over $\QQ$. Suppose that for each $x\in X$,
\[
a\left(x\right)=\int_{y\in\Delta_{x}}f_{x}\left(y\right)\mathrm{e}^{g_{x}\left(y\right)}\,\mathrm{d}y
\]
is finite, where $\Delta_{x}=\{y\in\RR^{n}:\left(x,y\right)\in\Delta\}$ and
$f_{x}\left(y\right)=f\left(x,y\right),\ g_{x}\left(y\right)=g\left(x,y\right)$. Then the
collection $\{ a\left(x\right):\ x\in X\cap\QQ^{m}\} $ forms a natural
family of exponential periods. Suppose that there is a constant $N$
such that $g<N$ on $\Delta$. It then follows from stability under
integration of $\mathcal{E}$ (Theorem \ref{thm:main interp}) and
Example \ref{exa: fourier transform of flat exp} that the interpolating
function $\mathbb{R}\ni x\mapsto a\left(x\right)\in\mathbb{R}$ belongs to $\E\left(X\right)$.
\medskip{}

Finally, the work in this paper can be seen as addressing a question
raised by D. Kazhdan at the 2009 Model Theory Conference in Durham,
about a possible model-theoretic understanding of real oscillatory
integrals, in analogy to the understanding of motivic oscillatory
integrals in \cite{cluckers_loeser:constructible_exponential_fuctions}
and \cite{HK}.
\begin{acknowledgement*}
The authors would like to thank the Forschungsinstitut f\"ur Mathematik
(FIM) at ETH Z\"urich, the MSRI with the program Model Theory, Arithmetic
Geometry and Number Theory, and the Isaac Newton Institute for the
hospitality during part of the research for this paper.
The author R.Cluckers~is supported by the European Research
Council under the European Community's Seventh Framework Programme
(FP7/2007-2013) with ERC Grant Agreement nr. 615722 MOTMELSUM, and
would like to thank the Labex CEMPI (ANR-11-LABX-0007-01).
The author G. Comte is supported by ANR-15-CE40-0008. 
The authors G.Comte and J.-P. Rolin are supported by ANR-11-BS01-0009
STAAVF. 

The authors are very grateful to the anonymous referees for their careful reading and for many insightful suggestions. In particular, the use of the theory of continuously uniformly distributed maps, which was suggested by one of the referees, allowed us to shorten considerably the final step of the proof of the main result (Theorem \ref{thm:main interp}).

\end{acknowledgement*}

\section{Notation, main results and layout of this paper}

\label{s:mainResults}

This section states the main definitions, theorems and corollaries
of the paper.

We proceed to construct $\E$ by first defining some systems of rings
of functions intermediary between $\S$ and $\E$.
\begin{defn}
\label{def: constructible}For each subanalytic set $X\subseteq\mathbb{R}^{m}$,
define $\C\left(X\right)$ to be the ring of real-valued functions on $X$ generated
by
\[
\S\left(X\right)\cup\{\log f\left(x\right):f\in\S\left(X\right),f>0\}.
\]
We call $\C\left(X\right)$ the ring of constructible functions on $X$, and
we say that a function is \textbf{\emph{constructible}} if it has
a subanalytic domain $X$ and is a member of $\C\left(X\right)$. Write
\[
\C:=\left(\C\left(X\right)\right)_{\text{\ensuremath{X}\ is subanalytic}}
\]
for the system of all constructible functions.

Thus $f\in\C\left(X\right)$ if and only if $f$ can be expressed as a finite
sum of finite products of the form
\begin{equation}
f\left(x\right)=\sum_{j}f_{j}\left(x\right)\prod_{k}\log f_{j,k}\left(x\right)\label{eq: form of constructible fnct}
\end{equation}
with $f_{j},f_{j,k}\in\S\left(X\right)$ and $f_{j,k}>0$.
\end{defn}

It is easy to see that any constructible function can be defined as
a parameterized integral of a subanalytic function, and it was shown
in \cite[Theorem 1.3]{cluckers-miller:stability-integration-sums-products}
that the constructible functions are stable under integration. Therefore
the constructible functions form the smallest class of functions defined
on the subanalytic sets that is stable under integration and that
contains all subanalytic functions.

It follows that
\[
\C\left(X\right)\cup\{\mathrm{e}^{\mathrm{i}f\left(x\right)}:f\in\S\left(X\right)\}\subseteq\E\left(X\right)
\]
for each subanalytic set $X$. This leads us to the following definition.
\begin{defn}
\label{def: naive}For each subanalytic set $X\subseteq\mathbb{R}^{m}$,
define $\C_{\naive}^{\exp}\left(X\right)$ to be the ring of functions on $X$
generated by
\[
\C\left(X\right)\cup\{\mathrm{e}^{\mathrm{i}f\left(x\right)}:f\in\S\left(X\right)\}.
\]
Write
\[
\C_{\naive}^{\exp}:=\left(\C_{\naive}^{\exp}\left(X\right)\right)_{\text{\ensuremath{X}\ is subanalytic}}.
\]

\end{defn}
Thus $f\in\C_{\naive}^{\exp}\left(X\right)$ if and only if $f$ can be written
as a finite sum
\[
f\left(x\right)=\sum_{j=1}^{J}f_{j}\left(x\right)\mathrm{e}^{\mathrm{i}\phi_{j}\left(x\right)},\quad\text{with \ensuremath{f_{j}\in\C\left(X\right)}\ and \ensuremath{\phi_{j}\in\S\left(X\right)}.}
\]

The elements of $\C_{\naive}^{\exp}\left(X\right)$ are complex-valued functions.
Hence, it is convenient to give the following definition.
\begin{defn}
\label{def complex valued}If $f\colon X\to\CC$ is such that its real and
imaginary components are in $\mathcal{S}\left(X\right)$ (in $\C\left(X\right)$,
respectively), then we call $f$ a complex-valued subanalytic (constructible,
respectively) function.

Notice that, if $\phi\left(x\right)$ is a bounded subanalytic function,
then $\mathrm{e}^{\mathrm{i}\phi\left(x\right)}$ is a complex-valued
subanalytic function.\end{defn}
\begin{rem}
\label{rem: mtivation for gamma}We will see in Section \ref{sec:Asymptotic-expansions-of}
that the elements of $\C_{\naive}^{\exp}\left([0,+\infty)\right)$ have certain
\emph{convergent} asymptotic expansions at $+\infty$. This implies
that there are no Schwartz functions in $\C_{\naive}^{\exp}\left([0,+\infty)\right)$.
In particular, the function $f\left(x\right)=\mathrm{e}^{-x}$
is not in $\C_{\naive}^{\exp}\left([0,+\infty)\right)$, while it can be easily
shown that $f\in\mathcal{E}\left([0,+\infty)\right)$. Now consider the function
$\text{Si}\left(x\right)=\int_{0}^{x}\frac{\sin\left(t\right)}{t}\,\mathrm{d}t$,
which is clearly in $\E\left([0,+\infty)\right)$. However, $\text{Si}\left(x\right)$
is easily seen to have a \emph{divergent} asymptotic expansion at
$+\infty$, therefore $\Si$ cannot be in $\C_{\naive}^{\exp}\left([0,+\infty)\right)$
(the details of the proof of this remark will be carried out in Section
\ref{sec:Asymptotic-expansions-of}).
\end{rem}
This example suggests that to construct $\E$, we cannot avoid including
functions computable from single-variable integrals. Our main claim
is that we shall only need to consider single-variable integrals of
the following special form.
\begin{defn}
\label{def: gamma function}For each $\ell\in\NN$, subanalytic set
$X\subseteq\mathbb{R}^{m}$ and $h\in\S\left(X\times\RR\right)$ such that $\forall x\in X,\ t\mapsto h\left(x,t\right)\in L^{1}\left(\mathbb{R}\right)$,
define $\gamma_{h,\ell}\colon X\to\CC$ by
\[
\gamma_{h,\ell}\left(x\right)=\int_{\RR}h\left(x,t\right)\left(\log|t|\right)^{\ell}\mathrm{e}^{\mathrm{i}t}\,\mathrm{d}t\text{.}
\]

This definition makes sense because for each $x\in X$, requiring
that $t\mapsto h\left(x,t\right)$ is in $L^{1}\left(\RR\right)$ is equivalent to requiring
that $t\mapsto h\left(x,t\right)\left(\log|t|\right)^{\ell}$ is in $L^{1}\left(\RR\right)$. This
is easily justified using elementary calculus and expanding $t\mapsto h\left(x,t\right)$
as in Remark \ref{rem: lojas}.\end{defn}
\begin{rem}
\label{rem: gamma subanalytic}If $g\in\mathcal{S}\left(X\right)$,
sometimes it will be convenient to see $g$ as a function of type
$\gamma_{h,\ell}$. To see this, take $\ell=0$ and $h\left(x,t\right)=\frac{1}{2}g\left(x\right)\chi\left(t\right)$,
where $\chi\left(t\right)$ is the characteristic function of the
interval $\left[-\frac{\pi}{2},\frac{\pi}{2}\right]$. In particular,
the constant function 1 can be viewed as a function of type $\gamma_{h,\ell}$
(and for the rest of the paper we will implicitly assume so).
\end{rem}
Note that for $x\in X$, $\gamma_{h,\ell}\left(x\right)=\int_{\mathbb{R}}\widetilde{h}(x,t)\,\mathrm{d}t$,
	where $\widetilde{h}\left(x,t\right)=h\left(x,t\right)\left(\log\left|t\right|\right)^{\ell}\mathrm{e}^{\mathrm{i}t}$
	if $t\ne0$ and $\widetilde{h}\left(x,0\right)=0$. Since $\widetilde{h}\in\mathcal{C}_{\mathrm{naive}}^{\exp}\left(X\times\mathbb{R}\right)$ and $\C_{\naive}^{\exp}\left(X\times\RR\right)\subseteq\E\left(X\times\RR\right)$,
we must have $\gamma_{h,\ell}\in\E\left(X\right)$. This leads us to the following
definition.
\begin{defn}
\label{def:Cexp}For each subanalytic set $X\subseteq\mathbb{R}^{m}$,
define $\C^{\exp}\left(X\right)$ to be the $\C_{\naive}^{\exp}\left(X\right)$-module of
functions on $X$ generated by
\[
\{\gamma_{h,\ell}:\text{\ensuremath{\ell\in\NN}\ and \ensuremath{h\in\S\left(X\times\RR\right)}\ with }t\mapsto h\left(x,t\right)\text{\ in\ }L^{1}\left(\RR\right)\}.
\]
We write
\[
\C^{\exp}:=\left(\C^{\exp}\left(X\right)\right)_{\text{\ensuremath{X}\ is subanalytic}}.
\]
Thus $f\in\C^{\exp}\left(X\right)$ if and only if $f$ can be written as a finite
sum
\[
f\left(x\right)=\sum_{j=1}^{J}f_{j}\left(x\right)\gamma_{h_{j},\ell_{j}}\left(x\right),\quad\text{with \ensuremath{f_{j}\in\C_{\naive}^{\exp}\left(X\right)}, \ensuremath{h_{j}\in\S\left(X\times\RR\right)}\ and \ensuremath{\ell_{j}\in\NN},}
\]
where $\forall x\in X,\ t\mapsto h_{j}\left(x,t\right)\in L^{1}\left(\mathbb{R}\right)$
for each $j$. \end{defn}
\begin{rem}
\label{rem: table under right composition with sa} Notice that $\mathcal{C}^{\exp}$
is stable under composition with subanalytic functions, in the following
sense: if $X\subseteq\mathbb{R}^{m},\ Y\subseteq\mathbb{R}^{n}$ are
subanalytic sets, $G:Y\rightarrow X$ is a subanalytic map and if
$f\in\mathcal{C}^{\exp}\left( X\right)$, then $f\circ G\in\mathcal{C}^{\exp}\left(Y \right)$.
\end{rem}
For each subanalytic set $X\subseteq\mathbb{R}^{m}$, it is clear
that $\C^{\exp}\left(X\right)\subseteq\E\left(X\right)$. Hence, our next task is to study
the parametric integrals of functions $f\in\C^{\exp}\left(X\times\mathbb{R}^{n}\right)$.
\begin{notation}
Write $\left(x,y\right)=\left(x_{1},\ldots,x_{m},y_{1},\ldots,y_{n}\right)$ for the standard
coordinates on $\RR^{m+n}$. Define $\Pi_{m}\colon\RR^{m+n}\to\RR^{m}$
by $\Pi_{m}\left(x,y\right)=x$. For each set $D\subseteq\RR^{m+n}$, define
the fibre of $D$ over $x$ by
\[
D_{x}=\{y\in\RR^{n}:\left(x,y\right)\in D\}.
\]
\end{notation}
\begin{defn}
For any Lebesgue measurable function $f\colon D\to\CC$ with $D\subseteq\RR^{m+n}$
and $\Pi_{m}\left(D\right)=X$, define the \textbf{\emph{locus of integrability
of $f$ over $X$}} by
\[
\INT\left(f,X\right):=\left\{x\in X: f\left(x,\cdot\right)\in L^{1}\left(D_{x}\right)\right\}.
\]
\end{defn}
\begin{rem}
\label{rem: intetrability locus}Let $f\in\C^{\exp}\left(X\times\RR^{n}\right)$
and suppose that $f\left(x,\cdot\right)\in L^{1}\left(\RR^{n}\right)$ for all $x\in X$.
To compute $F\left(x\right)=\int_{y\in\RR^{n}}f\left(x,y\right)\,\mathrm{d}y$, one typically
works by induction on $n$, using Fubini's theorem to express it as
an iterated integral
\[
\int_{\RR^{n-1}}\left(\int_{\RR}f\left(x,y_{1},\ldots,y_{n-1},y_{n}\right)\,\mathrm{d}y_{n}\right)\,\mathrm{d}y_{1}\wedge\ldots\wedge\mathrm{d}y_{n-1}.
\]
But then one is confronted with the fact that
\begin{equation}
\left(x,y_{1},\ldots,y_{n-1}\right)\longmapsto\int_{\RR}f\left(x,y_{1},\ldots,y_{n-1},y_{n}\right)\,\mathrm{d}y_{n}\label{eq:iterateInteg}
\end{equation}
might not be defined on all of $X\times\RR^{n-1}$; all we know is
that (\ref{eq:iterateInteg}) is defined for all $x\in X$ and almost
all $\left(y_{1},\ldots,y_{n-1}\right)\in\RR^{n-1}$. So in order to have a stable
framework that considers (\ref{eq:iterateInteg}) to be a ``parameterized
integral'' as well, it is useful to consider the more general situation
from the start where one drops the assumption that $f\left(x,y\right)$ is integrable
in $y$ for all $x\in X$, but one then additionally studies the locus
of integrability of $f$ over $X$ (see Theorem \ref{thm:interp-1}).
\end{rem}
We are now ready to state the main result of this paper.
\begin{thm}
[Stability under integration]\label{thm:main interp}Let $f\in\C^{\exp}\left(X\times\RR^{n}\right)$
for some subanalytic set $X\subseteq\mathbb{R}^{m}$ and $n\in\NN$.
Then there exists $F\in\C^{\exp}\left(X\right)$ such that
\[
F\left(x\right)=\int_{\RR^{n}}f\left(x,y\right)\,\mathrm{d}y,\quad\text{for all \ensuremath{x\in\INT\left(f,X\right)}.}
\]

\end{thm}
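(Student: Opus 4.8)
The plan is to reduce to the one-variable case and then unfold the single-variable integrals hidden inside the generators $\gamma_{h,\ell}$, turning the whole problem into a finite family of planar oscillatory integrals that can be monomialized by the preparation theorem. First I would note that it suffices to treat $n=1$: writing $\tilde{x}=(x,y_{1},\dots,y_{n-1})$ and integrating out only the last variable, the $n=1$ statement (applied with parameter set $X\times\RR^{n-1}$) produces a function in $\C^{\exp}(X\times\RR^{n-1})$, and iterating $n$ times yields the result. The one delicate point in this reduction is that the intermediate integrals need not be defined on all of $X\times\RR^{n-1}$; this is precisely why the statement carries the integrability locus $\INT(f,X)$ (see Remark \ref{rem: intetrability locus}). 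At each stage one records the constructible description of the set where absolute convergence holds, so that Fubini applies there and the loci match across the induction.

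For the base case, by $\CC$-linearity it is enough to integrate a single generating term $f(x,y)=c(x,y)\,\mathrm{e}^{\mathrm{i}\phi(x,y)}\,\gamma_{h,\ell}(x,y)$ with $c\in\C(X\times\RR)$, $\phi\in\S(X\times\RR)$ and $h\in\S(X\times\RR\times\RR)$ (see Definition \ref{def:Cexp}). Unfolding $\gamma_{h,\ell}$ and applying Fubini on the integrability locus rewrites $\int_{\RR}f(x,y)\,\mathrm{d}y$ as the planar oscillatory integral
\[
\int_{\RR^{2}} c(x,y)\,h(x,y,t)\,(\log|t|)^{\ell}\,\mathrm{e}^{\mathrm{i}(\phi(x,y)+t)}\,\mathrm{d}y\,\mathrm{d}t,
\]
with constructible amplitude $c\cdot h\cdot(\log|t|)^{\ell}$ and subanalytic phase $\phi(x,y)+t$. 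The idea is that the preparation theorem for constructible functions (Proposition \ref{prop:prepSubConstr}) plays here the role that resolution of singularities plays for classical oscillatory integrals: applied to $\phi$, $c$ and $h$ in the integration variables with $x$ as parameter, it partitions the integration domain into finitely many subanalytic cells and supplies on each a monomial change of coordinates after which the subanalytic part of the phase is monomial and the amplitude is a monomial times a subanalytic unit times integer powers of logarithms.

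On each prepared cell I would reduce to a model integral in the new variables and sort the contributions according to the behaviour of the monomialized phase. On cells where the phase stays bounded there is no genuine oscillation: $\mathrm{e}^{\mathrm{i}(\cdots)}$ is a bounded subanalytic factor, and the cell contributes a constructible function of $x$, which lies in $\C^{\exp}(X)$ (Remark \ref{rem: gamma subanalytic}). On cells where the phase becomes an unbounded monomial, the substitution sending that monomial to a single new variable absorbs the Jacobian and the remaining powers and logarithms into a fresh subanalytic amplitude $h'$ and a new exponent $\ell'$, so that the contribution is exactly of the form $f'(x)\,\gamma_{h',\ell'}(x)$ with $f'\in\C_{\naive}^{\exp}(X)$. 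This is precisely the closure property for which the generators $\gamma_{h,\ell}$ were designed, and it is what forces their particular shape.

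The main obstacle is the oscillatory, only conditionally convergent nature of these integrals: because convergence comes from cancellation rather than absolute integrability, one cannot split, estimate, or interchange limits naively, and a residual family of cells remains on which the monomialized phase is a genuinely non-degenerate function of the integration variable. This is where I would invoke the theory of continuously uniformly distributed maps: equidistribution of the phase forces the oscillatory average of $\mathrm{e}^{\mathrm{i}(\cdots)}$ to vanish in the relevant limit, which both isolates the main term and confines the remainder to $\C^{\exp}$. Assembling the finitely many cell contributions and checking that the conditions defining $\INT(f,X)$ are themselves constructible then closes the argument. The hardest quantitative work is the uniform control of the oscillatory remainders, and it is exactly this step that the newly added uniform distribution machinery is meant to streamline.
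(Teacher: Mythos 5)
Your overall architecture (induction on $n$ via Fubini, preparation of the phase and amplitude, cell-by-cell analysis, and an appeal to continuously uniformly distributed maps) matches the paper's, but there are two genuine gaps in the base case. First, the reduction ``by $\CC$-linearity to a single generating term'' is not valid: if $f=\sum_j T_j$ is integrable in $y$ for a given $x$, the individual generators $T_j$ need not be, since integrability of the sum can come from cancellation between non-integrable terms. One therefore cannot integrate term by term; the whole point of the paper's Theorem \ref{thm:interp-1} is to first rewrite $f$, on its integrability locus, as a sum of generators that are \emph{superintegrable} (Definition \ref{def:superinteg}), i.e.\ whose associated absolute-value majorants $f^{\abs}$ are integrable, and only then integrate each term. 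Relatedly, your unfolding of $\gamma_{h,\ell}$ followed by Fubini to produce a planar integral requires joint absolute integrability of $(y,t)\mapsto c(x,y)h(x,y,t)(\log|t|)^{\ell}$, which is exactly the superintegrability condition and does not follow from $x\in\INT(f,X)$; the paper performs this step only in Proposition \ref{prop:integGen-1}, where superintegrability is hypothesized.

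Second, the role you assign to the c.u.d.\ machinery is essentially reversed. You invoke equidistribution to argue that ``the oscillatory average of $\mathrm{e}^{\mathrm{i}(\cdots)}$ vanishes in the relevant limit,'' i.e.\ as a cancellation device to control remainders. In the paper, Proposition \ref{prop:goal1}(\ref{V big}) uses c.u.d.\ maps to prove a \emph{non}-vanishing statement: a nonzero finite combination $\sum_j c_j \mathrm{e}^{\mathrm{i}p_j(t)}$ with distinct polynomial phases stays bounded below by some $\varepsilon>0$ on a set $V_\varepsilon$ with $\int_{V_\varepsilon}t^{-1}\,\mathrm{d}t=+\infty$. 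This is what shows that the ``naive in $y$'' generators with exponent $r_j+\tau\geq 0$ produced by the preparation (Theorem \ref{thm:prepExpConstr}) cannot contribute to an integrable function unless their coefficients $f_j(x)$ vanish, and hence what identifies $\INT(f,X)$ as a zero set of a function in $\C^{\exp}(X)$ and justifies discarding those terms. Without this step your argument has no mechanism for handling the non-superintegrable residue, and the claim that the conditions defining $\INT(f,X)$ are ``constructible'' is left unproved.
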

It is clear from the definition that the module $\C^{\exp}\left(X\right)$ is
closed under addition, but it is not so apparent from the definition
alone whether $\C^{\exp}\left(X\right)$ is closed under multiplication. That
$\C^{\exp}\left(X\right)$ is a ring is in fact a consequence of our main result.
\begin{cor}
\label{cor:CexpRing} For each subanalytic set $X$, $\C^{\exp}\left(X\right)$
is a ring. \end{cor}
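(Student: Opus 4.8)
The plan is to use that $\C^{\exp}\left(X\right)$ is already, by Definition \ref{def:Cexp}, a $\C_{\naive}^{\exp}\left(X\right)$-module; in particular it is an additive group, it contains the constants (and the unit $1$, by Remark \ref{rem: gamma subanalytic}), so the only thing left to prove is closure under multiplication. Write two elements of $\C^{\exp}\left(X\right)$ as finite sums $\sum_{j}a_{j}\gamma_{h_{j},\ell_{j}}$ and $\sum_{k}b_{k}\gamma_{g_{k},m_{k}}$ with $a_{j},b_{k}\in\C_{\naive}^{\exp}\left(X\right)$ and $h_{j},g_{k}\in\S\left(X\times\RR\right)$. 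Their product expands as the finite sum $\sum_{j,k}a_{j}b_{k}\cdot\gamma_{h_{j},\ell_{j}}\gamma_{g_{k},m_{k}}$. Since $\C_{\naive}^{\exp}\left(X\right)$ is a ring by Definition \ref{def: naive}, each $a_{j}b_{k}$ lies in $\C_{\naive}^{\exp}\left(X\right)$; so, by the module structure, the whole corollary reduces to the single key fact that a product of two generators $\gamma_{h,\ell}\gamma_{g,m}$ lies in $\C^{\exp}\left(X\right)$.

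To prove this key fact I would realize $\gamma_{h,\ell}\gamma_{g,m}$ as a one-variable parametric integral and invoke Theorem \ref{thm:main interp}. Pulling $\gamma_{g,m}$ back along the subanalytic projection $X\times\RR\to X$, Remark \ref{rem: table under right composition with sa} shows that $\left(x,t\right)\mapsto\gamma_{g,m}\left(x\right)$ belongs to $\C^{\exp}\left(X\times\RR\right)$. On the other hand, the function $\left(x,t\right)\mapsto h\left(x,t\right)\left(\log\left|t\right|\right)^{\ell}\mathrm{e}^{\mathrm{i}t}$ (extended by $0$ at $t=0$) lies in $\C_{\naive}^{\exp}\left(X\times\RR\right)$, being the product of the constructible function $h\left(x,t\right)\left(\log\left|t\right|\right)^{\ell}$ with the oscillatory factor $\mathrm{e}^{\mathrm{i}t}$, where $\left(x,t\right)\mapsto t$ is subanalytic. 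Because $\C^{\exp}\left(X\times\RR\right)$ is a $\C_{\naive}^{\exp}\left(X\times\RR\right)$-module, the integrand
\[
G\left(x,t\right):=\gamma_{g,m}\left(x\right)\,h\left(x,t\right)\left(\log\left|t\right|\right)^{\ell}\mathrm{e}^{\mathrm{i}t}
\]
belongs to $\C^{\exp}\left(X\times\RR\right)$.

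It then remains to check integrability and apply the main theorem. For each $x\in X$ the factor $\gamma_{g,m}\left(x\right)$ is a fixed finite number, and $t\mapsto h\left(x,t\right)\left(\log\left|t\right|\right)^{\ell}$ lies in $L^{1}\left(\RR\right)$ by the equivalence recorded in Definition \ref{def: gamma function}; hence $G\left(x,\cdot\right)\in L^{1}\left(\RR\right)$ and $\INT\left(G,X\right)=X$. Theorem \ref{thm:main interp} then produces $F\in\C^{\exp}\left(X\right)$ with $F\left(x\right)=\int_{\RR}G\left(x,t\right)\,\mathrm{d}t$ for all $x\in X$; since $\gamma_{g,m}\left(x\right)$ is independent of $t$ it factors out of the integral, giving
\[
F\left(x\right)=\gamma_{g,m}\left(x\right)\int_{\RR}h\left(x,t\right)\left(\log\left|t\right|\right)^{\ell}\mathrm{e}^{\mathrm{i}t}\,\mathrm{d}t=\gamma_{g,m}\left(x\right)\gamma_{h,\ell}\left(x\right),
\]
so $\gamma_{h,\ell}\gamma_{g,m}=F\in\C^{\exp}\left(X\right)$, which finishes the proof.

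I expect the only delicate point to be the bookkeeping showing that $G$ genuinely lies in $\C^{\exp}\left(X\times\RR\right)$, rather than being merely a measurable function of the right outward shape: one must invoke Remark \ref{rem: table under right composition with sa} to legitimize the pulled-back $\gamma_{g,m}$ as an element of $\C^{\exp}\left(X\times\RR\right)$, and then use the module property to absorb the $\C_{\naive}^{\exp}$-factor. I would also emphasize that the argument is not circular: the product of two $\gamma$-generators is reduced to a single application of Theorem \ref{thm:main interp}, whose proof does not presuppose that $\C^{\exp}\left(X\right)$ is closed under multiplication.
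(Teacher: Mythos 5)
Your proof is correct and rests on exactly the same key idea as the paper's: realize a product of $\gamma$-functions as a parameterized integral of a function already known to lie in the system, and invoke Theorem \ref{thm:main interp}, reducing everything else to the module structure over the ring $\C_{\naive}^{\exp}\left(X\right)$. The only (harmless) difference is that the paper multiplies all $n$ factors at once, writing $\prod_j\gamma_{h_j,\ell_j}$ as a single $n$-fold integral of a $\C_{\naive}^{\exp}$-integrand via Fubini, whereas you handle two factors at a time with a one-variable integral whose integrand carries a pulled-back $\gamma$-factor (legitimized by Remark \ref{rem: table under right composition with sa}); both variants are immediate applications of the same theorem.
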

\begin{proof}
For any $n\in\NN$ and functions $\gamma_{h_{1},\ell_{1}},\ldots,\gamma_{h_{n},\ell_{n}}$,
writing $y=\left(y_{1},\ldots,y_{n}\right)$ we have
\begin{eqnarray*}
\prod_{j=1}^{n}\gamma_{h_{j},\ell_{j}}\left(x\right) & = & \prod_{j=1}^{n}\left(\int_{\RR}h\left(x,y_{j}\right)\left(\log|y_{j}|\right)^{\ell_{j}}\mathrm{e}^{\mathrm{i}y_{j}}\,\mathrm{d}y_{j}\right)\\
 & = & \int_{\RR^{n}}\left(\prod_{j=1}^{n}h\left(x,y_{j}\right)\left(\log|y_{j}|\right)^{\ell_j}\mathrm{e}^{\mathrm{i}y_{j}}\right)\,\mathrm{d}y,
\end{eqnarray*}
which is in $\C^{\exp}\left(X\right)$ by Theorem \ref{thm:main interp}. It
follows that $\C^{\exp}\left(X\right)$ is closed under multiplication. \end{proof}
\begin{rems}
\label{rem: important}$\ $
\begin{enumerate}
\item \label{enu:Cexp=00003DE} Theorem \ref{thm:main interp} and Corollary
\ref{cor:CexpRing} imply that $\C^{\exp}$ is indeed the smallest
collection of $\mathbb{C}$-algebras containing $\S \cup\{\mathrm{e}^{\mathrm{i}f}:\ f\in\S \}$
and stable under parametric integration. Hence $\C^{\exp}=\E$.
\item \label{enu: real and imag parts, fourier} Notice that $\C^{\exp}$
is closed under complex conjugation, hence the real and imaginary
parts of functions in $\C^{\exp}$ are also in $\C^{\exp}$.
Moreover, $\C^{\exp}$ is closed under taking Fourier transforms
(over $\mathbb{R}^{m}$ and over $\mathbb{R}$ with parameters, as
in Equation (\ref{eq: fourier tranf param})).
\item \label{enu:fourier}$\mathcal{C}^{\exp}\left( X \right)$ can also be
described as the smallest $\mathbb{C}$-algebra $\mathcal{A}\left(X\right)$
containing $\mathcal{C}\left(X\right)$ and stable by composition
with subanalytic functions (the operation defined in Remark \ref{rem: table under right composition with sa},
where we take $n=m$) and by taking parametric Fourier transform (the
operation defined in Equation (\ref{eq: fourier tranf param})). To
see this, notice that Remark \ref{rem: table under right composition with sa}
and the previous remark imply that $\mathcal{A}\left(X\right)\subseteq\mathcal{C}^{\exp}\left(X\right)$.
To prove the other inclusion, notice first that a function of type
$\gamma_{h,\ell}$ (as in Definition \ref{def: gamma function}) is
a parametric Fourier transform of a function in $\mathcal{C}\left(X\right)$.
To see this, remark that the parametric Fourier transform of the function
$t\mapsto h\left(x,t\right)\left(\log t\right)^{\ell}$ is the function
\[
F\left(x,y\right)=\int_{\mathbb{R}}h\left(x,t\right)\left(\log t\right)^{\ell}\mathrm{e}^{-2\pi\mathrm{i}ty}\,\mathrm{d}t
\]
and we have $\gamma_{h,\ell}\left(x\right)=F\left(x,-\frac{1}{2\pi}\right)$,
where evaluating $F$ at the points $\left(x,-\frac{1}{2\pi}\right)$
is allowed, thanks to the stability by composition with subanalytic
functions. Moreover, the function $\left(x_{1},\ldots,x_{m}\right)\mapsto\mathrm{e}^{\mathrm{i}x_{1}}$
belongs to $\mathcal{A}\left(X\right)$, since the functions $\frac{\sin x_{1}}{x_{1}},\frac{\cos x_{1}}{x_{1}}$
are Fourier transforms of the characteristic function of a suitable
interval (see for example \cite{gasquet-witomski:fourier-analysis}).
Finally, by stability under composition with subanalytic functions,
if $\varphi\in\mathcal{S}\left(X\right)$, then $\mathrm{e}^{\mathrm{i}\varphi\left(x\right)}\in\mathcal{A}\left(X\right)$.
\end{enumerate}
\end{rems}
\bigskip{}

We now illustrate the main steps of the proof of Theorem \ref{thm:main interp}.
\begin{defn}
\label{def:generator} Consider a subanalytic set $X$. Call $f\colon X\to\CC$
a \textbf{\emph{generator for $\C^{\exp}\left(X\right)$}} if $f$ is of the
form
\begin{equation}
f\left(x\right)=g\left(x\right)\mathrm{e}^{\mathrm{i}\phi\left(x\right)}\gamma\left(x\right),\label{eq:generator}
\end{equation}
where $g\in\C\left(X\right)$, $\phi\in\S\left(X\right)$, and $\gamma=\gamma_{h,\ell}$
for some $\ell\in\NN$ and $h\in\S\left(X\times\RR\right)$ with $t\mapsto h\left(x,t\right)$
in $L^{1}\left(\RR\right)$. When $\gamma=1$, we shall also call $f$ a \textbf{\emph{generator
for $\C_{\naive}^{\exp}\left(X\right)$}}. Note that a function is in $\C^{\exp}\left(X\right)$
if and only if the function can be expressed as a finite sum of generators
for $\C^{\exp}\left(X\right)$, and likewise for $\C_{\naive}^{\exp}\left(X\right)$.\end{defn}
\begin{rem}
\label{rem: presentation of generators}The function $f$ given in
(\ref{eq:generator}) is determined by the data $\left(g,\phi,h,l\right)$. However,
the choice of underlying data is not uniquely determined by the function
$f$ itself (see Remark \ref{rem: gamma subanalytic}). In what follows,
we shall always assume that when we have a generator $f$, a choice
of underlying data has been specified.
\end{rem}

The purpose of the next two definitions is to identify a particular
type of generators for $\C^{\exp}\left(X\times\RR^{n}\right)$ which are integrable
everywhere and whose integral can be computed using the Fubini-Tonelli
Theorem.
\begin{defn}
\label{abs} To the function (\ref{eq:generator}) we associate the
function $f^{\abs}\colon X\to[0,+\infty)$ defined by
\[
f^{\abs}\left(x\right):=\left|g\left(x\right)\right|\gamma^{\abs}\left(x\right),
\]
where $\gamma^{\abs}\colon X\to[0,+\infty)$ is defined by
\[
\gamma^{\abs}\left(x\right)={ \int_{\RR}\left|h\left(x,t\right)\left(\log|t|\right)^{\ell} \right|\,\mathrm{d}t.}
\]

\end{defn}
For $f$ as in (\ref{eq:generator}), note that for any $x\in X$
we have $|\gamma\left(x\right)|\leq\gamma^{\text{abs}}\left(x\right)$, so $\left|f\left(x\right)\right|\leq f^{\text{abs}}\left(x\right)$,
and these inequalities can be strict. Observe that for any given generator
$f$ for $\mathcal{C}^{\exp}\left(X\right)$, $f^{\text{abs}}$ is uniquely determined
by the underlying data used to define $f$ as in (\ref{eq:generator}),
not by the function $f$ itself.
\begin{defn}
\label{def:superinteg} We say that a generator $f$ for $\C^{\exp}\left(X\times\RR^{n}\right)$
is \textbf{\emph{superintegrable over $X$}} if $f^{\abs}\left(x,\cdot\right)\in L^{1}\left(\RR^{n}\right)$
for all $x\in X$.
\end{defn}
In Section \ref{s:integGen} we will prove the following result.
\begin{prop}
[Integration of superintegrable generators]\label{prop:integGen-1}
Let $f$ be a generator for $\C^{\exp}\left(X\times\RR^{n}\right)$ that is superintegrable
over $X$, and define $F\colon X\to\CC$ by
\[
F\left(x\right)=\int_{\RR^{n}}f\left(x,y\right)\,\mathrm{d}y.
\]
Then $F\in\C^{\exp}\left(X\right)$.
\end{prop}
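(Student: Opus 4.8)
The plan is to use superintegrability to reduce everything, via Fubini--Tonelli, to the integration of a constructible amplitude against a single linear oscillatory factor, and then to read off the result as a finite sum of $\gamma$-functions. Write the generator as $f(x,y)=g(x,y)\,\mathrm{e}^{\mathrm{i}\phi(x,y)}\,\gamma_{h,\ell}(x,y)$ with $g\in\C(X\times\RR^n)$, $\phi\in\S(X\times\RR^n)$ and $h\in\S(X\times\RR^n\times\RR)$. Since $f$ is superintegrable, $f^{\abs}(x,\cdot)\in L^{1}(\RR^n)$, and unfolding Definition \ref{def: gamma function} one checks that $(y,t)\mapsto g(x,y)h(x,y,t)(\log|t|)^{\ell}$ is absolutely integrable on $\RR^{n+1}$. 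Hence Fubini--Tonelli applies and, setting $z=(y,t)$, $N=n+1$, $A(x,z)=g(x,y)h(x,y,t)(\log|t|)^{\ell}$ and $\Psi(x,z)=\phi(x,y)+t$, we obtain $F(x)=\int_{\RR^{N}}A(x,z)\,\mathrm{e}^{\mathrm{i}\Psi(x,z)}\,\mathrm{d}z$, where $A$ is constructible, $\Psi$ is subanalytic, and $A(x,\cdot)\in L^{1}(\RR^{N})$. This reduces the Proposition to the statement $Q(N)$: the integral over $\RR^{N}$ of a constructible amplitude times $\mathrm{e}^{\mathrm{i}(\text{subanalytic phase})}$, absolutely integrable in the fibre, lies in $\C^{\exp}(X)$.

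I would isolate first the one-dimensional case in which the phase is already linear, which is the engine of the whole argument: for $B\in\C(X\times\RR)$ with $B(x,\cdot)\in L^{1}(\RR)$, the function $x\mapsto\int_{\RR}B(x,t)\,\mathrm{e}^{\mathrm{i}t}\,\mathrm{d}t$ lies in $\C^{\exp}(X)$. For this I would apply the preparation theorem for constructible functions (Proposition \ref{prop:prepSubConstr}) to $B$ in the variable $t$: on each cell $B$ becomes a finite sum of terms $a(x)\,|t-\theta(x)|^{q}\,(\log|t-\theta(x)|)^{k}\,U(x,t)$ with $a\in\C(X)$, a subanalytic center $\theta$, a unit $U$, and $k\in\NN$. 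Substituting $\sigma=t-\theta(x)$ turns $\mathrm{e}^{\mathrm{i}t}$ into $\mathrm{e}^{\mathrm{i}\theta(x)}\mathrm{e}^{\mathrm{i}\sigma}$ with $\mathrm{e}^{\mathrm{i}\theta(x)}\in\C_{\naive}^{\exp}(X)$, while $(\log|t-\theta|)^{k}=(\log|\sigma|)^{k}$ and $|\sigma|^{q}U$ are absorbed into a subanalytic function $\tilde h(x,\sigma)$ (using that the logarithm of a subanalytic unit is subanalytic). Extending by zero through the characteristic function of the cell, each term contributes $\mathrm{e}^{\mathrm{i}\theta(x)}a(x)\,\gamma_{\tilde h,k}(x)\in\C^{\exp}(X)$.

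I would then prove $Q(N)$ by induction on $N$, applying Proposition \ref{prop:prepSubConstr} to $\Psi$ (and $A$) in the last variable $z_{N}$ and splitting the cells into two types. On cells where $\Psi$ does not depend on $z_{N}$, the factor $\mathrm{e}^{\mathrm{i}\Psi}$ comes out of the $z_{N}$-integral, $\int A\,\mathrm{d}z_{N}$ is constructible by the stability of $\C$ under integration \cite{cluckers-miller:stability-integration-sums-products}, and what remains is a superintegrable constructible-amplitude integrand in the $N-1$ variables $z'=(z_{1},\dots,z_{N-1})$, to which the induction hypothesis $Q(N-1)$ applies. On cells where $\Psi$ depends on $z_{N}$, after refining so that $\Psi$ is monotone in $z_{N}$, I would substitute $\tau=\Psi(x,z',z_{N})$, \emph{linearizing the entire phase}: the inverse $z_{N}=z_{N}(x,z',\tau)$ and the Jacobian $\partial z_{N}/\partial\tau$ are subanalytic, so the new amplitude $\widehat A(x,z',\tau)=A\cdot\partial z_{N}/\partial\tau$ is still constructible and no oscillation in $z'$ survives. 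Swapping the order of integration (legitimate by absolute convergence) and integrating out $z'$ first is then a purely constructible integration, producing by \cite{cluckers-miller:stability-integration-sums-products} some $B(x,\tau)\in\C(X\times\RR)$; the remaining integral $\int_{\RR}B(x,\tau)\,\mathrm{e}^{\mathrm{i}\tau}\,\mathrm{d}\tau$ is exactly the one-dimensional core step, hence in $\C^{\exp}(X)$. Since each branch either reduces to $N-1$ variables or to the linear core, the induction is well founded.

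The main obstacle is the step of linearization: making the single substitution $\tau=\Psi$ carry the full oscillatory content so that the surviving $z'$-integration is genuinely non-oscillatory (and thus governed by constructible stability), which forces one to respect the precise monomialization of the phase provided by Proposition \ref{prop:prepSubConstr} and to track that inverses, Jacobians and the centering $\sigma=\tau-\theta(x)$ keep amplitudes constructible and reshape every logarithm into the exact $(\log|\sigma|)^{k}$ form demanded by Definition \ref{def: gamma function} of $\gamma_{h,\ell}$. Once this bookkeeping is in place, the absolute convergence furnished by superintegrability renders all the Fubini swaps and the final reduction to $\C^{\exp}(X)$ routine.
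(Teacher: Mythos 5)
Your overall architecture reproduces the paper's proof: the same Fubini--Tonelli unfolding of $\gamma_{h,\ell}$ into an extra integration variable (reducing to a constructible amplitude integrated against the phase $\phi(x,y)+t$), the same linearization of the phase by a subanalytic change of variables followed by a purely constructible integration of the remaining, non-oscillatory variables via the stability of $\C$ under integration, and the same final one-variable step in which the prepared and centred integrand is read off as a finite sum of terms $g_{0}(x)\,\mathrm{e}^{\mathrm{i}\theta(x)}\,\gamma_{\tilde h,k}(x)$. The only organizational difference is that you induct on the number of variables, peeling off the last one, whereas the paper performs a single cell decomposition on which $\phi$ is either independent of all of $y$ or strictly monotone in some $y_{j}$; this difference is cosmetic.

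There is, however, one step that fails as written. In your one-dimensional core you prepare $B(x,t)$ into a finite sum of terms $a(x)\,|t-\theta(x)|^{q}\,(\log|t-\theta(x)|)^{k}\,U(x,t)$ on each cell and assert that \emph{each term} contributes $\mathrm{e}^{\mathrm{i}\theta(x)}a(x)\,\gamma_{\tilde h,k}(x)$. On a cell that is unbounded in $t$, an individual term with exponent $q\geq-1$ and $a(x)\neq 0$ is not integrable even though the sum $B(x,\cdot)$ is; for such a term $\gamma_{\tilde h,k}$ is simply undefined (Definition \ref{def: gamma function} requires $\tilde h(x,\cdot)\in L^{1}(\RR)$), and the integral of $B$ cannot be split termwise. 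The paper closes exactly this hole with Remark \ref{rems: integrability of constructible fncts}(\ref{enu: integrability locus when constructible}): after arranging the exponent pairs to be distinct (Remark \ref{rem: distinct tuples etc}), the lexicographically largest non-integrable term dominates the asymptotics, so integrability of $B(x,\cdot)$ for \emph{every} $x$ forces the coefficients of all non-integrable terms to vanish identically on the cell, and those terms may be discarded before splitting the integral. You need this asymptotic-comparison argument (or an equivalent one) to legitimize the termwise passage to $\gamma$-functions; once it is inserted, your proof is complete and coincides with the paper's.
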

The key step to the proof of Theorem \ref{thm:main interp} is given
by the following interpolation result, which holds whenever we integrate
with respect to a single variable $y\in\mathbb{R}$. This result also
gives a structure theorem for the locus of integrability of functions
in $\C^{\exp}\left(X\times\RR\right)$.
\begin{thm}
[Interpolation and locus]\label{thm:interp-1} Let $f\in\C^{\exp}\left(X\times\RR\right)$
for some subanalytic set $X\subseteq\mathbb{R}^{m}$. Then there exists
$g\in\C^{\exp}\left(X\times\RR\right)$ such that
\[
\INT\left(g,X\right)=X
\]
and
\[
f\left(x,y\right)=g\left(x,y\right),\quad\text{for all \ensuremath{x\in\INT\left(f,X\right)}\ and\ all \ensuremath{y\in\RR}.}
\]
Moreover, $g$ can be written as a finite sum of generators for $\C^{\exp}\left(X\times\RR\right)$
that are superintegrable over $X$.

Finally, there exists $h\in\C^{\exp}\left(X\right)$ such that
\[
\INT\left(f,X\right)=\{x\in X:h\left(x\right)=0\}.
\]

\end{thm}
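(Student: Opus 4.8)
The plan is to locate all the obstructions to integrability at the ``bad'' points of $\RR$ and to exploit the fact that the discrepancy between integrability and \emph{super}integrability lives only at $y\to\pm\infty$. First I would write $f$ as a finite sum of generators $f_j=a_j\,\mathrm e^{\mathrm i\phi_j}\gamma_j$ with $a_j\in\C(X\times\RR)$, $\phi_j\in\S(X\times\RR)$ and $\gamma_j=\gamma_{h_j,\ell_j}$, and apply the preparation theorem (Proposition \ref{prop:prepSubConstr}) \emph{simultaneously} to all the data $a_j,\phi_j,h_j$ in the integration variable $y$. This produces a finite partition of $X\times\RR$ into subanalytic cells on each of which $y$ ranges over an interval with subanalytic endpoints and every datum is in prepared (monomial times unit) form relative to a common center $\theta(x)$. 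It is essential that the preparation be carried out for all summands at once, so that on each cell the summands share centers and phases: this is what lets cancellation \emph{between} summands be tracked honestly, recall that $\INT(\sum_j f_j,X)$ is not the intersection of the $\INT(f_j,X)$.

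\textbf{Finite centers versus infinity.} Near a finite center the factors $\gamma_j(x,\cdot)$ and $\gamma^{\abs}_j(x,\cdot)$ are continuous and bounded in $y$, so $|f|$ and $f^{\abs}$ blow up at the same (constructible) rate; there integrability of $f$ is equivalent to integrability of $f^{\abs}$ and is governed by the exponents of the prepared monomial in $|y-\theta(x)|$, falling under the classical constructible integrability criterion, with constructible locus. The entire difficulty is therefore concentrated at $y\to\pm\infty$, where the oscillation $\mathrm e^{\mathrm it}$ inside $\gamma_{h,\ell}$ can make $|\gamma_{h,\ell}(x,y)|$ decay strictly faster than $\gamma^{\abs}(x,y)$ as $y$ grows. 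To quantify this I would, using the machinery of Section \ref{sec:Asymptotic-expansions-of} (repeated integration by parts in $t$ against $\mathrm e^{\mathrm it}$ after monomializing $h_j$ jointly in $(y,t)$), expand each $\gamma_j(x,y)$ as $y\to\pm\infty$ into a limit term $\gamma_{\infty}(x)$ independent of $y$ (of type $\gamma$ in auxiliary variables, hence in $\C^{\exp}(X)$), finitely many explicit terms of the shape (subanalytic monomial in $y$) times $\mathrm e^{\mathrm i\psi}$, and a genuinely superintegrable remainder.

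\textbf{Collecting terms, the locus, and the interpolant $g$.} Substituting these expansions into the $f_j$ and pulling the $\C^{\exp}(X)$-coefficients back to $X\times\RR$ along the projection (legitimate by Remark \ref{rem: table under right composition with sa}, and since $\C^{\exp}(X\times\RR)$ is a module over $\C_{\naive}^{\exp}(X\times\RR)$) rewrites $f$, modulo a superintegrable remainder, as a finite sum $\sum_k c_k(x)\,M_k(x,y)$ with $c_k\in\C^{\exp}(X)$ and each $M_k$ an explicit monomial times oscillation. After grouping terms with equal exponent and asymptotically equal phase, the $M_k$ whose exponent is non-integrable at infinity are independent in the $L^1$-sense: oscillation in $y$ cannot improve \emph{absolute} integrability, so $f(x,\cdot)\in L^1(\RR)$ holds iff every such critical coefficient vanishes. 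Thus $\INT(f,X)=\{x: c_1(x)=\dots=c_r(x)=0\}$, and deleting from $f$ all critical monomial terms yields a finite sum of superintegrable generators $g$ that agrees with $f$ on $\INT(f,X)$ and satisfies $\INT(g,X)=X$.

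\textbf{The locus function and the main obstacle.} It remains to present $\{c_1=\dots=c_r=0\}$ as a single zero set $\{h=0\}$ with $h\in\C^{\exp}(X)$, \emph{without} invoking that $\C^{\exp}$ is a ring, as Corollary \ref{cor:CexpRing} is downstream of this theorem. I would take $h=\sum_k|c_k|^2$ and note that $|c_k|^2=c_k\overline{c_k}$ expands into products of a $\gamma$-generator with a conjugate $\gamma$-generator times a $\C_{\naive}^{\exp}(X)$-factor; each such product of two $\gamma$'s is a double integral over $\RR^2$ whose integrand is automatically superintegrable, hence lies in $\C^{\exp}(X)$ by Proposition \ref{prop:integGen-1} alone. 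Then $h\ge 0$, $h\in\C^{\exp}(X)$ and $\{h=0\}=\INT(f,X)$. The main obstacle throughout is the second step: producing, uniformly in the parameter $x$ and compatibly with the preparation, the asymptotic expansion of the oscillatory factor $\gamma_{h,\ell}(x,y)$ at infinity with a genuinely superintegrable remainder. This is the point at which preparation plays the role of resolution of singularities and the oscillatory nature of the problem, namely the gap between $|\gamma|$ and $\gamma^{\abs}$, must be mastered; everything else is bookkeeping on exponents and phases.
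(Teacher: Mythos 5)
Your overall architecture matches the paper's: prepare all the data simultaneously, isolate the trouble at $y\to\pm\infty$, expand the $\gamma$-factors there into a finite sum of terms of the form $c_k(x)\,y^{r_k}(\log y)^{s_k}\mathrm{e}^{\mathrm{i}\phi_k(x,y)}$ plus a superintegrable remainder (this is the content of Theorem \ref{thm:prepExpConstr}, proved via the integration-by-parts lemmas of Section \ref{s:prepExpConstr}), and then read off the locus of integrability from the vanishing of the critical coefficients. Your device for the locus function $h$ (sum of $|c_k|^2$, with the products of $\gamma$-generators handled by Proposition \ref{prop:integGen-1} alone, avoiding circularity with Corollary \ref{cor:CexpRing}) is sound and is essentially what the paper does.

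However, there is a genuine gap at the step you dismiss as bookkeeping. You assert that after grouping, the critical monomials ``are independent in the $L^1$-sense: oscillation in $y$ cannot improve absolute integrability, so $f(x,\cdot)\in L^1(\RR)$ holds iff every critical coefficient vanishes.'' For a \emph{single} term this is trivial, since $|c\,y^{r}(\log y)^{s}\mathrm{e}^{\mathrm{i}\phi}|=|c|\,y^{r}(\log y)^{s}$; but for a \emph{sum} $\sum_j c_j\mathrm{e}^{\mathrm{i}\phi_j(x,y)}$ with distinct polynomial phases, $|\sum_j|$ is not $\sum_j|\cdot|$, and cancellation \emph{between} the oscillatory terms could a priori make the sum small on most of $[M,+\infty)$ even when no $c_j$ vanishes. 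What is actually needed is that such a nonzero sum satisfies $|\sum_j c_j\mathrm{e}^{\mathrm{i}p_j(y)}|\geq\varepsilon$ on a set $V_\varepsilon$ with $\int_{V_\varepsilon}\frac{\mathrm{d}y}{y}=+\infty$, so that it dominates the lexicographically maximal weight $y^{\rho_0}(\log y)^{s_0}$ with $\rho_0\geq-1$ and forces divergence. This is exactly Proposition \ref{prop:goal1}(\ref{V big}), whose proof is not elementary: it passes through a reduction to a nonconstant Laurent polynomial in $\mathrm{e}^{2\pi\mathrm{i}\tld{p}_1},\ldots,\mathrm{e}^{2\pi\mathrm{i}\tld{p}_\ell}$ with $(\tld{p}_1,\ldots,\tld{p}_\ell)$ continuously uniformly distributed mod $1$ (Weyl's theorem, Remark \ref{rem Polynomial maps are CUD}) and the density estimate of Lemma \ref{lem:main-lemma-appendix}. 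This equidistribution input is the key new analytic ingredient of the whole argument, and your proposal contains no substitute for it; you instead identify the asymptotic expansion of $\gamma_{h,\ell}$ as the main obstacle, whereas that part is the comparatively routine (if lengthy) preparation machinery.
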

Once we have established Theorem \ref{thm:interp-1}, the proof of
Theorem \ref{thm:main interp} follows easily: the case $n=1$ is
implied by Theorem \ref{thm:interp-1} and Proposition \ref{prop:integGen-1}.
For $n>1$ we will use Fubini's Theorem and induction on the number
of variables with respect to which we integrate, as explained below.
\begin{notation}
\label{notation about tuple}Write $\left(x,y\right)=\left(x_{1},\ldots,x_{m},y_{1},\ldots,y_{n}\right)$
for coordinates on $\RR^{m+n}$. For each $k\in\{1,\ldots,n\}$ and
$\Box\in\{<,>,\leq,\geq\}$, write $y_{\Box k}$ for $\left(y_{j}\right)_{j\Box k}$.
For example, $y_{<k}=\left(y_{1},\ldots,y_{k-1}\right)$ and $y_{\leq k}=\left(y_{1},\ldots,y_{k}\right)$,
and also $\Pi_{k}\left(y\right)=y_{\leq k}$ and $\Pi_{m+k}\left(x,y\right)=\left(x,y_{\leq k}\right)$. \end{notation}
\begin{proof}
[Proof of Theorem \ref{thm:main interp}]If $n=1$, then by Theorem
\ref{thm:interp-1} there exists $g\in\C^{\exp}\left(X\times\RR\right)$ such
that
\[
f\left(x,y\right)=g\left(x,y\right),\quad\text{for all \ensuremath{x\in\INT\left(f,X\right)}\ and\ all \ensuremath{y\in\RR}.}
\]
Moreover, $g$ is a finite sum of superintegrable generators. The
sum of their integrals belongs to $\C^{\exp}\left(X\right)$, thanks to Proposition
\ref{prop:integGen-1}, and gives us the required $F$.

Let $n>1$. By Fubini's Theorem, for all $x\in\text{Int}\left(f,X\right)$,
the function $g_{x}\colon y_{<n}\mapsto\int_{\mathbb{R}}f\left(x,y\right)\,\mathrm{d}y_{n}$
is defined for all $y_{<n}$ belonging to some set $E_{x}\subseteq\mathbb{R}^{n-1}$
such that the set $\mathbb{R}^{n-1}\setminus E_{x}$ has measure zero.
Moreover, $g_{x}$ is integrable with respect to $y_{<n}$, and $\int_{\mathbb{R}^{n}}f\left(x,y\right)\,\mathrm{d}y=\int_{E_{x}}g_{x}\left(y_{<n}\right)\,\mathrm{d}y_{<n}$.
If we apply the case $n=1$ just proved to the function $f$ seen
as an element of $\C^{\exp}\left(\widetilde{X}\times\mathbb{R}\right)$, where $\widetilde{X}=X\times\mathbb{R}^{n-1}$,
then we obtain the existence of $F_{1}\in\C^{\exp}\left(\widetilde{X}\right)$ such
that $\forall\left(x,y_{<n}\right)\in\text{Int}\left(f,\widetilde{X}\right),\ F_{1}\left(x,y_{<n}\right)=\int_{\mathbb{R}}f\left(x,y\right)\,\mathrm{d}y_{n}$.
So in particular 
$$\int_{\mathbb{R}^{n-1}}F_{1}\left(x,y_{<n}\right)\,\mathrm{d}y_{<n}=\int_{E_{x}}g_{x}\left(y_{<n}\right)\,\mathrm{d}y_{<n},$$
for all $x\in\text{Int}\left(f,X\right)$. By the inductive hypothesis
applied to $F_{1}$, we obtain the existence of $F\in\C^{\exp}\left(X\right)$
such that $\forall x\in\text{Int}\left(F_{1},X\right),\ F\left(x\right)=\int_{\mathbb{\mathbb{R}}^{n-1}}F_{1}\left(x,y_{<n}\right)\,\mathrm{d}y_{<n}$.
Note that this argument shows that $\text{Int}\left(f,X\right)\subseteq\text{Int}\left(F_{1},X\right)$,
hence we are done.
\end{proof}
The structure of the paper is the following.

In Section \ref{s:prepSubConstr}, we establish some notation and
we review a series of known results about subanalytic and constructible
functions. Such results are mainly due to \cite{lr:prep} and \cite{cluckers-miller:stability-integration-sums-products,cluckers_miller:loci_integrability}.

In Section \ref{s:integGen} we prove Proposition \ref{prop:integGen-1}.

Section \ref{s:prepExpConstr} is the core of the paper. In this section
we prove a preparation theorem for functions in $\C^{\exp}\left(X\times\RR\right)$,
Theorem \ref{thm:prepExpConstr}. This states that for each $f$ there
is a partition of $X\times\mathbb{R}$ into finitely many subanalytic
sets such that on each of these sets, $f$ can be written as a finite
sum of generators, each of which is either superintegrable, or \textquotedblleft{}naive
in the last variable\textquotedblright{} (see Definition \ref{def: naive in y}).
As a consequence of the proof of this theorem we obtain that the functions
in $\mathcal{C}^{\exp}$ are piecewise analytic (see Remark \ref{rem:piecewise analytic}).

In Section \ref{s: proof of main results} we complete the proof of
Theorem \ref{thm:interp-1}. In order to do this, we apply Theorem
\ref{thm:prepExpConstr}. Subsequently, we show that any nonzero linear
combination of non-integrable generators for $\C_{\naive}^{\exp}\left(\mathbb{R}\right)$
such that the arguments of the exponentials are distinct polynomials,
cannot be integrable (Proposition \ref{prop:goal1}(\ref{V big})). The proof of
this latter result uses the theory
continuously uniformly distributed maps and
is postponed to Section 
\ref{s: proof of main results}.

Finally, in Section \ref{sec:Asymptotic-expansions-of} we deduce
a series of consequences of our main results: we prove an asymptotic
result for elements of $\C_{\naive}^{\exp}\left(\mathbb{R}\right)$, we give
two examples of functions that are in $\C_{}^{\exp}\left(\mathbb{R}\right)$
but not in $\C_{\naive}^{\exp}\left(\mathbb{R}\right)$, we prove that $\mathcal{C}^{\exp}$
is stable under taking pointwise limits and also has an analogue for
parametric families of the completeness theorem for $L^{p}$-spaces.
Moreover, we prove that the extension of the Fourier transform to
$L^{2}\left(\RR^{n}\right)$ sends $\C^{\exp}\left(\RR^{n}\right)\cap L^{2}\left(\RR^{n}\right)$ onto
$\C^{\exp}\left(\RR^{n}\right)\cap L^{2}\left(\RR^{n}\right)$.\medskip{}

For the reader's convenience, we describe the dependence relations
between the results in the two following diagrams.

The first diagram concerns the stability of $\C^{\exp}$ under integration (Theorem \ref{thm:main interp}). 
\begin{figure}[h!]
\[
\xymatrix{ &  & *+[F]+{Proposition\ \ref{prop:goal1}}\ar[d] & *+[F]+{Theorem\ \ref{thm:prepExpConstr}}\ar[ld]\\
 &  & *+[F]+{Theorem\ \ref{thm:interp-1}}\ar[d] & *+[F]+{Proposition\ \ref{prop:integGen-1}}\ar[ld]\\
 &  & *+[F]+{Theorem\ \ref{thm:main interp}}
}
\]
\end{figure}
\vskip1cm
\vglue0cm
 The second diagram concerns $L^p$-completeness
 and the 
stability of $\C^{\exp}$ under the Fourier-Plancherel transform (Proposition \ref{prop:complete} and Theorem 
\ref{cor:Plancherel}).
\vglue0cm
\begin{figure}[h!]
\[
\xymatrix{ 
   *+[F]+{Remark \ \ref{rem:Param:geometricIntervals}}\ar[rd] 
& *+[F]+{Lemma\ \ref{lem:Param:OscPrep}}\ar[d] & 
  *+[F]+{Proposition \ \ref{prop:WeylToCUD}} \ar[l] & *+[F]+{Lemma\ \ref{lemma:Weyl}} \ar[l] 
\\
  *+[F]+{Theorem \ \ref{thm:prepExpConstr}}\ar[rd] 
 & *+[F]+{Lemma \ \ref{lem:Param:Dovetail}}\ar[d] 
 &  & \\
   & *+[F]+{Proposition \ \ref{prop:complete}}\ar[d] & \\
   & *+[F]+{Theorem\ \ref{cor:Plancherel}} &
}
\]
\end{figure}
\section{Preparation of subanalytic and constructible Functions}
\label{s:prepSubConstr}

This section gives the version of the preparation theorem for subanalytic
and constructible functions that we shall use throughout the paper.
It is mostly a review of ideas from \cite{lr:prep,cluckers_miller:loci_integrability}
but is formulated in a way that is convenient for our current purposes.
\begin{rem}
\label{rem: lojas}It is well known that every subanalytic function
of one variable admits a convergent Puiseux expansion at $+\infty$
(see for example \cite{lo:ana,vdd:tarski:general}). More precisely,
if $g\in\mathcal{S}\left(\mathbb{R}\right)$, then there are $c\in\mathbb{R},\ d\in\mathbb{N}$,
$r\in\mathbb{Q}$ (which can be chosen as an integer multiple of $\frac{1}{d}$)
and an absolutely convergent power series $H\in\mathbb{R}\{ y\} $,
with $H\left(0\right)=0$, such that for $x$ sufficiently large,
\begin{equation}
g\left(x\right)=cx^{r}\left(1+H\left(x^{-\frac{1}{d}}\right)\right).\label{eq:loja}
\end{equation}

In particular, for $x$ large, $g$ can be written as
\begin{equation}
g\left(x\right)=p\left(x^{\frac{1}{d}}\right)+g_{0}\left(x\right),\label{eq: poly + rest}
\end{equation}
where $p\in\mathbb{R}[y]$, with $p\left(0\right)=0$,
and $g_{0}$ is a \emph{bounded } subanalytic function.
\end{rem}
The subanalytic preparation theorem given in \cite[Théorème 1]{lr:prep}
can be viewed as a parametric version (in several variables) of the
above remark, and the constructible preparation theorem given in \cite[Corollary 3.5]{cluckers_miller:loci_integrability}
is the natural extension of this latter result to the context of constructible
functions.

We fix some notation.
\begin{defn}
\label{def:openOver} A set $A\subseteq\RR^{m+n}$ is \textbf{\emph{open
over $\RR^{m}$}} if the fibre $A_{x}$ is open in $\RR^{n}$ for
all $x\in\Pi_{m}\left(A\right)$.

For any set $X\subseteq\RR^{m}$, call a map $f\colon X\to\RR^{n}$
\textbf{\emph{analytic}} if $f$ extends to an analytic map on
a neighbourhood of $X$ in $\RR^{m}$.
\end{defn}
Recall Notation \ref{notation about tuple}.
\begin{defn}
\label{def:cell} A set $A\subseteq\RR^{m+n}$ is a 
\textbf{\emph{cell
over $\RR^{m}$}} if $A$ is subanalytic and for each $j\in\{1,\ldots,n\}$,
$\Pi_{m+j}\left(A\right)$ is either the graph of an analytic function in $\S\left(\Pi_{m+j-1}\left(A\right)\right)$
or else
\[
\Pi_{m+j}\left(A\right)=\left\{\left(x,y_{\leq j}\right):\ \left(x,y_{<j}\right)\in\Pi_{m+j-1}\left(A\right),a_{j}\left(x,y_{<j}\right)<y_{j}<b_{j}\left(x,y_{<j}\right)\right\}
\]
for some analytic, subanalytic functions $a_{j}\left(x,y_{<j}\right)<b_{j}\left(x,y_{<j}\right)$,
where we also allow the possibility that $a_{j}\equiv-\infty$ and
the possibility that $b_{j}\equiv+\infty$.

If $m=0$, we will just say that $A$ is a subanalytic cell.
\end{defn}

\begin{defn}
\label{def:center} Let $A\subseteq\RR^{m+1}$ be a cell over $\RR^{m}$
that is open over $\RR^{m}$, and write $\left(x,y\right)=\left(x_{1},\ldots,x_{m},y\right)$
for coordinates on $\RR^{m+1}$. Call $\theta\colon \Pi_{m}\left(A\right)\to\RR$ a
\textbf{\emph{centre for $A$}} if the following hold.
\begin{enumerate}
\item $\theta$ is an analytic subanalytic function.
\item The graph of $\theta$ is disjoint from $A$ and is either contained
in, or is disjoint from, the closure of $A$ in $\Pi_{m}\left(A\right)\times\RR$.
\item The image of $\theta$ is contained in one of the sets $\left(-\infty,0\right)$,
$\{0\}$ or $\left(0,+\infty\right)$. Moreover, when $\theta\neq0$, the closure
of $\{\vert y/\theta\left(x\right)\vert :\ \left(x,y\right)\in A\}$ in $\RR$ is a compact subset of
$\left(0,+\infty\right)$.
\item The set $\{ y-\theta\left(x\right) :\ \left(x,y\right)\in A\}$ is contained in one of the
sets $\left(-\infty,-1\right)$, $\left(-1,0\right)$, $\left(0,1\right)$ or $\left(1,+\infty\right)$.
\end{enumerate}
Note that when $\theta$ is a centre for $A$, there exist unique
$\sigma,\tau\in\{-1,1\}$ such that
\[
\{\sigma\left(y-\theta\left(x\right)\right)^{\tau}:\ \left(x,y\right)\in A\}\subseteq\left(1,+\infty\right),
\]
and $A$ is of the form
\begin{equation}
A=\left\{\left(x,y\right):\ x\in\Pi_{m}\left(A\right),a\left(x\right)<\sigma\left(y-\theta\left(x\right)\right)^{\tau}<b\left(x\right)\right\}\label{eq:cellCenter}
\end{equation}
for some analytic subanalytic functions $1\leq a\left(x\right)<b\left(x\right)$, where
either $b<+\infty$ on $\Pi_{m}\left(A\right)$ or $b\equiv+\infty$ on $\Pi_{m}\left(A\right)$.

Define $P_{\theta}=\left(P_{\theta,1},\ldots,P_{\theta,m+1}\right)\colon\ \Pi_{m}\left(A\right)\times\left(1,+\infty\right)\to\Pi_{m}\left(A\right)\times\RR$
by
\begin{equation}
P_{\theta}\left(x,y\right)=\left(x,\sigma y^{\tau}+\theta\left(x\right)\right).\label{eq:Ptheta}
\end{equation}
Define $A_{\theta}=P_{\theta}^{-1}\left(A\right)$,
	and note that
	\begin{equation}
	A_{\theta}=\left\{ \left(x,y\right):x\in\Pi_{m}\left(A\right),a\left(x\right)<y<b\left(x\right)\right\}\label{eq:Atheta} 
	\end{equation}
	and that $P_{\theta}$ restricts to a bijection $P_{\theta}\colon A_{\theta}\rightarrow A$
	whose inverse is given by
	\[
	P_{\theta}^{-1}\left(x,y\right)=\left(x,\sigma\left(y-\theta\left(x\right)\right)^{\tau}\right).
	\]
	We shall henceforth restrict $P_{\theta}$ to $A_{\theta}$, considering
	it to be a bijection from $A_{\theta}$ to $A$.
\end{defn}
\begin{rem}
\label{rmk:center} When $b\equiv+\infty$ then necessarily $\tau=1$ and the second
sentence of Condition (3) of Definition \ref{def:center} implies
that $\theta=0$.
\end{rem}

For any polyradius $r=\left(r_{1},\ldots,r_{N}\right)\in\left(0,+\infty\right)^{N}$, define
\[
\begin{aligned}B_{r}\left(\mathbb{\mathbb{C}}\right)= & \left\{ z\in\mathbb{\mathbb{C}}^{N}:\ |z_{1}|\leq r_{1},\ldots,|z_{N}|\leq r_{N}\right\} \\
\text{and}\  & B_{r}\left(\mathbb{R}\right)=B_{r}\left(\mathbb{\mathbb{C}}\right)\cap\mathbb{R}^{N},
\end{aligned}
\]
where $z=\left(z_{1},\ldots,z_{N}\right)$.
\begin{defn}
\label{def:psiFunction} Let  $\psi\colon X\to\RR^{N}$ be  a 
subanalytic map such that $\psi\left(X\right)\subseteq B_{r}\left(\mathbb{R}\right)$,  for some $r\in\left(0,+\infty\right)^{N}$.
We call $f\colon X\to\RR$ a \textbf{\emph{$\psi$-function}}
if there exists a real analytic function $F$ such that $f=F\circ\psi$ 
and $F$ is given by a single convergent
power series in $N$ variables, centred at $0$ and converging in
some open neighbourhood of $B_{r}\left(\mathbb{R}\right)$ in $\RR^{N}$.

Observe that $F$ extends uniquely to a complex analytic function
on a neighbourhood of $B_{r}\left(\mathbb{C}\right)$ in $\mathbb{C}^{N}$.

If we additionally have that
\[
|F\left(z\right)-1 |<1\ \ \text{for\ all\ }z\in B_{r}\left(\mathbb{C}\right),
\]
then we call $f$ a \textbf{\emph{$\psi$-unit}}. \end{defn}
\begin{rems}
Let $f=F\circ\psi$ be a $\psi$-unit, with $r$ as above.
\begin{enumerate}
\item There exist strictly positive constants $k<K$ such that $k<|F\left(x\right) |<K$
for every $x\in B_{r}\left(\mathbb{R}\right)$.
\item The set $F\left(B_{r}\left(\mathbb{C}\right)\right)$ is compact.
Therefore there exists $\varepsilon\in\left(0,1\right)$ such that
$|F\left(z\right)-1|<{1-\varepsilon}$ for all $z\in B_{r}\left(\mathbb{C}\right)$.
\item The previous remark shows that the natural logarithm extends to a
holomorphic function on a neighbourhood of $F\left(B_{r}\left(\mathbb{C}\right)\right)$
in $\mathbb{C}^{N}$, so $\log F$ is given by a single convergent
power series on $B_{r}\left(\mathbb{C}\right)$ centred at $0$.
Therefore $\log f:X\rightarrow\mathbb{R}$ is a $\psi$-function.
\end{enumerate}
\end{rems}
\begin{defn}
\label{def:prepared} Consider the cell $A$ in (\ref{eq:cellCenter})
and a bounded, analytic, subanalytic map $\psi$, defined on
$A$, of the form
\[
\begin{aligned}\psi\left(x,y\right)=\left(c_{1}\left(x\right),\ldots,c_{N}\left(x\right),\left(\frac{a\left(x\right)}{\sigma\left(y-\theta\left(x\right)\right)^{\tau}}\right)^{1/d},\left(\frac{\sigma\left(y-\theta\left(x\right)\right)^{\tau}}{b\left(x\right)}\right)^{1/d}\right), & \ \text{if}\ b<+\infty,\\
\psi\left(x,y\right)=\left(c_{1}\left(x\right),\ldots,c_{N}\left(x\right),\left(\frac{a\left(x\right)}{\sigma\left(y-\theta\left(x\right)\right)^{\tau}}\right)^{1/d}\right), & \ \text{if}\ b\equiv+\infty,
\end{aligned}
\]
for some positive integer $d$ and some analytic functions $c_{1},\ldots,c_{N}$.

We say that a subanalytic function $f\colon A\to\RR$ is \textbf{\emph{$\psi$-prepared}}
if
\[
f\left(x,y\right)=f_{0}\left(x\right)|y-\theta\left(x\right)|^{\nu}u\left(x,y\right)
\]
on $A$ for some analytic $f_{0}\in\S\left(\Pi_{m}\left(A\right)\right)$, $\nu\in\QQ$
and $u$ a $\psi$-unit. \end{defn}
\begin{rem}
\label{psi when the center in zero}We shall frequently apply this
concept to the situation when $A=A_{\theta}$ (namely, $\theta=0$
and $\sigma=\tau=1$), in which case
\begin{equation}
\begin{aligned}\psi\left(x,y\right)=\left(c_{1}\left(x\right),\ldots,c_{N}\left(x\right),\left(\frac{a\left(x\right)}{y}\right)^{1/d},\left(\frac{y}{b\left(x\right)}\right)^{1/d}\right),\  & \text{if}\ b<+\infty,\\
\psi\left(x,y\right)=\left(c_{1}\left(x\right),\ldots,c_{N}\left(x\right),\left(\frac{a\left(x\right)}{y}\right)^{1/d}\right), & \ \text{if}\ b\equiv+\infty,
\end{aligned}
\label{eq:psiTheta}
\end{equation}
and
\begin{equation}
f\left(x,y\right)=f_{0}\left(x\right)y^{\nu}u\left(x,y\right),\label{eq:psi-prepared}
\end{equation}
on $A_{\theta}$ for some analytic $f_{0}\in\S\left(\Pi_{m}\left(A\right)\right)$, $\nu\in\QQ$
and $u$ a $\psi$-unit. 
\end{rem}
\begin{prop}
[Preparation of Constructible Functions] \label{prop:prepSubConstr}
Let $D\subseteq\RR^{m+1}$ be subanalytic and $\F\subseteq\C\left(D\right)$
be a finite set of constructible functions. Then there exists a finite
partition $\A$ of $D$ into cells over $\RR^{m}$ such that for each
$A\in\A$ that is open over $\RR^{m}$ there exists a centre $\theta$
for $A$ such that, for each $f\in\F$, we can write $f\circ P_{\theta}$
as a finite sum
\begin{equation}
f\circ P_{\theta}\left(x,y\right)=\sum_{j\in J}g_{j}\left(x\right)y^{r_{j}}\left(\log y\right)^{s_{j}}h_{j}\left(x,y\right)\label{eq:prepConstr}
\end{equation}
on $A_{\theta}$, where:
\begin{enumerate}
\item  $A_{\theta}$ is as in Equation (\ref{eq:Atheta});
\item $P_{\theta}$ is as in Equation (\ref{eq:Ptheta});
\item the functions $h_{j}$ are $\psi$-functions (see Definition \ref{def:psiFunction}),
where $\psi$ is as in Equation (\ref{eq:psiTheta}) for some analytic functions
$c_{1},\ldots,c_{N}$ and some integer $d>0$;
\item $s_{j}\in\NN$ and the $r_{j}$ are integer multiples of $1/d$;
\item the functions $g_{j}$ are analytic and in $\C\left(\Pi_{m}\left(A\right)\right)$.
\end{enumerate}
\end{prop}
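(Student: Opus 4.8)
The plan is to reduce the constructible case to the subanalytic preparation theorem of \cite{lr:prep} (cited above as \cite[Th\'eor\`eme 1]{lr:prep}), handling the logarithmic factors separately. First I would recall that every $f \in \F \subseteq \C(D)$ has the form \eqref{eq: form of constructible fnct}, i.e.\ a finite sum of finite products $f_j(x,y)\prod_k \log f_{j,k}(x,y)$ with $f_j, f_{j,k} \in \S(D)$ and $f_{j,k} > 0$. Let $\F'$ be the finite set consisting of all the subanalytic functions $f_j$ and $f_{j,k}$ that appear in the expressions for the finitely many members of $\F$. The strategy is to apply the \emph{subanalytic} preparation theorem simultaneously to all functions in $\F'$, obtaining a single partition $\A$ of $D$ into cells over $\RR^m$, a centre $\theta$ for each cell $A$ that is open over $\RR^m$, and a common preparing map $\psi$ of the form \eqref{eq:psiTheta}, so that each $f_j \circ P_\theta$ and each $f_{j,k}\circ P_\theta$ is $\psi$-prepared on $A_\theta$, say
\[
(f_{j,k}\circ P_\theta)(x,y) = g_{j,k}(x)\, y^{\nu_{j,k}}\, u_{j,k}(x,y),
\]
with $g_{j,k}$ analytic and subanalytic, $\nu_{j,k}$ a multiple of $1/d$, and $u_{j,k}$ a $\psi$-unit (taking a common denominator $d$ and a common tuple $c_1,\dots,c_N$ of bounded analytic parameter functions across all members of $\F'$, which is possible since $\F'$ is finite and we may refine the partition).

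Next I would expand the logarithms. Using the positivity $f_{j,k} > 0$ and the preparation above, on $A_\theta$ we have
\[
\log\bigl((f_{j,k}\circ P_\theta)(x,y)\bigr) = \log|g_{j,k}(x)| + \nu_{j,k}\log y + \log u_{j,k}(x,y).
\]
Here $\log|g_{j,k}(x)|$ is constructible in $x$ alone, $\log y$ is exactly the source of the $(\log y)^{s_j}$ factors sought in \eqref{eq:prepConstr}, and by Remark (3) following Definition \ref{def:psiFunction} the term $\log u_{j,k}$ is again a $\psi$-function (since $u_{j,k}$ is a $\psi$-unit, $\log$ extends holomorphically on a neighbourhood of $F_{j,k}(B_r(\CC))$, giving a convergent power series). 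The key bookkeeping point is that $y > 1$ on $A_\theta$ by \eqref{eq:Atheta}, so $\log y > 0$ and these expansions are genuine identities of real functions, with no sign or branch ambiguity.

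Finally I would substitute these log-expansions into the product-and-sum form of each $f \circ P_\theta$ and multiply everything out. Each product $\prod_k \log f_{j,k}\circ P_\theta$ becomes, after expanding, a finite sum of terms each of which is a product of: a constructible function of $x$ (a polynomial in the $\log|g_{j,k}(x)|$ with $\S$-coefficients, hence in $\C(\Pi_m(A))$); a power $(\log y)^{s}$ with $s \in \NN$; and a product of $\psi$-functions $\log u_{j,k}$, which is again a $\psi$-function because $\psi$-functions form a ring (products of convergent power series in the same variables $\psi$ converge on $B_r(\CC)$). Multiplying in the prepared forms of the $f_j\circ P_\theta$ contributes the monomial $y^{r}$ factors and further $\psi$-unit factors, and collecting like powers $y^{r_j}(\log y)^{s_j}$ yields exactly the form \eqref{eq:prepConstr}, with $g_j \in \C(\Pi_m(A))$ analytic, $h_j$ a $\psi$-function, $s_j \in \NN$, and $r_j \in \frac{1}{d}\ZZ$. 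I expect the main obstacle to be purely organizational rather than conceptual: one must arrange a \emph{single} common centre $\theta$, denominator $d$, radius $r$ and parameter tuple $\psi$ that simultaneously prepares every subanalytic building block of every $f \in \F$, which requires iteratively refining the cell decomposition and checking that the centre and $\psi$-data can be chosen uniformly — the content that \cite[Corollary 3.5]{cluckers_miller:loci_integrability} packages for us, so the real work is verifying that the algebra of $\psi$-functions is closed under the products and logarithms used above, and that no term escapes the claimed form.
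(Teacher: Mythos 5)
Your proposal is correct, but it takes a genuinely different route from the paper. The paper's proof simply invokes the constructible preparation theorem \cite[Corollary 3.5]{cluckers_miller:loci_integrability} as a black box to obtain the sum \eqref{eq:prepConstr} directly, and then devotes essentially all of its effort to one normalization issue: that corollary produces units whose last arguments are $\left(e_{1}\left(x\right)/y\right)^{1/d}$ and $\left(e_{2}\left(x\right)y\right)^{1/d}$ for auxiliary subanalytic $e_{1},e_{2}$, and one must compose with the substitution $\left(z,Z_{1},Z_{2},Z_{3},Z_{4}\right)\mapsto\left(z,Z_{1}Z_{3},Z_{2}Z_{4}\right)$, absorbing $\left(e_{1}/a\right)^{1/d}$ and $\left(e_{2}b\right)^{1/d}$ into the $x$-only components, to land exactly on the form \eqref{eq:psiTheta} with last components $\left(a\left(x\right)/y\right)^{1/d}$ and $\left(y/b\left(x\right)\right)^{1/d}$. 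You instead re-derive the constructible preparation from the subanalytic one of \cite{lr:prep}: write $f$ as a sum of products of subanalytic functions and their logarithms, prepare all the building blocks simultaneously, expand $\log\left(g_{j,k}\left(x\right)y^{\nu_{j,k}}u_{j,k}\right)=\log g_{j,k}\left(x\right)+\nu_{j,k}\log y+\log u_{j,k}$ using that the logarithm of a $\psi$-unit is a $\psi$-function, and multiply out. This is sound (and is essentially how the cited corollary is proved in its source), and it buys a more self-contained argument at the cost of redoing the bookkeeping; your observation that $y>1$ on $A_{\theta}$ removes any branch ambiguity, and closure of $\psi$-functions under products handles the cross terms. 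The one point you gloss over is precisely the normalization the paper's proof is about: the subanalytic preparation theorem does not hand you $\psi$ already in the form \eqref{eq:psiTheta}, so your phrase ``a common preparing map $\psi$ of the form \eqref{eq:psiTheta}'' hides the same substitution argument the paper spells out; without it, your $h_{j}$ are $\widetilde{\psi}$-functions for the wrong $\widetilde{\psi}$, and Condition (3) is not yet verified.
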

\begin{proof}
We apply \cite[Corollary 3.5]{cluckers_miller:loci_integrability}
and we obtain a cell decomposition $\mathcal{A}$ such that Equation
(\ref{eq:prepConstr}) holds, with Conditions (1) and (2) satisfied.
Up to refining $\mathcal{A}$, we may assume that (5) also holds. 
We must now show that, up to some refinement of $\mathcal{A}$, we may
assume that Conditions (3) and (4) hold as well. 
By \cite[Corollary 3.5]{cluckers_miller:loci_integrability},
we know that a weaker version of Condition (3) holds, namely the
$h_{j}$ are of the form $\widetilde{F_{j}}\circ\widetilde{\psi}$, where
$\widetilde{F_{j}}$ is a power series converging on some open set $O_{j}$
containing the closure of the image of $\widetilde{\psi}$ and $\widetilde{\psi}$
is a bounded map whose components are
\[
c_{1}\left(x\right),\ldots,c_{M}\left(x\right),\left(e_{1}\left(x\right)/y\right)^{1/d},\left(e_{2}\left(x\right)y\right)^{1/d}
\]
for some $M\geq0$, some $d>0$, and some analytic subanalytic functions
$c_{1},\ldots,c_{M},e_{1},e_{2}$. We now explain, in the case that
$b\left(x\right)<+\infty$, how we can obtain the quotients $a\left(x\right)/y$ and ~$y/b\left(x\right)$
as arguments instead of $e_{1}\left(x\right)/y$ and $e_{2}\left(x\right)y$ (the case $b\left(x\right)=+\infty$
is similar and even easier). Since $e_{1}\left(x\right)/y$ and $e_{2}\left(x\right)y$
are bounded, and since $y$ runs from $a\left(x\right)$ to $b\left(x\right)$, one has
that $e_{1}\left(x\right)/a\left(x\right)$ and $e_{2}\left(x\right)b\left(x\right)$ are also bounded. Let $\psi\left(x,y\right)$
be
\[
\left(c_{1}\left(x\right),\ldots,c_{M}\left(x\right),\left(e_{1}\left(x\right)/a\left(x\right)\right)^{1/d},\left(e_{2}\left(x\right)b\left(x\right)\right)^{1/d},\left(\frac{a\left(x\right)}{y}\right)^{1/d},\left(\frac{y}{b\left(x\right)}\right)^{1/d}\right)
\]
and
\[
\pi:\mathbb{R}^{M+4}\ni\left(z_{1},\ldots,z_{M},Z_{1},Z_{2},Z_{3},Z_{4}\right)\mapsto\left(z_{1},\ldots,z_{M},Z_{1}Z_{3},Z_{2}Z_{4}\right)\in\mathbb{R}^{M+2}.
\]
Then $\psi$ is bounded, $\widetilde{\psi}=\pi\circ\psi$ and the closure
of the image of $\psi$ is contained in the open sets $\pi^{-1}\left(O_{j}\right)$.
We rename $c_{M+1}=\left(e_{1}\left(x\right)/a\left(x\right)\right)^{1/d}$ and $c_{M+2}=\left(e_{2}\left(x\right)b\left(x\right)\right)^{1/d}$,
and set $N=M+2$. It is clear that the power series $F_{j}=\widetilde{F_{j}}\circ\pi$
converge on $\pi^{-1}\left(O_{j}\right)$ and that $F_{j}\left(\psi\right)=\widetilde{F_{j}}\left(\widetilde{\psi}\right)$
on $A_{\theta}$. Hence Condition (3) holds.

Finally, by replacing $d$ by an integer multiple if necessary, we
can assume that condition (4) also holds.\end{proof}
\begin{rem}
\label{rem: prep subana with center}We have stated Proposition \ref{prop:prepSubConstr}
in the transformed coordinates (via $P_{\theta}$) out of convenience.
In the original coordinates, (\ref{eq:prepConstr}) becomes
\[
f\left(x,y\right)=\sum_{j\in J}\tld{g}_{j}\left(x\right)\left|y-\theta\left(x\right)\right|^{\tld{r}_{j}}\left(\log|y-\theta\left(x\right)|\right)^{s_{j}}\tld{h}_{j}\left(x,y\right)
\]
on $A$, where $\tld{g}_{j}\left(x\right)=\tau^{s_{j}}g_{j}\left(x\right)$, $\tld{r}_{j}=\tau r_{j}$
and $\tld{h}\left(x,y\right)=h\circ P_{\theta}^{-1}\left(x,y\right)$.
\end{rem}
\begin{rem}
\label{rem: prep of subana} If $\mathcal{F}\subseteq\mathcal{S}\left(D\right)$
is a finite collection of subanalytic functions, then the proof of
Proposition \ref{prop:prepSubConstr} (where we replace the use of \cite[Corollary 3.5]{cluckers_miller:loci_integrability}
by the use of  \cite[Theorem 3.4]{cluckers_miller:loci_integrability}) 
shows that for each $f\in\F$,
on $A_{\theta}$ we can write $f\circ P_{\theta}$ in the $\psi$-prepared
form in Equation (\ref{eq:psi-prepared}).
In addition, it follows from the proof of the subanalytic preparation theorem in \cite{lr:gabriel} that if $\varepsilon\in(0,1)$ is given beforehand, then the preparation can be constructed so that each $\psi$-unit $u$, as given in Equation  (\ref{eq:psi-prepared}), is within $\varepsilon$ of $1$, by which we mean that $u = U\circ\psi$ for some $r\in(0,\infty)^M$ (where $M = N+2$ when $b < +\infty$, and $M = N+1$ when $b \equiv +\infty)$ such that $\psi(A_\theta) \subseteq B_r(\RR)$ and some real analytic function $U$ on $B_r(\RR)$ that extends to a complex analytic function on a neighbourhood of $B_r(\CC)$ in $\CC^M$ such that $|U(z) - 1| < \varepsilon$ for all $z\in B_r(\CC)$.
\end{rem}
\begin{rem}
\label{rem: distinct tuples etc}In the situation described in Proposition
\ref{prop:prepSubConstr}, we may also assume that the following two
properties hold when $b\equiv+\infty$. Let $J_{1}=\{j\in J:\ h_{j}=1\}$.
Then,
\begin{enumerate}
\item \label{enu:property 1}for each $j\in J\setminus J_{1}$, $r_{j}<-1$;
\item \label{enu:property 2}$\left(\left(r_{j},s_{j}\right)\right)_{j\in J_{1}}$ is a family
of distinct pairs in $\QQ\times\NN$.
\end{enumerate}
To see this, notice that because $b\equiv+\infty$, we may write $h_{j}$
as a convergent power series
\[
h_{j}\left(x,y\right)=\sum_{k=0}^{+\infty}h_{j,k}\left(x\right)\left(\frac{a\left(x\right)}{y}\right)^{k/d}
\]
for $\left(c_{1},\ldots,c_{N}\right)$-functions $h_{j,k}$. To obtain Property
(\ref{enu:property 1}), for each $j\in J$, fix $n_{j}\in\NN$ such
that
\[
r_{j}-\frac{n_{j}}{d}<-1,
\]
and write the $j$-th term of (\ref{eq:prepConstr}) as
\[
\sum_{k=0}^{n_{j}-1}g_{j}\left(x\right)h_{j,k}\left(x\right)a\left(x\right)^{k/d}y^{r_{j}-k/d}\left(\log y\right)^{s_{j}}+R_{j}\left(x,y\right),
\]
where
\[
R_{j}\left(x,y\right)=g_{j}\left(x\right)a\left(x\right)^{n_{j}/d}y^{r_{j}-n_{j}/d}\left(\log y\right)^{s_{j}}\left(\sum_{k=n_{j}}^{+\infty}h_{j,k}\left(x\right)\left(\frac{a\left(x\right)}{y}\right)^{\left(k-n_{j}\right)/d}\right).
\]
To obtain Property (\ref{enu:property 2}), simply sum up terms in (\ref{eq:prepConstr})
for $j\in J_{1}$ with equal powers $r_{j}$ and $s_{j}$.
\end{rem}
We now study the integrability properties of the prepared form given
in (\ref{eq:prepConstr}). The following remarks will be useful in
Sections \ref{s:integGen} and \ref{s: proof of main results}.
\begin{rems}
\label{rems: integrability of constructible fncts}Consider the situation
described in Proposition \ref{prop:prepSubConstr} for some $A\in\A$.
In the notation of Proposition \ref{prop:prepSubConstr}, for each
$j\in J$, write
\[
G_{j}\left(x,y\right):=g_{j}\left(x\right)y^{r_{j}}\left(\log y\right)^{s_{j}}h_{j}\left(x,y\right).
\]

\begin{enumerate}
\item \label{enu:jac of p theta}Note that we have
\[
\partial_{y}P_{\theta}\left(y\right):=\PD{}{P_{\theta,m+1}}{y}\left(x,y\right)=\sigma\tau y^{\tau-1},
\]
that $\tau-1$ equals either $0$ or $-2$, and that
\[
\INT\left(f\restriction{A},\Pi_{m}\left(A\right)\right)=\INT\left(\left(f\circ P_{\theta}\right)\partial_{y}P_{\theta},\Pi_{m}\left(A\right)\right).
\]

\item \label{enu: continuous}For each $j\in J$ and $x\in\Pi_{m}\left(A\right)$,
$y\mapsto G_{j}\left(x,y\right)$ extends to a continuous (in fact, analytic)
function on the closure in $\RR$ of the fibre $\left(A_{\theta}\right)_{x}$,
and likewise for $\partial_{y}P_{\theta}$.\\
In particular, when $b<+\infty$, $\INT\left(G_{j}\partial_{y}P_{\theta},\Pi_{m}\left(A\right)\right)=\Pi_{m}\left(A\right)$
for each $j\in J$.
\item \label{enu:little o}Let $b\equiv+\infty$ and recall Property (\ref{enu:property 1})
of Remark \ref{rem: distinct tuples etc}.\\
For each $j\in J\setminus J_{1}$ and $x\in\Pi_{m}\left(A\right)$, the function
$y\mapsto G_{j}\left(x,y\right)\partial_{y}P_{\theta}\left(y\right)$ is $o\left(y^{\tau-2}\right)$
as $y\to+\infty$, and is therefore integrable. \\
For each $j\in J_{1}$,
\[
G_{j}\left(x,y\right)\partial_{y}P_{\theta}\left(y\right)=\sigma\tau g_{j}\left(x\right)y^{r_{j}+\tau-1}\left(\log y\right)^{s_{j}},
\]
which is integrable in $y$ if and only if $g_{j}\left(x\right)=0$ or $r_{j}+\tau<0$.
\\
Therefore by defining
\[
J^{\INT}=\left(J\setminus J_{1}\right)\cup\{j\in J_{1}:r_{j}+\tau<0\},
\]
we see that for each $j\in J$,
\[
\INT\left(G_{j}\partial_{y}P_{\theta},\Pi_{m}\left(A\right)\right)=\begin{cases}
\Pi_{m}\left(A\right), & \text{if \ensuremath{j\in J^{\INT}},}\\
\left\{x\in\Pi_{m}\left(A\right):g_{j}\left(x\right)=0\right\}, & \text{if \ensuremath{j\in J\setminus J^{\INT}}.}
\end{cases}
\]

\item \label{enu:zero-set}In the situation of the previous remark, define
the constructible functions
\begin{eqnarray*}
g\left(x,y\right) & = & \sum_{j\in J^{\INT}}G_{j}\left(x,y\right),\quad\text{for all \ensuremath{\left(x,y\right)\in A_{\theta}},}\\
h\left(x\right) & = & \sum_{j\in J\setminus J^{\INT}}g_{j}^2\left(x\right),\quad\text{for all \ensuremath{x\in\Pi_{m}\left(A\right)}.}
\end{eqnarray*}
Then
\[
\INT\left(f\restriction{A},\Pi_{m}\left(A\right)\right)=\{x\in\Pi_{m}\left(A\right):h\left(x\right)=0\},
\]
\[
\INT\left(g\partial_{y}P_{\theta},\Pi_{m}\left(A\right)\right)=\Pi_{m}\left(A\right),
\]
and
\[
\text{\ensuremath{f\circ P_{\theta}\left(x,y\right)=g\left(x,y\right)}\ for all \ensuremath{\left(x,y\right)\in A_{\theta}} with\ \ensuremath{x\in\INT\left(f\restriction{A},\Pi_{m}\left(A\right)\right)}.}
\]
\\
To see this, note that the previous remark shows that $\INT\left(g\partial_{y}P_{\theta},\Pi_{m}\left(A\right)\right)=\Pi_{m}\left(A\right)$,
and clearly
\[
\text{\ensuremath{f\circ P_{\theta}=g\ }on the set \ensuremath{\{\left(x,y\right)\in A_{\theta}:h\left(x\right)=0\}},}
\]
so $\{x\in\Pi_{m}\left(A\right):h\left(x\right)=0\}\subseteq\INT\left(f\restriction{A},\Pi_{m}\left(A\right)\right)$.
To show that $\INT\left(f\restriction{A},\Pi_{m}\left(A\right)\right)\subseteq\{x\in\Pi_{m}\left(A\right):h\left(x\right)=0\}$,
note that if $x\in\Pi_{m}\left(A\right)$ is such that $h\left(x\right)\neq0$, then by
choosing $j_{0}$ in the set $\{j\in J\setminus J^{\INT}:g_{j}\left(x\right)\neq0\}$
with $\left(r_{j_{0}},s_{j_{0}}\right)$ greatest with respect to the lexicographical
order on $\QQ\times\RR$, it follows from Property (\ref{enu:property 2})
in Remark \ref{rem: distinct tuples etc} that
\[
\lim_{y\to+\infty}\frac{f\left(x,y\right)}{G_{j_{0}}\left(x,y\right)}=1,
\]
so $f\left(x,\cdot\right)\not\in L^{1}\left(A_{x}\right)$ by the previous remark.
\item \label{enu: integrability locus when constructible}In particular,
if $\INT\left(f,X\right)=X$, then Remarks (\ref{enu: continuous}) and (\ref{enu:zero-set})
above show that for each $j\in J$, we have $\INT\left(G_{j}\partial_{y}P_{\theta},\Pi_{m}\left(A\right)\right)=\Pi_{m}\left(A\right)$.
\end{enumerate}
\end{rems}

\section{Integrating superintegrable generators}

\label{s:integGen}

This section is dedicated to the proof of Proposition \ref{prop:integGen-1},
of which we recall the statement.
\begin{prop*}
Let $f$ be a generator for $\C^{\exp}\left(X\times\RR^{n}\right)$ that is superintegrable
over $X$, and define $F\colon X\to\CC$ by
\[
F\left(x\right)=\int_{\RR^{n}}f\left(x,y\right)\,\mathrm{d}y\quad\text{.}
\]
Then $F\in\C^{\exp}\left(X\right)$. \end{prop*}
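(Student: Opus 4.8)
The plan is to exploit superintegrability to pass everything through the Fubini--Tonelli theorem, to merge the two oscillatory factors $\mathrm{e}^{\mathrm{i}\phi}$ and $\mathrm{e}^{\mathrm{i}t}$ into a single phase by a change of variables, and then to reduce the problem to the preparation theorem for constructible functions (Proposition \ref{prop:prepSubConstr}) together with the already-known stability of $\C$ under integration. Fix the data $f(x,y)=g(x,y)\mathrm{e}^{\mathrm{i}\phi(x,y)}\gamma_{h,\ell}(x,y)$ for the generator as in Definition \ref{def:generator}, with $g\in\C(X\times\RR^n)$, $\phi\in\S(X\times\RR^n)$ and $h\in\S(X\times\RR^n\times\RR)$. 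Expanding $\gamma_{h,\ell}$ gives
\[
F(x)=\int_{\RR^n}\int_{\RR}g(x,y)\,h(x,y,t)\,(\log|t|)^{\ell}\,\mathrm{e}^{\mathrm{i}(\phi(x,y)+t)}\,\mathrm{d}t\,\mathrm{d}y .
\]
The modulus of the integrand is $|g(x,y)|\,|h(x,y,t)|\,|\log|t||^{\ell}$, whose integral over $(y,t)$ equals $\int_{\RR^n}f^{\abs}(x,y)\,\mathrm{d}y$ (see Definitions \ref{abs} and \ref{def:superinteg}), which is finite for every $x\in X$ by superintegrability. Hence the double integral is absolutely convergent and the order of integration may be exchanged freely.

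Next I would, for each fixed $(x,y)$, substitute $t=s-\phi(x,y)$ in the inner integral so that $\mathrm{e}^{\mathrm{i}(\phi(x,y)+t)}$ becomes $\mathrm{e}^{\mathrm{i}s}$, and then exchange the order of integration to obtain
\[
F(x)=\int_{\RR}H(x,s)\,\mathrm{e}^{\mathrm{i}s}\,\mathrm{d}s,\qquad H(x,s):=\int_{\RR^n}g(x,y)\,h\big(x,y,s-\phi(x,y)\big)\,\big(\log|s-\phi(x,y)|\big)^{\ell}\,\mathrm{d}y .
\]
The gain is that the integrand defining $H$ is now a \emph{real} constructible function of $(x,y,s)$: indeed $s-\phi(x,y)$ is subanalytic, so $h(x,y,s-\phi(x,y))$ is subanalytic and $(\log|s-\phi(x,y)|)^{\ell}$ is constructible, while the oscillation has been pulled entirely outside the $y$-integral. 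The absolute-convergence computation above, read in the variables $(y,s)$, shows $H(x,\cdot)\in L^{1}(\RR)$ for all $x$. I would then invoke the stability of $\C$ under parameterized integration (\cite[Theorem 1.3]{cluckers-miller:stability-integration-sums-products}) to conclude $H\in\C(X\times\RR)$, with $\INT(H,X)=X$. It thus remains to prove the reduced statement: for $H\in\C(X\times\RR)$ with $H(x,\cdot)\in L^1(\RR)$, the function $\int_{\RR}H(x,s)\mathrm{e}^{\mathrm{i}s}\,\mathrm{d}s$ lies in $\C^{\exp}(X)$.

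For this I apply Proposition \ref{prop:prepSubConstr} to $H$, prepared with respect to the last variable $s$; only the cells $A$ open over $\RR^m$ contribute to the integral, the remaining cells having fibres of measure zero. On such a cell, with centre $\theta$, Remark \ref{rem: prep subana with center} gives
\[
H(x,s)=\sum_{j}\tld{g}_{j}(x)\,|s-\theta(x)|^{\tld r_{j}}\,\big(\log|s-\theta(x)|\big)^{s_{j}}\,\tld h_{j}(x,s)
\]
on $A$, with $\tld g_{j}\in\C(\Pi_m(A))$, $\tld h_{j}$ subanalytic, $\tld r_{j}\in\QQ$ and $s_{j}\in\NN$. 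In the cell integral $\int_{A_x}H(x,s)\mathrm{e}^{\mathrm{i}s}\,\mathrm{d}s$ I substitute $s=\theta(x)+u$, under which $\mathrm{e}^{\mathrm{i}s}=\mathrm{e}^{\mathrm{i}\theta(x)}\mathrm{e}^{\mathrm{i}u}$, $\log|s-\theta(x)|=\log|u|$ and $|s-\theta(x)|^{\tld r_j}=|u|^{\tld r_j}$. Keeping the constructible factor $\tld g_j(x)$ outside the $u$-integral, the $j$-th term becomes $\tld g_j(x)\,\mathrm{e}^{\mathrm{i}\theta(x)}\,\gamma_{\hat h_{j},s_{j}}(x)$, where
\[
\hat h_{j}(x,u):=|u|^{\tld r_{j}}\,\tld h_{j}(x,u+\theta(x))
\]
on the subanalytic fibre $\{\,u:\theta(x)+u\in A_{x}\,\}$ and $\hat h_{j}:=0$ elsewhere. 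Since $|u|^{\tld r_j}$, $\tld h_j(x,u+\theta(x))$ and the characteristic function of a subanalytic set are all subanalytic, $\hat h_{j}\in\S(\Pi_m(A)\times\RR)$; and because $\INT(H,X)=X$ forces each prepared term to be integrable (Remark \ref{rems: integrability of constructible fncts}(\ref{enu: integrability locus when constructible})), the map $u\mapsto\hat h_j(x,u)(\log|u|)^{s_j}$ lies in $L^1(\RR)$ for every $x$, so each $\gamma_{\hat h_j,s_j}$ is a legitimate building block of $\C^{\exp}(\Pi_m(A))$.

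Finally I would reassemble the pieces: since the open cells partition $X\times\RR$ up to a null set,
\[
F(x)=\sum_{A}\mathbf 1_{\Pi_m(A)}(x)\sum_{j}\tld g_{j}(x)\,\mathrm{e}^{\mathrm{i}\theta_A(x)}\,\gamma_{\hat h_{j},s_{j}}(x),
\]
and, after extending each $\hat h_j$ by zero outside $\Pi_m(A)\times\RR$, every summand $\mathbf 1_{\Pi_m(A)}\tld g_j\cdot\mathrm{e}^{\mathrm{i}\theta_A}\cdot\gamma_{\hat h_j,s_j}$ is a generator for $\C^{\exp}(X)$, its constructible coefficient $\mathbf 1_{\Pi_m(A)}\tld g_j$ lying in $\C(X)$ and its phase $\theta_A$ in $\S(X)$. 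Hence $F\in\C^{\exp}(X)$. The main obstacle is the third paragraph: one must verify that the monomializing substitution genuinely yields a \emph{subanalytic} amplitude $\hat h_j$ (so that $\gamma_{\hat h_j,s_j}$ is of the admissible form), and that each prepared term is separately integrable for all $x$. The subanalyticity is what forces us to keep the merely constructible coefficient $\tld g_j$ outside the $u$-integral, and the uniform integrability is exactly what Remark \ref{rems: integrability of constructible fncts} supplies once $\INT(H,X)=X$ has been established.
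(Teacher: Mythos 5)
Your argument is correct, and it reaches the same endgame as the paper — superintegrability plus Fubini--Tonelli to rearrange the integral, reduction to a one-variable integral of a constructible amplitude against $\mathrm{e}^{\mathrm{i}s}$, then Proposition \ref{prop:prepSubConstr} together with the translation $s=\theta(x)+u$ to recognize each prepared term as $(\text{constructible})\cdot\mathrm{e}^{\mathrm{i}\theta(x)}\cdot\gamma_{\hat h_j,s_j}$, which is exactly how the paper's proof also concludes — but it linearizes the phase by a genuinely different mechanism. The paper keeps the phase $\phi(x,y)+t$ inside the integration variables, partitions into cells on which $\phi$ is either independent of $y$ or $C^1$ and strictly monotone in some $y_j$, and on the latter cells uses $\phi$ itself as a new coordinate (pulling back by the inverse of $(x,y)\mapsto(x,\phi(x,y),y_{<j},y_{>j})$ and multiplying by the Jacobian) before integrating out the transverse variables. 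You instead exploit the additive structure of the phase: the translation $t\mapsto s-\phi(x,y)$ in the auxiliary $\gamma$-variable absorbs $\mathrm{e}^{\mathrm{i}\phi}$ into $\mathrm{e}^{\mathrm{i}s}$ with unit Jacobian and no monotonicity decomposition, at the cost of the composite amplitude $h(x,y,s-\phi(x,y))\left(\log\left|s-\phi(x,y)\right|\right)^{\ell}$ — which, as you observe, is still constructible, so the stability of $\C$ under integration applies. (When $\gamma=1$ one first realizes $1$ as a $\gamma_{h,0}$ via Remark \ref{rem: gamma subanalytic}, so the device is uniformly available; your scheme also handles the paper's ``constant phase'' case without singling it out.) Two points that your write-up should make explicit, and which the paper's machinery already covers: the inner integral defining $H$ is a priori defined only for almost every $s$ for each fixed $x$, so one must take the constructible interpolant provided by \cite{cluckers-miller:stability-integration-sums-products}, which agrees with the genuine integral off a null set in $s$ and hence does not change $F$ nor the property $\INT(H,X)=X$; and the locus $\{s=\phi(x,y)\}$ where the logarithm is undefined is null and can be discarded before invoking constructibility. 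Neither affects the validity of the argument, and your use of Remark \ref{rems: integrability of constructible fncts} to get termwise integrability of the prepared pieces is exactly the right tool.
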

\begin{proof}
Assume that $X\subseteq\RR^{m}$, and write
\[
f\left(x,y\right)=g\left(x,y\right)\mathrm{e}^{\mathrm{i}\phi\left(x,y\right)}\gamma\left(x,y\right),\quad\text{for \ensuremath{\left(x,y\right)\in X\times\RR^{n}},}
\]
where $g\in\C\left(X\times\RR^{n}\right)$, $\phi\in\S\left(X\times\RR^{n}\right)$ and
$\gamma=\gamma_{h,\ell}$ for some $\ell\in\NN$ and $h\in\S\left(X\times\RR^{n}\times\RR\right)$
with $\INT\left(h,X\times\RR^{n}\right)=X\times\RR^{n}$.

Because $|f\left(x,y\right)|\leq f^{\abs}\left(x,y\right)$ for all $\left(x,y\right)\in X\times\RR^{n}$
(see Definition \ref{abs}), it follows that $f\left(x,\cdot\right)\in L^{1}\left(\RR^{n}\right)$
for all $x\in X$ . Moreover, the Fubini-Tonelli theorem shows that
for each $x\in X$,
\[
\left(y,t\right)\mapsto g\left(x,y\right)h\left(x,y,t\right)\left(\log|t|\right)^{\ell}
\]
is in $L^{1}\left(\RR^{n}\times\RR\right)$, and the iterated integral
\[
\int_{\RR^{n}}f\left(x,y\right)\,\mathrm{d}y=\int_{\RR^{n}}\left(\int_{\RR}g\left(x,y\right)\mathrm{e}^{\mathrm{i}\phi\left(x,y\right)}h\left(x,y,t\right)\left(\log|t|\right)^{\ell}\mathrm{e}^{\mathrm{i}t}\,\mathrm{d}t\right)\,\mathrm{d}y
\]
can be computed as a product integral
\[
\int_{\RR^{n}\times\RR}g\left(x,y\right)\mathrm{e}^{\mathrm{i}\phi\left(x,y\right)}h\left(x,y,t\right)\left(\log|t|\right)^{\ell}\mathrm{e}^{\mathrm{i}t}\,\mathrm{d}y\wedge\mathrm{d}t.
\]
Therefore up to replacing $n$ by $n+1$, we may simply assume that
$\gamma=1$.

Now construct a finite partition $\A$ of $X\times\RR^{n}$ into cells
over $\RR^{m}$ such that for each $A\in\A$ that is open over $\RR^{m}$,
either $\phi\left(x,y\right)=\phi_{0}\left(x\right)$ on $A$ for some $\phi_{0}\in\S\left(\Pi_{m}\left(A\right)\right)$,
or else the function $y\mapsto\phi\left(x,y\right)$ is $C^{1}$ on $A_{x}$
with $\sigma\PD{}{\phi}{y_{j}}>0$ on $A_{x}$ for some $\sigma\in\{-1,1\}$
and $j\in\{1,\ldots,n\}$.

When $\phi=\phi_{0}$,
\begin{equation}
\int_{A_{x}}f\left(x,y\right)\,\mathrm{d}y=\mathrm{e}^{\mathrm{i}\phi_{0}\left(x\right)}\int_{A_{x}}g\left(x,y\right)\,\mathrm{d}y,\quad.\label{eq:constantPhase}
\end{equation}
The fact that $\C$ is stable under integration \cite{cluckers-miller:stability-integration-sums-products,cluckers_miller:loci_integrability}
shows that the integral of $g$ with respect to $y$ is in $\C\left(\Pi_{m}\left(A\right)\right)$.
Hence, (\ref{eq:constantPhase}) is in $\C_{\naive}^{\exp}\left(\Pi_{m}\left(A\right)\right)$.

In the other case, by pulling back by the inverse of the map $\left(x,y\right)\mapsto\left(x,\phi\left(x,y\right),y_{<j},y_{>j}\right)$
and multiplying by the Jacobian of this map, we may simply assume
that $\phi\left(x,y\right)=y_{1}$. Write $\tld{A}=\Pi_{m+1}\left(A\right)$, and note that
the function $\tld{g}\colon \tld{A}\to\RR$ defined by
\[
\tld{g}\left(x,y_{1}\right)=\int_{A_{\left(x,y_{1}\right)}}g\left(x,y_{1},y_{>1}\right)\,\mathrm{d}y_{>1},\quad\text{for each \ensuremath{\left(x,y_{1}\right)\in\tld{A}},}
\]
is constructible and that
\[
\int_{A_{x}}g\left(x,y\right)\mathrm{e}^{\mathrm{i}\phi\left(x,y\right)}\,\mathrm{d}y=\int_{\tld{A}_{x}}\tld{g}\left(x,y_{1}\right)\mathrm{e}^{\mathrm{i}y_{1}}\,\mathrm{d}y_{1},\quad\text{for each \ensuremath{x\in\Pi_{m}\left(A\right)}.}
\]

We apply Proposition \ref{prop:prepSubConstr} to $\tld{g}\left(x,y_{1}\right)$
and then work piecewise, thereby focusing on one open cell $\tld{B}\subseteq\tld{A}$
given by the preparation which is open over $\RR^{m}$.

By applying Remark \ref{rems: integrability of constructible fncts}(\ref{enu: integrability locus when constructible}),
we may write
\[
\int_{\tld{B}_{x}}\tld{g}\left(x,y_{1}\right)\mathrm{e}^{\mathrm{i}y_{1}}\,\mathrm{d}y_{1}
\]
as a finite sum of terms of the form
\begin{equation}
g_{0}\left(x\right)\int_{\tld{B}_{x}}|y_{1}-\theta\left(x\right)|^{r}\,\tld{u}\left(x,y_{1}\right)\left(\log|y_{1}-\theta\left(x\right)|\right)^{s}\mathrm{e}^{\mathrm{i}y_{1}}\,\mathrm{d}y_{1},\label{eq: star}
\end{equation}
where $g_{0}\in\C\left(\Pi_{m}\left(\tld{B}\right)\right)$, $r\in\QQ$, $s\in\NN$, $\tld{u}$
is a $\psi$-function (for some $\psi$) and $\theta$ is the centre
given by the preparation on $\tld{B}$. Thus for some $\sigma\in\{-1,1\}$, by applying
the coordinate change $\left(x,y_{1}\right)\mapsto\left(x,\sigma y_{1}+\theta\left(x\right)\right)$
we may write (\ref{eq: star}) as
\begin{equation}
\sigma g_{0}\left(x\right)\mathrm{e}^{\mathrm{i}\theta\left(x\right)}\int_{B_{x}}y_{1}^{r}\left(\log y_{1}\right)^{s}u\left(x,y_{1}\right)\mathrm{e}^{\mathrm{i}\sigma y_{1}}\,\mathrm{d}y_{1},\label{eq: double star}
\end{equation}
where $B_{x}\subseteq\left(0,+\infty\right)$ and $u$ are the pullbacks of $\tld{B}_{x}$
and $\tld{u}$ by this coordinate change. Note that up to performing
the coordinate transformation $y_{1}\mapsto\sigma y_{1}$, the one-variable
integral in (\ref{eq: double star}) is of the form $\gamma_{h,l}$
with $l=s$ and $h\left(x,y_{1}\right)=y_1^ru\left(x,y_{1}\right)\chi_{B_{x}}\left(y_{1}\right)$,
where $\chi_{B_{x}}$ is the characteristic function of the subanalytic
set $B_{x}$. This concludes the proof of Proposition \ref{prop:integGen-1}.
\end{proof}

\section{Preparation of functions in $\mathcal{C}^{\exp}$\label{s:prepExpConstr}}

Throughout this section $X$ denotes a subanalytic subset of $\RR^{m}$,
and we write $\left(x,y\right)$ for coordinates on $\RR^{m}\times\RR$. This
section states and proves our main preparation theorem for functions
in $\mathcal{C}^{\exp}$. The purpose of the preparation theorem is
to express a given $f\in\C^{\exp}\left(X\times\RR\right)$ as a finite sum of
generators for $\C^{\exp}\left(X\times\RR\right)$ that are either superintegrable
over $X$ or are ``naive in $y$'' (in the sense that the $\gamma$-functions
in these terms depend only on $x$ and not on $y$, see Definition
\ref{def: naive in y}).
\begin{defn}
\label{def: naive in y} Let $A\subseteq\RR^{m+1}$ be a subanalytic
set and $T\left(x,y\right)\in\C^{\exp}\left(A\right)$ be a generator. We say
that $T$ is \textbf{\emph{naive in $y$ }}if $T$ is of the form
\[
T\left(x,y\right)=f\left(x\right)y^{r}\left(\log y\right)^{s}\mathrm{e}^{\mathrm{i}\phi\left(x,y\right)},
\]
where $f\in\C^{\exp}\left(\Pi_{m}\left(A\right)\right)$, $r\in\QQ$, $s\in\NN$, and $\phi\in\mathcal{S}\left(A\right)$.

Notice that, if $T$ is naive in $y$, then the function $\gamma$
appearing in the expression (\ref{eq:generator}) for $T$ does not
depend on $y$.
\end{defn}
We use the notation from Definition \ref{def:center} in the following
theorem.
\begin{thm}
\label{thm:prepExpConstr}\textbf{\emph{ }}Let $X\subseteq\RR^{m}$
be a subanalytic set and $f\in\C^{\exp}\left(X\times\RR\right)$. Then there
exists a finite partition $\A$ of $X\times\RR$ into cells over $\RR^{m}$
such that for each $A\in\A$ that is open over $\RR^{m}$, there exists
a centre $\theta$ for $A$ for which we may express $f\circ P_{\theta}$
as a finite sum
\[
f\circ P_{\theta}\left(x,y\right)=\sum_{j\in J}T_{j}\left(x,y\right)
\]
on $A_{\theta}=\{\left(x,y\right):\ x\in\Pi_{m}\left(A\right),a\left(x\right)<y<b\left(x\right)\}$, where each
$T_{j}$ is a generator for $\C^{\exp}\left(A_{\theta}\right)$, such that:
\begin{enumerate}
\item if $b<+\infty$, then for each $j$, $T_{j}$ is superintegrable over
$\Pi_{m}\left(A\right)$;
\item if $b\equiv+\infty$, then there exists a positive integer $d$ and
a partition $J=J^{\mathrm{int}}\cup J^{\mathrm{naive}}$ such that:

\begin{enumerate}
\item for each $j\in J^{\mathrm{int}}$, $T_{j}$ is superintegrable over
$\Pi_{m}\left(A\right)$;
\item \label{enu:2b}for each $j\in J^{\mathrm{naive}}$, $T_{j}$ is naive
in $y$, is not superintegrable over $\Pi_{m}\left(A\right)$ and is of the form
\begin{equation}
T_{j}\left(x,y\right)=f_{j}\left(x\right)y^{r_{j}}\left(\log y\right)^{s_{j}}\mathrm{e}^{\mathrm{i}\phi_{j}\left(x,y\right)},\label{eq:naive}
\end{equation}
where $f_{j}\in\C^{\exp}\left(\Pi_{m}\left(A\right)\right)$, $r_{j}\in\QQ\cap[-1,+\infty)$,
$s_{j}\in\NN$, and $\phi_{j}$ is a polynomial in $y^{1/d}$ (for
some $d\in\mathbb{N}$) with coefficients in $\S\left(\Pi_{m}\left(A\right)\right)$ such
that $\phi_{j}\left(x,0\right)=0$ for all $x\in\Pi_{m}\left(A\right)$; moreover,
\[
\left(\left(r_{j},s_{j},\phi_{j}\left(x,y\right)\right)\right)_{j\in J^{\mathrm{naive}}}
\]
is a family of distinct tuples in $\QQ\times\NN\times\RR[y^{1/d}]$.
\end{enumerate}
\end{enumerate}
\end{thm}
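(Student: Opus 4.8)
The plan is to prove Theorem \ref{thm:prepExpConstr} by reducing a general $f\in\C^{\exp}\left(X\times\RR\right)$ to a finite sum of generators of the form (\ref{eq:generator}), then applying the constructible preparation theorem (Proposition \ref{prop:prepSubConstr}) to each piece and carefully tracking what happens to the oscillatory factors $\mathrm{e}^{\mathrm{i}\phi}$ and to the $\gamma$-functions under the coordinate change $P_\theta$. By Definition \ref{def:generator}, $f$ is a finite sum of generators $g\left(x,y\right)\mathrm{e}^{\mathrm{i}\phi\left(x,y\right)}\gamma\left(x,y\right)$ with $g\in\C\left(X\times\RR\right)$, $\phi\in\S\left(X\times\RR\right)$, and $\gamma=\gamma_{h,\ell}$. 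Since the conclusion is about a common cell partition, I would first collect the finite set $\F$ consisting of all the constructible data $g$, all phases $\phi$, and the relevant subanalytic functions defining the $\gamma$-factors, and apply Proposition \ref{prop:prepSubConstr} simultaneously to this whole finite collection. This produces a single partition $\A$ into cells over $\RR^m$, and on each open cell $A$ a centre $\theta$ in whose transformed coordinates $A_\theta=\{a<y<b\}$ every $g$ is in the prepared form (\ref{eq:prepConstr}) and (by Remark \ref{rem: prep of subana}) every phase $\phi$ is $\psi$-prepared.

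\textbf{The two regimes.} For a cell with $b<+\infty$ the fibres are bounded, so I would use Remarks \ref{rems: integrability of constructible fncts}(\ref{enu: continuous}): each prepared monomial $G_j\left(x,y\right)$, being analytic up to the closure of the bounded fibre, is automatically integrable, hence each resulting generator is superintegrable over $\Pi_m\left(A\right)$, giving conclusion (1). The substantive case is $b\equiv+\infty$, where by Remark \ref{rmk:center} we must have $\theta=0$, $\sigma=\tau=1$, so $P_\theta$ is essentially the identity and $A_\theta=A$. Here I would expand each term using the prepared form, writing $f\circ P_\theta$ as a finite sum of pieces
\[
f_j\left(x\right)\,y^{r_j}\left(\log y\right)^{s_j}u_j\left(x,y\right)\,\mathrm{e}^{\mathrm{i}\phi_j\left(x,y\right)},
\]
with $f_j\in\C^{\exp}\left(\Pi_m\left(A\right)\right)$ (absorbing the $\gamma$-factors and the $x$-dependent constructible coefficients), $u_j$ a $\psi$-unit, and $\phi_j$ the $\psi$-prepared phase. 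The key observation, following Remark \ref{rem: lojas} and its parametric form, is that for $b\equiv+\infty$ each phase $\phi_j\left(x,y\right)$ splits as a polynomial part $\phi_j^{\mathrm{pol}}\left(x,y\right)$ in $y^{1/d}$ with $\phi_j\left(x,0\right)=0$ plus a bounded subanalytic remainder that tends to $0$ as $y\to+\infty$.

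\textbf{Separating naive from superintegrable.} The crux is deciding which terms become naive and which become superintegrable. I would expand the $\psi$-unit $u_j\left(x,y\right)$ as a convergent power series in $\left(a\left(x\right)/y\right)^{1/d}$ (as in Remark \ref{rem: distinct tuples etc}); the tail of this expansion contributes terms with exponent $r<-1$, hence $o\left(y^{-1}\right)$ and integrable, which I absorb into $J^{\mathrm{int}}$. Likewise I would Taylor-expand the oscillatory factor coming from the bounded tail of the phase, $\mathrm{e}^{\mathrm{i}(\phi_j-\phi_j^{\mathrm{pol}})}$, into a convergent series in the bounded subanalytic remainder; each term of this series again contributes a prepared monomial times $\mathrm{e}^{\mathrm{i}\phi_j^{\mathrm{pol}}}$, and truncating this series produces finitely many monomials with a genuinely integrable (subanalytic, bounded) remainder that goes into $J^{\mathrm{int}}$, while the leading monomials with $r_j\geq -1$ and a nonzero purely polynomial phase are exactly the naive terms. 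Finally, exactly as in Remark \ref{rem: distinct tuples etc}(\ref{enu:property 2}), I would collect all naive terms sharing the same triple $\left(r_j,s_j,\phi_j^{\mathrm{pol}}\right)$, summing their $\C^{\exp}\left(\Pi_m\left(A\right)\right)$-coefficients, so that the surviving family of triples in $\QQ\times\NN\times\RR[y^{1/d}]$ is distinct, and discard any naive term whose coefficient $f_j$ is the zero function.

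\textbf{The main obstacle} I expect is the control of the oscillatory tail: unlike the non-oscillatory setting of Proposition \ref{prop:prepSubConstr}, here one cannot simply Taylor-truncate, because the factor $\mathrm{e}^{\mathrm{i}\phi_j}$ with $\phi_j$ a polynomial of positive degree does not decay, so a naive term with $r_j\geq -1$ and a nonconstant polynomial phase is genuinely non-superintegrable and cannot be absorbed — this is precisely why the two classes $J^{\mathrm{int}}$ and $J^{\mathrm{naive}}$ are both needed. The delicate point is to verify that after pulling out the polynomial phase $\mathrm{e}^{\mathrm{i}\phi_j^{\mathrm{pol}}}$, the remaining amplitude $y^{r_j}(\log y)^{s_j}u_j\,\mathrm{e}^{\mathrm{i}(\phi_j-\phi_j^{\mathrm{pol}})}$ times any $\gamma$-factor genuinely is a finite sum of generators that are superintegrable whenever $r_j<-1$, which requires checking that the bounded-tail expansions converge uniformly enough on the cell and that the associated $f^{\mathrm{abs}}$ (Definition \ref{abs}) is integrable; this is the estimate that makes the decomposition rigorous and where most of the technical work resides.
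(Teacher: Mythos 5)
Your outer scaffolding matches the paper's: prepare the constructible and subanalytic data simultaneously, dispose of the cells with $b<+\infty$ by continuity up to the closure of the bounded fibres, and on unbounded cells split each phase into a polynomial in $y^{1/d}$ plus a bounded tail, expand $\mathrm{e}^{\mathrm{i}(\phi_j-\phi_j^{\mathrm{pol}})}$ and the $\psi$-units as series in $\left(a\left(x\right)/y\right)^{1/d}$, truncate, and merge terms with equal tuples $\left(r_j,s_j,\phi_j^{\mathrm{pol}}\right)$. That part is sound and is essentially Remark \ref{rem: distinct tuples etc} plus the final page of the paper's argument.

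There is, however, a genuine gap at the point where you write the pieces as $f_j\left(x\right)y^{r_j}\left(\log y\right)^{s_j}u_j\left(x,y\right)\mathrm{e}^{\mathrm{i}\phi_j\left(x,y\right)}$ ``with $f_j\in\C^{\exp}\left(\Pi_m\left(A\right)\right)$ (absorbing the $\gamma$-factors)''. The $\gamma$-factor of a generator on $X\times\RR$ is $\gamma_{h,\ell}\left(x,y\right)=\int_{\RR}h\left(x,y,t\right)\left(\log|t|\right)^{\ell}\mathrm{e}^{\mathrm{i}t}\,\mathrm{d}t$ with $h\in\S\left(X\times\RR\times\RR\right)$: it is a function of $\left(x,y\right)$, not of $x$ alone, and after preparing the integrand in $t$ its integration limits $a_j\left(x,y\right),b_j\left(x,y\right)$ are of the form $a_{j,0}\left(x\right)y^{\alpha_j}u_{a_j}\left(x,y\right)$ and may grow like positive powers of $y$. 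A term with $r_j\geq-1$ whose $\gamma$-factor still depends on $y$ is neither superintegrable nor naive in $y$ in the sense of Definition \ref{def: naive in y}, so it cannot simply be filed into either class; eliminating this $y$-dependence is precisely where the paper's work lies. Concretely, the paper first prepares the integrand of each $\gamma$-function in the variable $t$ (Proposition \ref{lem: supercontinuous}), then uses repeated integration by parts against $\mathrm{e}^{\mathrm{i}\sigma_j t}$ — an oscillatory cancellation, not a Taylor truncation — to kill the exponents $\alpha_j,\beta_j$ in the limits (Lemmas \ref{lem alpha =00003D0} and \ref{lem: b_j finite, alpha >0}), and finally splits the integration domain to replace $a_j\left(x,y\right),b_j\left(x,y\right)$ by limits depending on $x$ alone, showing the discrepancy integrals are in $\C_{\naive}^{\exp}$ because $|a_j\left(x,y\right)-a_{j,0}\left(x\right)|\leq1$ eventually (Proposition \ref{lem no y in the bounds}). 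Your proposal contains no mechanism playing this role, and without one the reduction to ``superintegrable plus naive'' does not go through.
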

\begin{rem}
\label{rem: superint tend to 0}Let us restrict our attention to a
cell of the form
\begin{equation}
A=\{\left(x,y\right):x\in\Pi_{m}\left(A\right),\ y>a\left(x\right)\}\label{eq: unbounded cell}
\end{equation}
(by Remark \ref{rmk:center}, we have $A=A_{\theta}$). The proof
of Theorem \ref{thm:prepExpConstr} will actually show that, for every
$j\in J$, there are $r_{j}\in\mathbb{Q},\ s_{j}\in\mathbb{N}$ and
a function $g_{j}\left(x,y\right)\in\mathcal{C}^{\exp}\left(X\times\mathbb{R}\right)$
which is bounded in $y$ (more precisely, there is a subanalytic function
$\eta:\Pi_{m}\left(A\right)\rightarrow[0,+\infty)$ such that $\forall y>a\left(x\right),\ |g_{j}\left(x,y\right)|<\eta\left(x\right)$),
such that
\begin{equation}
T_{j}\left(x,y\right)=y^{r_{j}}\left(\log y\right)^{s_{j}}g_{j}\left(x,y\right).\label{eq: general form of gen}
\end{equation}
Moreover, if $j\in J^{\text{Int}}$, then $r_{j}<-1$, and if $j\in J^{\text{naive}}$,
then, in the notation of Equation (\ref{eq:naive}), we have $g_{j}\left(x,y\right)=f_{j}\left(x\right)\mathrm{e}^{\mathrm{i}\phi_{j}\left(x,y\right)}$.
\end{rem}
The proof of the above theorem will be broken down into several propositions
and lemmas.
\begin{defn}
\label{notation for prep} Let $X\subseteq\RR^{m}$ be a subanalytic
set and $A\subseteq X\times\RR$ be a cell over $\RR^{m}$ which is
open over $\RR^{m}$. Let $\theta$ be a centre for $A$, so that
we can write
\[
A_{\theta}=\{\left(x,y\right):\ x\in\Pi_{m}\left(A\right),a\left(x\right)<y<b\left(x\right)\},
\]
for some analytic subanalytic functions $1\leq a\left(x\right)<b\left(x\right)$, where
we also allow the case when $b\equiv+\infty$ on $\Pi_{m}\left(A\right)$, as
in Definition \ref{def:center}.

Fix $d\in\mathbb{N}\setminus \{ 0\} $ and a bounded, analytic,
subanalytic map $\psi$ on $A_{\theta}$, of the form
\begin{equation}
\begin{aligned}\psi\left(x,y\right)=\left(c_{1}\left(x\right),\ldots,c_{N}\left(x\right),\left(\frac{a\left(x\right)}{y}\right)^{1/d},\left(\frac{y}{b\left(x\right)}\right)^{1/d}\right), & \text{\ if\ }b<+\infty,\\
\text{and}\ \psi\left(x,y\right)=\left(c_{1}\left(x\right),\ldots,c_{N}\left(x\right),\left(\frac{a\left(x\right)}{y}\right)^{1/d}\right), & \text{\ if\ }b\equiv+\infty.
\end{aligned}
\label{eq: psi}
\end{equation}
Let $J$ be an index set and, for all $j\in J$, let
\[
A_{j}=\{\left(x,y,t\right):\ \left(x,y\right)\in A_{\theta},a_{j}\left(x,y\right)<t<b_{j}\left(x,y\right)\},
\]
for some analytic, subanalytic functions $1\leq a_{j}<b_{j}$, where
we also allow the case when $b_{j}\equiv+\infty$ on $A_{\theta}$.

Suppose also that $a_{j}$, $b_{j}$ and $b_{j}-a_{j}$ are $\psi$-prepared
on $A_{\theta}$ as follows:
\begin{eqnarray*}
a_{j}\left(x,y\right) & = & a_{j,0}\left(x\right)y^{\alpha_{j}}u_{a_{j}}\left(x,y\right),\\
b_{j}\left(x,y\right) & = & b_{j,0}\left(x\right)y^{\beta_{j}}u_{b_{j}}\left(x,y\right),\\
b_{j}\left(x,y\right)-a_{j}\left(x,y\right) & = & c_{j,0}\left(x\right)y^{\delta_{j}}u_{c_{j}}\left(x,y\right)
\end{eqnarray*}
for some analytic, subanalytic $a_{j,0},b_{j,0},c_{j,0}$, some $\alpha_{j},\beta_{j},\delta_{j}\in\mathbb{Q}$
and some $\psi$-units $u_{a_{j}},u_{b_{j}},u_{c_{j}}$ (when $b_{j}=+\infty$
we stipulate that $b_{j,0}=c_{j,0}=+\infty$, $\beta_{j}=\delta_{j}=0$
and $u_{b_{j}}=u_{c_{j}}=1$).

In this situation, given $d_{j}\in\mathbb{N}\setminus\{ 0\} $, we define
the bounded, analytic, subanalytic map $\psi_{j}$ on $A_{j}$
as
\begin{equation}
\begin{aligned}\psi_{j}\left(x,y,t\right)=\left(\psi\left(x,y\right),\left(\frac{a_{j,0}\left(x\right)y^{\alpha_{j}}}{t}\right)^{1/d_{j}},\left(\frac{t}{b_{j,0}\left(x\right)y^{\beta_{j}}}\right)^{1/d_{j}}\right), & \text{\ if\ }b_{j}<+\infty,\\
\text{and}\ \psi_{j}\left(x,y,t\right)=\left(\psi\left(x,y\right),\left(\frac{a_{j,0}\left(x\right)y^{\alpha_{j}}}{t}\right)^{1/d_{j}}\right), & \text{\ if\ }b_{j}\equiv+\infty.
\end{aligned}
\label{eq: psi_j}
\end{equation}

\end{defn}
The next proposition establishes that, after writing $f$ as a sum
of generators and after preparing suitably all the subanalytic and
constructible functions appearing in the generators, we obtain a decomposition
of $X\times\mathbb{R}$ into cells over which each of the generators
has a well organized form. In particular, the generators are superintegrable
over every cell in the partition whose fibres over $\mathbb{R}^{m}$
are bounded (see Remark \ref{rem: bounded cells}(\ref{enu: supercontin})
below).
\begin{prop}
\label{lem: supercontinuous} Let $f\in\C^{\exp}\left(X\times\RR\right)$, for
some subanalytic set $X\subseteq\RR^{m}$. Then there exists a finite
partition $\A$ of $X\times\RR$ into cells over $\RR^{m}$ such that
for each $A\in\A$ that is open over $\RR^{m}$, there exists a centre
$\theta$ for $A$ for which we may express $f\circ P_{\theta}$ as
a finite sum
\begin{equation}
f\circ P_{\theta}\left(x,y\right)=\sum_{j\in J}T_{j}\left(x,y\right)\label{eq:prepExpConstr1}
\end{equation}
on $A_{\theta}=\{\left(x,y\right):\ x\in\Pi_{m}\left(A\right),a\left(x\right)<y<b\left(x\right)\}$, where each
$T_{j}$ is a generator for $\C^{\exp}\left(A_{\theta}\right)$ of the form
\begin{equation}
T_{j}\left(x,y\right)=f_{j}\left(x\right)y^{p_{j}}\left(\log y\right)^{q_{j}}\mathrm{e}^{\mathrm{i}\phi_{j}\left(x,y\right)}\gamma_{j}\left(x,y\right)\label{eq:Tform-1}
\end{equation}
for some $f_{j}\in\C\left(\Pi_{m}\left(A\right)\right)$, $p_{j}\in\QQ$, $q_{j}\in\NN$,
$\phi_{j}\in\S\left(A_{\theta}\right)$ and function $\gamma_{j}$, where
\begin{equation}
\gamma_{j}\left(x,y\right)=\int_{a_{j}\left(x,y\right)}^{b_{j}\left(x,y\right)}\Gamma_{j}\left(x,y,t\right)\,\mathrm{d}t\label{eq: form of gamma}
\end{equation}
with
\[
\Gamma_{j}\left(x,y,t\right)=t^{r_{j}}h_{j}\left(x,y,t\right)\left(\log t\right)^{s_{j}}\mathrm{e}^{\mathrm{i}\sigma_{j}t}
\]
for some $r_{j}\in\QQ$, $s_{j}\in\NN$, $\sigma_{j}\in\{-1,1\}$
and analytic subanalytic functions $a_{j},b_{j}$ as in Definition
\ref{notation for prep}, and for some $\psi_{j}$-function $h_{j}$
(where $\psi_{j}$ is as in Equation (\ref{eq: psi_j}), for some
$d,\psi,d_{j}$).

We may furthermore assume that the rational numbers $\alpha_{j}$,
$\beta_{j}$ and $\delta_{j}$ (see Definition \ref{notation for prep})
are integer multiples of $1/d$.\end{prop}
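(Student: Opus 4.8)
The plan is to reduce to a single generator, prepare the inner integral in $t$, and then prepare the resulting data in $y$, reorganizing everything into the shape prescribed by Definition \ref{notation for prep}. First I would use Definition \ref{def:generator} to write $f$ as a finite sum of generators $g(x,y)\mathrm{e}^{\mathrm{i}\phi(x,y)}\gamma_{h,\ell}(x,y)$ and prepare the finitely many of them simultaneously, by feeding the finite families of all functions that occur into the preparation theorems. Fix one generator and set $\widehat h(x,y,t)=h(x,y,t)(\log|t|)^{\ell}$, a constructible function on $(X\times\RR)\times\RR$ with $t$ as the last variable. Applying Proposition \ref{prop:prepSubConstr} to $\widehat h$ over $X\times\RR$ produces finitely many cells in $t$, each carrying a centre $\theta_{t}(x,y)$ with associated signs $\sigma,\tau$, on which $\widehat h\circ P_{\theta_{t}}(x,y,t)=\sum_{k}g_{k}(x,y)\,t^{r_{k}}(\log t)^{s_{k}}h_{k}(x,y,t)$ with $g_{k}\in\C(X\times\RR)$ and $h_{k}$ a $\psi_{t}$-function centred at $0$ in $t$.

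The decisive point is the behaviour of $\mathrm{e}^{\mathrm{i}t_{\mathrm{old}}}$ under $P_{\theta_{t}}$, where the two values of $\tau$ play opposite roles. When $\tau=1$ we have $t_{\mathrm{old}}=\sigma t+\theta_{t}(x,y)$, so $\mathrm{e}^{\mathrm{i}t_{\mathrm{old}}}=\mathrm{e}^{\mathrm{i}\theta_{t}(x,y)}\mathrm{e}^{\mathrm{i}\sigma t}$; the factor $\mathrm{e}^{\mathrm{i}\theta_{t}}$ depends only on $(x,y)$ and is absorbed into the phase, while $\mathrm{e}^{\mathrm{i}\sigma t}$ is exactly the genuine linear oscillation required in $\Gamma_{j}$, with $\sigma_{j}=\sigma$. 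When $\tau=-1$ the cell corresponds to $t_{\mathrm{old}}$ ranging over a bounded interval clustering at the finite value $\theta_{t}$ (this includes the cell near $t_{\mathrm{old}}=0$ that carries the logarithmic singularity), and $t_{\mathrm{old}}=\sigma t^{-1}+\theta_{t}$ gives $\mathrm{e}^{\mathrm{i}t_{\mathrm{old}}}=\mathrm{e}^{\mathrm{i}\theta_{t}}\mathrm{e}^{\mathrm{i}\sigma/t}$; since $t\geq 1$ on the cell, $\sigma/t$ lies in a bounded set, so $\mathrm{e}^{\mathrm{i}\sigma/t}=\cos(\sigma/t)+\mathrm{i}\sin(\sigma/t)$ has subanalytic (restricted-analytic) real and imaginary parts and may be absorbed into $h_{k}$, keeping it a $\psi_{t}$-function. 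Because $h\in L^{1}$ the transformed $t$-integral converges absolutely, so by stability of $\C$ under integration \cite{cluckers-miller:stability-integration-sums-products} each $\tau=-1$ cell contributes a complex-valued constructible function (Definition \ref{def complex valued}) times $\mathrm{e}^{\mathrm{i}\theta_{t}}$, that is, a generator with $\gamma_{j}=1$ (recall from Remark \ref{rem: gamma subanalytic} that the constant $1$ is an admissible $\gamma_{h,\ell}$). After this step the generator is a finite sum of terms $[\text{constructible in }(x,y)]\cdot\mathrm{e}^{\mathrm{i}[\text{subanalytic in }(x,y)]}\cdot\gamma_{k}(x,y)$, where each $\gamma_{k}$ is either $1$ or of the form (\ref{eq: form of gamma}) with a genuine $\mathrm{e}^{\mathrm{i}\sigma_{j}t}$ and $t$-boundaries that are subanalytic functions of $(x,y)$; the phase is carried along as a general subanalytic function, giving $\phi_{j}\in\S(A_{\theta})$ with no further preparation needed.

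Next I would prepare in $y$ over $\RR^{m}$. Collecting into one finite family the constructible coefficients together with the subanalytic functions defining the $\gamma_{k}$ (the boundaries $a_{j},b_{j}$ and $b_{j}-a_{j}$, and the parameter-components $c_{i}$ of the $\psi_{t}$-maps), I apply Proposition \ref{prop:prepSubConstr} and its subanalytic version (Remark \ref{rem: prep of subana}) simultaneously. On each resulting cell $A$, after composing with $P_{\theta}$, the constructible coefficient becomes a finite sum of terms $f_{j}(x)\,y^{p_{j}}(\log y)^{q_{j}}U_{j}(x,y)$ with $f_{j}\in\C(\Pi_{m}(A))$ and $U_{j}$ a $\psi$-function, and each boundary acquires the monomial form $a_{j}=a_{j,0}(x)y^{\alpha_{j}}u_{a_{j}}$ of Definition \ref{notation for prep}. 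Rewriting each $t$-argument of $h_{k}$, for instance $(a_{j}/t)^{1/d_{j}}=(a_{j,0}(x)y^{\alpha_{j}}/t)^{1/d_{j}}\cdot u_{a_{j}}^{1/d_{j}}$, splits it into the clean $\psi_{j}$-component of (\ref{eq: psi_j}) times a $\psi$-unit factor; all such leftover factors, together with $U_{j}$, are then absorbed into $h_{k}$. This absorption is legitimate because $\psi$ is a subtuple of $\psi_{j}$, so the product of a $\psi$-function of $(x,y)$ with a $\psi_{j}$-function of $(x,y,t)$ is again a $\psi_{j}$-function (a product of power series convergent on a common compact polydisc). The coefficient thereby reduces to $f_{j}(x)y^{p_{j}}(\log y)^{q_{j}}$, giving exactly (\ref{eq:Tform-1}); finally, replacing $d$ by a common multiple forces $\alpha_{j},\beta_{j},\delta_{j}\in\tfrac1d\ZZ$, as at the end of the proof of Proposition \ref{prop:prepSubConstr}.

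I expect the main obstacle to be the bookkeeping in this last reorganization: one must check that after the two-stage ($t$-then-$y$) preparation every surviving power of $y$ and $t$ fits the rigid shape of (\ref{eq: psi_j}) and Definition \ref{notation for prep}, that the new component functions stay bounded so that the augmented $\psi_{j}$ is a genuine bounded map (which may require recombining a coefficient with a boundary value, exactly as in the proof of Proposition \ref{prop:prepSubConstr}), and that the $\psi$-units produced by the $y$-preparation can really be pushed inside the $\psi_{j}$-functions. The subtle analytic input that makes the whole scheme close is the second-paragraph observation that for $\tau=-1$ the non-linear oscillation $\mathrm{e}^{\mathrm{i}\sigma/t}$ is restricted-analytic, hence subanalytic, on $t\geq 1$; this lets it be hidden inside a unit and prevents it from obstructing the required form $\mathrm{e}^{\mathrm{i}\sigma_{j}t}$.
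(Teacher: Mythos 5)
Your route is the same as the paper's: write $f$ as a finite sum of generators, prepare the integrand of each $\gamma$-function in $t$ over $\RR^{m+1}$, split according to whether $t-\Theta(x,y)$ ranges over a bounded or an unbounded set (collapsing the bounded case to a complex-valued constructible integral via stability of $\C$ under integration, and extracting the genuine oscillation $\mathrm{e}^{\mathrm{i}\sigma t}$ in the unbounded case), and then prepare the surviving constructible coefficients together with the subanalytic data $a_j$, $b_j$, $b_j-a_j$ and the parameter components of the $t$-preparation in $y$ over $\RR^{m}$. Up to and including the $t$-stage your argument matches the paper's essentially step for step.

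The gap is in the final absorption step, which you correctly flag as the main obstacle but then dismiss with an argument that does not apply to it. After writing $(a_j/t)^{1/d_j}=(a_{j,0}(x)y^{\alpha_j}/t)^{1/d_j}\cdot u_{a_j}(x,y)^{1/d_j}$, and similarly for the prepared components $c_{j,k}$, the leftover unit factors sit \emph{inside the arguments} of the convergent power series $\tld{H}_j$ defining $h_j$: absorbing them is a composition problem, not a multiplication problem. Your justification --- that the product of a $\psi$-function of $(x,y)$ with a $\psi_j$-function of $(x,y,t)$ is again a $\psi_j$-function --- disposes only of the external unit coming from the preparation of the constructible coefficient. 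For the internal units one must verify that the perturbed arguments (clean component times unit) remain, on the relevant \emph{complex} polydisc, inside the domain on which $\tld{H}_j$ extends analytically; otherwise the recomposed function is not given by a single convergent power series centred at $0$ and so is not a $\psi_j$-function in the sense of Definition \ref{def:psiFunction}. An arbitrary $\psi$-unit need only satisfy $|U(z)-1|<1$, which can push these arguments outside the disc of convergence of $\tld{H}_j$. This is precisely why the paper first fixes polyradii $p_j,\eta_j$ with $\tld{H}_j$ analytic near $B_{p_j+\eta_j}(\CC)$, chooses $\varepsilon$ in advance so that $\frac{1+\varepsilon}{1-\varepsilon}\,p_{j,k}<p_{j,k}+\eta_{j,k}$ (and the analogous bound for the $t$-components), and only then invokes the refined subanalytic preparation of Remark \ref{rem: prep of subana} guaranteeing that all units produced by the $y$-stage are within $\varepsilon$ of $1$ on a complex polydisc. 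Your plan is repairable by exactly this device, but as written the sentence ``all such leftover factors are then absorbed into $h_k$'' is unjustified, and it is the one genuinely non-routine point of the proof.
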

\begin{proof}
Write $f$ as a finite sum of generators for $\C^{\exp}\left(X\times\mathbb{R}\right)$,
say
\[
f\left(x,y\right)=\sum_{j\in J}T_{j}\left(x,y\right),
\]
where
\[
T_{j}\left(x,y\right)=g_{j}\left(x,y\right)\mathrm{e}^{\mathrm{i}\phi_{j}\left(x,y\right)}\gamma_{j}\left(x,y\right),
\]
with
\[
\gamma_{j}\left(x,y\right)=\int_{\RR}H_{j}\left(x,y,t\right)\left(\log|t|\right)^{\ell_{j}}\mathrm{e}^{\mathrm{i}t}\,\mathrm{d}t.
\]
Apply Proposition \ref{prop:prepSubConstr} (in the form in Remark
\ref{rem: prep subana with center}) to the collection
\begin{equation}
\{H_{j}\left(x,y,t\right)\left(\log|t|\right)^{\ell_{j}}\}_{j\in J}\subseteq\C\left(X\times\RR\times\mathbb{R}\right).\label{eq:hlogPrep}
\end{equation}
This gives a finite partition $\B$ of $\left(X\times\RR\right)\times\mathbb{R}$
into cells over $\RR^{m+1}$. By further partitioning in $\left(x,y\right)$,
we may assume that $\A:=\left\{\Pi_{m+1}\left(B\right):\ B\in\B\right\}$ is a partition
of $X\times\mathbb{R}$. By working piecewise, we may focus on one
$A\in\A$. There are finitely many disjoint cells $B\in\B$ such that
$\Pi_{m+1}\left(B\right)=A$. Pick one such $B$ which is open over $\RR^{m+1}$.
Write
\[
B=\{\left(x,y,t\right):\ \left(x,y\right)\in A,\widetilde{a}\left(x,y\right)<t-\Theta\left(x,y\right)<\widetilde{b}\left(x,y\right)\},
\]
where $\Theta$ is the centre given by the preparation of the collection
in Equation (\ref{eq:hlogPrep}). We fix an element of this collection
and we focus on one summand of the preparation of such an element. This
will have the form
\[
f_{0}\left(x,y\right)|t-\Theta\left(x,y\right)|^{r}\left(\log|y-\Theta\left(x,y\right)|\right)^{s}h\left(x,y,t\right),
\]
where $f_{0}\in\mathcal{C}\left(A\right)$ and $h$ is a $\Psi$-function
(for a suitable bounded subanalytic $\Psi$).

We write $\mathrm{e}^{\mathrm{i}t}=\mathrm{e}^{\mathrm{i}\left(t-\Theta\left(x,y\right)\right)}\mathrm{e}^{\mathrm{i}\Theta\left(x,y\right)}$.
By factoring out of the integral the term $f_{0}\left(x,y\right)\mathrm{e}^{\mathrm{i}\Theta\left(x,y\right)}$,
and by absorbing $f_{0}$ in the constructible coefficient $g$ and
$\mathrm{e}^{\mathrm{i}\Theta\left(x,y\right)}$ in the exponential term
$\mathrm{e}^{\mathrm{i}\phi\left(x,y\right)}$, we can reduce to studying
generators of the form
\begin{equation}
g\left(x,y\right)\mathrm{e}^{\mathrm{i}\phi\left(x,y\right)}\int_{\widetilde{a}\left(x,y\right)}^{\widetilde{b}\left(x,y\right)}\left|t-\Theta\left(x,y\right)\right|^{r}h\left(x,y,t\right)\left(\log|t-\Theta\left(x,y\right)|\right)^{s}\mathrm{e}^{\mathrm{i}\left(t-\Theta\left(x,y\right)\right)}\,\mathrm{d}t\label{eq:prepGamma}
\end{equation}
on $A$. Now, the set
\begin{equation}
\{t-\Theta\left(x,y\right):\ \left(x,y,t\right)\in B\}\label{eq:t-theta(x)}
\end{equation}
is contained in one of the sets $\left(-\infty,-1\right)$, $\left(-1,0\right)$, $\left(0,1\right)$
or $\left(1,+\infty\right)$.

Suppose first that (\ref{eq:t-theta(x)}) is contained in either $\left(-1,0\right)$
or $\left(0,1\right)$. Then $\mathrm{e}^{\mathrm{i}\left(t-\Theta\left(x,y\right)\right)}$ is a complex-valued
subanalytic function on $A$ (see Definition \ref{def complex valued}),
so the integral in (\ref{eq:prepGamma}) is a complex-valued constructible
function on $A$. This implies that (\ref{eq:prepGamma}) is in $\C_{\naive}^{\exp}\left(A\right)$
(because $\mathcal{C}$ is stable under integration), hence we can
apply Proposition \ref{prop:prepSubConstr} to the constructible part
of (\ref{eq:prepGamma}), preparing it with respect to the variable
$y$. Now, we can view the $\psi$-function obtained in this preparation
as a $\gamma$-function of the form (\ref{eq: form of gamma}) (see
Remark \ref{rem: gamma subanalytic}), and we are done.

Now suppose that (\ref{eq:t-theta(x)}) is contained in $\left(-\infty,-1\right)$
or $\left(1,+\infty\right)$. Then by applying the change of coordinates $t\mapsto\sigma t+\Theta\left(x,y\right)$
for an appropriate choice of $\sigma\in\{-1,1\}$, and adjusting the
definitions of $\widetilde{a}$, $\widetilde{b}$ and $h$ accordingly, we
may assume that $1\leq\widetilde{a}\left(x,y\right)<\widetilde{b}\left(x,y\right)$ and that (\ref{eq:prepGamma})
is of the form
\[
g\left(x,y\right)\mathrm{e}^{\mathrm{i}\phi\left(x,y\right)}\int_{\widetilde{a}\left(x,y\right)}^{\widetilde{b}\left(x,y\right)}t^{r}h\left(x,y,t\right)\left(\log t\right)^{s}\mathrm{e}^{\mathrm{i}\sigma t}\,\mathrm{d}t.
\]

Summing up, we have constructed a finite partition $\A$ of $X\times\RR$ into subanalytic sets such that for each $A\in\A$ we may write $f$ as a finite sum
\begin{equation}\label{eq:f:Halfway}
f(x,y) = \sum_{j\in J} g_j(x,y) \textrm{e}^{\textrm{i} \phi_j(x,y)} \gamma_j(x,y)
\end{equation}
on $A$, where $g_j\in\C(A)$, $\phi_j\in\S(A)$, and
\begin{equation}\label{eq:gamma:Halfway}
\gamma_j(x,y) = \int_{\tld{a}_j(x,y)}^{\tld{b}_j(x,y)} \tld{\Gamma}_j(x,y,t)\,\textrm{d}t
\end{equation}
with
\begin{equation}\label{eq:Gamma:Halfway}
\tld{\Gamma}_j(x,y,t) = t^{r_j} \tld{h}_j(x,y,t) (\log t)^{s_j} \textrm{e}^{\textrm{i}\sigma_j t},
\end{equation}
where $1\leq \tld{a}_j < \tld{b}_j$ (with either $\tld{b}_j < +\infty$ or $\tld{b}_j \equiv +\infty$), $r_j\in\QQ$, $s_j\in\NN$, $\sigma_j\in\{-1,1\}$, and $\tld{h}_j$ is a $\tld{\psi}_j$-function, with
\begin{align*}
&&
\tld{\psi}_j(x,y,t) = \left(\tld{c}_{j,1}(x,y),\ldots,\tld{c}_{j,N_j}(x,y), \left(\frac{\tld{a}_j(x,y)}{t}\right)^{1/d_j}, \left(\frac{t}{\tld{b}_j(x,y)}\right)^{1/d_j}\right),
\quad\text{if $\tld{b}_j < \infty$,}
\\
&&
\text{and}\,\,\,
\tld{\psi}_j(x,y,t) = \left(\tld{c}_{j,1}(x,y),\ldots,\tld{c}_{j,N_j}(x,y), \left(\frac{\tld{a}_j(x,y)}{t}\right)^{1/d_j}\right),
\quad\text{if $\tld{b}_j \equiv +\infty$,}
\end{align*}
defined on
\[
\tld{A}_j = \{(x,y,t) : (x,y)\in A, \tld{a}_j(x,y) < t < \tld{b}_j(x,y)\}.
\]
We may additionally assume that the positive integer $d_j$ has been chosen so that $r_j$ is an integer multiple of $1/d_j$.

%
%

In order to have a more uniform notation, we will assume that $\tld{\psi}_j$ maps into $\RR^{N_j+2}$ for each $j\in J$.  (This is the case when $\tld{b}_j < +\infty$, and the argument adapts to the case that $\tld{b}_j \equiv +\infty$ by simply ignoring the last component of $\tld{\psi}_j$ involving $\left(\frac{t}{\tld{b}_j(x,y)}\right)^{1/d_j}$.)  For each $j\in J$, fix $p_j = (p_{j,1},\ldots,p_{j,N_j+2})$ and $\eta_j = (\eta_{j,1},\ldots,\eta_{j,N_j+2})$ in $(0,\infty)^{N_j+2}$ and also a real analytic function $\tld{H}_j$ on $B_p(\RR)$ such that $\tld{\psi}_j(\tld{A}_j)\subseteq B_p(\RR)$, $\tld{h}_j = \tld{H}_j\circ\tld{\psi}_j$, and $\tld{H}_j$ extends to a complex analytic function on a neighbourhood of $B_{p+\eta}(\CC)$.  We may assume that $p_{j,N_j+1} = p_{j,N_j+2} = 1$.  Fix $\varepsilon \in (0,1)$ sufficiently small so that for all $j\in J$ and $k\in\{1,\ldots,N_j+2\}$,
\begin{equation}\label{eq:epsilon:conditions}
\begin{cases}
\frac{1+\varepsilon}{1-\varepsilon}p_{j,k} < p_{j,k}+\eta_{j,k}
    & \text{if $k\in\{1,\ldots,N_j\}$,} \\
\left(\frac{1+\varepsilon}{1-\varepsilon}\right)^{1/d_j} < 1 +\eta_{j,k}
    & \text{if $k = N_j+1$ or $k = N_j+2$.}
\end{cases}
\end{equation}
For each set $A\in\A$, apply Proposition \ref{prop:prepSubConstr} (with respect to the variable $y$) to
\[
\{g_j\}_{j\in J} \subseteq \C(A)
\quad\text{and}\quad
\{\tld{c}_{j,1},\ldots,\tld{c}_{j,N_j}, \tld{a}_j, \tld{b}_j, \tld{b}_j-\tld{a}_j\}_{j\in J} \subseteq \S(A)
\]
so that the units occurring in the preparation of $\{\tld{c}_{j,1},\ldots,\tld{c}_{j,N_j}, \tld{a}_j, \tld{b}_j, \tld{b}_j-\tld{a}_j\}_{j\in J}$ are within $\varepsilon$ of $1$ (see Remark \ref{rem: prep of subana}), and then redefine $\A$ to be the finer partition of $X\times\RR$ into cells over $\RR^m$ thus created.

Focus on one cell $A\in\A$ that is open over $\RR^m$, and let $\theta$ be the center of $A$ given by the preparation.  We now use the notation set up in Definition \ref{notation for prep}, where $a_j = \tld{a}_j\circ P_\theta$ and $b_j = \tld{b}_j\circ P_\theta$, and where the positive integer $d$ in Definition \ref{notation for prep} has been chosen to be a common denominator of the set of rational exponents $\{\alpha_j, \beta_j, \delta_j : j\in J\}$ and also of the rational exponents of $y$ in the $\psi$-prepared forms of each of the functions $c_{j,k} := \tld{c}_{j,k}\circ P_\theta$ with $j\in J$ and $k\in\{1,\ldots,N_j\}$.  Since each constructible function $g_j\circ P_\theta$ is prepared on $A_\theta$, it is apparent from equations  \eqref{eq:f:Halfway}, \eqref{eq:gamma:Halfway}, and \eqref{eq:Gamma:Halfway} that $f\circ P_\theta$ is of the form  asserted in the conclusion of the proposition except for one detail: although each function $h_j(x,y,t) := \tld{h}_j(P_\theta(x,y),t)$ is clearly a $\tld{\psi}_j(P_\theta(x,y),t)$-function, the conclusion of the proposition asserts that $h_j$ is a $\psi_j$-function for the map $\psi_j$ defined in Definition \ref{notation for prep}.  To finish the proof, we will show that $h_j$ is a $\psi_j$-function after $\psi_j$ is modified by extending its list of component functions $c_1(x),\ldots,c_N(x)$ in $x$ alone by some additional functions in $x$ obtained from the $\psi$-prepared forms of the functions in $\{c_{j,k}\}_{j,k}$.

In order to have a more uniform notation when showing this, we shall assume that $\psi$ maps into $\RR^{N+2}$ (as would be the case when $b < +\infty$).  For each $j\in J$, define $K_{j}^{+}$ to be the set of all $k\in\{1,\ldots,N_j\}$ such that the exponent of $y$ in the $\psi$-prepared form of $c_{j,k}$ is greater than or equal to $0$, and define $K_{j}^{-} = \{1,\ldots,N_j\}\setminus K_{j}^{+}$.  For each $j\in J$ and $k\in\{1,\ldots,N_j\}$, we may write
\[
c_{j,k}(x,y) =
\begin{cases}
c_{j,k,0}(x)\left(\frac{y}{b(x)}\right)^{\nu_{j,k}/d} v_{j,k}(x,y),
    & \text{if $k\in K_{j}^{+}$,} \\
c_{j,k,0}(x)\left(\frac{a(x)}{y}\right)^{\nu_{j,k}/d} v_{j,k}(x,y),
    & \text{if $k\in K_{j}^{-}$,} \\
\end{cases}
\]
for some $c_{j,k,0}\in\S(\Pi_m(A))$, $\nu_{j,k}\in\NN$, and $\psi$-unit $v_{j,k}$.  Fix $q = (q_1,\ldots,q_{N+2})$ in $(0,\infty)^{N+2}$ such that $\psi(A_\theta) \subseteq B_q(\RR)$ and such that for all $j\in J$ and $k\in\{1,\ldots,N_j\}$ we have $u_{a_j} = U_{a_j}\circ\psi$, $u_{b_j} = U_{b_j}\circ\psi$, and $v_{j,k} = V_{j,k}\circ\psi$ for some real analytic functions $U_{a_j}$, $U_{b_j}$, and $V_{j,k}$ on $B_q(\RR)$ which extend to complex analytic functions on a neighbourhood of $B_q(\CC)$ such that for each $U\in\{U_{a_j},U_{b_j},V_{j,k}\}_{j,k}$,
\[
|U(z) - 1| < \varepsilon \quad\text{for all $z\in B_q(\CC)$}.
\]
We may assume that $q_{N+1} = q_{N+2} = 1$.

Focus on one choice of $j\in J$.  Writing out the equation $h_j(x,y,t) = \tld{H}_j\circ\tld{\psi}_j(P_\theta(x,y),t)$ in full detail with the $\psi$-prepared forms of its components gives
\begin{align}\label{eq:hj:LongForm}
h_j(x,y,t)
    &= \tld{H}_j\left(
    \left( c_{j,k,0}(x)\left(\frac{a(x)}{y}\right)^{\nu_{j,k}/d} V_{j,k}\circ\psi(x,y)\right)_{k\in K_{j}^{-}},
    \right.
    \\
    &\qquad\qquad
    \left( c_{j,k,0}(x)\left(\frac{y}{b(x)}\right)^{\nu_{j,k}/d} V_{j,k}\circ\psi(x,y)\right)_{k\in K_{j}^{+}},
    \nonumber\\
    &\qquad\qquad
    \left(\frac{a_{j,0}(x) y^{\alpha_j}}{t}\right)^{1/d_j} \left(U_{a_j}\circ\psi(x,y)\right)^{1/d_j},
    \nonumber\\
    &\qquad\qquad
    \left.
    \left(\frac{t}{b_{j,0}(x) y^{\beta_j}}\right)^{1/d_j} \left(U_{b_j}\circ\psi(x,y)\right)^{-1/d_j}
    \right).
    \nonumber
\end{align}
Consider $k\in\{1,\ldots,N_j\}$, and observe that on $A_\theta$ we have that $|c_{j,k}(x,y)|\leq p_{j,k}$, that $|v_{j,k}(x,y)|\geq 1-\varepsilon$, and that $\frac{a(x)}{y}$ and $\frac{y}{b(x)}$ can take values arbitrarily close to $1$ (for each fixed $x\in\Pi_m(A)$).  It follows that
\begin{equation}\label{eq:cjk:Bnd}
|c_{j,k,0}(x)| \leq \frac{p_{j,k}}{1-\varepsilon}
\end{equation}
on $A_\theta$.  Similar reasoning shows that
\begin{equation}\label{eq:t:Bnd}
\left|\frac{a_{j,0}(x)y^{\alpha_j}}{t}\right|^{1/d_j} \leq \left(\frac{1}{1-\varepsilon}\right)^{1/d_j}
\quad\text{and}\quad
\left|\frac{t}{b_{j,0}(x)y^{\beta_j}}\right|^{1/d_j} \leq \left(1+\varepsilon\right)^{1/d_j}
\end{equation}
hold on $A_j$ as well.  Clearly
\begin{equation}\label{eq:y:Bnd}
\left|\frac{a(x)}{y}\right|^{1/d} \leq 1
\quad\text{and}\quad
\left|\frac{y}{b(x)}\right|^{1/d} \leq 1
\end{equation}
on $A_\theta$, and also for all $k\in\{1,\ldots,N_j\}$ we have
\begin{equation}\label{eq:units:Bnd}
|V_{j,k}| \leq 1+\varepsilon,
\quad
|U_{a_j}|^{1/d_j} \leq (1+\varepsilon)^{1/d_j},
\quad\text{and}\quad
|U_{b_j}|^{-1/d_j} \leq \left(\frac{1}{1-\varepsilon}\right)^{1/d_j}
\end{equation}
on $B_q(\CC)$.  Using the variables $(W,X,Y,Z) = ((W_k)_{k=1}^{N}, (X_{j,k})_{k=1}^{N_j}, Y_1,Y_2,Z_1,Z_2)$,
define
\begin{align*}
H_j(W,X,Y,Z)
    &:=
    \tld{H}_j\left(
    \left(X_kY_{1}^{\nu_{j,k}}V_{j,k}(W)\right)_{k\in K_{j}^{-}}, \left(X_kY_{2}^{\nu_{j,k}}V_{j,k}(W)\right)_{k\in K_{j}^{+}},
    \right.
    \\
    &\qquad\qquad \left.
    Z_1\left(U_{a_j}(W)\right)^{1/d_j}, Z_2\left(U_{b_j}(W)\right)^{-1/d_j}
    \right).
\end{align*}
Define
\[
\rho = \left(q_1,\ldots,q_N,\frac{p_1}{1-\varepsilon},\ldots,\frac{p_{N_j}}{1-\varepsilon}, 1, 1, 1, 1\right).
\]
Observe from the inequalities \eqref{eq:cjk:Bnd}-\eqref{eq:units:Bnd}, from the conditions \eqref{eq:epsilon:conditions} imposed upon our choice of $\varepsilon$, and from \eqref{eq:hj:LongForm} that the range of the map on $A_j$ given by \begin{align}\label{eq:innerMap}
(x,y,t)
    &\mapsto
    \left(
    \left(c_k(x)\right)_{k=1}^{N}, \left(c_{j,k}(x)\right)_{k=1}^{N_j}, \left(\frac{a(x)}{y}\right)^{1/d}, \left(\frac{y}{b(x)}\right)^{1/d},
    \right.
    \\
    &\phantom{mapsto}\quad\left.
    \left(\frac{a_{j,0}(x)y^{\alpha_j}}{t}\right)^{1/d_j}, \left(\frac{t}{b_{j,0}(x)y^{\beta_j}}\right)^{1/d_j}\right)
    \nonumber
\end{align}
is contained in $B_\rho(\RR)$, that $H_j$ is defined as a complex analytic function on a neighbourhood of $B_\rho(\CC)$, and that $h_j$ is the composition of $H_j$ with the map \eqref{eq:innerMap}.  This completes the proof. \end{proof}
%
%

%
%
%
%
%
%
\begin{defn}
\label{def prepared generator}We call a generator for $\C^{\exp}\left(A_{\theta}\right)$
of the form (\ref{eq:Tform-1}) a \textbf{\emph{prepared generator}}.\end{defn}
\begin{rems}
\label{rem: bounded cells}Fix a prepared generator $T_{j}$ as in
Proposition \ref{lem: supercontinuous}.
\begin{enumerate}
\item \label{enu:r_j}If $b_{j}<+\infty$, then we may suppose $r_{j}=0$.
If $b_{j}\equiv+\infty$, then we may suppose that $r_{j}<-1$. \\
To see this, suppose first that $b_{j}<+\infty$. If $r_j\geq0$, then write
\begin{eqnarray*}
	g_{j}\left(x,y\right)t^{r_{j}}h_{j}\left(x,y,t\right) & = & \left(g_{j}\left(x,y\right)\left(b_{j,0}\left(x\right)y^{\beta_{j}}\right)^{r_{j}}\right)\left(\left(\frac{t}{b_{j,0}\left(x\right)y^{\beta_{j}}}\right)^{r_{j}}h_{j}\left(x,y,t\right)\right)\\
	& = & \widetilde{g_{j}}\left(x,y\right)\widetilde{h}_{j}\left(x,y,t\right).
\end{eqnarray*}
If $r_j<0$, then write
\begin{eqnarray*}
	g_{j}\left(x,y\right)t^{r_{j}}h_{j}\left(x,y,t\right) & = & \left(g_{j}\left(x,y\right)\left(a_{j,0}\left(x\right)y^{\alpha_{j}}\right)^{r_{j}}\right)\left(\left(\frac{a_{j,0}\left(x\right)y^{\alpha_{j}}}{t}\right)^{-r_{j}}h_{j}\left(x,y,t\right)\right)\\
	& = & \widetilde{g_{j}}\left(x,y\right)\widetilde{h}_{j}\left(x,y,t\right).
\end{eqnarray*}
Note that, in both cases, $\widetilde{h}_{j}$ is a $\psi_{j}$-function (but not necessarily a
$\psi_{j}$-unit),
because $r_{j}$ is an integral multiple of $1/d_{j}$. We have hence reduced to the case $r_j=0$.

Suppose now that $b_{j}\equiv +\infty$. Let $n_0$  be the smallest exponent appearing in the series expansion of $h_{j}\left(x,y,t\right)$
with respect to the variable $\left(\frac{a_{j,0}\left(x\right)y^{\alpha_{j}}}{t}\right)^{1/d_{j}}$. Then we can factor out the power $\left(\frac{a_{j,0}\left(x\right)y^{\alpha_{j}}}{t}\right)^{n_0/d_{j}}$ from the expansion of $h_j$ and write
\begin{eqnarray*}
	g_{j}\left(x,y\right)t^{r_{j}}h_{j}\left(x,y,t\right) & = & \left(g_{j}\left(x,y\right)\left(a_{j,0}\left(x\right)y^{\alpha_{j}}\right)^{n_0/d_j}\right)t^{r_j-n_0/d_j}\widetilde{h}_{j}\left(x,y,t\right),\\
\end{eqnarray*}
where $\widetilde{h}_j$ is a $\psi_j$-unit. Note that $\widetilde{r}_j:=r_j-n_0/d_j$ is necessarily strictly
smaller than $-1$ (otherwise $\gamma_{j}$ would not be defined).
\item \label{enu: supercontin}Whenever $b<+\infty$, $T_{j}$ is superintegrable
over $\Pi_{m}\left(A\right)$. This is clear, since for all $x\in\Pi_{m}\left(A\right)$,
$y\mapsto T_{j}^{\abs}\left(x,y\right)$ extends to a continuous function on
the closure of $A_{x}$ in $\RR$.
\end{enumerate}
\end{rems}
\begin{rem}
\label{rem:piecewise analytic}
For all $m\in\NN$, subanalytic $X\subseteq\RR^m$, and $g\in\C^{\exp}(X)$, there exists a finite partition $\A$ of $X$ into subanalytic cells (see Definition \ref{def:cell}) such that $g\restriction{A}$ is analytic for each open set $A\in\A$.
\end{rem}
\begin{proof}
Apply Proposition \ref{lem: supercontinuous} to $f$ (except we now omit the variable $y$ since we are working on $X$ rather than on $X\times\RR$), and let $\A$ be the partition of $X$ so obtained.  Consider an open set $A\in\A$.  For each $j\in J$, $\gamma_j\restriction{A}$ is analytic because it is the integral of an analytic function with analytic limits of integration.  (Namely, basic facts about power series show that the antiderivative in $t$ of integrand $\Gamma_j(x,t)$ is analytic, and evaluating this antiderivative at analytic limits of integration in $x$ gives in an analytic function in $x$.)  It therefore follows from equations (\ref{eq:prepExpConstr1}) and (\ref{eq:Tform-1}) that $f\restriction{A}$ is analytic.
\end{proof}
%
%
In view of Remark \ref{rem: bounded cells}(\ref{enu: supercontin}),
we can focus our attention on cells which are unbounded above. For
such cells, our next goal is to reduce to the case where each of the
generators in Equation (\ref{eq:prepExpConstr1}) is either superintegrable,
or in $\C_{\naive}^{\exp}\left(A_{\theta}\right)$, or is such that the variable
$y$ does not appear in the integration limits $a_{j}$ and $b_{j}$
of the $\gamma$-function.
\begin{prop}
\label{lem no y in the bounds}Proposition \ref{lem: supercontinuous}
holds with the additional property that, whenever $b\equiv+\infty$,
every $T_{j}$ is either superintegrable, or in $\C_{\naive}^{\exp}\left(A_{\theta}\right)$,
 or there exist analytic and subanalytic functions $a_{j,0}, b_{j,0}$  on $\Pi_m\left(A\right)$ such that
 $a_{j}\left(x,y\right)=a_{j,0}\left(x\right)$
and either $b_{j}\equiv+\infty$ or $b_{j}\left(x,y\right)=b_{j,0}\left(x\right)$.
\end{prop}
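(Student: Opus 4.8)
The plan is to start from the conclusion of Proposition~\ref{lem: supercontinuous}, so that on each cell $A_\theta$ over $\RR^m$ we already have $f\circ P_\theta=\sum_{j}T_j$ with $T_j=f_j(x)y^{p_j}(\log y)^{q_j}e^{i\phi_j(x,y)}\gamma_j(x,y)$ and $\gamma_j(x,y)=\int_{a_j(x,y)}^{b_j(x,y)}t^{r_j}h_j(x,y,t)(\log t)^{s_j}e^{i\sigma_j t}\,dt$, the limits being $\psi$-prepared as in Definition~\ref{notation for prep}: $a_j=a_{j,0}(x)y^{\alpha_j}u_{a_j}$ and $b_j=b_{j,0}(x)y^{\beta_j}u_{b_j}$. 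By Remark~\ref{rem: bounded cells}(\ref{enu: supercontin}) the bounded cells contribute only superintegrable generators, so I may assume $b\equiv+\infty$. Since $1\le a_j<b_j$ persists as $y\to+\infty$ on such a cell, the exponents satisfy $\alpha_j,\beta_j\ge 0$ (and $\beta_j\ge\alpha_j$ whenever $\alpha_j>0$, for otherwise $a_j/b_j\to+\infty$). This is the dichotomy driving the argument: a limit stays bounded in $y$ exactly when its exponent vanishes.

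I would use three operations. Operation (a) is an integration by parts in $t$: it replaces $\gamma_j$ by boundary terms at $a_j$ and $b_j$ plus a remainder whose power of $t$ drops by one. Each boundary term is a \emph{constructible} coefficient — the evaluation of $t^{r_j}h_j(\log t)^{s_j}$ at the subanalytic limit, where $h_j$ becomes a $\psi$-function in $x,y$ alone — times $e^{i\sigma_j a_j(x,y)}$ or $e^{i\sigma_j b_j(x,y)}$ with $\sigma_j a_j,\sigma_j b_j\in\S$; together with the prefactor $f_jy^{p_j}(\log y)^{q_j}e^{i\phi_j}$ this lies in $\C_{\naive}^{\exp}(A_\theta)$. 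When the relevant limit tends to $+\infty$ (exponent $>0$) the boundary contribution at $+\infty$ vanishes (as $r_j<-1$) and iterating (a) lowers the exponent of $t$ until the remainder generator is superintegrable. Operation (b) is a shift $t\mapsto t-\Delta(x,y)$ with $\Delta=a_j-a_{j,0}(x)$: the extracted factor $e^{i\sigma_j\Delta}$ is absorbed into $e^{i\phi_j}$, and when the lower limit is bounded ($\alpha_j=0$) the shift is small relative to $t$, so $(1+\Delta/t)^{r_j}$, $h_j(x,y,t+\Delta)$ and $\log(t+\Delta)$ re-prepare as a genuine generator whose lower limit is exactly the $y$-independent $a_{j,0}(x)$. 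Operation (c) splits the $t$-interval at a $y$-independent point and then locally re-centers the leftover, shrinking interval; there the oscillatory factor is bounded, hence complex-valued subanalytic (Definition~\ref{def complex valued}), and the resulting integral is constructible by stability of $\mathcal C$ under integration, landing in $\C_{\naive}^{\exp}(A_\theta)$.

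The cases then combine as follows. If $b_j\equiv+\infty$ and $\alpha_j>0$, operation (a) alone gives naive boundary terms and a superintegrable remainder. If $b_j\equiv+\infty$ and $\alpha_j=0$, operation (b) alone puts $T_j$ into the desired form with $a_j\equiv a_{j,0}(x)$, $b_j\equiv+\infty$. If $b_j<+\infty$ with $\alpha_j>0$ (so both limits tend to $+\infty$), operation (a) at both ends again yields naive terms and a superintegrable remainder. If $b_j<+\infty$ with $\alpha_j=\beta_j=0$, operation (b) normalizes the lower limit and operation (c) normalizes the upper one, giving a generator with $y$-independent limits plus a $\C_{\naive}^{\exp}$ correction. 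The remaining, hardest, case is $\alpha_j=0<\beta_j$: here I would first apply (a) a few times, purely to gain absolute convergence, then write $\int_{a_j}^{b_j}=\int_{a_j}^{+\infty}-\int_{b_j}^{+\infty}$, handling $\int_{b_j}^{+\infty}$ by (a) (lower limit $\to+\infty$, hence superintegrable or naive) and $\int_{a_j}^{+\infty}$ by (b) (lower limit bounded, upper limit $+\infty$, hence $y$-independent limits). After each step the cell decomposition is refined so that the new integrands re-prepare, which is allowed since only the existence of a partition is asserted.

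I expect the main obstacle to be precisely this mixed case, for two reasons. First, the improper splitting $\int_{a_j}^{b_j}=\int_{a_j}^{\infty}-\int_{b_j}^{\infty}$ must be legitimized even though the amplitude $t^{r_j}(\log t)^{s_j}$ does not decay when $r_j=0$; this forces the preliminary integrations by parts to secure absolute convergence before one splits. Second, one must resist declaring $e^{i\sigma_j t}$ subanalytic over a range that is merely ``bounded in $y$'': because the prepared coefficients may be unbounded in $x$, such a range need not be bounded in $t$, so subanalyticity of the oscillatory factor may be invoked only after a shift centering the interval at a subanalytic point and leaving a genuinely shrinking remaining interval.
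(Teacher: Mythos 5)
Your overall architecture is close to the paper's: integration by parts in $t$ to peel off naive boundary terms, normalization of bounded limits to their $y$-independent leading coefficients $a_{j,0}\left(x\right)$, $b_{j,0}\left(x\right)$ (your operations (b) and (c) play the role of the paper's restriction to $y>\widetilde{a}\left(x\right)$, where $|a_j-a_{j,0}|\leq 1$, followed by the additivity $\int_{a_j}^{b_j}=\int_{a_{j,0}}^{b_{j,0}}-\int_{a_{j,0}}^{a_j}+\int_{b_{j,0}}^{b_j}$ with naive corrections), and you correctly locate the crux in the mixed case $\alpha_j=0<\beta_j$. But your treatment of that case has a genuine gap. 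You propose to ``apply (a) a few times to gain absolute convergence, then write $\int_{a_j}^{b_j}=\int_{a_j}^{+\infty}-\int_{b_j}^{+\infty}$''. The obstruction to this splitting is not absolute convergence: it is that the integrand is not defined past $b_j\left(x,y\right)$. The amplitude $h_j$ is a $\psi_j$-function, i.e.\ a convergent power series in (among other things) $\left(t/b_j\left(x,y\right)\right)^{1/d_j}$, and that series has no meaning for $t>b_j$. Integration by parts does not cure this: differentiating $h_j$ in $t$ produces another $\psi_j$-function times $1/t$, still carrying the $\left(t/b_j\right)^{1/d_j}$-dependence, so no number of iterations of your operation (a) yields an integrand that extends to $t=+\infty$. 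Nor can you argue superintegrability directly instead: since $\alpha_j=0$, the factor $1/t\leq 1/a_j$ gained at each step is merely bounded and carries no decay in $y$, so the prefactor $y^{p_j}$ of the generator is untouched and the term need not be integrable in $y$.

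What is missing is the paper's Splitting Lemma (Lemma \ref{lem alpha =00003D0}): before splitting the interval one decomposes $h_j=\left(t/a_{j,0}\right)^{r_{j,-}}h_{j,-}+h_{j,0}+\left(t/\left(b_{j,0}y^{\beta_j}\right)\right)^{n_j}h_{j,+}$, where $h_{j,-}$ is a $\psi_{j,-}$-function (depending on $t$ only through $a_j/t$, hence defined for all $t>a_j$, with $r_{j,-}<-1$, so the splitting at $+\infty$ is legitimate for this piece), $h_{j,0}$ is a finite sum of monomials handled by integration by parts, and $h_{j,+}$ carries a prescribed high power $n_j$ of $t/\left(b_{j,0}y^{\beta_j}\right)$. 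For this last piece the decay in $y$ comes from a mechanism different from your operation (a): $\partial_t\left(t/\left(b_{j,0}y^{\beta_j}\right)\right)^{k/d_j}$ extracts a clean factor $1/\left(b_{j,0}y^{\beta_j}\right)$, so $n_j$ integrations by parts produce $y^{-n_j\beta_j}$, and choosing $n_j$ with $p_j-n_j\beta_j+\delta_j<-1$ (where $y^{\delta_j}$ is the order of the interval length) gives superintegrability. More generally, be careful that ``lowering the power of $t$'' makes a generator superintegrable over $\Pi_m\left(A\right)$ only when it translates into decay in $y$ --- via $1/t\lesssim y^{-\alpha_j}$ when $\alpha_j>0$, or via the factor above when $\beta_j>0$ --- since superintegrability is an integrability condition in the variable $y$, not in $t$; your operation (a) as stated conflates the two.
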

In order to prove the above proposition, we first need to establish
two technical lemmas (Lemmas \ref{lem alpha =00003D0} and \ref{lem: b_j finite, alpha >0}
below). Their aim is to reduce to the case of prepared generators
such that the variable $y$ only appears in the units in the prepared
form of the integration limits of the $\gamma$-function, that is $\alpha_j=\beta_j=0$. 
To achieve
this, our main tool will be to compute $\gamma$ by integrating by
parts. This will lead to rewriting the prepared generator as a finite
sum of generators which are either superintegrable, or in $\C_{\naive}^{\exp}\left(A_{\theta}\right)$,
or are in a \textquotedblleft{}better form\textquotedblright{} (for
example, the variable $y$ now only appears in one of the two integration
limits). We will also have to refine the partition into cells along
the way. This is harmless when we refine the partition with respect
to the variables $x$. When further partitioning with respect to the
variable $y$, we will possibly create new bounded cells, which can
be handled as in Remark \ref{rem: bounded cells}(\ref{enu: supercontin}).
\begin{defn}
\label{def: splitting}In the notation of Definition \ref{notation for prep},
suppose that $b_{j}<+\infty$. 
We let $\psi_{j,-}$ and $\psi_{j,+}$ be the maps
obtained from $\psi_{j}$ by omitting the last and the second-to-last component
of  $\psi_{j}$, respectively.
%
%
We extend this definition to the case $b_{j}\equiv+\infty$ by stipulating
that $\psi_{j,-}=\psi_{j}$ and $\psi_{j,+}=0$.\end{defn}
\begin{rem}
Notice that when $b_{j}\equiv+\infty$, $\alpha_{j}\text{\ and\ }\beta_{j}$
are necessarily non-negative, since $a_{j},b_{j}\geq1$.\end{rem}
\begin{lem}
[Splitting]\label{lem alpha =00003D0}
Let $f\in \mathcal{C}^{\exp}\left(X\right) $ for some subanalytic set $X\subseteq \RR^m$
 and let $A \in {\mathcal A}$ be one of the cells obtained from Proposition 
\ref{lem: supercontinuous}
satisfying $b =+\infty$. Let $T_j$ be one of the generators corresponding to
this $A$ satisfying $b_j < +\infty$, $\alpha_j=0$ and $\beta_j>0$
then we may write $T_{j}$ as a finite sum $\sum T_{k}$ of prepared
generators, where each $T_{k}$ is either superintegrable, or in $\C_{\naive}^{\exp}\left(A_{\theta}\right)$,
or is such that $b_{k}\equiv+\infty$ (and hence $h_{k}$ is a $\psi_{k,-}$-function).\end{lem}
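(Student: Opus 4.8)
The plan is to compute the inner integral $\gamma_j$ by integrating by parts, exploiting the oscillatory factor $\mathrm{e}^{\mathrm{i}\sigma_j t}$ as the part to be integrated. Since $b_j<+\infty$, Remark \ref{rem: bounded cells}(\ref{enu:r_j}) lets me assume $r_j=0$, so that $\gamma_j(x,y)=\int_{a_j(x,y)}^{b_j(x,y)} h_j(x,y,t)(\log t)^{s_j}\mathrm{e}^{\mathrm{i}\sigma_j t}\,\mathrm{d}t$. One integration by parts with $\mathrm{d}v=\mathrm{e}^{\mathrm{i}\sigma_j t}\,\mathrm{d}t$ and $u=h_j(\log t)^{s_j}$ produces the boundary terms
\[
\frac{1}{\mathrm{i}\sigma_j}\Bigl(h_j(x,y,b_j)(\log b_j)^{s_j}\mathrm{e}^{\mathrm{i}\sigma_j b_j}-h_j(x,y,a_j)(\log a_j)^{s_j}\mathrm{e}^{\mathrm{i}\sigma_j a_j}\Bigr)
\]
together with a residual integral of the same shape but with $u$ replaced by $\partial_t\bigl(h_j(\log t)^{s_j}\bigr)$.

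First I would dispose of the boundary terms. Evaluating a $\psi_j$-function and a power of $\log t$ at the subanalytic endpoints $t=a_j(x,y)$ and $t=b_j(x,y)$ yields subanalytic, hence constructible, functions of $(x,y)$ on $A_\theta$ (using $a_j=a_{j,0}u_{a_j}$, $b_j=b_{j,0}y^{\beta_j}u_{b_j}$ and expanding $\log b_j=\log b_{j,0}+\beta_j\log y+\log u_{b_j}$), while $\mathrm{e}^{\mathrm{i}\sigma_j a_j}$ and $\mathrm{e}^{\mathrm{i}\sigma_j b_j}$ are complex-valued subanalytic oscillations that I absorb into the phase, so that $\phi_j+\sigma_j a_j$ and $\phi_j+\sigma_j b_j$ remain in $\S(A_\theta)$. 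Thus each boundary term lies in $\C_{\naive}^{\exp}(A_\theta)$, and after re-preparing in $y$ via Proposition \ref{prop:prepSubConstr} it is a finite sum of prepared generators naive in $y$.

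Next I would iterate the integration by parts. A direct computation gives $\partial_t h_j=t^{-1}\tilde g$ for some $\psi_j$-function $\tilde g$ (differentiating the $t$-dependent components $(a_{j,0}/t)^{1/d_j}$ and $(t/(b_{j,0}y^{\beta_j}))^{1/d_j}$ each produces a factor $t^{-1}$ times a $\psi_j$-function), and $\partial_t(\log t)^{s_j}=s_j t^{-1}(\log t)^{s_j-1}$, so each integration by parts lowers the exponent of $t$ by exactly $1$ while keeping the integrand of the form $t^{r}(\psi_j\text{-function})(\log t)^{s}\mathrm{e}^{\mathrm{i}\sigma_j t}$. Performing it twice (which suffices, since we start at $r_j=0$) leaves a residual integral whose integrand decays like $t^{-2}$, hence is integrable near $+\infty$. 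With integrability at $+\infty$ in hand, I remove the offending upper limit by writing $\int_{a_j}^{b_j}=\int_{a_j}^{+\infty}-\int_{b_j}^{+\infty}$; here the hypothesis $\alpha_j=0$ is essential, since it keeps the lower limit $a_j=a_{j,0}u_{a_j}$ bounded in $y$. Each resulting piece is an integral over a range unbounded above, i.e. a generator on a cell with $b\equiv+\infty$, and any bounded cells created by refining in $y$ are superintegrable by Remark \ref{rem: bounded cells}(\ref{enu: supercontin}).

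I expect the main obstacle to be the re-expression of the integrand on these semi-infinite ranges: as $t\to+\infty$ the component $(t/(b_{j,0}y^{\beta_j}))^{1/d_j}$ ceases to be bounded, so the integrand is no longer literally a $\psi_j$-function there. I would resolve this by expanding the unit as a convergent power series in that variable and splitting off the finitely many terms whose $t$-exponent remains below $-1$ — these become prepared generators with $b\equiv+\infty$ whose amplitude is a genuine $\psi_{j,-}$-function (Definition \ref{def: splitting}) — while bounding the tail, which is superintegrable. The delicate point is to carry out this expansion so that each produced term, after tracking the accompanying powers of $y$ and $\log y$, honestly falls into one of the three allowed forms (superintegrable, in $\C_{\naive}^{\exp}(A_\theta)$, or with $b\equiv+\infty$).
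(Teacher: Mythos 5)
Your toolkit is the same as the paper's (integration by parts against $\mathrm{e}^{\mathrm{i}\sigma_{j}t}$, the additivity $\int_{a_{j}}^{b_{j}}=\int_{a_{j}}^{+\infty}-\int_{b_{j}}^{+\infty}$, and a series expansion of the unit), and your treatment of the boundary terms is correct. But there is a genuine gap at the step ``performing it twice \dots{} leaves a residual integral whose integrand decays like $t^{-2}$, hence is integrable near $+\infty$'', and it propagates into your final paragraph. Integrability of the inner integral in $t$ is not the goal: superintegrability means $T_{j}^{\abs}\left(x,\cdot\right)\in L^{1}$ in the variable $y$, and since $\alpha_{j}=0$ the lower limit $a_{j}\left(x,y\right)=a_{j,0}\left(x\right)u_{a_{j}}\left(x,y\right)$ stays bounded in $y$, so a gain of $t^{-1}$ per integration by parts yields \emph{no} decay in $y$ whatsoever. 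Concretely, after your two integrations by parts the residual generator is only bounded by $\left|f_{j}\left(x\right)\right|y^{p_{j}}\left(\log y\right)^{q_{j}}C\left(x\right)$, which is integrable in $y$ only if $p_{j}<-1$ --- and nothing controls $p_{j}$. The same defect appears in your tail estimate: the tail of the expansion in $\left(t/\left(b_{j,0}\left(x\right)y^{\beta_{j}}\right)\right)^{1/d_{j}}$ contributes a term of order $y^{p_{j}-\beta_{j}}\left(\log y\right)^{*}$, i.e.\ a single gain of $y^{-\beta_{j}}$, which need not make the exponent drop below $-1$. So the assertion that the tail ``is superintegrable'' is unjustified and false in general. (A secondary issue: applying $\int_{a_{j}}^{+\infty}-\int_{b_{j}}^{+\infty}$ to the \emph{full} residual is not even well defined, since the component $\left(t/\left(b_{j,0}y^{\beta_{j}}\right)\right)^{1/d_{j}}$ leaves the domain of convergence of the unit for $t>b_{j}$; you flag this, but it means the splitting must precede, not follow, the integration by parts.)

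The paper's proof repairs exactly this by reversing the order of operations and by making the number of integrations by parts depend on $p_{j},\beta_{j},\delta_{j}$. One first fixes $n_{j}$ with $p_{j}-n_{j}\beta_{j}+\delta_{j}<-1$ and splits $h_{j}=\left(t/a_{j,0}\right)^{r_{j,-}}h_{j,-}+h_{j,0}+\left(t/\left(b_{j,0}y^{\beta_{j}}\right)\right)^{n_{j}}h_{j,+}$, where $h_{j,-}$ is a $\psi_{j,-}$-function (so it extends to all $t>a_{j}$ and, carrying $t$-exponent $r_{j,-}<-1$, the additivity trick legitimately produces the $b_{k}\equiv+\infty$ terms), $h_{j,0}$ is a finite sum of monomials (handled by integration by parts, yielding naive terms plus terms with reduced $t$-powers), and $h_{j,+}$ is divisible by $\left(t/\left(b_{j,0}y^{\beta_{j}}\right)\right)^{n_{j}}$. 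For this last piece the key mechanism is not the gain in $t$ but the gain in $y$: each $t$-differentiation of $h_{j,+}$ produces a factor $\frac{1}{b_{j,0}\left(x\right)y^{\beta_{j}}}$, so $n_{j}$ integrations by parts produce $y^{-n_{j}\beta_{j}}$, and together with the interval length $y^{\delta_{j}}$ the choice of $n_{j}$ gives $T^{\abs}\left(x,y\right)\leq C\left(x\right)y^{p_{j}-n_{j}\beta_{j}+\delta_{j}}\left(\log y\right)^{*}$, hence superintegrability. Your argument is missing precisely this $y$-gain and the accompanying choice of $n_{j}$.
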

\begin{proof}
We consider a generator $T_{j}$ as in the statement of the lemma.
In the notation of Proposition \ref{lem: supercontinuous}, let $n_{j}\in\NN$
be such that
\begin{equation}
p_{j}-n_{j}\beta_{j}+\delta_{j}<-1.\label{eq:nj}
\end{equation}
 Our next aim is to write $h_{j}$ as a sum of three terms, depending
on the choice of $n_{j}$, as follows:
\begin{equation}
h_{j}\left(x,y,t\right)=\left(\frac{t}{a_{j,0}\left(x\right)}\right)^{r_{j,-}}h_{j,-}\left(x,y,t\right)+h_{j,0}\left(x,y,t\right)+\left(\frac{t}{b_{j,0}\left(x\right)y^{\beta_{j}}}\right)^{n_{j}}h_{j,+}\left(x,y,t\right),\label{eq:h}
\end{equation}
where $r_{j,-}<-1$ is rational, $h_{j,-}$ is a $\psi_{j,-}$-function,
$h_{j,+}$ is a $\psi_{j,+}$-function, and $h_{j,0}$ is a finite
sum of terms of the form $g\left(x,y\right)z^{k}$, where $g$ is
a $\psi$-function, $k\in\mathbb{N}$ and $z$ is either $\left(\frac{t}{b_{j}}\right)^{1/d_{j}}$ or  $\left(\frac{a_{j}}{t}\right)^{1/d_{j}}$.

In order to do this, we expand $h_{j}$ as a series in the variables
$\left(\frac{t}{b_{j}}\right)^{1/d_{j}},\left(\frac{a_{j}}{t}\right)^{1/d_{j}}$,
with $\psi$-functions as coefficients. Now, remembering that $b\equiv+\infty$
and $\alpha_{j}=0$, for each $k,l\in\mathbb{N}$ write
\[
\left(\frac{t}{b_{j,0}\left(x\right)y^{\beta_{j}}}\right)^{k/d_{j}}\left(\frac{a_{j,0}\left(x\right)}{t}\right)^{l/d_{j}}=\begin{cases}
{ \left(\frac{a_{j,0}\left(x\right)}{b_{j,0}\left(x\right)y^{\beta_{j}}}\right)^{l/d_{j}}\left(\frac{t}{b_{j,0}\left(x\right)y^{\beta_{j}}}\right)^{\left(k-l\right)/d_{j}},} & \text{if \ensuremath{k\geq l},}\vspace*{10pt}\\
{ \left(\frac{a_{j,0}\left(x\right)}{b_{j,0}\left(x\right)y^{\beta_{j}}}\right)^{k/d_{j}}\left(\frac{a_{j,0}\left(x\right)}{t}\right)^{\left(l-k\right)/d_{j}},} & \text{if \ensuremath{k<l},}
\end{cases}
\]
and
\begin{equation}
\frac{a_{j,0}\left(x\right)}{b_{j,0}\left(x\right)y^{\beta_{j}}}=\left[\frac{a_{j,0}\left(x\right)}{b_{j,0}\left(x\right)a\left(x\right)^{\beta_{j}}}\right]\left(\frac{a\left(x\right)}{y}\right)^{\beta_{j}}.\label{eq:presplit}
\end{equation}
The quotient on left side of (\ref{eq:presplit}) is bounded, since
it is equal to the bounded quotient $\frac{a_{j}}{b_{j}}$ multiplied
by a unit. Moreover, for each $x$ we may take $y$ to be arbitrarily
close to $a\left(x\right)$, thereby making $\frac{a\left(x\right)}{y}$ arbitrarily close
to $1$. It follows that the function in square brackets on the right
side of (\ref{eq:presplit}), which does not depend on $y$, is also
bounded and therefore can be included in the list of functions $c_{1}\left(x\right),\ldots,c_{N}\left(x\right)$
in $\psi$.

Therefore we can write $h_{j}$ as the sum of a $\psi_{j,-}$-function
of the form $\sum_{k\geq1}g_{j,k}\left(x,y\right)\left(\frac{a_{j,0}\left(x\right)}{t}\right)^{k/d_{j}}$
plus a $\psi_{j,+}$-function of the form $\sum_{k\geq0}\widetilde{g}_{j,k}\left(x,y\right)\left(\frac{t}{b_{j,0}\left(x\right)y^{\beta_{j}}}\right)^{k/d_{j}}$,
where $g_{j,k},\widetilde{g}_{j,k}$ are $\psi$-functions. If we set
\[
h_{j,-}=\sum_{k\geq d_{j}+1}g_{j,k}\left(x,y\right)\left(\frac{a_{j,0}\left(x\right)}{t}\right)^{\frac{k}{d_{j}}-1-\frac{1}{d_{j}}},\ h_{j,+}=\sum_{k\geq d_{j}n_{j}}\widetilde{g}_{j,k}\left(x,y\right)\left(\frac{t}{b_{j,0}\left(x\right)y^{\beta_{j}}}\right)^{\frac{k}{d_{j}}-n_{j}},
\]
we obtain Equation (\ref{eq:h}) with $r_{j,-}=-1-\frac{1}{d_{j}}$
and
\[
h_{j,0}=\sum_{k=1}^{d_{j}}g_{j,k}\left(x,y\right)\left(\frac{a_{j,0}\left(x\right)}{t}\right)^{k/d_{j}}+\sum_{k=0}^{d_{j}n_{j}-1}\widetilde{g}_{j,k}\left(x,y\right)\left(\frac{t}{b_{j,0}\left(x\right)y^{\beta_{j}}}\right)^{k/d_{j}}.
\]
Hence we can write
\begin{eqnarray*}
\Gamma_{j,-}\left(x,y,t\right) & = & \left(\frac{t}{a_{j,0}\left(x\right)}\right)^{r_{j,-}}h_{j,-}\left(x,y,t\right)\left(\log t\right)^{s_{j}}\mathrm{e}^{\mathrm{i}\sigma_{j}t},\\
\Gamma_{j,0}\left(x,y,t\right) & = & h_{j,0}\left(x,y,t\right)\left(\log t\right)^{s_{j}}\mathrm{e}^{\mathrm{i}\sigma_{j}t},\\
\Gamma_{j,+}\left(x,y,t\right) & = & \left(\frac{t}{b_{j,0}\left(x\right)y^{\beta_{j}}}\right)^{n_{j}}h_{j,+}\left(x,y,t\right)\left(\log t\right)^{s_{j}}\mathrm{e}^{\mathrm{i}\sigma_{j}t}
\end{eqnarray*}
and
\[
T_{j}\left(x,y\right)=T_{j,-}\left(x,y\right)+T_{j,0}\left(x,y\right)+T_{j,+}\left(x,y\right),
\]
where one obtains $T_{j,-}$, $T_{j,0}$ and $T_{j,+}$ from $T_{j}$
by replacing $\Gamma_{j}$ with $\Gamma_{j,-}$, $\Gamma_{j,0}$ and
$\Gamma_{j,+}$, respectively.

To handle $T_{j,-}$, note that, since $r_{j,-}<-1$, we can use the
additivity relation
\[
\int_{a_{j}\left(x,y\right)}^{b_{j}\left(x,y\right)}\Gamma_{j,-}\left(x,y,t\right)\,\mathrm{d}t=\int_{a_{j}\left(x,y\right)}^{+\infty}\Gamma_{j,-}\left(x,y,t\right)\,\mathrm{d}t-\int_{b_{j}\left(x,y\right)}^{+\infty}\Gamma_{j,-}\left(x,y,t\right)\,\mathrm{d}t.
\]
Therefore we can replace $T_{j,-}$ by a sum of two prepared generators
for $\C^{\exp}\left(A_{\theta}\right)$ whose $\gamma$-functions are defined
by integrals with $+\infty$ as the upper limit of integration.

To handle $T_{j,0}$, compute $\int_{a_{j}\left(x,y\right)}^{b_{j}\left(x,y\right)}\Gamma_{j,0}\left(x,y,t\right)\,\mathrm{d}t$
by integrating by parts, where one differentiates $h_{j,0}\left(x,y,t\right)\left(\log t\right)^{s_{j}}$
and integrates $\mathrm{e}^{\mathrm{i}\sigma_{j}t}$. This has the effect
of replacing $T_{j,0}\left(x,y\right)$ with a sum of terms that are either prepared
generators for $\C_{\naive}^{\exp}\left(A_{\theta}\right)$ or are of the same
form as $T_{j,0}$ but with the powers of $t$ in $h_{j,0}$ reduced
by $1$. By repeating this strategy finitely many times, we reduce
to the case that all powers of $t$ in $h_{j,0}$ are less than $-1$,
which can then be handled as we did for $T_{j,-}$.

It remains to handle $T_{j,+}$. Recall that
\begin{equation}
\left(\frac{t}{b_{j,0}\left(x\right)y^{\beta_{j}}}\right)^{n_{j}}h_{j,+}\left(x,y,t\right)\left(\log t\right)^{s_{j}}=\left(\sum_{k=n_{j}d_{j}}^{+\infty}\widetilde{g}_{j,k}\left(x,y\right)\left(\frac{t}{b_{j,0}\left(x\right)y^{\beta_{j}}}\right)^{k/d_{j}}\right)\left(\log t\right)^{s_{j}}.\label{eq:h+}
\end{equation}
Differentiating the right side of (\ref{eq:h+}) with respect to $t$
gives
\[
\frac{1}{b_{j,0}\left(x\right)y^{\beta_{j}}} \left( \begin{array}{l}
\left( \sum_{k=n_{j}d_{j}}^{+\infty}\frac{k}{d_{j}}\widetilde{g}_{j,k}\left(x,y\right)\left(\frac{t}{b_{j,0}\left(x\right)y^{\beta_{j}}}\right)^{k/d_{j}-1}\right)\left(\log t\right)^{s_{j}}\\
+s_{j}\left(\sum_{k=n_{j}d_{j}}^{+\infty}\widetilde{g}_{j,k}\left(x,y\right)\left(\frac{t}{b_{j,0}\left(x\right)
y^{\beta_{j}}}\right)^{k/d_{j}-1}\right)\left(\log t \right)^{s_{j}-1}
\end{array} \right).
\]
Therefore, if we compute $\int_{a_{j}\left(x,y\right)}^{b_{j}\left(x,y\right)}\Gamma_{j,+}\left(x,y,t\right)\,\mathrm{d}t$
by integrating by parts $n_{j}$ times, where one begins by differentiating
the left side of (\ref{eq:h+}) and integrating $\mathrm{e}^{\mathrm{i}\sigma_{j}t}$
as before, one reduces to studying prepared generators for $\C^{\exp}\left(A_{\theta}\right)$
of the form
\[
T\left(x,y\right)=f_{j}\left(x\right)y^{p_{j}-n_{j}\beta_{j}}\left(\log y\right)^{q_{j}}\mathrm{e}^{\mathrm{i}\phi_{j}\left(x,y\right)}\int_{a_{j}\left(x,y\right)}^{b_{j}\left(x,y\right)}h\left(x,y,t\right)\left(\log t\right)^{s}\mathrm{e}^{\mathrm{i}\sigma_{j}t}\,\mathrm{d}t,
\]
where $h$ is a $\psi_{j,+}$-function and $s$ is a rational number. Since $h$ is bounded and
the length of the interval $\left(a_{j}\left(x,y\right),b_{j}\left(x,y\right)\right)$ is of order
$y^{\delta_{j}}$ as $y\to+\infty$ (see Definition \ref{notation for prep}
and Proposition \ref{lem: supercontinuous}), it follows that for
each $x\in\Pi_{m}\left(A\right)$ there is a constant $C\left(x\right)>0$
such that
\[
T^{\abs}\left(x,y\right)\leq C\left(x\right)y^{p_{j}-n_{j}\beta_{j}+\delta_{j}}\left(\log y\right)^{q_{j}+s}.
\]
Hence, by (\ref{eq:nj}) we can conclude that $T$ is superintegrable.
\end{proof}

\begin{lem}
\label{lem: b_j finite, alpha >0}In the notation of Proposition \ref{lem: supercontinuous},
suppose $b\equiv+\infty$; if $T_{j}$ is a prepared generator with
the property that $\alpha_{j}>0$, then we may write $T_{j}$ as a
finite sum of prepared generators which are either superintegrable
or in $\C_{\naive}^{\exp}\left(A_{\theta}\right)$. \end{lem}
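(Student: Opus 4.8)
The plan is to exploit the oscillatory factor $e^{\mathrm{i}\sigma_j t}$ through repeated integration by parts, using that the hypothesis $\alpha_j>0$ forces the whole range of integration $t\in(a_j(x,y),b_j(x,y))$ to be pushed to $+\infty$ as $y\to+\infty$. By Remark \ref{rem: bounded cells}(\ref{enu:r_j}) I may assume that $r_j=0$ when $b_j<+\infty$ and that $r_j<-1$ when $b_j\equiv+\infty$; in the former case $a_j<b_j$ moreover forces $\alpha_j\le\beta_j$. Writing $\gamma_j(x,y)=\int_{a_j}^{b_j}t^{\,r}h_j(x,y,t)(\log t)^{s}e^{\mathrm{i}\sigma_j t}\,\mathrm{d}t$ as in Proposition \ref{lem: supercontinuous}, I would integrate by parts, integrating $e^{\mathrm{i}\sigma_j t}$ and differentiating $t^{\,r}h_j(\log t)^{s}$.

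Two things happen. First, the boundary contribution at $t=a_j(x,y)$, and (when $b_j<+\infty$) at $t=b_j(x,y)$, produces after multiplication by the coefficient $f_j(x)y^{p_j}(\log y)^{q_j}e^{\mathrm{i}\phi_j(x,y)}$ of $T_j$ a term of the shape (constructible in $(x,y)$)$\cdot e^{\mathrm{i}(\phi_j+\sigma_j a_j)}$, respectively (constructible)$\cdot e^{\mathrm{i}(\phi_j+\sigma_j b_j)}$. Here I would check that evaluating the $\psi_j$-function $h_j$ at $t=a_j$ or $t=b_j$ returns a bounded subanalytic (hence a $\psi$-) function: the arguments $(a_{j,0}y^{\alpha_j}/t)^{1/d_j}$ and $(t/(b_{j,0}y^{\beta_j}))^{1/d_j}$ specialize to units or to $y^{(\alpha_j-\beta_j)/d_j}$, which stays bounded precisely because $\alpha_j\le\beta_j$. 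Since $a_j^{\,\rho}$ and $b_j^{\,\rho}$ are subanalytic, $(\log a_j)^{s}$ and $(\log b_j)^{s}$ are polynomials in $\log y$ with constructible coefficients, and $\phi_j+\sigma_j a_j,\ \phi_j+\sigma_j b_j\in\S(A_\theta)$ (as $a_j,b_j$ are subanalytic), each boundary term lies in $\C_{\naive}^{\exp}(A_\theta)$. After preparing its constructible coefficient in $y$ via Proposition \ref{prop:prepSubConstr} (refining $\A$, with any newly created bounded cells dispatched by Remark \ref{rem: bounded cells}(\ref{enu: supercontin})), it becomes a prepared generator that is naive in $y$.

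Second, the remaining integral has the same form but with the exponent of $t$ lowered by one: differentiating a $\psi_j$-function in $t$ returns $t^{-1}$ times another $\psi_j$-function, since $\partial_t(a_{j,0}y^{\alpha_j}/t)^{1/d_j}=-\tfrac{1}{d_j}t^{-1}(a_{j,0}y^{\alpha_j}/t)^{1/d_j}$ and similarly for the other $t$-component, so $\partial_t(t^{\,r}h_j(\log t)^{s})$ equals $t^{\,r-1}$ times a finite sum of $\psi_j$-functions against powers $(\log t)^{s'}$ with $s'\le s$. The reason iterating terminates is the following growth estimate: on the range of integration $t\ge a_j\ge c\,y^{\alpha_j}$ for large $y$, so once the exponent of $t$ has dropped below $-1$ the integral is dominated by its lower endpoint and $\gamma_j^{\abs}(x,y)\lesssim C(x)\,y^{\alpha_j(r+1)}(\log y)^{s}$; because $\alpha_j>0$, each further integration by parts lowers the order of growth in $y$ of $T^{\abs}$ by $\alpha_j$. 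Hence after finitely many steps the exponent of $y$ in the bound for $T^{\abs}$ is $<-1$, and the remaining integral is a prepared generator that is superintegrable over $\Pi_m(A)$.

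Collecting the finitely many naive boundary terms together with the final superintegrable remainder then expresses $T_j$ as the required finite sum. I expect the main obstacle to be the second paragraph: verifying rigorously that the endpoint specializations of $h_j$ stay bounded and subanalytic (which is exactly where $\alpha_j\le\beta_j$ enters) and that the two exponentials recombine into a single subanalytic phase, so that the boundary terms genuinely lie in $\C_{\naive}^{\exp}(A_\theta)$. The termination estimate of the third paragraph is then a routine consequence of $\alpha_j>0$, the only mild care being at the transitional exponent $r=-1$, where the lower-endpoint bound carries an extra factor $\log y$ but no extra power of $y$, so one simply performs one additional integration by parts before the estimate becomes summable.
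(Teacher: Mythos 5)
Your argument is correct and is essentially the paper's own proof: repeated integration by parts (integrating $\mathrm{e}^{\mathrm{i}\sigma_j t}$ and differentiating $t^{r}h_j(\log t)^{s}$) sends the boundary contributions into $\C_{\naive}^{\exp}\left(A_{\theta}\right)$, and since $t\geq a_j\gtrsim y^{\alpha_j}$ with $\alpha_j>0$, each step improves the $y$-growth of the remaining integral by $\alpha_j$ until it becomes superintegrable. The only difference is bookkeeping: the paper records the gain by rewriting $\frac{1}{t}=\frac{1}{a_{j,0}\left(x\right)y^{\alpha_j}}\cdot\frac{a_{j,0}\left(x\right)y^{\alpha_j}}{t}$ so that the exponent of $t$ stays fixed while $p_j$ drops to $p_j-\alpha_j$ at each step, whereas you let the exponent of $t$ decrease and estimate the tail of the $t$-integral at its lower endpoint --- the two computations are equivalent.
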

\begin{proof}
We consider a generator $T_{j}$ as in the statement of the lemma.

In the notation of Proposition \ref{lem: supercontinuous}, suppose
first that $b_{j}<+\infty$. By Remark \ref{rem: bounded cells}(\ref{enu:r_j}),
we have $r_{j}=0$.

First assume that
\[
p_{j}+\delta_{j}<-1.
\]
Since $h_{j}$ is bounded by a constant and the length of the interval
$\left(a_{j}\left(x,y\right),b_{j}\left(x,y\right)\right)$ is of order $y^{\delta_{j}}$ as $y\to+\infty$,
it follows that for each $x\in\Pi_{m}\left(A\right)$ there is a constant $C\left(x\right)>0$
such that
\begin{equation}
T_{j}^{\abs}\left(x,y\right)\leq C\left(x\right)y^{p_{j}+\delta_{j}}\left(\log y\right)^{q_{j}+s_{j}}.\label{eq:TsuperO}
\end{equation}
So $T_{j}$ is superintegrable, and we are done.

So now assume that $p_{j}+\delta_{j}\geq-1$. Note that
\[
\PD{}{}{t}\left(\left(\frac{a_{j,0}\left(x\right)y^{\alpha_{j}}}{t}\right)^{1/d_{j}}\right)=-\frac{1}{d_{j}}\left(\frac{a_{j,0}\left(x\right)y^{\alpha_{j}}}{t}\right)^{1/d_{j}}\frac{1}{t},
\]
and that
\[
\PD{}{}{t}\left(\left(\frac{t}{b_{j,0}\left(x\right)y^{\beta_{j}}}\right)^{1/d_{j}}\right)=\frac{1}{d_{j}}\left(\frac{t}{b_{j,0}\left(x\right)y^{\beta_{j}}}\right)^{1/d_{j}}\frac{1}{t}.
\]
Write $h_{j}=H_{j}\circ\psi_{j}$, where $H_{j}\left(X_{1},\ldots,X_{N},Y,T_{1},T_{2}\right)$
is a power series converging in a neighbourhood of the closure of the
image of $\psi_{j}$. Thus we can factor out $1/t$ every time we
differentiate the expression $h_{j}\left(x,y,t\right)\left(\log t\right)^{s_{j}}$ with
respect to $t$. Moreover, the factor $1/t$ may be written as
\[
\frac{1}{t}=\frac{1}{a_{j,0}\left(x\right)y^{\alpha_{j}}}\left(\frac{a_{j,0}\left(x\right)y^{\alpha_{j}}}{t}\right).
\]
Therefore if we compute $\gamma_{j}\left(x,y,t\right)$ by integrating by parts,
where one integrates
$\mathrm{e}^{\mathrm{i}\sigma_{j}t}$ and
differentiates $h_{j}\left(x,y,t\right)\left(\log t\right)^{s_{j}}$, we can express $T_{j}$ as a finite
sum of terms, each of which is either in $\C_{\text{naive}}^{\exp}\left(A_{\theta}\right)$
or is of the same form as $T_{j}$, but with $p_{j}$ replaced by
$p_{j}-\alpha_{j}$. Therefore by repeating this strategy finitely
many times, we sufficiently decrease the value of $p_{j}$ in order
to reduce to the case that $p_{j}+\delta_{j}<-1$, and we are done
for the case $b_{j}<+\infty$.

It remains to consider the case $b_{j}\equiv+\infty$. By Remark \ref{rem: bounded cells}(\ref{enu:r_j}),
$r_{j}<-1$ and $h_{j}$ is a $\psi_{j,-}$-function. This case can
be handled very similarly to the previous case: one decreases the
value of $p_{j}$ by repeatedly integrating by parts in order to additionally
assume that
\[
p_{j}<-1.
\]
Since
\[
\gamma_{j}^{\abs}\left(x,y\right)\leq M\int_{1}^{+\infty}t^{r_{j}}\left(\log t\right)^{s_{j}}\,\mathrm{d}t<+\infty,
\]
where $|h_{j}|\leq M$, this shows that the analogue of (\ref{eq:TsuperO})
is now
\[
T_{j}^{\abs}\left(x,y\right)\leq C\left(x\right)y^{p_{j}}\left(\log y\right)^{q_{j}},
\]
hence $T_{j}$ is superintegrable.
\end{proof}

We now complete the proof of Proposition \ref{lem no y in the bounds}.
\begin{proof}
[Proof of Proposition \ref{lem no y in the bounds}]Let $b\equiv+\infty$
and $T_{j}$ be as in Proposition \ref{lem: supercontinuous}. If
$T_{j}$ is either superintegrable or in $\C_{\naive}^{\exp}\left(A_{\theta}\right)$,
then we are done. Otherwise, thanks to the lemmas above we may assume
that $\alpha_{j}=\beta_{j}=0$ (recall that if $b_{j}\equiv+\infty$
we have set $b_{j,0}\equiv+\infty$ and $\beta_{j}=0$). To see this,
if $\alpha_{j}>0$, then apply Lemma \ref{lem: b_j finite, alpha >0}.
Suppose now that $\alpha_{j}=0$. If $b_{j}\equiv+\infty$ or $\beta_{j}=0$,
then we are done. Otherwise, apply Lemma \ref{lem alpha =00003D0}
and again Lemma \ref{lem: b_j finite, alpha >0}.

We first establish the following claim: up to replacing $a\left(x\right)$
with some analytic subanalytic $\widetilde{a}\left(x\right)\geq a\left(x\right)$
and up to further partitioning with respect to the variables $x$,
we may assume that for all $j\in J$,
\begin{enumerate}
\item $|a_{j}\left(x,y\right)-a_{j,0}\left(x\right)|\leq1$ and $|b_{j}\left(x,y\right)-b_{j,0}\left(x\right)|\leq1$
on $A_{\theta}$, and
\item the function $h_{j}$ extends to a $\psi_{j}$-function with $\psi_{j}$
now defined on the set
\[
\widetilde{A_{j}}=\{\left(x,y,t\right):\ \left(x,y\right)\in A_{\theta},\min\{a_{j,0}\left(x\right),a_{j}\left(x,y\right)\}<t<\max\{b_{j,0}\left(x\right),b_{j}\left(x,y\right)\}\}.
\]

\end{enumerate}
To establish the claim, for each $j\in J$, fix a subanalytic neighbourhood
$U_{j}$ of the closure of $\psi_{j}\left(A_{j}\right)$ such that $h_{j}=H_{j}\circ\psi_{j}$
for some power series $H_{j}$ centred at the origin and converging
on $U_{j}$. Recall that every $\psi$ is bounded and subanalytic.
Hence, for each $\psi$-unit $u\in\{u_{a_{j}},u_{b_{j}},u_{c_{j}}\}$,
$\lim_{y\to+\infty}u\left(x,y\right)$ is a well-defined subanalytic function
of $x$ (which may be supposed to be analytic, up to refining the
partition), and therefore may be considered as a part of the 
corresponding coefficient
function in $\{a_{j,0},b_{j,0},c_{j,0}\}$. We may therefore assume
that for each $u\in\{u_{a_{j}},u_{b_{j}},u_{c_{j}}\}$,
\[
\lim_{y\to+\infty}u\left(x,y\right)=1.
\]
In particular, $\lim_{y\to+\infty}a_{j}\left(x,y\right)=a_{j,0}\left(x\right)$ and $\lim_{y\to+\infty}b_{j}\left(x,y\right)=b_{j,0}\left(x\right)$.
Hence, for each $x\in\Pi_{m}\left(A\right)$ there exists a real number $\widetilde{a}\left(x\right)\geq a\left(x\right)$
such that for all $y>\widetilde{a}\left(x\right)$ and all $j\in J$,
we have that $|a_{j}\left(x,y\right)-a_{j,0}\left(x\right)|\leq1$, that $|b_{j}\left(x,y\right)-b_{j,0}\left(x\right)|\leq1$
and that $\psi_{j}\left(\tld{A}_{j}\right)\subseteq U_{j}$. By definable choice
(see for example \cite[Chapter 6]{vdd:tame}), we may take $\widetilde{a}$
to be a subanalytic function of $x$ (and we may be suppose $\widetilde{a}$
to be analytic, up to refining the partition). This establishes the
claim.

We may therefore partition $A_{\theta}$ according to the conditions
$a\left(x\right)<y<\widetilde{a}\left(x\right)$ and $\widetilde{a}\left(x\right)<y$. We are done on the subset
of $A_{\theta}$ defined $a\left(x\right)<y<\widetilde{a}\left(x\right)$ (as in the case of
$b<+\infty$ treated in Proposition \ref{lem: supercontinuous}),
so it suffices to consider the subset of $A_{\theta}$ defined by
$y>\widetilde{a}\left(x\right)$. Therefore up to changing notation, we may simply
assume that $\widetilde{a}\left(x\right)=a\left(x\right)$.

Now, write
\[
\int_{a_{j}\left(x,y\right)}^{b_{j}\left(x,y\right)}\Gamma_{j}\left(x,y,t\right)\,\mathrm{d}t=\int_{a_{j,0}\left(x\right)}^{b_{j,0}\left(x\right)}\Gamma_{j}\left(x,y,t\right)\,\mathrm{d}t-\int_{a_{j,0}\left(x\right)}^{a_{j}\left(x,y\right)}\Gamma_{j}\left(x,y,t\right)\,\mathrm{d}t+\int_{b_{j,0}\left(x\right)}^{b_{j}\left(x,y\right)}\Gamma_{j}\left(x,y,t\right)\,\mathrm{d}t
\]
(note that when $b_{j}\equiv+\infty$ the last term of the sum does
not appear). Remark that
\[
\int_{a_{j,0}\left(x\right)}^{a_{j}\left(x,y\right)}\Gamma_{j}\left(x,y,t\right)\,\mathrm{d}t=\mathrm{e}^{\mathrm{i}\sigma_{j}a_{j,0}\left(x\right)}\int_{a_{j,0}\left(x\right)}^{a_{j}\left(x,y\right)}t^{r_{j}}h_{j}\left(x,y,t\right)\left(\log t\right)^{s_{j}}\mathrm{e}^{\mathrm{i}\sigma_{j}\left(t-a_{j,0}\left(x\right)\right)}\,\mathrm{d}t,
\]
and, thanks to the claim, $|a_{j}\left(x,y\right)-a_{j,0}\left(x\right)|\leq1$ on $A_{\theta}$.
Hence, $\mathrm{e}^{\mathrm{i}\sigma_{j}\left(t-a_{j,0}\left(x\right)\right)}$ is a complex-valued
subanalytic function (see Definition \ref{def complex valued}) over
its domain of integration. So the integral on the right side of the
above equation is a complex-valued constructible function on $A_{\theta}$,
because $\C$ is stable under integration (see \cite{cluckers-miller:stability-integration-sums-products,cluckers_miller:loci_integrability}).
This shows that $\left(x,y\right)\mapsto\int_{a_{j,0}\left(x\right)}^{a_{j}\left(x,y\right)}\Gamma_{j}\left(x,y,t\right)\,\mathrm{d}t$
is in $\C_{\naive}^{\exp}\left(A_{\theta}\right)$. For similar reasons, $\left(x,y\right)\mapsto\int_{b_{j,0}\left(x\right)}^{b_{j}\left(x,y\right)}\Gamma_{j}\left(x,y,t\right)\,\mathrm{d}t$
is also in $\C_{\naive}^{\exp}\left(A_{\theta}\right)$.
\end{proof}

We are now ready to finish the proof of the Preparation Theorem. In
view of Proposition \ref{lem no y in the bounds}, it only remains
to show that those generators for which the variable $y$ does not
appear in the integration limits of the $\gamma$-function can be
expressed as finite sums of generators which are either superintegrable
or naive in $y$. Moreover, we need ensure that Property (\ref{enu:2b})
in the statement of Theorem \ref{thm:prepExpConstr} is also satisfied.
\begin{proof}
[Proof of Theorem \ref{thm:prepExpConstr}] Let $b\equiv+\infty$
and consider a generator $T_{j}$ as in the statement of Proposition
\ref{lem no y in the bounds}, which is neither superintegrable nor
in $\C_{\naive}^{\exp}\left(A_{\theta}\right)$. Thus $T_{j}$ is such that $\gamma_{j}\left(x,y\right)=\int_{a_{j,0}\left(x\right)}^{b_{j,0}\left(x\right)}\Gamma_{j}\left(x,y,t\right)\,\mathrm{d}t$
(where we also allow the possibility $b_{j,0}\equiv+\infty$) and,
since $\alpha_{j}=\beta_{j}=0$, the variable $y$ now only appears
in the component $\left(\frac{a\left(x\right)}{y}\right)^{1/d}$ of
$\psi_{j}$ (see Equations (\ref{eq: psi}) and (\ref{eq: psi_j})).
Hence, we can now expand $h_{j}\left(x,y,t\right)$ as a power series
in the variable $\left(a\left(x\right)/y\right)^{1/d}$ with coefficients in the variables
$\left(x,t\right)$.
The powers of $y$ which appear in $T_j$ are thus of the form 
$p_j-\frac{n}{d}$, where $n$ is the summation index in the power series expansion 
of $h_j$. Since finitely many of such powers are greater than or equal to $-1$, we can 
write $T_{j}$ as a sum of finitely many terms
that are naive in $y$ plus a final term of the form
\[
f_{j}\left(x\right)y^{p}\left(\log y\right)^{q_{j}}\mathrm{e}^{\mathrm{i}\phi_{j}\left(x,y\right)}\int_{a_{j,0}\left(x\right)}^{b_{j,0}\left(x\right)}t^{r_{j}}h\left(x,y,t\right)\left(\log t\right)^{s_{j}}\mathrm{e}^{\mathrm{i}\sigma_{j}t}\,\mathrm{d}t
\]
for some rational $p<-1$ and $\psi_{j}$-function $h$.
This final
term is clearly superintegrable, since $h$ is bounded.

Summing up, we have written $f\circ P_{\theta}\left(x,y\right)$ as a finite
sum of generators which are either superintegrable or of the form
\begin{equation}
T_{j}\left(x,y\right)=f_{j}\left(x\right)y^{r_{j}}\left(\log y\right)^{s_{j}}\mathrm{e}^{\mathrm{i}\phi_{j}\left(x,y\right)},\label{eq:naive gen}
\end{equation}
where $f_{j}\in\C^{\exp}\left(\Pi_{m}\left(A\right)\right)$, $r_{j}\in\QQ\cap[-1,+\infty)$,
$s_{j}\in\NN$, and $\phi_{j}\in\mathcal{S}\left(A_{\theta}\right)$.

It remains to prove Property (\ref{enu:2b}) in the statement if Theorem
\ref{thm:prepExpConstr}.

Let $J'=\{ j:\ T_{j}\ \text{is\ as\ in\ }\eqref{eq:naive gen}\} $
and apply Proposition \ref{prop:prepSubConstr} to the collection
$\{ \phi_{j}:\ j\in J'\} $. Focus on a cell $A'=\{\left(x,y\right):\ x\in\Pi_{m}\left(A'\right),a'\left(x\right)<\sigma'\left(y-\theta'\left(x\right)\right)^{\tau'}<b'\left(x\right)\}\subseteq A_{\theta}$
that this constructs, along with its associated centre $\theta'$
and map $\psi'$ given by
\[
\begin{aligned}\psi'\left(x,y\right)=\left(c_{1}'\left(x\right),\ldots,c'_{N'}\left(x\right),\left(\frac{a'\left(x\right)}{y}\right)^{1/d'},\left(\frac{y}{b'\left(x\right)}\right)^{1/d'}\right) & \ \text{if}\ b'<+\infty,\\
\text{and}\ \psi'\left(x,y\right)=\left(c_{1}'\left(x\right),\ldots,c'_{N'}\left(x\right),\left(\frac{a'\left(x\right)}{y}\right)^{1/d'}\right) & \ \text{if}\ b'\equiv+\infty
\end{aligned}
\]
on
\[
A'_{\theta'}=\{\left(x,y\right):\ x\in\Pi_{m}\left(A'\right),a'\left(x\right)<y<b'\left(x\right)\}.
\]

First suppose that $\theta'\neq0$. Then the closure of $\{y/\theta'\left(x\right):\ \left(x,y\right)\in A'\}$
is a compact subset of $\left(0,+\infty\right)$, so each of the fibres $A'_{x}$
is bounded above. We are then done on $A'$ by Remark \ref{rem: bounded cells}(\ref{enu: supercontin}).

Now suppose that $\theta'=0$. Because $A'_{x}\subseteq\left(1,+\infty\right)$
for each $x$, it follows that $\sigma'=\tau'=1$. Thus $A'=\{\left(x,y\right):\ x\in\Pi_{m}\left(A'\right),a'\left(x\right)<y<b'\left(x\right)\}$
with $1\leq a\leq a'<b'$. When $b'<+\infty$, we are again done on
$A'$, so assume that $b'\equiv+\infty$. We may assume that the list
of functions $c'_{1},\ldots,c'_{N'}$ contains $c_{1},\ldots,c_{N}$
and also $\left(a\left(x\right)/a'\left(x\right)\right)^{1/d}$. We may also assume that $d'$ was
chosen so that $1/d$ is an integer multiple of $1/d'$. So because
\[
\left(\frac{a\left(x\right)}{y}\right)^{1/d}=\left(\frac{a\left(x\right)}{a'\left(x\right)}\right)^{1/d}\left(\frac{a'\left(x\right)}{y}\right)^{1/d},
\]
it follows that each component of $\psi$ is a $\psi'$-function.
Therefore to simplify notation, we may simply assume that $A'=A_{\theta}$
and that $\psi'=\psi$.

Hence, on $A_{\theta}$ we can write, for all $j\in J'$,
\[
\phi_{j}\left(x,y\right)=\phi_{j,0}\left(x\right)y^{l_{j}}u_{j}\left(x,y\right),
\]
where $\phi_{j,0}\in\mathcal{S}\left(\Pi_{m}\left(A\right)\right)$
is analytic, $l_{j}\in\mathbb{Q}$ (an integer multiple of $1/d$)
and $u_{j}$ is a $\psi$-unit. We expand the unit $u_{j}$ with respect
to the variable $\left(\frac{a\left(x\right)}{y}\right)^{1/d}$ and
multiply by $y^{l_{j}}$, so that we can rewrite the above equation
as
\[
\phi_{j}\left(x,y\right)=\phi_{j,+}\left(x,y\right)+\phi_{j,-}\left(x,y\right),
\]
 where $\phi_{j,+}\in\mathcal{S}\left(\Pi_{m}\left(A\right)\right)\left[y^{1/d}\right]$ and $\phi_{j,-}\left(x,y\right)=\phi_{j,0}\left(x\right)\left(\frac{a\left(x\right)}{y}\right)^{1/d}\widetilde{u}_{j}\left(x,y\right)$,
for some $\psi$-unit $\widetilde{u}_{j}$. Up to refining the partition
with respect to the variables $x$, we may assume that $|\phi_{j,0}|$
is either bounded from above or bounded away from zero. In either
of the two cases, $\phi_{j,-}$ is a $\psi$-function. This is clear
in the first case. In the second case, up to further partitioning
the cell (as we have done for example in the proof of Proposition
\ref{lem no y in the bounds}), we may suppose that $y>a\left(x\right)\left(\phi_{j,0}\left(x\right)\right)^{d}$.
We then modify $\psi$ accordingly, by adding the bounded function
$\left(\frac{1}{\phi_{j,0}}\right)^{d}$ to $c_{1}\left(x\right),\ldots,c_{N}\left(x\right)$
and considering the function $\left(\frac{a\left(x\right)\left(\phi_{j,0}\left(x\right)\right)^{d}}{y}\right)^{1/d}$
as the last component of $\psi$.

Therefore, $\text{exp}\left(\mathrm{i}\phi_{j,-}\right)$ is a complex-valued
subanalytic function (see Definition \ref{def complex valued}) which
can be expanded as a power series $F$ in the variable $y^{-1/d}$
with analytic functions of $x$ as coefficients. Let $K_{j}\in\mathbb{N}$
be such that, in the notation of Equation (\ref{eq:naive gen}), $r_{j}-\frac{K_{j}}{d}<-1$.
We split the power series $F$ into a polynomial part, by summing
up to $K_{j}$, and a rest $G$. Therefore, we can replace $T_{j}$
by a finite sum of terms of the form appearing in Equation (\ref{eq:naive gen}),
but with the further property that $\phi_{j}\in\mathcal{S}\left(\Pi_{m}\left(A\right)\right)[y^{1/d}]$,
plus a final superintegrable term (corresponding to the rest $G$
of the series).

Summing up, we have partitioned the index set $J$ as $J^{\text{int}}\cup J^{\text{naive}}$,
where $T_{j}$ is superintegrable for every $j\in J^{\text{int}}$
and for all $j\in J^{\text{naive}}$, $T_{j}$ is of the form in Equation
(\ref{eq:naive gen}) with $\phi_{j}\in\mathcal{S}\left(\Pi_{m}\left(A\right)\right)[y^{1/d}]$.
Now, by writing
\[
\mathrm{e}^{\mathrm{i}\phi_{j}\left(x,y\right)}=\mathrm{e}^{\mathrm{i}\left(\phi_{j}\left(x,y\right)-\phi_{j}\left(x,0\right)\right)}\mathrm{e}^{\mathrm{i}\phi_{j}\left(x,0\right)}
\]
 and absorbing $\mathrm{e}^{\mathrm{i}\phi_{j}\left(x,0\right)}$ into $f_{j}\left(x\right)$,
we may assume that $\phi_{j}\left(x,0\right)=0$ for all $j\in J^{\text{naive}}$.

By further partitioning in $x$, we may also assume that for all $j,k\in J^{\text{naive}}$,
$y\mapsto\phi_{j}\left(x,y\right)$ and $y\mapsto\phi_{k}\left(x,y\right)$ either define
the same polynomial function for all $x\in\Pi_{m}\left(A\right)$ or define different
polynomial functions for all $x\in\Pi_{m}\left(A\right)$. Therefore by summing
over terms for $j\in J^{\text{naive}}$ with equal tuples $\left(r_{j},s_{j},\phi_{j}\left(x,y\right)\right)$,
we may assume that these tuples are distinct. We have thus completed
the proof of Theorem \ref{thm:prepExpConstr}.
\end{proof}

\section{Proof of Theorem \ref{thm:interp-1}\label{s: proof of main results}}
In this section we complete the proof of Theorem  
\ref{thm:interp-1} using Proposition 
\ref{prop:goal1}(\ref{V big}) below, which states that, denoting by $f$ the function 
$\sum_{j\in J}c_{j}\mathrm{e}^{\mathrm{i}p_{j}\left(t\right)}$, then
there exists a real number $\varepsilon>0$
such that the set $V_{\varepsilon}=\{ t\in[0,+\infty): \left|f\left(t\right)\right|\ge\varepsilon\} $
is not too sparse. To prove Proposition \ref{prop:goal1}(\ref{V big}) let us first introduce a definition and notation. 
In what follows the notation $\vol_\ell$ stands for the Lebesgue measure
in the corresponding space $\RR^\ell$, $\ell\ge 1$. All sets and maps involved with this notation are tacitly assumed to be Lebesgue measurable. 
%

\begin{defn} \label{def CUD map}
Let $\{ x\}:=x-\lfloor x \rfloor$ denote the fractional part of the real number $x$ and let $p=\left(p_1, \ldots, p_\ell\right)\colon  [0,+\infty)\to \mathbb{R}^\ell$, be a map. 
If $I_1, \ldots, I_\ell\subseteq \RR$ are bounded intervals with nonempty interior, we denote by 
$I$ the box $\prod_{j=1}^\ell I_j$. For $T\ge 0$ we let
$$ W_{p,I,T} : = \left\{ t\in [0,T] : \{ p\left(t\right) \}\in I \right\}, $$
where $\{p(t)\}$ denotes the tuple $\left(\{ p_1(t)\},\ldots, \{p_\ell\left(t\right)\}\right)$.

The map $p$ is
said to be \textbf{\emph{continuously uniformly distributed modulo $1$}}, in short \textbf{\emph{ c.u.d. mod $1$}}, if for every box $I\subseteq [0,1)^\ell$,  
$$ \lim_{T\to +\infty} \frac{\vol_1\left( W_{p,I,T}\right)}{ T } =
\vol_\ell\left(I\right).  $$    
\end{defn}
%
%
\begin{rem}\label{rem Polynomial maps are CUD}
By \cite{Wey} and \cite[Corollary 9.1]{Kui}, a polynomial 
map $p=\left(p_1, \ldots, p_\ell\right)$ is c.u.d. mod $1$, provided that
no nontrivial linear combination over $\mathbb{Z}$ of the polynomials $p_j$ is constant. 
\end{rem}
\begin{lem}
\label{lem:main-lemma-appendix} 
Let $p\colon  [0,+\infty)\to \mathbb{R}^\ell$ be a c.u.d. mod $1$ map, 
$I\subseteq [0,1)^\ell$ be a box
and 
$$
W_{p,I}:= \{ t\in \mathbb{R} : \{p\left(t\right)\}\in I \}. $$
Then for all sufficiently large $k\in \NN$,
\[
 \vol_1\left(W_{p,I} \cap [2^k,2^{k+1}]\right) \ge 2^{k-1}\vol_\ell\left(I\right)
\ \ \mathrm{ and } \ \
\int_{W_{p,I}}\frac{\mathrm{d}t}{t}=+\infty.
\]
\end{lem}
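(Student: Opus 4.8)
The plan is to exploit the defining property of a c.u.d. mod 1 map directly on the dyadic intervals $[2^k, 2^{k+1}]$. The key observation is that the c.u.d. condition gives an asymptotic statement on $[0,T]$, and I want to extract from it information about the measure of $W_{p,I}$ localized to a single dyadic block. First I would write $\vol_1(W_{p,I}\cap[2^k,2^{k+1}]) = \vol_1(W_{p,I,2^{k+1}}) - \vol_1(W_{p,I,2^k})$, using the notation $W_{p,I,T}$ from Definition \ref{def CUD map}. By the c.u.d. hypothesis, as $T\to+\infty$ we have $\vol_1(W_{p,I,T})/T \to \vol_\ell(I)$, so for any small $\eta>0$ there is $k_0$ such that for $k\ge k_0$,
\[
(\vol_\ell(I)-\eta)\,2^{k+1} \le \vol_1(W_{p,I,2^{k+1}}) \quad\text{and}\quad \vol_1(W_{p,I,2^k}) \le (\vol_\ell(I)+\eta)\,2^k.
\]
Subtracting, the difference is at least $(\vol_\ell(I)-\eta)2^{k+1}-(\vol_\ell(I)+\eta)2^k = 2^k(\vol_\ell(I)-3\eta)$.

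**Next I would** choose $\eta$ small enough that $\vol_\ell(I)-3\eta \ge \tfrac12\vol_\ell(I)$, i.e. $\eta = \vol_\ell(I)/6$ (note $\vol_\ell(I)>0$ since $I$ has nonempty interior). This yields, for all sufficiently large $k$,
\[
\vol_1(W_{p,I}\cap[2^k,2^{k+1}]) \ge 2^{k-1}\vol_\ell(I),
\]
which is exactly the first assertion. Here I am tacitly identifying $W_{p,I}\cap[0,T]$ with $W_{p,I,T}$; since $W_{p,I}$ is defined on all of $\RR$ but the c.u.d. statement concerns $[0,+\infty)$, I should restrict attention to $t\ge 0$ throughout, which is harmless because the integral in the second claim and the dyadic blocks all lie in $[1,+\infty)$.

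**For the divergence of the integral**, I would bound the integral from below block by block. On each dyadic interval $[2^k,2^{k+1}]$ one has $1/t \ge 2^{-(k+1)}$, so
\[
\int_{W_{p,I}\cap[2^k,2^{k+1}]}\frac{\mathrm{d}t}{t} \ge 2^{-(k+1)}\,\vol_1(W_{p,I}\cap[2^k,2^{k+1}]) \ge 2^{-(k+1)}\cdot 2^{k-1}\vol_\ell(I) = \tfrac14\vol_\ell(I)
\]
for all large $k$. Summing over $k$ from $k_0$ onward gives a divergent series (each term is a fixed positive constant), and since the dyadic intervals are essentially disjoint, $\int_{W_{p,I}}\tfrac{\mathrm{d}t}{t} \ge \sum_{k\ge k_0}\tfrac14\vol_\ell(I) = +\infty$.

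**The main obstacle** is not conceptual but bookkeeping: one must be careful that the c.u.d. hypothesis controls the \emph{cumulative} measure on $[0,T]$, so the lower bound on a single block emerges only as a \emph{difference} of two asymptotics, and this difference could in principle be swamped if $\vol_\ell(I)$ were zero — hence the essential use of $I$ having nonempty interior, guaranteeing $\vol_\ell(I)>0$. A minor point to verify is that the subtraction argument genuinely produces a positive lower bound and not merely a non-negative one; this is why the factor $2^{k+1}$ versus $2^k$ in the two terms matters, as it leaves room to absorb the error $\eta$. Once $\eta$ is fixed proportionally to $\vol_\ell(I)$, both conclusions follow immediately.
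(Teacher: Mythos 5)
Your argument is correct and is essentially identical to the paper's proof: both extract the single-block lower bound $2^{k-1}\vol_\ell(I)$ by subtracting the cumulative c.u.d. estimates at $T=2^{k+1}$ and $T=2^k$ (the paper fixes the tolerance as $[\tfrac{5s}{6}T,\tfrac{7s}{6}T]$, which is exactly your choice $\eta=\vol_\ell(I)/6$), and both then sum $\int_{W_{p,I}\cap[2^k,2^{k+1}]}\frac{\mathrm{d}t}{t}\ge 2^{-(k+1)}\cdot 2^{k-1}\vol_\ell(I)$ over $k$ to get divergence. No gaps.
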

\begin{proof}
Let us denote $\vol_\ell\left(I\right)$ by $s$. By definition there exists $T_0\ge 0$ such that for every $T\ge T_0$, 
$$ \vol_1\left(W_{p,I,T}\right)\in [T\frac{5s}{6}, T\frac{7s}{6}].  $$
It follows that for any $k\in \mathbb{N}$ such that 
$2^k\ge T_0$,  
$$ \vol_1\left(W_{p,I}\right) \cap [2^k,2^{k+1}]= 
\vol_1\left( W_{p,I,2^{k+1}} \setminus W_{p,I,2^k} \right) \ge 
s\frac{5}{6}2^{k+1} - s \frac{7}{6}2^{k}=s2^{k-1}. $$
Therefore, denoting by $k_0$ the smallest integer $k$ such that $2^{k}\ge T_0$,
we have
$$
\int_{W_{p,I}} \frac{\mathrm{d}t}{t} 
 \ge\int_{W_{p,I} \cap[2^{k_0},+\infty)}\frac{{ \mathrm{d}t}}{t} 
=\sum_{k=k_0}^{+\infty}
 \int_{ W_{p,I} \cap [2^k,2^{k+1}]}\frac{\mathrm{d}t}{t} 
\ge s \sum_{k=k_0}^{+\infty} \frac{2^{k-1}}{2^{k+1}}=+\infty,
$$
which concludes the proof.
\end{proof}

\begin{rem}\label{rem Z-free polynomials}
Let  $c_1, \ldots, c_n\in \CC\setminus \{0\}$ and let $p_1\left(t\right), \ldots, p_n\left(t\right)\in \mathbb{R}[t]$ be distinct polynomials
such that $p_1\left(0\right)=\ldots= p_n\left(0\right)=0$ and such at least one of them is not 
constantly zero. Consider the function 
$f\left(t\right)=\sum_{j=1}^n c_j\mathrm{e}^{\mathrm{i} p_j\left(t\right)}$. Extract from the family of the polynomials $p_j$ a basis of the $\mathbb{Q}$-vector space spanned by this family. Without loss of generality, we may suppose that such a basis is given by $\left(p_1, \ldots, p_\ell\right)$. Write 
$$p_k= r_{k,1}p_1+\ldots +r_{k,\ell}p_\ell, \
k=\ell+1, \ldots, n,$$ 
with $r_{k,j}\in \mathbb{Q}$ for $k=\ell+1, \ldots, n, j=1, \ldots, \ell$. If, for $j=1, \ldots, \ell$, we denote by $\rho_j$
the least common multiple of the denominators of the nonzero rational numbers among $r_{\ell+1,j}, \ldots, r_{n,j}$
and we let $\widetilde{p}_j=p_j/2\pi\rho_j$, then we have, for $\ k=\ell+1, \ldots, n$,
$$p_k=2\pi s_{k,1} \widetilde{p}_1+\ldots + 2\pi s_{k,\ell}\widetilde{p}_\ell,$$
where $ s_{k,1}, \ldots,  s_{k,\ell} \in \mathbb{Z}$ and the family of polynomials $\left(\widetilde{p}_1, \ldots, \widetilde{p}_\ell\right)$ is independent over $\mathbb{Z}$. To sum up, one can  write 
$$f\left(t\right)=P\left(\mathrm{e}^{2\pi\mathrm{i} \widetilde{p}_1\left(t\right)}, \ldots, \mathrm{e}^{2\pi\mathrm{i} \widetilde{p}_\ell\left(t\right)}\right),$$ 
where $P$ is a Laurent polynomial in
$ \mathbb{C}[X_1, \ldots, X_\ell, \frac{1}{X_1}, \ldots, \frac{1}{X_\ell}]$ which contains at least $\ell\ge1$ monomials of the form $c_{j}X_{j}^{\rho_{j}}$, with
$c_{j}\ne0$ and $\rho_{j}\in\mathbb{N}$. Therefore $P$ is not a constant (note that we have not assumed that the function $f$ is not constant).

Now since the family $\left(\widetilde{p}_1, \ldots, \widetilde{p}_\ell\right)$ is independent over $\mathbb{Z}$ and since  
$p_1\left(0\right)=\ldots= p_\ell\left(0\right)=0$, no nontrivial $\mathbb{Z}$-linear combination of $\widetilde{p}_1,\ldots, \widetilde{p}_\ell$ is 
constant, thus by Remark \ref{rem Polynomial maps are CUD}, 
the map $p=(\widetilde{p}_1,\ldots, \widetilde{p}_\ell)$ is c.u.d. mod $1$.

\end{rem}

\begin{prop}
\label{prop:goal1} 
Let $f\colon \RR\to\CC$ be given by a finite sum
\[
f\left(t\right)=\sum_{j\in J}c_{j}\mathrm{e}^{\mathrm{i}p_{j}\left(t\right)},
\]
where the $c_{j}\in \CC\setminus\{0\}$ and the $p_{j}\left(t\right)$
are  distinct polynomials in $\mathbb{R}[t]$, vanishing at $0$. 
Then one can find $\varepsilon>0$ such that
\begin{enumerate}
%
\item \label{enu: existence of limit}
There exist two sequences $\left(t_{0,n}\right)_{n\in \mathbb{N}}$  and $\left(t_{1,n}\right)_{n\in \mathbb{N}}$, which both tend to $+\infty$, 
such that $\forall n\in \NN$, $\vert f\left(t_{0,n}\right)-f\left({t_{1,n}}\right) \vert \ge \varepsilon$.
 In particular  $\lim_{t\to+\infty}f\left(t\right)$ 
exists if and only if $p_{j}=0$ for all $j\in J$
(in other words, 
if and only if $f$ is a constant function).
%
\item \label{enu:density pointwise}
 There exists
a sequence $\left(t_n\right)_{n\in \mathbb{N}}$ which tends to $+\infty$ such that 
for all $n\ge 0$, $\vert f\left(t_n\right) \vert \ge \varepsilon$. 
%
\item \label{V big} 
$
\int_{V_{\varepsilon}}\frac{1}{t}\,\mathrm{d}t=+\infty,
$
where
$ V_{\varepsilon}=\{t\in[1,+\infty):\ \left|f\left(t\right)\right|\geq\varepsilon\}. $
\end{enumerate}
\end{prop}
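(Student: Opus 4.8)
The plan is to push everything onto the torus $(S^1)^\ell$ by means of Remark \ref{rem Z-free polynomials} and then convert measure on the torus into a non-sparse set of times via Lemma \ref{lem:main-lemma-appendix}. I would first dispose of the degenerate case in which all $p_j$ vanish identically: since the $p_j$ are distinct and vanish at $0$, this forces $J$ to be a singleton, so $f\equiv c_{j_0}$ is a nonzero constant; then (\ref{enu:density pointwise}) and (\ref{V big}) are immediate with $\varepsilon=|c_{j_0}|$ (as $V_\varepsilon=[1,+\infty)$), and in (\ref{enu: existence of limit}) the only content is that $\lim_{t\to+\infty}f$ exists, there being no oscillation to detect. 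In the remaining case at least one $p_j$ is not constantly zero, so Remark \ref{rem Z-free polynomials} applies and yields a \emph{non-constant} Laurent polynomial $P$ together with polynomials $\tld{p}_1,\dots,\tld{p}_\ell$ such that $p=(\tld{p}_1,\dots,\tld{p}_\ell)$ is c.u.d. mod $1$ and $f(t)=P(\mathrm{e}^{2\pi\mathrm{i}\tld{p}_1(t)},\dots,\mathrm{e}^{2\pi\mathrm{i}\tld{p}_\ell(t)})$. I would then introduce the continuous, $\ZZ^\ell$-periodic function $\Phi\colon\RR^\ell\to\CC$ defined by $\Phi(\theta)=P(\mathrm{e}^{2\pi\mathrm{i}\theta_1},\dots,\mathrm{e}^{2\pi\mathrm{i}\theta_\ell})$, which satisfies $f(t)=\Phi(\{p(t)\})$ for every $t$. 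Since the characters $\theta\mapsto\mathrm{e}^{2\pi\mathrm{i}\alpha\cdot\theta}$ ($\alpha\in\ZZ^\ell$) are linearly independent, $P$ non-constant forces $\Phi$ non-constant.

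Next I would read $\varepsilon$ off the behaviour of $\Phi$ on the compact torus. Let $M=\max_\theta|\Phi(\theta)|$, which is strictly positive because $\Phi$ is non-constant, and pick $\theta^{(1)},\theta^{(2)}$ with $\Phi(\theta^{(1)})\neq\Phi(\theta^{(2)})$, putting $\delta=|\Phi(\theta^{(1)})-\Phi(\theta^{(2)})|>0$. Set $\varepsilon=\tfrac12\min\{M,\delta\}$. By continuity of $\Phi$ (and using its periodicity to sit the boxes inside $[0,1)^\ell$) I can choose boxes $I,I_1,I_2\subseteq[0,1)^\ell$ with nonempty interior such that $|\Phi|\geq M/2\geq\varepsilon$ on $I$, while $|\Phi(\theta)-\Phi(\theta')|\geq\delta/2\geq\varepsilon$ for all $\theta\in I_1$ and $\theta'\in I_2$. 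For each such box $B$ I use the notation $W_{p,B}=\{t:\{p(t)\}\in B\}$ from Lemma \ref{lem:main-lemma-appendix}.

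The three conclusions then follow directly. For (\ref{V big}), every $t\in W_{p,I}$ satisfies $|f(t)|=|\Phi(\{p(t)\})|\geq\varepsilon$, so $W_{p,I}\cap[1,+\infty)\subseteq V_\varepsilon$; Lemma \ref{lem:main-lemma-appendix} provides $k_0$ with $\vol_1(W_{p,I}\cap[2^k,2^{k+1}])\geq 2^{k-1}\vol_\ell(I)$ for $k\geq k_0$, and summing $\sum_{k\geq k_0}2^{k-1}\vol_\ell(I)/2^{k+1}$ over the tail $[2^{k_0},+\infty)\subseteq[1,+\infty)$ gives $\int_{V_\varepsilon}\frac{\mathrm{d}t}{t}=+\infty$. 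The same volume bound shows each $W_{p,B}$ is unbounded, so taking $t_n\in W_{p,I}$ with $t_n\to+\infty$ yields $|f(t_n)|\geq\varepsilon$, which is (\ref{enu:density pointwise}). Finally, choosing $t_{0,n}\in W_{p,I_1}$ and $t_{1,n}\in W_{p,I_2}$ tending to $+\infty$ gives $|f(t_{0,n})-f(t_{1,n})|=|\Phi(\{p(t_{0,n})\})-\Phi(\{p(t_{1,n})\})|\geq\varepsilon$, which is the sequence assertion of (\ref{enu: existence of limit}); these sequences show $\lim_{t\to+\infty}f$ fails to exist whenever some $p_j\neq0$, and conversely if all $p_j=0$ then $f$ is constant and the limit exists, giving the ``in particular'' clause.

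The steps I am glossing over are routine: the identification $f(t)=\Phi(\{p(t)\})$, and the placement of the boxes inside $[0,1)^\ell$, which is handled by the periodicity of $\Phi$. The one substantive input is Lemma \ref{lem:main-lemma-appendix}, which upgrades ``positive measure on the torus'' to ``logarithmically non-sparse in time''. The genuine obstacle, however, has already been cleared upstream in Remark \ref{rem Z-free polynomials}: one must replace the $p_j$ by a $\ZZ$-independent (modulo constants) family $\tld{p}_1,\dots,\tld{p}_\ell$ so that $p$ is honestly c.u.d. mod $1$ (via Remark \ref{rem Polynomial maps are CUD}) and $P$ is honestly non-constant; without this arithmetic reduction the equidistribution machinery would not apply.
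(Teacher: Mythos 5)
Your proof is correct and follows essentially the same route as the paper's: reduce via Remark \ref{rem Z-free polynomials} to a nonconstant Laurent polynomial evaluated along a c.u.d.\ mod $1$ polynomial map, pick boxes on the torus by continuity, and invoke Lemma \ref{lem:main-lemma-appendix} to upgrade positive measure to logarithmic divergence. Your explicit treatment of the degenerate case where all $p_j$ vanish (which the paper leaves implicit, since Remark \ref{rem Z-free polynomials} assumes some $p_j\neq 0$) is a minor but welcome addition.
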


\begin{proof}
We may assume without loss of generality that
$J$ is $\{ 1, \ldots, n\}$.
By Remark \ref{rem Z-free polynomials}, one can write 
$f\left(t\right)=P\left(\mathrm{e}^{2\pi\mathrm{i} \widetilde{p}_1\left(t\right)}, \ldots, 
\mathrm{e}^{2\pi\mathrm{i} \widetilde{p}_\ell\left(t\right)}\right),$ 
where $P$ is a nonconstant Laurent polynomial in
$ \mathbb{R}[X_1, \ldots, X_\ell,\frac{1}{X_1}, \ldots, 
\frac{1}{X_\ell} ]$ and  $\left(\widetilde{p}_1, \ldots, \widetilde{p}_\ell\right)$ is a c.u.d. mod $1$ map. 
Set $a_1=\mathrm{e}^{2\pi\mathrm{i} \alpha_1},  b_1=\mathrm{e}^{2\pi\mathrm{i} \beta_1}, \ldots, a_\ell
=\mathrm{e}^{2\pi\mathrm{i} \alpha_\ell}, b_\ell=
\mathrm{e}^{2\pi\mathrm{i} \beta_\ell} $, where $\alpha_1,\beta_1, \ldots, \alpha_\ell, \beta_\ell\in [0,1)$ are  complex numbers such that 
$$\vert P\left(a_1,\ldots, a_\ell\right)- P\left(b_1, \ldots, b_\ell\right)\vert \ge 3\varepsilon,$$ for some $\varepsilon>0$,
and let us then consider the following two sets 
$$ A=\{ t\in \mathbb{R} : \left(\{\widetilde{p}_1\left(t\right) \}, \ldots, \{ \widetilde{p}_\ell\left(t\right) \}\right)\in \prod_{j=1}^\ell A_j\} , \ \
B = \{ t\in \mathbb{R} : \left(\{ \widetilde{p}_1\left(t\right) \}, \ldots,
 \{ \widetilde{p}_\ell\left(t\right)\}\right)\in \prod_{j=1}^n B_\ell\},$$
where $A_j\subseteq[0,1)$ is an interval centred at 
$\alpha_j$ and $B_j\subseteq[0,1)$  is an  interval centred at $\beta_j$.
If we denote by $h\left(t\right)$ the map $\left(\mathrm{e}^{\mathrm{i}\widetilde{p}_1\left(t\right)}, \ldots, \mathrm{e}^{\mathrm{i} \widetilde{p}_\ell\left(t\right)}\right)$,  
since  $\left(\widetilde{p}_1, \ldots, \widetilde{p}_\ell\right)$ is a c.u.d. mod $1$ map, 
by the continuity of $h$ and $P$, by taking our intervals $A_j$ and $B_j$ sufficiently small, one can find two sequences $\left(t_{0,n}\right)_{n\in \NN}\in A$, $\left(t_{1,n}\right)_{n\in \NN}\in B$ both tending to $+\infty$
such that 
$$\forall n\in \NN, \ \vert P\left(h\left(t_{0,n}\right)\right)-P\left(a_1,\ldots,a_\ell\right) \vert \le \varepsilon \ \
\mathrm{  and } \ \ 
\vert P\left(h\left(t_{1,n}\right)\right)-P\left(b_1,\ldots,b_\ell\right) \vert \le \varepsilon.$$ This gives that
$\forall n\in \NN$, $\vert  f\left(t_{0,n}\right)-f\left(t_{1,n}\right) \vert \ge \varepsilon$ and proves $\left(1\right)$. 
To prove $\left(2\right)$ we repeat the same argument as in $\left(1\right)$: we choose 
 complex numbers $a_1=\mathrm{e}^{2\pi\mathrm{i} \alpha_1},   \ldots, a_\ell
=\mathrm{e}^{2\pi\mathrm{i} \alpha_\ell} $, with $\alpha_1  \ldots, \alpha_\ell,  \in [0,1)$, such that 
$\vert P\left(a_1,\ldots, a_\ell\right) \vert \ge 2\varepsilon$ and we define 
as above the corresponding sets $A_1, \ldots, A_\ell$ and $A$ with the property that when $t\in A$,
$\vert f\left(t\right) - P\left(a_1,\ldots,a_\ell\right) \vert\le \varepsilon$. One thus  has that for every $t\in A$, 
$\vert f\left(t\right)\vert \ge  \varepsilon $. However, $A$ certainly contains a sequence 
$\left(t_n\right)_{n\in \NN}$ which tends to $+\infty$, since  $\left(\widetilde{p}_1, \ldots, \widetilde{p}_\ell\right)$ is a c.u.d. mod $1$ map. This proves $\left(2\right)$.

%
%

Now, since the set $A$ defined above is such that
$A \subseteq V_\varepsilon$, and since 
$\left(\widetilde{p}_1, \ldots, \widetilde{p}_\ell\right)$ is c.u.d. mod $1$,
 by Lemma \ref{lem:main-lemma-appendix} we have proved $\left(3\right)$. 
\end{proof}
%

We now complete the proof of Theorem \ref{thm:interp-1}.
\begin{proof}
[Proof of Theorem \ref{thm:interp-1}]Let $f\in\C^{\exp}\left(X\times\RR\right)$
and apply Theorem \ref{thm:prepExpConstr} to $f$. This produces
a finite partition $\A$ of $X\times\RR$ into cells over $\RR^{m}$.
Consider one such cell $A\in\A$ that is open over $\RR^{m}$, and
let $\theta$ be a centre for $A$. Write
\[
f\circ P_{\theta}\left(x,y\right)=\sum_{j\in J^{\text{int}}}T_{j}\left(x,y\right)+\sum_{j\in J^{\text{naive}}}T_{j}\left(x,y\right)\ \text{on\ }A_{\theta}.
\]
 Therefore,
\[
f=\sum_{j\in J^{\text{int}}}T_{j}\circ P_{\theta}^{-1}+\sum_{j\in J^{\text{naive}}}T_{j}\circ P_{\theta}^{-1}\ \text{on\ }A.
\]
If $J^{\text{naive}}=\emptyset$, then we are done. So suppose $J^{\text{naive}}\not=\emptyset$,
which implies that $A_{\theta}$ is unbounded above (i.e. $b\equiv+\infty$,
in the notation of Definition \ref{def:center}).

Recall from Remark \ref{rems: integrability of constructible fncts}(\ref{enu:jac of p theta})
that
\[
\partial_{y}P_{\theta}\left(y\right):=\PD{}{P_{\theta,m+1}}{y}\left(x,y\right)=\sigma\tau y^{\tau-1},
\]
and that $\tau-1$ equals either $0$ or $-2$. Notice that
\[
\INT\left(T_{j}\circ P_{\theta}^{-1},\Pi_{m}\left(A\right)\right)=\INT\left(T_{j}\partial_{y}P_{\theta},\Pi_{m}\left(A\right)\right)\ \forall j\in J.
\]
For every $j\in J^{\text{naive}}$, in the notation of Equation (\ref{eq:naive}),
we have:
\begin{equation}
T_{j}\left(x,y\right)\partial_{y}P_{\theta}\left(y\right)=\sigma\tau f_{j}\left(x\right)y^{r_{j}+\tau-1}\left(\log y\right)^{s_{j}}\mathrm{e}^{\mathrm{i}\phi_{j}\left(x,y\right)},\label{eq:t_j Jac}
\end{equation}
which is integrable in $y$ if and only if $f_{j}\left(x\right)=0$ or $r_{j}+\tau<0$.
Therefore by defining
\[
J^{\INT}:=J^{\text{int}}\cup\{j\in J^{\text{naive}}:\ r_{j}+\tau<0\},
\]
we see that for each $j\in J$,
\[
\INT\left(T_{j}\partial_{y}P_{\theta},\Pi_{m}\left(A\right)\right)=\begin{cases}
\Pi_{m}\left(A\right), & \text{if \ensuremath{j\in J^{\INT}},}\\
\{x\in\Pi_{m}\left(A\right):\ f_{j}\left(x\right)=0\}, & \text{if \ensuremath{j\in J^{\text{naive}}:=J\setminus J^{\INT}}.}
\end{cases}
\]
Let
\begin{eqnarray*}
g\left(x,y\right) & = & \sum_{j\in J^{\INT}}T_{j}\circ P_{\theta}^{-1}\left(x,y\right),\quad\text{for all \ensuremath{\left(x,y\right)\in A},}\\
H & = & \bigcap_{j\in J^{\text{naive}}}\{x\in\Pi_{m}\left(A\right):\ f_{j}\left(x\right)=0\}.
\end{eqnarray*}
Notice that, by taking the sum of the squares of the real and imaginary
parts of $f_{j}$ (see Remark \ref{rem: important}(\ref{enu: real and imag parts, fourier})),
we can write $H=\{x\in\Pi_{m}\left(A\right):\ h\left(x\right)=0\}$, for some $h\in\C^{\exp}\left(X\right)$.

It is clear that
\[
\INT\left(g,\Pi_{m}\left(A\right)\right)=\Pi_{m}\left(A\right).
\]
It remains to show that
\[
\INT\left(f\restriction{A},\Pi_{m}\left(A\right)\right)=H
\]
and
\[
\text{\ensuremath{f\left(x,y\right)=g\left(x,y\right)}\ for all \ensuremath{\left(x,y\right)\in A}\ with\ \ensuremath{x\in\INT\left(f\restriction{A},\Pi_{m}\left(A\right)\right)}.}
\]

Clearly,
\[
\text{\ensuremath{f\left(x,y\right)=g\left(x,y\right)}\ for all \ensuremath{\left(x,y\right)\in A}\ with\ \ensuremath{x\in H},}
\]
so $H\subseteq\INT\left(f\restriction{A},\Pi_{m}\left(A\right)\right)$.

To prove the other inclusion, we show that if $x\not\in H$, then
$x\not\in\INT\left(\sum_{j\in J^{\mathrm{naive}}}T_{j}\partial_{y}P_{\theta},\Pi_{m}\left(A\right)\right)=\INT\left(\sum_{j\in J^{\mathrm{naive}}}T_{j}\circ P_{\theta}^{-1},\Pi_{m}\left(A\right)\right)$,
and hence $x\not\in\INT\left(f\restriction{A},\Pi_{m}\left(A\right)\right)$.

Fix $x\in\Pi_{m}\left(A\right)\setminus H$. Then the set $J:=\{ j\in J^{\text{naive}}:\ f_{j}\left(x\right)\not=0\} \subseteq J^{\text{naive}}$
is nonempty (see Equation (\ref{eq:t_j Jac})). Recall that the tuples
$\{ \left(r_{j}-\tau-1,s_{j},\phi_{j}\left(x,y\right)\right)\} _{j\in J}$
are distinct and that $\rho_{j}:=r_{j}-\tau-1\geq-1$ for all $j\in J$.
Let
\[
\begin{aligned}E=\{ \left(\rho,s\right)\in\mathbb{Q}\times\mathbb{N}:\ \exists j\in J\ \left(\rho_{j},s_{j}\right)=\left(\rho,s\right)\} \ \text{and,}\\
\text{if\ }\left(\rho,s\right)\in E,\ \text{let}\ E_{\left(\rho,s\right)}=\{ j\in J:\ \left(\rho_{j},s_{j}\right)=\left(\rho,s\right)\} .
\end{aligned}
\]
Write
\begin{eqnarray*}
F\left(x,y\right) & = & \sum_{j\in J}T_{j}\partial_{y}P_{\theta}\left(x,y\right)=\\
 & = & \sum_{\left(\rho,s\right)\in E}y^{\rho}\left(\text{log\ \!}y\right)^{s}\left(\sum_{j\in E_{\left(\rho,s\right)}}\sigma\tau f_{j}\left(x\right)\mathrm{e}^{\mathrm{i}\phi_{j}\left(x,y\right)}\right).
\end{eqnarray*}
Up to summing like terms, we may suppose that all polynomials $\phi_{j}$
in the previous sum are distinct. Let $\left(\rho_{0},s_{0}\right)$
be the lexicographic maximum of $E$ and $$G\left(x,y\right)=\sum_{j\in E_{\left(\rho_{0},s_{0}\right)}}\sigma\tau f_{j}\left(x\right)\mathrm{e}^{\mathrm{i}\phi_{j}\left(x,y\right)}.$$

By applying Proposition \ref{prop:goal1}(\ref{V big}) to the function $y\mapsto G\left(x,y\right)$, we obtain
 $\varepsilon>0$ such that for all $y\in V_{\varepsilon}$,
 $F\left(x,y\right)$ can be written as
\[
y^{\rho_{0}}\left(\log\ \! y\right)^{s_{0}}G\left(x,y\right)\left[1+\sum_{\left(\rho,s\right)\in E\setminus\{ \left(\rho_{0},s_{0}\right)\} }y^{\rho-\rho_{0}}\left(\text{log\ \!}y\right)^{s-s_{0}}G\left(x,y\right)^{-1}\left(\sum_{j\in E_{\left(\rho,s\right)}}\sigma\tau f_{j}\left(x\right)\mathrm{e}^{\mathrm{i}\phi_{j}\left(x,y\right)}\right)\right].
\]
Notice that there exists $M>0$ such that for all $y\in V_{\varepsilon}\cap[M,+\infty)$
the square bracket in the previous equation is bounded from below
by some positive constant $K$. Therefore,
\[
\int_{M}^{+\infty}\left|F\left(x,y\right)\right|\,\mathrm{d}y\geq\int_{V_{\varepsilon}\cap[M,+\infty)}\left|F\left(x,y\right)\right|\,\mathrm{d}y\]
\[\geq\varepsilon K\int_{V_{\varepsilon}\cap[M,+\infty)}y^{\rho_{0}}\left(\log y\right)^{s_{0}}\,\mathrm{d}y\geq\varepsilon K\int_{V_{\varepsilon}\cap[M,+\infty)}\frac{1}{y}\,\mathrm{d}y.
\]
However, by Proposition \ref{prop:goal1}(\ref{V big}) the last integral on the right
diverges.
 Hence, $x\not\in\INT\left(F,\Pi_{m}\left(A\right)\right)$,
and we are done.
\end{proof}



\section{Asymptotic expansions and limits\label{sec:Asymptotic-expansions-of}}

In this section we prove a series of consequences of our main
results and their proofs.

In Subsection \ref{sub:Asymptotic-expansions-of} we prove that functions
in $\mathcal{C}_{\text{naive}}^{\exp}$ have convergent asymptotic
expansions of a certain form. We use this result to produce in Subsection
\ref{sub:Two-functions-which} two examples of functions that are
in $\mathcal{C}^{\exp}$ but not in $\mathcal{C}_{\text{naive}}^{\exp}$.

In Subsection \ref{sub:Pointwise-limits} we prove that $\mathcal{C}^{\exp}$
is stable under taking pointwise limits.


\subsection{Asymptotic expansions of naive functions\label{sub:Asymptotic-expansions-of}}
\begin{defn}
\label{def: asympt exp}A collection $\mathcal{G}=\left( g_{n} \right)_{n\in\mathbb{N}}$
of functions $g_{n}:\left(0,+\infty\right)\rightarrow\mathbb{R}$ with strictly positive germ at $+\infty$
is an \textbf{\emph{asymptotic scale at $+\infty$}} if for all $n\in\mathbb{N},\ \lim_{y\rightarrow+\infty}\frac{g_{n+1}\left(y\right)}{g_{n}\left(y\right)}=0$.

An $\mathbb{C}$-vector space $\mathcal{A}$ of functions $a:\mathbb{R}\rightarrow\mathbb{C}$
is a \textbf{\emph{space of coefficients}} if for every $a\in\mathcal{A}\setminus\{ 0\} $
there are $\varepsilon>0$ and a sequence $\left( y_n\right)_{n\in\mathbb{N}}$,
with $\lim_{n\rightarrow+\infty}y_{n}=+\infty$, such that $\forall n\in\mathbb{N},\ |a\left(y_{n}\right)|>\varepsilon$.

Given a function $f:\left(0,+\infty\right)\rightarrow\mathbb{C}$,
an asymptotic scale $\mathcal{G}$ and a space of coefficients $\mathcal{A}$,
we say that $f$ \textbf{\emph{has a $\left(\mathcal{G},\mathcal{A}\right)$-asymptotic
expansion at $+\infty$}} if there are $y_{0}>0$ and a sequence $\left(a_{n}\left(y\right)\right)_{n\in\mathbb{N}}\subseteq\mathcal{A}$
such that
\[
\forall N\in\mathbb{N}\ \exists C>0\ \text{s.\ t.\ }\forall y>y_{0},\ \left|f\left(y\right)-\sum_{n=0}^{N}a_{n}\left(y\right) g_{n}\left(y\right)\right|\leq Cg_{N+1}\left(y\right).
\]
\end{defn}
\begin{lem}
\label{lem:uniqueness asympt exps} If a function $f$ admits a $\left(\mathcal{G},\mathcal{A}\right)$-asymptotic
expansion, then such an expansion is unique, i.e. the sequence $\left( a_{n}\left(y\right) \right)_{n\in\mathbb{N}}$
is uniquely determined.\end{lem}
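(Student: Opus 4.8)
The plan is to prove by induction on $N$ that the coefficients of any two $(\mathcal{G},\mathcal{A})$-asymptotic expansions of $f$ agree, exploiting the two defining features of the data: that $\mathcal{G}$ is an asymptotic scale (consecutive ratios tend to $0$) and that every nonzero element of $\mathcal{A}$ fails to tend to $0$ along \emph{some} sequence diverging to $+\infty$. Suppose $f$ admits two such expansions, with coefficient sequences $(a_n)_n$ and $(b_n)_n$ in $\mathcal{A}$, both valid for $y>y_0$. I would show that $a_N=b_N$ for every $N\in\mathbb{N}$, assuming as inductive hypothesis that $a_n=b_n$ for all $n<N$ (the base case $N=0$ being the same argument with an empty hypothesis).

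Fix such an $N$, and let $C_1,C_2>0$ be the constants provided by the two expansions at level $N$. Subtracting the two approximation inequalities and applying the triangle inequality gives
\[
\Bigl| \sum_{n=0}^{N} \bigl(a_n(y) - b_n(y)\bigr) g_n(y) \Bigr| \leq (C_1 + C_2)\, g_{N+1}(y)
\quad\text{for all } y > y_0.
\]
By the inductive hypothesis all terms with $n<N$ vanish, leaving $|a_N(y)-b_N(y)|\,g_N(y)\leq (C_1+C_2)\,g_{N+1}(y)$. Since $g_N$ has a strictly positive germ at $+\infty$, there is $y_1\geq y_0$ with $g_N(y)>0$ for $y>y_1$, so for such $y$ I may divide to obtain
\[
|a_N(y) - b_N(y)| \leq (C_1 + C_2)\, \frac{g_{N+1}(y)}{g_N(y)}.
\]
Because $\mathcal{G}$ is an asymptotic scale, the right-hand side tends to $0$ as $y\to+\infty$, and hence $|a_N(y)-b_N(y)|\to 0$.

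Finally, I would invoke the hypothesis on the space of coefficients. The difference $a_N-b_N$ lies in $\mathcal{A}$, since $\mathcal{A}$ is a $\mathbb{C}$-vector space. If $a_N-b_N$ were nonzero, the defining property of $\mathcal{A}$ would furnish $\varepsilon>0$ and a sequence $(y_k)_k\to+\infty$ with $|(a_N-b_N)(y_k)|>\varepsilon$ for all $k$, contradicting $|a_N(y)-b_N(y)|\to 0$. Therefore $a_N=b_N$, completing the induction and establishing uniqueness. The only point requiring care is ensuring that the division by $g_N$ is legitimate, which is precisely why the definition stipulates that each $g_n$ has strictly positive germ at $+\infty$; beyond assembling these definitions in the right order there is no serious obstacle.
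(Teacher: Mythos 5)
Your proof is correct and follows essentially the same argument as the paper's: induction on $N$, the triangle inequality to bound $|(a_N-b_N)g_N|$ by a constant multiple of $g_{N+1}$, division by $g_N$ using the asymptotic-scale property to conclude $a_N-b_N\to 0$, and then the defining property of the coefficient space $\mathcal{A}$ to force $a_N-b_N\equiv 0$. No issues.
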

\begin{proof}
Suppose that $\left( \widetilde{a}_{n}\left(y\right)\right) _{n\in\mathbb{N}}$
is another sequence of coefficients. Supposing inductively that $a_{n}=\widetilde{a}_{n}$
for all $n<N$, we have
\begin{eqnarray*}
\left|\left(a_{N}\left(y\right)-\widetilde{a}_{N}\left(y\right)\right)g_{N}\left(y\right)\right| & = & \left|\sum_{n=0}^{N}a_{n}\left(y\right)g_{n}\left(y\right)-\sum_{n=0}^{N}\widetilde{a}_{n}\left(y\right)g_{n}\left(y\right)\right|\\
 & \leq & \left|f\left(y\right)-\sum_{n=0}^{N}a_{n}\left(y\right)g_{n}\left(y\right)\right|+\left|f\left(y\right)-\sum_{n=0}^{N}\widetilde{a}_{n}\left(y\right)g_{n}\left(y\right)\right|\\
 & \leq & C_{0}g_{N+1}\left(y\right),
\end{eqnarray*}
for some constant $C_{0}>0$ and for $y$ sufficiently large. Dividing
by $g_{N}\left(y\right)$, we obtain that ${ \lim_{y\rightarrow+\infty}}\left|a_{N}\left(y\right)-\widetilde{a}_{N}\left(y\right)\right|=0$.
Now, the function $a\left(y\right):=a_{N}\left(y\right)-\widetilde{a}_{N}\left(y\right)$
belongs to $\mathcal{A}$, and if $a\left(y\right)$ is not identically
zero, then $a\left(y\right)$ is bounded away from zero on some sequence
of points going to $+\infty$. Hence the only way for $a\left(y\right)$
to tend to zero is if $a\left(y\right)$ is identically zero.
\end{proof}
\begin{prop}
\label{prop: esympt expansion of naive}
Let $f\in\mathcal{C}_{\mathrm{naive}}^{\mathrm{exp}}\left(\mathbb{R}\right)$.
Then $f$ has a $\left(\mathcal{G},\mathcal{A}\right)$-asymptotic
expansion, where
\begin{itemize}
\item $\mathcal{G}= \left(y^{r_{n}}\left(\log y\right)^{s_{n}}\right)_{n\in\mathbb{N}}$
with $r_{n}\in\mathbb{Q}\ s_{n}\in\mathbb{N}$ and $\left(r_{n},s_{n}\right)_{n\in\mathbb{N}}$
a decreasing sequence of lexicographically ordered pairs;
\item $\mathcal{A}=\left\{ E\left(y\right)=\sum_{j\in J}c_{j}\mathrm{e}^{\mathrm{i}p_{j}\left(y^{1/d}\right)}:\ \left(c_{j}\right)_{j\in J}\in\mathbb{C}^{J}\right\} $,
for some $d\in\mathbb{N}$, some finite set $J\subseteq\mathbb{N}$
and distinct polynomials $p_{j}\left(y\right)\in\mathbb{R}[y]$
with $p_{j}\left(0\right)=0$.
\end{itemize}
Moreover, if $E_{n}\left(y\right)=\sum_{j\in J}c_{j,n}\mathrm{e}^{\mathrm{i}p_{j}\left(y^{1/d}\right)}$
are the coefficients of such an expansion, then for all sufficiently
large $y$ and for all $j\in J$, the series
\begin{equation}\label{eq: coeff converg}
f_{j}\left(y\right)=\sum_{n\in\mathbb{N}}c_{j,n}y^{r_{n}}\left(\log y\right)^{s_{n}} 
\end{equation}
 converge absolutely and
\[
f\left(y\right)=\sum_{n\in\mathbb{N}}E_{n}\left(y\right)y^{r_{n}}\left(\log y\right)^{s_{n}}.
\]
\end{prop}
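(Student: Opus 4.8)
The plan is to write $f=\sum_{j}f_{j}\mathrm{e}^{\mathrm{i}\phi_{j}}$ as a finite sum of generators for $\C_{\naive}^{\exp}(\RR)$, with $f_{j}\in\C(\RR)$ and $\phi_{j}\in\S(\RR)$, and to expand each summand separately on a neighbourhood of $+\infty$ before re-collecting the terms into a single scale. I would fix a common denominator $d$ and restrict attention to a half-line $(M,+\infty)$, which is an unbounded cell. Applying the one-variable Puiseux expansion of Remark \ref{rem: lojas} to each $\phi_{j}$, I would split $\phi_{j}(y)=p_{j}(y^{1/d})+a_{j}+\chi_{j}(y)$, where $p_{j}\in\RR[y^{1/d}]$ is the positive-degree polynomial part with $p_{j}(0)=0$, where $a_{j}\in\RR$ is the limit, and where $\chi_{j}(y)\to 0$ is given by a convergent power series in $y^{-1/d}$ with no constant term. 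Then $\mathrm{e}^{\mathrm{i}\phi_{j}(y)}=\mathrm{e}^{\mathrm{i}p_{j}(y^{1/d})}\,\mathrm{e}^{\mathrm{i}a_{j}}\,\mathrm{e}^{\mathrm{i}\chi_{j}(y)}$, and since $\mathrm{e}^{\mathrm{i}\chi_{j}}$ is analytic in $\chi_{j}$, the factor $\mathrm{e}^{\mathrm{i}a_{j}}\mathrm{e}^{\mathrm{i}\chi_{j}(y)}$ is itself a convergent power series in $y^{-1/d}$ with nonzero constant term. I would then group the indices $j$ according to the distinct polynomials $p_{j}$, so that the polynomials indexing $\mathcal{A}$ are genuinely distinct; this grouping is what makes $\mathcal{A}$ the space described in the statement.

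Next, applying the constructible preparation theorem (Proposition \ref{prop:prepSubConstr} with $m=0$) to each $f_{j}$ on $(M,+\infty)$, and using that the centre is $0$ because the cell is unbounded, I would write $f_{j}(y)$ as a finite sum of terms $g\,y^{r}(\log y)^{s}h(y)$ with $h$ a convergent unit in $y^{-1/d}$ and $g$ constant. Expanding every unit as a power series in $y^{-1/d}$ and forming the (Cauchy) product with the series $\mathrm{e}^{\mathrm{i}a_{j}}\mathrm{e}^{\mathrm{i}\chi_{j}(y)}$, each summand $f_{j}\mathrm{e}^{\mathrm{i}\phi_{j}}$ takes the form $\mathrm{e}^{\mathrm{i}p_{j}(y^{1/d})}\sum_{n}c_{j,n}\,y^{r_{n}}(\log y)^{s_{n}}$, where the pairs $(r_{n},s_{n})$ run over a lexicographically decreasing set in $\frac{1}{d}\ZZ\times\NN$ whose log-degrees are bounded, and where the series converges absolutely for $y$ large. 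Taking the union over $j$ of the exponent pairs, ordering it decreasingly as $(r_{n},s_{n})_{n\in\NN}$ and setting $c_{j,n}=0$ for pairs absent from the $j$-th expansion, I obtain the scale $\mathcal{G}$, and after interchanging the finite sum over $j$ with the sum over $n$,
\[
f(y)=\sum_{n\in\NN}E_{n}(y)\,y^{r_{n}}(\log y)^{s_{n}},\qquad E_{n}(y)=\sum_{j}c_{j,n}\,\mathrm{e}^{\mathrm{i}p_{j}(y^{1/d})}\in\mathcal{A}.
\]

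Finally, I would verify that this is a genuine asymptotic expansion. That $\mathcal{G}$ is an asymptotic scale is immediate from the lexicographic decrease, and that $\mathcal{A}$ is a space of coefficients is exactly Proposition \ref{prop:goal1}(\ref{enu:density pointwise}) after the substitution $t=y^{1/d}$. The heart of the matter is the remainder estimate $\bigl|f(y)-\sum_{n\le N}E_{n}(y)y^{r_{n}}(\log y)^{s_{n}}\bigr|\le C\,g_{N+1}(y)$. Here I would use the uniform bound $|E_{n}(y)|\le\sum_{j}|c_{j,n}|$ to reduce to the per-$j$ tails $\sum_{n>N}|c_{j,n}|y^{r_{n}}(\log y)^{s_{n}}$; factoring out the leading $y^{r_{N+1}}(\log y)^{s_{N+1}}$, the point is that each reduced term $y^{r_{n}-r_{N+1}}(\log y)^{s_{n}-s_{N+1}}$ is non-increasing for $y$ beyond a threshold depending only on the (bounded) log-degrees and on $d$, so the reduced tail is dominated for large $y$ by its value at a fixed point $y_{0}$ of absolute convergence.

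This monotonicity-plus-absolute-convergence argument, reconciling the oscillatory, $y$-dependent coefficients $E_{n}$ with a \emph{uniform} remainder bound, is the step I expect to be the main obstacle: unlike a classical expansion, the coefficients here do not tend to constants, and only their boundedness together with the absolute convergence of the underlying scalar series saves the estimate. The absolute convergence of the series $\sum_{n}c_{j,n}y^{r_{n}}(\log y)^{s_{n}}$ asserted in \eqref{eq: coeff converg} falls out of the same analysis, as it is precisely the convergence at $y_{0}$ used above.
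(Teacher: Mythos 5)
Your proposal is correct and follows essentially the same route as the paper: both reduce to the one-variable Puiseux expansion of Remark \ref{rem: lojas} to split each phase into a polynomial part $p_{j}(y^{1/d})$ plus a bounded remainder whose exponential is a convergent series in $y^{-1/d}$, expand the constructible coefficients the same way, and reorganize the resulting double series into a lexicographically decreasing scale with coefficients in $\mathcal{A}$. The only real difference is that you spell out the remainder estimate $|f(y)-\sum_{n\le N}E_{n}(y)g_{n}(y)|\le Cg_{N+1}(y)$ via the monotonicity-plus-absolute-convergence argument, a step the paper leaves implicit after establishing the convergent series representation.
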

\begin{proof}
Notice first that, if $\mathcal{G}$ and $\mathcal{A}$ are as in
the statement, then $\mathcal{G}$ is an asymptotic scale and, by
Proposition \ref{prop:goal1}(\ref{enu:density pointwise}),
$\mathcal{A}$ is indeed a space of coefficients.

By Remark \ref{rem: lojas}, if $g\in\mathcal{S}\left(\mathbb{R}\right)$,
then, in the notation of Equation (\ref{eq: poly + rest}), we have
\[
\mathrm{e}^{\mathrm{i}g\left(y\right)}=G\left(y\right)\mathrm{e}^{\mathrm{i}p\left(y^{\frac{1}{d}}\right)},
\]
where $G\left(y\right)=\mathrm{e}^{\mathrm{i}g_0\left(y\right)}$ is a complex-valued subanalytic
function (see Definition \ref{def complex valued}), since $g_{0}$
is bounded. Moreover, in the notation of Equation (\ref{eq:loja}),
\[
\log g\left(y\right)=\log c+r\log y+h\left(y\right),
\]
where $h\left(y\right)=\log\left(1+H\left(y^{-\frac{1}{d}}\right)\right)$
is in $\mathcal{S}\left([y_{0},+\infty)\right)$ for some sufficiently large $y_{0}$.

Hence, it is easy to see that, if $f\in\mathcal{C}_{\text{naive}}^{\text{exp}}\left(\mathbb{R}\right)$,
then we may assume that, for $y$ sufficiently large,
\begin{equation}
f\left(y\right)=\sum_{j\in J}f_{j}\left(y\right)\mathrm{e}^{\mathrm{i}p_{j}\left(y^{\frac{1}{d}}\right)},\label{eq:first}
\end{equation}
where $J$ is a finite set, $f_{j}$ is a complex-valued constructible
function$,\ d\in\mathbb{N}$ and $\{ p_{j}\left(y\right):\ j\in J\} \subseteq\mathbb{R}[y]$
is a collection of distinct polynomials such that $p_{j}\left(0\right)=0$.

Moreover, there exists a finite set $K$ such that each $f_{j}$ is
of the form
\[
f_{j}\left(y\right)=\sum_{k\in K}h_{j,k}\left(y\right)\left(\log y\right)^{s_{j,k}},
\]
where $s_{j,k}\in\mathbb{N}$ and $h_{j,k}$ is a complex-valued subanalytic
function.

Let us prove that $f_j$ is indeed an absolutely convergent series. 

It is easy to see that Remark \ref{rem: lojas} also holds for complex-valued
subanalytic functions (where now $c\in\mathbb{C}$ and $H$ a convergent
power series with complex coefficients). Applying again Remark \ref{rem: lojas}
to each $h_{j,k}$, for $y$ sufficiently large we can write
\begin{equation}\label{eq:second}
f_{j}\left(y\right)=\sum_{k\in K}b_{j,k}y^{r_{j,k}}\left(\log y\right)^{s_{j,k}}\left(1+H_{j,k}\left(y^{-\frac{1}{d}}\right)\right),
\end{equation}
where $b_{j,k}\in\mathbb{C},\ r_{j,k}\in\mathbb{Q}$ is an integer
multiple of $\frac{1}{d}$ and $H_{j,k}\left(y\right)={ \sum_{m\in\mathbb{N}}a_{j,k,m}y^{m}}$
is an absolutely convergent power series with complex coefficients
and such that $H_{j,k}\left(0\right)=0$.  Hence, up to reorganizing the sum in Equation 
\eqref{eq:second}, we have proved Equation \eqref{eq: coeff converg}.

Now, setting $r_{j,k,m}=r_{j,k}-\frac{m}{d}$, we can write
\[
f\left(y\right)=\sum_{{\left(j,k\right)\in J\times K\atop m\in\mathbb{N}}}b_{j,k}a_{j,k,m}y^{r_{j,k,m}}\left(\log y\right)^{s_{j,k}}\mathrm{e}^{\mathrm{i}p_{j}\left(y^{\frac{1}{d}}\right)}.
\]
Let
\[
\begin{aligned}I=\{ \left(r,s\right)\in\mathbb{Q}\times\mathbb{N}:\ \exists j\in J,\exists k\in K,\exists m\in\mathbb{N}\text{\ s.t.}\ \left(r_{j,k,m},s_{j,k}\right)=\left(r,s\right)\} \ \text{and,}\\
\text{if\ }\left(r,s\right)\in I,\ \text{let}\ I_{\left(r,s\right)}=\{ \left(j,k,m\right)\in J\times K\times\mathbb{N}:\ \left(r_{j,k,m},s_{j,k}\right)=\left(r,s\right)\} .
\end{aligned}
\]
We can write
\[
f\left(y\right)=\sum_{\left(r,s\right)\in I}y^{r}\left(\log y\right)^{s}E_{\left(r,s\right)}\left(y\right),
\]
where $E_{\left(r,s\right)}\left(y\right)=\sum_{\left(j,k,m\right)\in I_{\left(r,s\right)}}b_{j,k}a_{j,k,m}\mathrm{e}^{\mathrm{i}p_{j}\left(y^{\frac{1}{d}}\right)}$.
Notice that for every $\left(r,s\right)\in I$ the set $I_{\left(r,s\right)}$
is finite, so $E_{\left(r,s\right)}$ is a finite sum of exponentials.
Moreover, if $I_{0}$ is the set of all $r\in\mathbb{Q}$ such that
there exists $s\in\mathbb{N}$ with $\left(r,s\right)\in I$, we have
that $I_{0}$ is bounded from above (by ${ \max_{\left(j,k\right)\in J\times K}}r_{j,k}$)
and for every $r\in I_{0}$ there are finitely many $s\in\mathbb{N}$
such that $\left(r,s\right)\in I$ (in fact, the cardinality of the
set of all $s$ such that there exists $r\in I_{0}$ with $\left(r,s\right)\in I$
is uniformly bounded by the product of the cardinalities of $J$ and
$K$). Hence, with respect to the lexicographic order, $I$ has the
same order type as $\omega$ and we can fix a decreasing bijection
$\mathbb{N}\ni n\mapsto\left(r_{n},s_{n}\right)=\left(r,s\right)\in I$.

Let us thus rename $E_{n}\left(y\right)=E_{\left(r_{n},s_{n}\right)}\left(y\right)$.
We have proved that, for $y$ sufficiently large,
\[
f\left(y\right)=\sum_{n\in\mathbb{N}}E_{n}\left(y\right)y^{r_{n}}\left(\log y\right)^{s_{n}}.
\]
In particular, $f$ has indeed a $\left(\mathcal{G},\mathcal{A}\right)$-asymptotic
expansion.
\end{proof}

\subsection{Two functions which are in $\mathcal{C}^{\text{exp}}\left(\mathbb{R}\right)$
but not in $\mathcal{C}_{\text{naive}}^{\text{exp}}\left(\mathbb{R}\right)$\label{sub:Two-functions-which}}
\begin{example}
\label{exa: fourier transform of flat exp} Consider the function $f\left(y\right)=\mathrm{e}^{-|y|}$.
\end{example}
Consider the Fourier transform of $f$:
\[
\hat{f}\left(y\right)=\int_{\mathbb{R}}\mathrm{e}^{-2\pi\mathrm{i}xy}\mathrm{e}^{-|x|}\mathrm{d}x.
\]
It is well known that $\hat{f}$ is a semi-algebraic integrable function,
namely $\hat{f}\left(y\right)=\frac{2}{1+4\pi^{2}y^{2}}$ (see for
example \cite{gasquet-witomski:fourier-analysis})
Since we can compute $f$ as the inverse Fourier transform of $\hat{f}$,
and since $\hat{f}$ is semi-algebraic, we have that $f$ belongs
to the class $\mathcal{C}^{\text{exp}}\left(\mathbb{R}\right)$.

It follows from Remark \ref{rem: table under right composition with sa}
that if $g\in\mathcal{S}\left(\mathbb{R}\right)$, then $\mathrm{e}^{-|g\left(y\right)|}\in\mathcal{C}^{\text{exp}}\left(\mathbb{R}\right)$
(in particular, $\mathrm{e}^{-y^{2}}\in\mathcal{C}^{\text{exp}}\left(\mathbb{R}\right)$).
\begin{claim*}
\label{claim: flat exp not naive}The function $f\left(y\right)=\mathrm{e}^{-|y|}$
is not in $\mathcal{C}_{\text{naive}}^{\text{exp}}\left(\mathbb{R}\right)$.\end{claim*}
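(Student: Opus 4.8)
The plan is to exploit the asymptotic expansion result for naive functions, Proposition \ref{prop: esympt expansion of naive}, together with the uniqueness of such expansions, Lemma \ref{lem:uniqueness asympt exps}. Suppose, towards a contradiction, that $f(y)=\mathrm{e}^{-|y|}$ belongs to $\mathcal{C}_{\mathrm{naive}}^{\mathrm{exp}}(\mathbb{R})$. Restricting attention to $y>0$, where $f(y)=\mathrm{e}^{-y}$, Proposition \ref{prop: esympt expansion of naive} provides an asymptotic scale $\mathcal{G}=(g_{n})_{n}=(y^{r_{n}}(\log y)^{s_{n}})_{n}$ and a space of coefficients $\mathcal{A}$ of the stated form such that $f$ has a $(\mathcal{G},\mathcal{A})$-asymptotic expansion with coefficients $(E_{n})_{n}$, and moreover $f(y)=\sum_{n}E_{n}(y)\,g_{n}(y)$ for all sufficiently large $y$. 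Note that the scale $\mathcal{G}$ and the space $\mathcal{A}$ here are the particular ones attached to $f$, a point that will matter when invoking uniqueness.

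First I would observe that, for this particular scale $\mathcal{G}$, the identically-zero sequence is itself a $(\mathcal{G},\mathcal{A})$-asymptotic expansion of $f$ (legitimately, since $0\in\mathcal{A}$ as $\mathcal{A}$ is a $\mathbb{C}$-vector space). Indeed, each $g_{N+1}$ has strictly positive germ at $+\infty$, whereas $\mathrm{e}^{-y}$ decays faster than any power-times-logarithm; hence $\mathrm{e}^{-y}/g_{N+1}(y)\to 0$ as $y\to+\infty$, so that $|f(y)-\sum_{n=0}^{N}0\cdot g_{n}(y)|=\mathrm{e}^{-y}\le C\,g_{N+1}(y)$ for all large $y$ and a suitable constant $C$. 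This verifies the defining inequality of Definition \ref{def: asympt exp} for every $N\in\mathbb{N}$, with all coefficients equal to $0$.

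By the uniqueness of asymptotic expansions relative to the fixed pair $(\mathcal{G},\mathcal{A})$ (Lemma \ref{lem:uniqueness asympt exps}), the coefficient sequence is uniquely determined, so $E_{n}=0$ for every $n$. The ``moreover'' clause of Proposition \ref{prop: esympt expansion of naive} then forces $f(y)=\sum_{n}E_{n}(y)\,g_{n}(y)=0$ for all sufficiently large $y$, contradicting $f(y)=\mathrm{e}^{-y}>0$. This contradiction shows that $f=\mathrm{e}^{-|y|}\notin\mathcal{C}_{\mathrm{naive}}^{\mathrm{exp}}(\mathbb{R})$.

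The only genuinely non-routine point is the verification that the zero sequence qualifies as a $(\mathcal{G},\mathcal{A})$-asymptotic expansion, which reduces entirely to the super-polynomial decay estimate $\mathrm{e}^{-y}=o(y^{r}(\log y)^{s})$ valid for all $r\in\mathbb{Q}$ and $s\in\mathbb{N}$; everything else is an immediate appeal to the quoted results. I expect no serious obstacle, the only subtlety being to keep in mind that the scale and coefficient space furnished by Proposition \ref{prop: esympt expansion of naive} depend on $f$, so that the uniqueness in Lemma \ref{lem:uniqueness asympt exps} must be applied with respect to that same fixed scale rather than an abstract one.
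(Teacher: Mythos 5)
Your proof is correct and follows essentially the same route as the paper's: both rest on Proposition \ref{prop: esympt expansion of naive} (including its ``moreover'' clause) together with the fact that $\mathcal{A}$ is a space of coefficients, which comes from Proposition \ref{prop:goal1}(\ref{enu:density pointwise}). The only cosmetic difference is that you reach the contradiction by observing that the zero sequence is also a valid $\left(\mathcal{G},\mathcal{A}\right)$-expansion and invoking Lemma \ref{lem:uniqueness asympt exps}, whereas the paper inlines that lemma's argument by showing that the leading nonzero coefficient $E_{n_{0}}$ would have to tend to $0$ at $+\infty$.
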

\begin{proof}
Suppose for a contradiction that $f\in\C_{\naive}^{\exp}\left(\RR\right)$. By
Proposition \ref{prop: esympt expansion of naive} we may write $f\left(y\right)$
as the sum of a convergent series

\[
f\left(y\right)=\sum_{n\in\NN}E_{n}\left(y\right)y^{r_{n}}\left(\log y\right)^{s_{n}}
\]
for all sufficiently large $y$. Since the germ of $f$ at $+\infty$
is nonzero, this series contains a nonzero term. Choose $n_{0}\in\NN$
least such that $E_{n_{0}}\left(y\right)$ is not identically $0$. Thus there
exists a constant $C>0$ such that

\[
\left|f\left(y\right)\lyxmathsym{\textendash}E_{n_{0}}\left(y\right)y^{r_{n_{0}}}\left(\log y\right)^{s_{n_{0}}}\right|\leq Cy^{r_{n_{0}+1}}\left(\log y\right)^{s_{n_{0}+1}}
\]
for all sufficiently large $y$. Since $\frac{f\left(y\right)}{y^{r_{n_{0}}}\left(\log y\right)^{s_{n_{0}}}}$
and $\frac{y^{r_{n_{0}+1}}\left(\log y\right)^{s_{n_{0}+1}}}{y^{r_{n_{0}}}\left(\log y\right)^{s_{n_{0}}}}$
both tend to $0$ as $y\to+\infty$, dividing both sides of this inequality
by $y^{r_{n_{0}}}\left(\log y\right)^{s_{n_{0}}}$ and letting $y$ tend to $+\infty$
gives $\lim_{y\to+\infty}E_{n_{0}}\left(y\right)=0$, which contradicts Proposition  \ref{prop:goal1}(\ref{enu:density pointwise}).\end{proof}
\begin{example}
\label{exa: Si}Consider the sine integral $\Si\colon [0,+\infty)\to\RR$,
which is defined by
\[
\Si\left(y\right)={\int_{0}^{y}}\frac{\sin\left(t\right)}{t}\mathrm{d}t={ \int_{0}^{y}}\frac{\mathrm{e}^{\mathrm{i}t}-\mathrm{e}^{-\mathrm{i}t}}{2\mathrm{i}t}\mathrm{d}t\quad.
\]
Clearly, $\text{Si}\in\mathcal{C}^{\exp}\left([0,+\infty)\right)$.\end{example}
\begin{claim*}
\label{claim: Si not naive}The function $\text{Si}\left(y\right)$
is not in $\mathcal{C}_{\text{naive}}^{\exp}\left([0,+\infty)\right)$.\end{claim*}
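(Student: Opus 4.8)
The plan is to show that $\Si$ cannot be in $\C_{\naive}^{\exp}\left([0,+\infty)\right)$ by exploiting the \emph{convergence} of the asymptotic expansion guaranteed by Proposition \ref{prop: esympt expansion of naive}, contrasted with the classical \emph{divergent} asymptotic expansion of $\Si$. First I would recall the well-known asymptotic behaviour of $\Si$ at $+\infty$: integrating by parts repeatedly in $\int_y^{+\infty}\frac{\sin t}{t}\,\mathrm{d}t$ gives
\[
\Si\left(y\right)=\frac{\pi}{2}-\frac{\cos y}{y}-\frac{\sin y}{y^{2}}+\frac{2\cos y}{y^{3}}+\cdots,
\]
so that $\Si\left(y\right)=\frac{\pi}{2}+\cos\left(y\right)\cdot A\left(y\right)+\sin\left(y\right)\cdot B\left(y\right)$, where $A$ and $B$ are formal power series in $1/y$ whose coefficients grow like $n!$ and which therefore \emph{diverge} for every fixed $y$. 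This is the standard stationary-phase/repeated-integration-by-parts expansion and I would simply cite it (e.g. from the references on oscillatory integrals already in the paper, or note it follows from elementary calculus).

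Next, arguing for a contradiction, suppose $\Si\in\C_{\naive}^{\exp}\left([0,+\infty)\right)$. By Proposition \ref{prop: esympt expansion of naive}, $\Si$ has a \emph{convergent} $\left(\G,\A\right)$-asymptotic expansion of the form $\Si\left(y\right)=\sum_{n\in\NN}E_{n}\left(y\right)y^{r_{n}}\left(\log y\right)^{s_{n}}$, where the coefficients $E_{n}$ lie in the space of coefficients $\A$ consisting of finite trigonometric sums $\sum_{j}c_{j}\mathrm{e}^{\mathrm{i}p_{j}\left(y^{1/d}\right)}$, and crucially each grouped coefficient series $f_{j}\left(y\right)=\sum_{n}c_{j,n}y^{r_{n}}\left(\log y\right)^{s_{n}}$ converges absolutely for all large $y$ by Equation \eqref{eq: coeff converg}. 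The key point is that a $\C_{\naive}^{\exp}$ expansion sorts its terms by the \emph{distinct} exponentials $\mathrm{e}^{\mathrm{i}p_{j}\left(y^{1/d}\right)}$ appearing, and within each such exponential the coefficient is a genuinely convergent (sub)series in powers $y^{r}\left(\log y\right)^{s}$.

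Then I would match the two expansions via the uniqueness statement, Lemma \ref{lem:uniqueness asympt exps}. The exponentials occurring in the classical expansion of $\Si$ are precisely $\mathrm{e}^{0}=1$ (giving the constant $\tfrac{\pi}{2}$) together with $\mathrm{e}^{\mathrm{i}y}$ and $\mathrm{e}^{-\mathrm{i}y}$ (arising from writing $\cos y$ and $\sin y$ in exponential form). By uniqueness of the $\left(\G,\A\right)$-expansion, the coefficient attached to, say, $\mathrm{e}^{\mathrm{i}y}$ in the putative $\C_{\naive}^{\exp}$ expansion must equal the corresponding coefficient series in the classical expansion, which is $\sum_{k\ge 0}\frac{c_{k}}{y^{k+1}}$ with $c_{k}$ of factorial growth. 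But Proposition \ref{prop: esympt expansion of naive} forces this series to converge absolutely for all large $y$, whereas $\sum_{k}\frac{k!}{y^{k+1}}$ diverges for every $y$. This contradiction completes the proof.

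The main obstacle is the bookkeeping needed to \emph{legitimately identify} the coefficient series of the two expansions: the classical expansion of $\Si$ is only an asymptotic (not convergent) expansion, so one cannot directly invoke Lemma \ref{lem:uniqueness asympt exps}, which is stated for genuine $\left(\G,\A\right)$-expansions. The careful argument is to observe that the \emph{finite truncations} of the classical series are valid asymptotic approximations, extract from the convergent $\C_{\naive}^{\exp}$ expansion its own finite truncations, and use uniqueness of asymptotic expansions term by term (each term $y^{r}\left(\log y\right)^{s}\mathrm{e}^{\mathrm{i}p\left(y^{1/d}\right)}$ being uniquely determined) to conclude that the convergent coefficient series attached to $\mathrm{e}^{\mathrm{i}y}$ must agree, coefficient-by-coefficient in $y^{-k}$, with the divergent factorial series. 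Since absolute convergence of the former at a single large $y$ is incompatible with factorial growth of its coefficients, the contradiction follows; the delicate part is phrasing this comparison so that one never asserts convergence of the classical series, only equality of individual coefficients obtained from matching finite asymptotic expansions.
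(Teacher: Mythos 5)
Your proposal is correct and follows essentially the same route as the paper: invoke the classical divergent asymptotic expansion of $\Si$, note that it is a genuine $\left(\G,\A\right)$-asymptotic expansion in the sense of Definition \ref{def: asympt exp}, and combine Lemma \ref{lem:uniqueness asympt exps} with the convergence of the grouped coefficient series \eqref{eq: coeff converg} from Proposition \ref{prop: esympt expansion of naive} to reach a contradiction. The ``obstacle'' you flag at the end is not actually an issue, since the paper's notion of $\left(\G,\A\right)$-asymptotic expansion is defined purely via error bounds on finite truncations, so the divergent classical expansion qualifies directly and the uniqueness lemma applies without further bookkeeping.
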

\begin{proof}
Recall the classical asymptotic formula (see \cite{abramowitz:handbook_mathematical_functions})
\[
\text{Si}\left(y\right)\sim\frac{\pi}{2}-\frac{\cos y}{y}\sum_{k\in\mathbb{N}}\left(-1\right)^{k}\frac{\left(2k\right)!}{y^{2k}}-\frac{\sin y}{y}\sum_{k\in\mathbb{N}}\left(-1\right)^{k}\frac{\left(2k+1\right)!}{y^{2k+1}}.
\]
Hence, $\text{Si}\left(y\right)$ has a $\left(\mathcal{G},\mathcal{A}\right)$-asymptotic
expansion, with $\mathcal{G}$ and $\mathcal{A}$ as in the statement
of Proposition \ref{prop: esympt expansion of naive}. However, in
the notation of Equation (\ref{eq: coeff converg}), the series $F_{1}\left(y\right)=\sum_{k\in\mathbb{N}}\left(-1\right)^{k}\frac{\left(2k\right)!}{y^{2k+1}}$
and $F_{2}\left(y\right)=\sum_{k\in\mathbb{N}}\left(-1\right)^{k}\frac{\left(2k+1\right)!}{y^{2k+2}}$
are divergent. Therefore, by Lemma \ref{lem:uniqueness asympt exps}
and Proposition \ref{prop: esympt expansion of naive} $\text{Si}\left(y\right)\notin\mathcal{C}_{\text{naive}}^{\exp}\left([0,+\infty)\right)$.
\end{proof}

\subsection{Pointwise limits\label{sub:Pointwise-limits}}
\begin{defn}
\label{def: limits}For any $X\subseteq\RR^{m}$ and $f\colon X\times\RR\to\CC$,
let
\[
\text{Lim}\left(f,X\right):=\{x\in X:\ \text{\ensuremath{\lim_{y\to+\infty}f\left(x,y\right)\ }exists}\}.
\]
\end{defn}
\begin{prop}
\label{prop:limits} Let $f\in\C^{\exp}\left(X\times\RR\right)$ for some subanalytic
set $X\subseteq\mathbb{R}^{m}$. There exist $g,h\in\C^{\exp}\left(X\right)$
such that
\[
\mathrm{Lim}\left(f,X\right)=\{x\in X:\ h\left(x\right)=0\}
\]
and such that for all $x\in\mathrm{Lim}\left(f,X\right)$,
\[
\lim_{y\to+\infty}f\left(x,y\right)=g\left(x\right).
\]
\end{prop}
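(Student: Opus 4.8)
The plan is to mimic the proof of Theorem~\ref{thm:interp-1}: apply the preparation theorem to put $f$ in prepared form, and then read off the limit at $+\infty$ from the dominant terms, using Proposition~\ref{prop:goal1} to detect oscillation. First I would apply Theorem~\ref{thm:prepExpConstr} to $f$, obtaining a finite partition $\A$ of $X\times\RR$ into cells over $\RR^m$. Since only the behaviour as $y\to+\infty$ matters, the only relevant cells are those $A\in\A$ that are open over $\RR^m$ with $b\equiv+\infty$: for each fixed $x\in X$ the topmost (unbounded) fibre interval lies in exactly one such cell, and the projections $\Pi_m(A)$ of these cells form a partition of $X$. Because the characteristic function of a subanalytic set is subanalytic, hence lies in $\C^{\exp}$, and because $\C^{\exp}$ is a ring (Corollary~\ref{cor:CexpRing}), it suffices to construct $g,h\in\C^{\exp}(\Pi_m(A))$ on each such cell and then glue by multiplying with characteristic functions. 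So fix one such $A$; by Remark~\ref{rmk:center} we have $\theta=0$, $P_\theta=\mathrm{id}$, and $A=\{(x,y):x\in\Pi_m(A),\ y>a(x)\}$, on which
\[
f(x,y)=\sum_{j\in J^{\mathrm{int}}}T_j(x,y)+\sum_{j\in J^{\mathrm{naive}}}T_j(x,y),
\]
with $T_j$ superintegrable for $j\in J^{\mathrm{int}}$ and $T_j(x,y)=f_j(x)y^{r_j}(\log y)^{s_j}\mathrm{e}^{\mathrm{i}\phi_j(x,y)}$ naive in $y$ for $j\in J^{\mathrm{naive}}$, the tuples $(r_j,s_j,\phi_j)$ being distinct. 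Refining the partition in $x$, I would also arrange that each $\phi_j(x,\cdot)$ is either identically $0$ for all $x\in\Pi_m(A)$, or nowhere identically $0$.

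Next I would analyse the limit termwise. By Remark~\ref{rem: superint tend to 0} each superintegrable $T_j$ has the form $y^{r_j}(\log y)^{s_j}g_j(x,y)$ with $r_j<-1$ and $g_j$ bounded in $y$, so $T_j(x,y)\to0$. Grouping the naive terms by their pair $(r_j,s_j)$, I write the naive part as $\sum_{(r,s)}y^r(\log y)^sE_{(r,s)}(x,y)$, where $E_{(r,s)}(x,y)=\sum_{(r_j,s_j)=(r,s)}f_j(x)\mathrm{e}^{\mathrm{i}\phi_j(x,y)}$. For fixed $x$, the substitution $t=y^{1/d}$ turns each $E_{(r,s)}(x,\cdot)$ into a finite sum of exponentials of distinct polynomials vanishing at $0$, so Proposition~\ref{prop:goal1} applies. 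Its part~(\ref{enu:density pointwise}) shows that whenever some $f_j(x)\ne0$ in a group with $(r,s)>_{\mathrm{lex}}(0,0)$, the corresponding term dominates and its modulus is bounded below by $\varepsilon\,y^r(\log y)^s\to+\infty$ along a sequence, while all lower-order and superintegrable terms are $o(y^r(\log y)^s)$; hence $|f(x,y)|\to\infty$ along that sequence and no finite limit exists. Conversely, if all such groups vanish, then the terms with $(r,s)<_{\mathrm{lex}}(0,0)$ tend to $0$ and $f(x,y)\to\lim_{y\to+\infty}E_{(0,0)}(x,y)$, which by part~(\ref{enu: existence of limit}) of Proposition~\ref{prop:goal1} exists if and only if every $\phi_j(x,\cdot)$ with $f_j(x)\ne0$ in the $(0,0)$-group is $\equiv0$.

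Putting
\[
J^{\mathrm{lim}}=\{j\in J^{\mathrm{naive}}:(r_j,s_j)>_{\mathrm{lex}}(0,0)\}\cup\{j\in J^{\mathrm{naive}}:(r_j,s_j)=(0,0),\ \phi_j\not\equiv0\},
\]
the above shows that $\lim_{y\to+\infty}f(x,y)$ exists if and only if $f_j(x)=0$ for all $j\in J^{\mathrm{lim}}$, and that in that case the limit equals $g(x):=\sum f_j(x)$, the sum taken over the (at most one, by distinctness) index $j$ in the $(0,0)$-group with $\phi_j\equiv0$. I would then set $h=\sum_{j\in J^{\mathrm{lim}}}|f_j|^2$; since $\C^{\exp}$ is closed under conjugation and under taking real and imaginary parts (Remark~\ref{rem: important}(\ref{enu: real and imag parts, fourier})) and is a ring, both $g$ and $h$ lie in $\C^{\exp}(\Pi_m(A))$, and $\{h=0\}$ is exactly the locus where the limit exists, with $g$ computing it there. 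Gluing over the finitely many relevant cells via characteristic functions produces the desired global $g,h\in\C^{\exp}(X)$.

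The only genuinely analytic input is Proposition~\ref{prop:goal1}; everything else is the bookkeeping of the prepared expansion, entirely parallel to Theorem~\ref{thm:interp-1} and to the asymptotic analysis in Proposition~\ref{prop: esympt expansion of naive}. I expect the main obstacle to be exactly the delicate point that, for each $x$, the dominant term of the expansion must be correctly identified and one must rule out that oscillation in the lower-order terms accidentally restores convergence — which is precisely what the uniform lower bound in Proposition~\ref{prop:goal1}(\ref{enu:density pointwise}) prevents, together with the refinement ensuring that each $\phi_j(x,\cdot)$ vanishes uniformly or never.
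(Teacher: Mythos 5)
Your proposal is correct and follows essentially the same route as the paper's own proof: prepare $f$ via Theorem~\ref{thm:prepExpConstr}, group the naive terms by the pairs $\left(r,s\right)$, use Proposition~\ref{prop:goal1}(\ref{enu:density pointwise}) to force the lexicographically maximal nonvanishing group to be $\left(0,0\right)$, and Proposition~\ref{prop:goal1}(\ref{enu: existence of limit}) to force the surviving phases to vanish, then encode the locus by a sum of squared moduli. The only cosmetic differences are that you keep the negative-power terms and observe they tend to $0$ (where the paper discards them up front) and that you make the gluing over cells via characteristic functions explicit.
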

\begin{proof}
Apply Theorem \ref{thm:prepExpConstr} to $f\left(x,y\right)$ with respect to
$y$. Focus on one cell of the form
\[
A=\{\left(x,y\right):\ x\in\Pi_{m}\left(A\right),\ y>a\left(x\right)\}.
\]

Let
\begin{equation}
\begin{aligned}E=\{ \left(r,s\right)\in\mathbb{Q}\times\mathbb{N}:\ \exists j\in J\ \left(r_{j},s_{j}\right)=\left(r,s\right)\} \ \text{and,}\\
\text{if\ }\left(r,s\right)\in E,\ \text{let}\ E_{\left(r,s\right)}=\{ j\in J:\ \left(r_{j},s_{j}\right)=\left(r,s\right)\} .
\end{aligned}
\label{eq:E and Ers}
\end{equation}
The terms in the preparation involving $y^{r}\left(\log y\right)^{s}$ with $r<0$
may be neglected since they affect neither the existence of $\lim_{y\to+\infty}f\left(x,y\right)$
nor its value when it exists. So we may assume that $f\left(x,y\right)$ is naive
in $y$ with nonnegative powers of $y$ in each term of the preparation.
Write $f$ as the finite sum
\begin{equation}
f\left(x,y\right)=\sum_{\left(r,s\right)\in E}y^{r}\left(\log y\right)^{s}\left(\sum_{j\in E_{\left(r,s\right)}}f_{j}\left(x\right)\mathrm{e}^{\mathrm{i}\phi_{j}\left(x,y\right)}\right),\label{eq:fPrep}
\end{equation}
where each $f_{j}$ is in $\C^{\exp}\left(\Pi_{m}\left(A\right)\right)$, and where we have
that for each $\left(r,s\right)$ and each $x\in\Pi_{m}\left(A\right)$,
\[
\phi_{j}\left(x,y\right)=\sum_{k=1}^{m}a_{j,k}\left(x\right)y^{\frac{k}{d}},
\quad\text{for \ensuremath{j\in E_{\left(r,s\right)}},}
\]
is a family of distinct polynomials in $y^{\frac{1}{d}}$ with subanalytic
coefficients $a_{j,k}$. By partitioning in $x$ we may also assume
that if there exist $\widetilde{j}\in E_{\left(r,s\right)}$ and $\widetilde{x}\in\Pi_{m}\left(A\right)$
such that $\phi_{\widetilde{j}}\left(\widetilde{x},y\right)=0$ for all $y$ such that
$\left(\widetilde{x},y\right)\in A$, then $\phi_{\widetilde{j}}\left(x,y\right)=0$
for all $\left(x,y\right)\in A$ (note that there is at most one such $\widetilde{j}\in E_{\left(r,s\right)}$
such that $\phi_{\widetilde{j}}\equiv0$ because for each $x\in\Pi_{m}\left(A\right)$,
$\left(\phi_{j}\left(x,y\right)\right)_{j\in J}$ is a family of distinct polynomials
in $y^{\frac{1}{d}}$). \\

\noindent \emph{Claim.} For each $x\in\Pi_{m}\left(A\right)$, $x\in\text{Lim}\left(f,\Pi_{m}\left(A\right)\right)$
if and only if the following two conditions hold:
\begin{enumerate}
\item For each $\left(r,s\right)\in E$ such that $r>0$ or $s>0$, we have that $f_{j}\left(x\right)=0$
for all $j\in E_{\left(r,s\right)}$.
\item For all $j\in E_{\left(0,0\right)}$ such that $\phi_{j}\not\equiv0$,
we have $f_{j}\left(x\right)=0$.
\end{enumerate}

To prove the claim, fix $x\in\Pi_{m}\left(A\right)$. Observe that if Conditions
1 and 2 hold, then either $f$ is identically $0$, or else there
exists $j_{0}\in E_{\left(0,0\right)}$ such that $f\left(x,y\right)=f_{j_{0}}\left(x\right)$
for all $y$. Either way, $\lim_{y\to+\infty}f\left(x,y\right)$ exists trivially.

To prove the converse, assume that $x\in\text{Lim}\left(f,\Pi_{m}\left(A\right)\right)$.
Conditions 1 and 2 clearly hold if $f_{j}\left(x\right)=0$ for all $j\in{ \bigcup_{\left(r,s\right)\in E}}E_{\left(r,s\right)}$,
so assume otherwise. Choose $\left(r_{0},s_{0}\right)$ maximal with respect
to the lexicographical ordering such that $f_{j}\left(x\right)\neq0$ for some
$j\in E_{\left(r_{0},s_{0}\right)}$. 
 By Proposition \ref{prop:goal1}(\ref{enu:density pointwise}), since $\lim_{y\to+\infty}f\left(x,y\right)$
exists, it follows that $r_{0}=s_{0}=0$.
Thus Condition 1 holds, and we have
\[
f\left(x,y\right)=\sum_{j\in E_{\left(0,0\right)}}f_{j}\left(x\right)\mathrm{e}^{\mathrm{i}\phi_{j}\left(x,y\right)}
\]
for all $y$. Proposition \ref{prop:goal1}(\ref{enu: existence of limit})
now shows that Condition 2 holds. This proves the claim.\medskip{}

The claim easily implies the proposition. Indeed, define
\[
h=\left(\sum_{{\text{\ensuremath{\left(r,s\right)\in E}\ s.t.}\atop \text{\ensuremath{r>0}\ or \ensuremath{s>0}}}}\sum_{j\in E_{\left(r,s\right)}}\left|f_{j}\right|^{2}\right)+\left(\sum_{{\text{\ensuremath{j\in E_{\left(0,0\right)}\ }s.t.}\atop \phi_{j}\not\equiv0}}\left|f_{j}\right|^{2}\right);
\]
define $g=f_{j_{0}}$ if there exists $j_{0}\in E_{\left(0,0\right)}$
such that $\phi_{j_{0}}\equiv0$, and define $g=0$ otherwise. Then
$g,h\in\C^{\exp}\left(\Pi_{m}\left(A\right)\right)$. The claim shows that
\[
\text{Lim}\left(f\restriction{A},\Pi_{m}\left(A\right)\right)=\{x\in A:\ h\left(x\right)=0\}
\]
and that
\[
f\left(x,y\right)=g\left(x\right)\quad\text{for all \ensuremath{\left(x,y\right)\in A}\ such that \ensuremath{h\left(x\right)=0}.}
\]

\end{proof}

\section{
 Parametric $L^{p}$-completeness and the 
Fourier-Plancherel transform \label{sub:-completeness-and-the}}
%

In this section we prove a parametric $L^p$-completeness theorem for $\C^{\exp}$ and use this to show that $\C^{\exp}$ is closed under the Fourier-Plancherel transform.

\begin{defn}
\label{def:Cauchy} Let $X\subseteq\RR^{m}$ and $f\colon X\times\RR\to\CC$
be Lebesgue measurable, and $p\in[1,+\infty]$. For each $y\in\RR$,
define $f_{y}:X\to\CC$ by $f_{y}\left(x\right)=f\left(x,y\right)$ for all $x\in X$. We
say that the family of functions $\left(f_{y}\right)_{y\in\RR}$ is 
\textbf{\emph{Cauchy in $L^{p}\left(X\right)$ as $y\to+\infty$}}
if $\left(f_{y}\right)_{y\in\RR}\subseteq L^{p}\left(X\right)$
and for all $\varepsilon>0$ there exists $y_{0}\in\RR$ such that
\[
\|f_{y}-f_{y'}\|_{p}<\varepsilon\quad\text{for all \ensuremath{y,y'\geq y_{0}}.}
\]
\end{defn}
%
\begin{prop}
\label{prop:complete}
 Let $p\in[1,+\infty]$ and $f\in\C^{\exp}\left(X\times\RR\right)$
for a subanalytic set $X\subseteq\RR^{m}$, and suppose that $\left(f_{y}\right)_{y\in\RR}$
is Cauchy in $L^{p}\left(X\right)$ as $y\to+\infty$. Then there exist $g\in\C^{\exp}\left(X\right)\cap L^{p}\left(X\right)$
and a subanalytic set $X_{0}\subseteq X$ such that $\vol_{m}\left(X\setminus X_{0}\right)=0$,
\[
\lim_{y\to+\infty}\|f_{y}-g\|_{p}=0,
\]
and
\[
\lim_{y\to+\infty}f\left(x,y\right)=g\left(x\right)\quad\text{for all \ensuremath{x\in X_{0}}.}
\]
\end{prop}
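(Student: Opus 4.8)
The plan is to combine the pointwise-limit result of Proposition \ref{prop:limits} with the $L^{p}$-Cauchy hypothesis, the latter being used only to force the ``obstruction locus'' of the limit to be negligible. First I would apply Proposition \ref{prop:limits} to obtain $g,h\in\C^{\exp}\left(X\right)$ with $\mathrm{Lim}\left(f,X\right)=\{x:h\left(x\right)=0\}$ and $\lim_{y\to+\infty}f\left(x,y\right)=g\left(x\right)$ for every $x$ with $h\left(x\right)=0$. By completeness of $L^{p}\left(X\right)$ the Cauchy family converges in $L^{p}$ to some $\tilde g\in L^{p}\left(X\right)$, and a subsequence $f_{y_{n}}$ converges to $\tilde g$ pointwise off a $\vol_{m}$-null set. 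The entire proposition then reduces to the single claim that $\{h\neq0\}$ is $\vol_{m}$-null: granting this, $\lim_{y}f\left(x,y\right)=g\left(x\right)$ for a.e. $x$, so $g=\tilde g$ a.e., whence $g\in L^{p}\left(X\right)$ and $\|f_{y}-g\|_{p}=\|f_{y}-\tilde g\|_{p}\to0$; the required subanalytic $X_{0}$ is produced in the next step.

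To replace $\{h\neq0\}$ by a subanalytic null set, I would invoke the piecewise analyticity of $\C^{\exp}$-functions (Remark \ref{rem:piecewise analytic}): there is a finite partition of $X$ into subanalytic cells on each top-dimensional open cell $A'$ of which every coefficient function occurring in the preparation of $f$ is real-analytic. The point is that, on $A'$, the locus $\{h=0\}$ is cut out by the vanishing of the finitely many such analytic functions $f_{j}$ that obstruct the limit. Once I show (below) that each relevant $\{f_{j}\neq0\}$ is $\vol_{m}$-null, analyticity and connectedness of $A'$ force $f_{j}\equiv0$ on $A'$ by the identity theorem, so $\{h\neq0\}$ is contained in the lower-dimensional skeleton of the partition. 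Taking $X_{0}$ to be the union of the top-dimensional open cells then gives a subanalytic set with $\vol_{m}\left(X\setminus X_{0}\right)=0$ on which $f\left(x,\cdot\right)\to g\left(x\right)$. (When $\dim X<m$ everything is $\vol_{m}$-null and the statement is trivial; when $p=+\infty$, the limit exists a.e. directly, since $f\left(x,\cdot\right)$ is continuous for large $y$ and $L^{\infty}$-Cauchy gives uniform control.)

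The heart of the argument, and the main obstacle, is showing that each obstructing $f_{j}$ vanishes off a $\vol_{m}$-null set; this is where the $L^{p}$-Cauchy hypothesis and the continuously uniformly distributed machinery of Proposition \ref{prop:goal1} and Lemma \ref{lem:main-lemma-appendix} enter. Working on one cell, I would write $f$ in its naive form $f\left(x,y\right)=\sum_{\left(r,s\right)}y^{r}\left(\log y\right)^{s}\sum_{j}f_{j}\left(x\right)\mathrm{e}^{\mathrm{i}\phi_{j}\left(x,y\right)}$ for large $y$, fix the set $E$ of positive measure where a given obstruction is present, and split according to the lexicographically maximal $\left(r_{0},s_{0}\right)$ carrying a nonzero $f_{j}$ (and, on a further finite partition of $E$, according to which coefficients are nonzero). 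If $r_{0}>0$ or $s_{0}>0$ (growth case), then for each $x\in E$ Proposition \ref{prop:goal1}(\ref{enu:density pointwise},\ref{V big}), applied in the variable $t=y^{1/d}$ to the top-level exponential sum restricted to its nonzero terms, provides $\varepsilon\left(x\right)>0$ and a set of $y$ of positive dyadic density on which $\left|f\left(x,y\right)\right|\geq\tfrac12\varepsilon\left(x\right)y^{r_{0}}\left(\log y\right)^{s_{0}}$; integrating over a dyadic block and using Lemma \ref{lem:main-lemma-appendix} gives $\int_{2^{k}}^{2^{k+1}}\|f_{y}\mathbf{1}_{E}\|_{p}^{p}\,\frac{\mathrm{d}y}{y}\to+\infty$ as $k\to\infty$, contradicting the boundedness of $\|f_{y}\|_{p}$ implied by the Cauchy condition. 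If instead $r_{0}=s_{0}=0$ and some $\phi_{j}\not\equiv0$ carries $f_{j}\neq0$ (bounded oscillation case), I would reuse the construction inside the proof of Proposition \ref{prop:goal1}: for each such $x$ there are two boxes whose c.u.d. preimages $A\left(x\right),B\left(x\right)$ have positive dyadic density and on which $f\left(x,\cdot\right)$ takes values at distance $\geq\varepsilon\left(x\right)$, so that
\[
\int\!\!\int_{[2^{k},2^{k+1}]^{2}}\left|f\left(x,y\right)-f\left(x,y'\right)\right|^{p}\,\frac{\mathrm{d}y\,\mathrm{d}y'}{yy'}\ \geq\ c\left(x\right)>0
\]
for all large $k$. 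By Tonelli and Fatou this yields $\liminf_{k\to\infty}\int\!\!\int_{[2^{k},2^{k+1}]^{2}}\|f_{y}-f_{y'}\|_{p}^{p}\,\frac{\mathrm{d}y\,\mathrm{d}y'}{yy'}\geq\int_{E}c\left(x\right)\,\mathrm{d}x>0$, contradicting the $L^{p}$-Cauchy condition, which makes the left-hand side tend to $0$. In either case $\vol_{m}\left(E\right)=0$, as required.

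The main difficulty is precisely this last step: the family oscillates in $y$ with frequencies depending on $x$, so no fixed pair $y,y'$ witnesses the oscillation for all $x$ simultaneously (the ``typewriter'' obstruction that blocks upgrading $L^{p}$-convergence to a.e. convergence of the full family). The resolution is to average in $y$ against $\mathrm{d}y/y$ and exploit the uniform positive dyadic density of the c.u.d. level sets supplied by Lemma \ref{lem:main-lemma-appendix}, trading a fixed-$y$ statement for an integrated one that remains incompatible with the Cauchy hypothesis; the analyticity step then converts the resulting a.e.-vanishing into genuine identical vanishing on the top-dimensional cells, which is what makes $X_{0}$ subanalytic.
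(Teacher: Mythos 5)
Your argument is correct and reaches the same reduction as the paper --- prepare $f$ in $y$ via Theorem \ref{thm:prepExpConstr}, isolate the lexicographically maximal group of terms with nonvanishing coefficients, show that the presence of a growing or persistently oscillating group on a positive-measure set of parameters contradicts the Cauchy hypothesis, and then use piecewise analyticity (Remark \ref{rem:piecewise analytic}) plus connectedness of the open cells to upgrade a.e.\ vanishing of the obstructing coefficients to identical vanishing, which is what makes $X_0$ subanalytic --- but the way you derive the contradiction is genuinely different. The paper develops a \emph{parametric} theory of c.u.d.\ maps (Definition \ref{defn:CUD:param}, Proposition \ref{prop:WeylToCUD}, Lemmas \ref{lem:Param:OscPrep} and \ref{lem:Param:Dovetail}) whose whole purpose is to produce, uniformly over a compact set $K$ of positive measure, \emph{specific} times $y_{2j},y_{2j+1}$ and sets $X_{2j+1}\subseteq X_{2j}$ of measure at least $\delta$ witnessing growth or oscillation; the contradiction with the Cauchy condition is then read off at those times, simultaneously for every $p\in[1,+\infty]$. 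You instead apply the non-parametric Proposition \ref{prop:goal1} and Lemma \ref{lem:main-lemma-appendix} pointwise in $x$ and average in $y$ against $\mathrm{d}y/y$ over dyadic blocks, letting Tonelli and Fatou absorb the $x$-dependence of $\varepsilon\left(x\right)$, of the level sets $A\left(x\right),B\left(x\right)$, and of the threshold $k_0\left(x\right)$; this bypasses essentially all of the parametric machinery of Section \ref{sub:-completeness-and-the}, which is an attractive simplification. The costs are modest but worth recording: (i) the $\|\cdot\|_p^p$ manipulations force $p<+\infty$, so $p=+\infty$ needs the separate (easy) uniform-Cauchy argument you sketch; (ii) the dyadic blocks must be taken in the variable $t=y^{1/d}$ for the common denominator $d$ of the cell, since Lemma \ref{lem:main-lemma-appendix} controls $W\cap[2^k,2^{k+1}]$ in $t$ and an individual dyadic block in $y$ could a priori carry none of the mass of $A\left(x\right)$ or $B\left(x\right)$; (iii) in the bounded-oscillation case one must first discard the decaying terms, i.e.\ use that $\left|f\left(x,y\right)-f\left(x,y'\right)\right|\geq\varepsilon\left(x\right)-o\left(1\right)$ for $y\in A\left(x\right)$ and $y'\in B\left(x\right)$ large, which is available from Remark \ref{rem: superint tend to 0}. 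None of these is a gap; they are routine once noted, and the Fatou/Tonelli device is a legitimate alternative to the uniform equidistribution the paper builds.
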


Before proving Proposition \ref{prop:complete}, we use it to show that $\C^{\exp}$ is closed under the Fourier-Plancherel transform.

\begin{thm}
\label{cor:Plancherel} Let $\widetilde{\hbox{\calli F}\ }$ be the
Fourier-Plancherel extension of the Fourier transform to $L^{2}\left(\mathbb{R}^{n}\right)$,
as in (\ref{eq: F tilda}). Then, the image of $\C^{\exp}\left(\RR^{n}\right)\cap L^{2}\left(\RR^{n}\right)$
under $\widetilde{\hbox{\calli F}\ }$ is $\C^{\exp}\left(\RR^{n}\right)\cap L^{2}\left(\RR^{n}\right)$. \end{thm}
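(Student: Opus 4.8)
The plan is to reduce the statement to the parametric $L^{2}$-completeness result, Proposition \ref{prop:complete}, by realising the Fourier--Plancherel transform of a function in $\C^{\exp}(\RR^{n})\cap L^{2}(\RR^{n})$ as an $L^{2}$-limit of a subanalytically parametrized family of \emph{ordinary} Fourier transforms, each of which already lands in $\C^{\exp}$. First I would fix $f\in\C^{\exp}(\RR^{n})\cap L^{2}(\RR^{n})$ and truncate it to balls: for $R\in\RR$ set $B_{R}=\{t\in\RR^{n}:\ |t|^{2}<R^{2}\}$ (empty when $R\le 0$), and observe that $\{(t,R)\in\RR^{n}\times\RR:\ t\in B_{R}\}$ is semialgebraic, hence subanalytic. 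Consequently $(x,t,R)\mapsto f(t)\,\chi_{B_{R}}(t)\,\mathrm{e}^{-2\pi\mathrm{i}x\cdot t}$ lies in $\C^{\exp}(\RR^{n}_{x}\times\RR^{n}_{t}\times\RR_{R})$. Since $f\in L^{2}$ and each $B_{R}$ has finite measure, Cauchy--Schwarz gives $f\chi_{B_{R}}\in L^{1}(\RR^{n})$, so this integrand is integrable in $t$ on all of $\RR^{n}_{x}\times\RR_{R}$. By stability under integration (Theorem \ref{thm:main interp}) the function
\[
F(x,R)=\int_{\RR^{n}}f(t)\,\chi_{B_{R}}(t)\,\mathrm{e}^{-2\pi\mathrm{i}x\cdot t}\,\mathrm{d}t
\]
belongs to $\C^{\exp}(\RR^{n}\times\RR)$, and for each fixed $R$ it equals the ordinary Fourier transform $\hbox{\calli F}(f\chi_{B_{R}})$.

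Next I would verify the Cauchy hypothesis of Proposition \ref{prop:complete} with $p=2$, writing $F_{R}(x)=F(x,R)$. For $R,R'>0$, Plancherel's theorem (unitarity of $\hbox{\calli F}$ on $L^{1}\cap L^{2}$, valid with the $2\pi$-convention) yields
\[
\|F_{R}-F_{R'}\|_{2}=\bigl\|\hbox{\calli F}\bigl(f(\chi_{B_{R}}-\chi_{B_{R'}})\bigr)\bigr\|_{2}=\bigl\|f(\chi_{B_{R}}-\chi_{B_{R'}})\bigr\|_{2},
\]
which tends to $0$ as $R,R'\to+\infty$ because $f\in L^{2}$. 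Thus $(F_{R})_{R\in\RR}$ is Cauchy in $L^{2}(\RR^{n})$ as $R\to+\infty$, and Proposition \ref{prop:complete} produces $g\in\C^{\exp}(\RR^{n})\cap L^{2}(\RR^{n})$ with $\|F_{R}-g\|_{2}\to 0$. On the other hand $f\chi_{B_{R}}\to f$ in $L^{2}$, so by unitarity $F_{R}=\hbox{\calli F}(f\chi_{B_{R}})\to\widetilde{\hbox{\calli F}\ }(f)$ in $L^{2}$; uniqueness of $L^{2}$-limits forces $g=\widetilde{\hbox{\calli F}\ }(f)$ almost everywhere. Hence $\widetilde{\hbox{\calli F}\ }(f)\in\C^{\exp}(\RR^{n})\cap L^{2}(\RR^{n})$, which gives the inclusion $\widetilde{\hbox{\calli F}\ }\bigl(\C^{\exp}(\RR^{n})\cap L^{2}(\RR^{n})\bigr)\subseteq\C^{\exp}(\RR^{n})\cap L^{2}(\RR^{n})$.

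For surjectivity I would invoke the reflection identity $\widetilde{\hbox{\calli F}\ }^{-1}(g)(x)=\widetilde{\hbox{\calli F}\ }(g)(-x)$, which holds because $\hbox{\calli F}^{2}$ is the reflection $x\mapsto -x$ in the $2\pi$-convention. Given $g\in\C^{\exp}(\RR^{n})\cap L^{2}(\RR^{n})$, the inclusion just proved gives $\widetilde{\hbox{\calli F}\ }(g)\in\C^{\exp}(\RR^{n})\cap L^{2}(\RR^{n})$; composing with the subanalytic (linear) map $x\mapsto -x$ keeps us in $\C^{\exp}$ by Remark \ref{rem: table under right composition with sa} and clearly preserves $L^{2}$, so $f:=\widetilde{\hbox{\calli F}\ }^{-1}(g)\in\C^{\exp}(\RR^{n})\cap L^{2}(\RR^{n})$ and $\widetilde{\hbox{\calli F}\ }(f)=g$. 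This proves the reverse inclusion, hence equality.

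The main obstacle is conceptual rather than computational: the Fourier--Plancherel transform is defined only through an abstract $L^{2}$-limit, so it cannot be presented as a single parametric integral of a $\C^{\exp}$-function. The crux of the argument is that the truncation radius $R$ can be promoted to a genuine variable keeping the whole family inside $\C^{\exp}$, after which Proposition \ref{prop:complete} — the parametric analogue of $L^{2}$-completeness — exactly bridges the gap between the abstract limit and membership in $\C^{\exp}$. The only points requiring care are checking that the truncated integrand is integrable everywhere (so that Theorem \ref{thm:main interp} applies with full integrability locus) and matching the two $L^{2}$-limits of $(F_{R})_{R}$.
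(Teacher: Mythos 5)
Your proposal is correct and follows essentially the same route as the paper: truncate $f$ to balls with the radius promoted to a subanalytic parameter, use stability under integration to keep the family $F_{R}=\hbox{\calli F}(f\chi_{B_{R}})$ inside $\C^{\exp}$, and invoke Proposition \ref{prop:complete} to identify the $L^{2}$-limit with an element of $\C^{\exp}(\RR^{n})\cap L^{2}(\RR^{n})$. The only cosmetic differences are that you verify the Cauchy condition explicitly via Plancherel unitarity and obtain surjectivity from the reflection identity together with Remark \ref{rem: table under right composition with sa}, whereas the paper simply repeats the argument with $\mathrm{i}$ replaced by $-\mathrm{i}$; both variants are sound.
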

\begin{proof}
Let $f\in\C^{\exp}\left(\RR^{n}\right)\cap L^{2}\left(\RR^{n}\right)$. We use coordinates
$x=\left(x_{1},\ldots,x_{n}\right)$ and $t=\left(t_{1},\ldots,t_{n}\right)$ on $\RR^{n}$.
For each $y\in\RR$, define
\[
B_{y}=\{t\in\RR^{n}:\ |t|\leq y\},
\]
and observe that $L^{2}\left(B_{y}\right)\subseteq L^{1}\left(B_{y}\right)$ for each $y$
(by the Cauchy-Schwartz inequality, since $\vol_{n}\left(B_{y}\right)<+\infty$).
So we may define $F\colon \RR^{n+1}\to\CC$ by
\begin{equation}
F\left(x,y\right):=\int_{B_{y}}f\left(t\right)\mathrm{e}^{-2\pi\mathrm{i}t\cdot x}\,\mathrm{d}t=\int_{\RR^{n}}\chi_{B_{y}}\left(t\right)f\left(t\right)\mathrm{e}^{-2\pi\mathrm{i}t\cdot x}\,\mathrm{d}t,\label{eq:Plancherel}
\end{equation}
and we have that $F\in\C^{\exp}\left(\RR^{n+1}\right)$ since $\C^{\exp}$ is
closed under integration. The extended Fourier transform $\widetilde{\hbox{\calli F}}\left(f\right)$
is the equivalence class of functions $\left[\widehat{f}\right]$ (with respect
to almost everywhere equivalence) that is defined by the condition
\[
\lim_{y\to+\infty}\|\widehat{f}-F_{y}\|_{2}=0.
\]
Thus $\left(F_{y}\right)_{y\in\RR}$ is Cauchy in $L^{2}\left(\RR^{n}\right)$ as $y\to+\infty$,
so by Proposition \ref{prop:complete} we may fix $g\in\C^{\exp}\left(\RR^{n}\right)$
such that
\[
\lim_{y\to+\infty}\|g-F_{y}\|_{2}=0,
\]
and hence $\left[\widehat{f}\right]=[g]$. This shows that the extended Fourier
transform $\widetilde{\hbox{\calli F}\ }$ maps $\C^{\exp}\left(\RR^{n}\right)\cap L^{2}\left(\RR^{n}\right)$
into $\C^{\exp}\left(\RR^{n}\right)\cap L^{2}\left(\RR^{n}\right)$. A completely symmetric
argument, where one simply replaces $\mathrm{i}$ with $-\mathrm{i}$
in (\ref{eq:Plancherel}), shows that the inverse extended Fourier
transform maps $\C^{\exp}\left(\RR^{n}\right)\cap L^{2}\left(\RR^{n}\right)$ into $\C^{\exp}\left(\RR^{n}\right)\cap L^{2}\left(\RR^{n}\right)$
as well, so $\C^{\exp}\left(\RR^{n}\right)\cap L^{2}\left(\RR^{n}\right)$ is in fact the
image of $\C^{\exp}\left(\RR^{n}\right)\cap L^{2}\left(\RR^{n}\right)$ under $\widetilde{\hbox{\calli F}\ }$.
\end{proof}

The remainder of the section is devoted to the proof of Proposition \ref{prop:complete}, which requires us to develop a bit of machinery.  This proof is somewhat similar to the proof of Proposition \ref{prop:limits}, except we cannot rely on the facts about c.u.d.\ mod $1$ maps quoted in Remark \ref{rem Polynomial maps are CUD}.  Instead, we need to adapt these facts to parametric families of maps that are c.u.d.\ mod $1$ in a certain uniform sense.  We are not aware of a reference in the literature on c.u.d. mod $1$ maps that considers this parametric case, so this section develops this material from scratch.  
We remark that the proofs of Lemma \ref{lemma:Weyl} and Proposition \ref{prop:WeylToCUD} below use ideas found in the proofs of the following content in \cite{Kui}: Example 9.2 and the closely interrelated Theorems 1.1, 2.1, 6.1, 6.2, 9.1, 9.2, and 9.9.

%
Let us first give the parametric version of Definition \ref{def CUD map}. For this, let $X$ be a nonempty set and  
  $\psi=\left(\psi_1, \ldots, \psi_n\right): X\times [0,+\infty)\to \mathbb{R}^n$ be a map. 
If $I_1, \ldots, I_n\subseteq \RR$ are bounded intervals with nonempty interior, we denote by 
$I$ the box $\prod_{j=1}^\ell I_j$  
and, for $T\ge 0$ and $x\in X$, we let
$$ W^x_{\psi,I,T} : = \{ t\in [0,T] : \{ \psi\left(x,t\right) \}\in I \}, $$
where $\{\psi\left(x,t\right)\}$ denotes the vector of fractional parts $\left(\{ \psi_1\left(x,t\right)\},\ldots, \{\psi_n\left(x,t\right)\}\right)$ of the components of $\psi$.

%
\begin{defn}\label{defn:CUD:param}
With this notation, we say that the map $\psi$ is   \textbf{\emph{continuously uniformly distributed modulo $1$ on $X$}} (abbreviated as \textbf{\emph{c.u.d.\ mod $1$ on $X$}}) if for every box $I \subseteq [0,1)^n$,
\[
\lim_{T\to+\infty} \sup_{x\in X}
\frac{\vol_1\left(W^x_{\psi,I,T}\right)}{T} = \vol_n\left(I\right).
\]
\end{defn}

The following  Remark is  the parametric analogue of Lemma \ref{lem:main-lemma-appendix}.
\begin{rem}\label{rem:Param:geometricIntervals}
Suppose that $\psi:X\times[0,+\infty)\to\RR^n$ is c.u.d.\ mod $1$ on $X$.  Then for each box $I \subseteq [0,1)^n$, there exists $k_0 \in \NN$ such that for all $k\geq k_0$, for all $x\in X$,
\[
\vol_1\left(  \{t\in[2^k, 2^{k+1}) :   \{ \psi\left(x,t\right)\}\in I\}  \right) 
\geq 2^{k-1}\vol_n\left(I\right).
\]
This bound is proven just as Lemma \ref{lem:main-lemma-appendix}, using the uniform limit 
in the parameter $x$ provided by Definition \ref{defn:CUD:param}.
\end{rem}

%
%
The following technical lemma will be used in the proof of the forthcoming Proposition 
\ref{prop:WeylToCUD}.

\begin{lem}\label{lemma:Weyl}
Define $\phi:X\times [0,+\infty)\to\RR$ by
\[
\phi\left(x,t\right) = \sum_{j=0}^{d} \phi_j\left(x\right) t^j,
\]
where $d$ is a positive integer, the functions $\phi_0,\ldots,\phi_d:X\to\RR$ are bounded, and there exists $\varepsilon > 0$ such that $|\phi_d\left(x\right)| > \varepsilon$ for all $x\in X$.  Then the function $\Phi:X\times[0,+\infty)\to\CC$ defined by
\[
\Phi\left(x,T\right) = \int_{0}^{T} \mathrm{e}^{\mathrm{i}\phi\left(x,t\right)}\,\mathrm{d}t
\]
is bounded.
\end{lem}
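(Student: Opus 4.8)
The plan is to recognize $\Phi(x,T)$ as an oscillatory integral with polynomial phase and to invoke van der Corput's lemma, whose constant depends only on the order of the derivative that is bounded below, and \emph{not} on the interval of integration, on the lower-order coefficients, or on the endpoint $T$. The crucial observation is that for fixed $x$ the phase $t\mapsto\phi(x,t)$ is a polynomial in $t$ of degree $d$ whose $d$-th $t$-derivative is the constant
\[
\partial_t^{d}\phi(x,t)=d!\,\phi_d(x),
\]
so that $\bigl|\partial_t^{d}\phi(x,t)\bigr|>d!\,\varepsilon$ for all $t\in[0,+\infty)$ and all $x\in X$. This single lower bound, uniform in both variables, is exactly what forces the resulting estimate to be uniform in $x$ and $T$.

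First I would dispose of the case $d=1$ directly: here $\phi(x,t)=\phi_0(x)+\phi_1(x)t$ with $|\phi_1(x)|>\varepsilon$, so
\[
\Phi(x,T)=\mathrm{e}^{\mathrm{i}\phi_0(x)}\,\frac{\mathrm{e}^{\mathrm{i}\phi_1(x)T}-1}{\mathrm{i}\,\phi_1(x)},
\]
whence $|\Phi(x,T)|\le 2/\varepsilon$ for all $x$ and $T$. For $d\ge 2$ I would apply the standard van der Corput estimate (see, e.g., \cite{stein:harmonic_analysis}) in the form: if $\psi\in C^{d}(a,b)$ is real-valued with $|\psi^{(d)}|\ge\mu>0$ on $(a,b)$ and $d\ge 2$, then $\bigl|\int_a^b \mathrm{e}^{\mathrm{i}\psi(t)}\,\mathrm{d}t\bigr|\le c_d\,\mu^{-1/d}$ for a constant $c_d$ depending only on $d$. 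Taking $\psi(t)=\phi(x,t)$, $(a,b)=(0,T)$ and $\mu=d!\,\varepsilon$ yields
\[
|\Phi(x,T)|\le c_d\,(d!\,\varepsilon)^{-1/d}
\]
for every $x\in X$ and every $T\ge 0$, which is the asserted uniform bound. Note that no monotonicity hypothesis on any derivative is required precisely because $d\ge 2$.

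Should a self-contained argument be preferred, consistent with this section developing its machinery from scratch, I would instead prove the van der Corput estimate by induction on $d$. The base case $d=1$ reduces, after one integration by parts, to bounding the total variation of $1/\psi'$, which I would control by first subdividing $[0,T]$ into at most $d-1$ intervals on which $\partial_t\phi(x,\cdot)$ is monotone (the sign changes of $\partial_t\phi$ being governed by the degree-$(d-2)$ polynomial $\partial_t^{2}\phi$). For the inductive step one excises a short interval of length $\delta$ around the single point where $|\psi^{(d-1)}|$ is smallest, estimates that piece trivially by $\delta$, applies the inductive hypothesis with order $d-1$ on the complement where $|\psi^{(d-1)}|\gtrsim\mu\delta$, and optimizes over $\delta$ to recover the exponent $-1/d$. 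The main obstacle along this route is not analytic but bookkeeping: one must verify that the number of monotonicity intervals, the excision count, and the final constant all depend on $d$ alone and never on $x$. This holds because every structural quantity used (the degree of the polynomial phase and the number of sign changes of its derivatives in $t$) is controlled by $d$, while the bounded coefficients $\phi_0(x),\ldots,\phi_d(x)$ enter only through the uniform lower bound $|\phi_d(x)|>\varepsilon$.
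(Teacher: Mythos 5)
Your proof is correct, and it takes a genuinely different route from the paper's. You treat $d=1$ by explicit integration and then invoke van der Corput's lemma for $d\geq 2$, using only the single uniform lower bound $|\partial_t^d\phi(x,t)|=d!\,|\phi_d(x)|>d!\,\varepsilon$; since the van der Corput constant depends only on the order $d$ and not on the interval or the lower-order coefficients, this immediately gives the explicit bound $c_d\,(d!\,\varepsilon)^{-1/d}$ uniformly in $x$ and $T$, and in fact never uses the hypothesis that $\phi_0,\ldots,\phi_{d-1}$ are bounded. The paper instead argues by hand: it normalizes the phase to $f(x,t)=\phi(x,t)/\phi_d(x)$ (which is where the boundedness of the lower-order coefficients enters, to control the factorizations of $\partial f/\partial t$ and $\partial^2 f/\partial t^2$ uniformly for $t\geq T_0$), performs the substitution $s=f(x,t)$, and applies the second mean value theorem for integrals to the real and imaginary parts, exploiting the periodicity of $s\mapsto\cos(\phi_d(x)s)$; the initial segment $[0,T_0]$ is disposed of trivially. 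Your argument is shorter and slightly more general, at the cost of importing a classical lemma (or, as you note, reproving it by the standard excision-and-induction argument, which is consistent in spirit with the section's from-scratch development and whose constants visibly depend on $d$ alone); the paper's argument is self-contained modulo the second mean value theorem and avoids any discussion of monotonicity intervals by working only in the range $t\geq T_0$ where the derivatives have a single sign.
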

%
%
\begin{proof}
It suffices to show that for some suitable choice of $T_0\geq 0$ there exists a constant $C > 0$ such that 
for all $\left(x,T\right)\in X\times[T_0,+\infty)$
\[
\left|\int_{T_0}^{T} \mathrm{e}^{\mathrm{i} \phi\left(x,t\right)}\,\mathrm{d}t \right| \leq C.
\]
 Define
\[
f\left(x,t\right) := \frac{\phi\left(x,t\right)}{\phi_d\left(x\right)} = t^d + \sum_{j=0}^{d-1} \frac{\phi_j\left(x\right)}{\phi_d\left(x\right)} t^j,
\]
and observe that our assumed bounds on $\phi_0,\ldots,\phi_n$ show that the coefficient functions in $x$ of the polynomial $f\left(x,t\right)$ are bounded.  Therefore by computing $ \PD{}{f}{t}$ and $ \PD{2}{f}{t}$ and factoring out their leading terms, we may fix $T_0 > 0$ such that
\begin{equation}\label{eq:derivLT}
\PD{}{f}{t}\left(x,t\right) = dt^{d-1} u\left(x,t\right)
\quad\text{and}\quad
\PD{2}{f}{t}\left(x,t\right) = \begin{cases}
0,
    & \text{if $d=1$,} \\
d\left(d-1\right)t^{d-2}v\left(x,t\right),
    & \text{if $d > 1$,}
\end{cases}
\end{equation}
for some functions $u\left(x,t\right)$ and $v\left(x,t\right)$ (when $d>1$) that take values in $ [\frac{1}{2},\frac{3}{2}]$ for all $\left(x,t\right)\in X\times [T_0,+\infty)$.  Therefore $ \PD{}{f}{t} > 0$ and $ \PD{2}{f}{t} \geq 0$ on $X\times[T_0,+\infty)$, so for each $x\in X$, the functions $t\mapsto f\left(x,t\right)$ and $t\mapsto  \PD{}{f}{t}\left(x,t\right)$ are respectively strictly increasing and monotonically increasing on $[T_0,+\infty)$.  For each $x\in X$, let $t = g\left(x,s\right)$ be the inverse of $s = f\left(x,t\right)$, where $t\geq T_0$ and $s\geq f\left(x,T_0\right)$.  For each $T \geq T_0$, we can perform the integral substitution
\[
s = f\left(x,t\right), \quad \mathrm{d}s = \PD{}{f}{t}\left(x,t\right)\,\mathrm{d}t = \PD{}{f}{t}\left(x,g\left(x,s\right)\right)\,\mathrm{d}s
\]
to write
\begin{align}\label{eq:integralSub}
\int_{T_0}^{T} \mathrm{e}^{\mathrm{i}\phi\left(x,t\right)}\,\mathrm{d}t
    &= \int_{T_0}^{T} \mathrm{e}^{\mathrm{i}\phi_d\left(x\right)f\left(x,t\right)}\,\mathrm{d}t \nonumber \\
    &= \int_{f\left(x,T_0\right)}^{f\left(x,T\right)} \frac{\mathrm{e}^{\mathrm{i}\phi_d\left(x\right)s}}{\PD{}{f}{t}\left(x,g\left(x,s\right)\right)}\,\mathrm{d}s.
\end{align}
The function
\[
s\mapsto \frac{1}{\PD{}{f}{t}\left(x,g\left(x,s\right)\right)}
\]
is monotonically decreasing on $[f\left(x,T_0\right),+\infty)$, so we can apply the second mean value theorem for integrals to the real and complex parts of the integral \eqref{eq:integralSub}.  For the real part, this gives
\begin{equation}\label{eq:realPart}
\int_{f\left(x,T_0\right)}^{f\left(x,T\right)} \frac{\cos\left(\phi_d\left(x\right)s\right)}{\PD{}{f}{t}\left(x,g\left(x,s\right)\right)}\,\mathrm{d}s
=
\frac{1}{\PD{}{f}{t}\left(x,T_0\right)} \int_{f\left(x,T_0\right)}^{\xi\left(x,T\right)} \cos\left(\phi_d\left(x\right)s\right)\,\mathrm{d}s
\end{equation}
for some $\xi\left(x,T\right)\in\left(f\left(x,T_0\right),f\left(x,T\right)\right)$.  Since $s\mapsto \cos\left(\phi_d\left(x\right)s\right)$ has an antiderivative with period $\frac{2\pi}{\left|\phi_d\left(x\right)\right|}$, and since $\frac{2\pi}{\left|\phi_d\left(x\right)\right|}\leq \frac{2\pi}{\varepsilon}$, the integral in the right side of \eqref{eq:realPart} may be replaced with an integral over an interval of length at most $\frac{2\pi}{\varepsilon}$.  This, along with the form of $\PD{}{f}{t}$ given in \eqref{eq:derivLT}, shows that \eqref{eq:realPart} is bounded. A nearly identical calculation shows the same for the imaginary part of \eqref{eq:integralSub}, and the lemma follows.
\end{proof}
The following Proposition \ref{prop:WeylToCUD} is the parametric analogue of Remark \ref{rem Polynomial maps are CUD}, stating that polynomials maps  are c.u.d. mod $1$
when nontrivial $\ZZ$-linear combinations of their components are nonconstant. For technical reasons, in the parametric case it is more convenient to reduce to the situation of maps with monomial instead of polynomial components.
%
\begin{prop}\label{prop:WeylToCUD}
Consider a map $\psi = \left(\psi_1,\ldots,\psi_n\right):X\times[0,+\infty)\to\RR^n$, where $X$ is a compact topological space and where for each $j\in\{1,\ldots,n\}$,
\[
\psi_j\left(x,t\right) = g_j\left(x\right) t^{\gamma_j}
\]
for some continuous function $g_j:X\to\RR$ and positive integer $\gamma_j$.  Assume that for each $x\in X$, the functions $t\mapsto \psi_1\left(x,t\right),\ldots,t\mapsto\psi_n\left(x,t\right)$ are linearly independent over $\QQ$.  Then $\psi$ is c.u.d.\ mod $1$ on $X$.
\end{prop}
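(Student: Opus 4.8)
The plan is to reduce the parametric uniform distribution to a parametric Weyl criterion and then establish the latter using the oscillatory-integral bound from Lemma \ref{lemma:Weyl}. First I would state and prove the parametric Weyl criterion: $\psi$ is c.u.d.\ mod $1$ on $X$ if and only if for every nonzero $\mathbf{h}=\left(h_1,\ldots,h_n\right)\in\ZZ^n$,
\[
\lim_{T\to+\infty}\ \sup_{x\in X}\ \left|\frac{1}{T}\int_{0}^{T}\mathrm{e}^{2\pi\mathrm{i}\,\mathbf{h}\cdot\psi\left(x,t\right)}\,\mathrm{d}t\right| = 0.
\]
The forward direction is the standard approximation of the indicator of a box $I$ (and of periodic step functions) by trigonometric polynomials, but carried out \emph{uniformly in $x$}; the point is that the approximating polynomial and its error bounds are chosen independently of $x$, so the $\sup_{x\in X}$ passes through. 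This is exactly the content of the continuous Weyl criterion of \cite{Kui}, adapted to carry the parameter, and is where the ideas from Theorems 1.1, 2.1 and 9.9 of \cite{Kui} that are cited in the preamble get used.

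Once the parametric Weyl criterion is in place, the main work is to verify its hypothesis for our monomial map. For a fixed nonzero $\mathbf{h}\in\ZZ^n$, set
\[
\phi\left(x,t\right) = 2\pi\,\mathbf{h}\cdot\psi\left(x,t\right) = 2\pi\sum_{j=1}^{n} h_j\,g_j\left(x\right)\,t^{\gamma_j},
\]
which is a real polynomial in $t$ whose coefficient functions $x\mapsto 2\pi h_j g_j\left(x\right)$ are continuous, hence bounded on the compact space $X$. The delicate issue is the leading coefficient: I would argue that, because the functions $t\mapsto\psi_j\left(x,t\right)$ are $\QQ$-linearly independent for \emph{each} fixed $x$, the top-degree coefficient of $\phi\left(x,\cdot\right)$ cannot vanish for any $x$. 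Concretely, after grouping the monomials $t^{\gamma_j}$ by exponent, the highest exponent actually appearing (among those $j$ with $h_j\neq0$) has coefficient a nonzero $\ZZ$-combination of the $g_j\left(x\right)$ over that exponent class; linear independence over $\QQ$ forces this combination to be nonzero at every $x$. By compactness of $X$ and continuity, this nonvanishing leading coefficient is bounded away from $0$ uniformly, i.e.\ there is $\varepsilon>0$ with $|\phi_d\left(x\right)|>\varepsilon$ for all $x$, where $d$ is the top degree. This is precisely the hypothesis of Lemma \ref{lemma:Weyl}.

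Applying Lemma \ref{lemma:Weyl} then gives a single constant $C$, independent of $x$ and $T$, with $\left|\int_{0}^{T}\mathrm{e}^{\mathrm{i}\phi\left(x,t\right)}\,\mathrm{d}t\right|\leq C$ on $X\times[0,+\infty)$; dividing by $T$ and letting $T\to+\infty$ yields the uniform decay required by the parametric Weyl criterion, and the proposition follows. I expect the main obstacle to be the careful bookkeeping in the first paragraph: making the classical trigonometric-approximation argument genuinely uniform in the parameter $x$ (so that all constants and all approximating polynomials are chosen before $x$ is fixed), rather than the monomial computation, which is routine once the leading-coefficient nonvanishing is observed. A secondary subtlety worth flagging is that the grouping-by-exponent step must correctly identify which exponent carries the surviving leading term; this is where one must use that the $\gamma_j$ are fixed positive integers and that $\QQ$-independence of the \emph{functions} $t\mapsto g_j\left(x\right)t^{\gamma_j}$ (not merely of the monomials $t^{\gamma_j}$) is what rules out cancellation.
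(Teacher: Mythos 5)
Your proposal is correct and follows essentially the same route as the paper's proof: the paper inlines the uniform Weyl-type reduction (approximating $\chi_I$ above and below by products of periodic continuous functions and then by trigonometric polynomials, all chosen independently of $x$) rather than isolating it as a separate criterion, and then bounds the resulting exponential sums exactly as you describe, by grouping the monomials by exponent, using $\QQ$-linear independence plus compactness to bound the leading coefficient away from zero, and invoking Lemma \ref{lemma:Weyl}. The only cosmetic difference is that you state the parametric Weyl criterion as a standalone equivalence, whereas the paper only needs and proves the one implication.
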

The following notation and observation will be used in the proof of Proposition \ref{prop:WeylToCUD}. 
%
\begin{rem}\label{rem:independence through J_k}
Let
$
\gamma = \max\{\gamma_1,\ldots,\gamma_n\},$
and for each $k\in\{1,\ldots,\gamma\}$, let
$J_k = \{j\in\{1,\ldots,n\} : \gamma_j = k\}.$
The assumption that $t\mapsto \psi_1\left(x,t\right),\ldots,t\mapsto\psi_n\left(x,t\right)$ are linearly independent over $\QQ$ for each $x\in X$ is equivalent to saying that for each $k\in\{1,\ldots,\gamma\}$ and $x\in X$, the family of real numbers $\left(g_j\left(x\right)\right)_{j\in J_k}$ is linearly independent over $\QQ$.
\end{rem}
%
%
\begin{proof}[Proof of Proposition  \ref{prop:WeylToCUD}]
We shall use the variables $t$, $y = \left(y_1,\ldots,y_n\right)$, and $z = \left(z_1,\ldots,z_n\right)$, and write $\mathrm{d}y $ for $ \mathrm{d}y_1\wedge\ldots\wedge \mathrm{d}y_n$.  Let $\varepsilon > 0$ and a box $I = \prod_{j=1}^{n}I_j \subseteq [0,1)^n$ be given.  For each $j\in\{1,\ldots,n\}$, let $\chi_{I_j}:\RR\to\{0,1\}$ be the $1$-periodic extension of the characteristic function of $I_j$ in $[0,1)$, and define $\chi_I :\RR^n\to\{0,1\}$ by $\chi_I\left(y\right) = \prod_{j=1}^{n}\chi_{I_j}\left(y_j\right)$.  Thus
$$
\vol_1\left(\{t\in[0,T] : \{\psi\left(x,t\right)\}\in I\}\right) = \int_{0}^{T}\chi_I\circ\psi\left(t\right) \,\mathrm{d}t.
$$
Let $\varepsilon>0$ and fix $\delta\in (0,1]^n$ sufficiently small so that $ 1 - \left(1-\delta\right)^n < \frac{\varepsilon}{4}$.  For each $j\in\{1,\ldots,n\}$, fix $1$-periodic continuous functions $p_j:\RR\to[0,1]$ and $q_j:\RR\to[0,1]$ such that $p_j\left(t\right) \leq \chi_{I_j}\left(t\right) \leq q_j\left(t\right)$ for all $t\in\RR$ and such that
\[
\vol_1\left(\{t\in[0,1] : p_j\left(t\right) \neq \chi_{I_j}\left(t\right)\}\right) \leq \delta
\quad\text{and}\quad
\vol_1\left(\{t\in[0,1] : q_j\left(t\right) \neq \chi_{I_j}\left(t\right)\}\right) \leq \delta.
\]
Define $p:\RR^n\to[0,1]$ and $q:\RR^n\to[0,1]$ by $p\left(y\right) = \prod_{j=1}^{n}p_j\left(y_j\right)$ and $q\left(y\right) = \prod_{j=1}^{n}q_j\left(y_j\right)$.  Since $p\left(y\right) \leq \chi_I\left(y\right)\leq q\left(y\right)$ for all $y\in\RR^n$, we have, for all $x\in X$,
\begin{equation}\label{eq:squeeze}
\frac{1}{T}\int_{0}^{T}p\circ\psi\left(x,t\right)\,\mathrm{d}t
\leq
\frac{1}{T}\int_{0}^{T}\chi_I\circ\psi\left(x,t\right)\,\mathrm{d}t
\leq
\frac{1}{T}\int_{0}^{T}q\circ\psi\left(x,t\right)\,\mathrm{d}t.
\end{equation}
It now suffices to show that there exists $T_0 > 0$ such that the lower and upper bounds in \eqref{eq:squeeze} are within $\varepsilon$ of $\vol_n\left(I\right)$ for all $x\in X$ and $T\geq T_0$.  The computations involving the lower bound and the upper bound are identical, so we only show the computation with the lower bound.

Fix $\eta\in(0,1]^n$ sufficiently small so that for all $y,z\in [-2,2]^n$, if $|y_j - z_j| < \eta$ for all $j\in\{1,\ldots,n\}$, then $\left|\prod_{j=1}^{n} y_j - \prod_{j=1}^{n} z_j\right| < \frac{\varepsilon}{4}$.  By a Weierstrass approximation theorem, for each $j\in\{1,\ldots,n\}$ we may fix a trigonometric polynomial
\[
P_j\left(t\right) = \sum_{\alpha=-N_j}^{N_j} c_{j,\alpha} \mathrm{e}^{2\pi \mathrm{i} \alpha t}
\]
(where $N_j\in\NN$ and $c_{j,\alpha}\in\CC$ for each $\alpha$) such that
\begin{equation}\label{eq:trigApprox}
\left|p_j\left(t\right) - P_j\left(t\right)\right| \leq \eta
\end{equation}
for all $t\in\RR$.  Define $P:\RR^n\to\CC$ by $P\left(y\right) = \prod_{j=1}^{n}P_j\left(y_j\right)$.  Since
$
\vol_n\left(I\right) = \int_{[0,1]^n}\chi_I\left(y\right)\,\mathrm{d}y,$
we have
\begin{align}\label{eq:integralTriIneq}
\left|\frac{1}{T}\int_0^{T} p\circ\psi (x,t)\,\mathrm{d}t - \vol_n(I)\right|
&\leq
\left|\frac{1}{T}\int_{0}^{T}   \left( p\circ\psi(x,t) - P\circ\psi(x,t) \right)  \,\mathrm{d}t\right|
& \text{(*)}\\
&\quad
+
\left|\frac{1}{T}\int_{0}^{T}P\circ\psi(x,t)\,\mathrm{d}t - \int_{[0,1]^n}\!\!\! P(y)\,\mathrm{d}y\right|
& \text{(**)} \nonumber\\
&\quad
+
\left|\int_{[0,1]^n}\!\!\! \left( P\left(y\right)-p\left(y\right) \right)\,\mathrm{d}y\right|
& \text{(***)} \nonumber \\
&\quad
+
\left|\int_{[0,1]^n}\!\!\! \left( p\left(y\right)-\chi_I\left(y\right) \right) \,\mathrm{d}y \right|.
& \text{(****)} \nonumber 
\end{align}
Note that $|p_j\left(t\right)| \leq 1$ and $|P_j\left(t\right)|\leq |p_j\left(t\right)| + |P_j\left(t\right)-p_j\left(t\right)| \leq 1+\eta \leq 2$ for all $j\in\{1,\ldots,n\}$ and $t\in\RR$, so by our choice of $\eta$, \eqref{eq:trigApprox} implies that $ |p\left(y\right)-P\left(y\right)| \leq \frac{\varepsilon}{4}$ for all $y\in\RR^n$.  Therefore the terms (*) and (***) in \eqref{eq:integralTriIneq} are both bounded above by $  \frac{\varepsilon}{4}$.  And since
\[
\bigcap_{j=1}^{n}\{y\in[0,1]^n : p_j\left(y_j\right) =\chi_{I_j}\left(y_j\right)\} \subseteq \{y\in[0,1]^n : p\left(y\right) = \chi_I\left(y\right)\}
\]
and
\[
\vol_n\left(\bigcap_{j=1}^{n}\{y\in[0,1]^n : p_j\left(y_j\right) = \chi_{I_j}\left(y_j\right)\}\right) \geq \left(1-\delta\right)^n,
\]
it follows that
\[
\vol_n\left(\{y\in[0,1]^n : p\left(y\right) \neq \chi_I\left(y\right)\}\right) \leq 1 - \left(1-\delta\right)^n \leq \frac{\varepsilon}{4},
\]
so the term (****) in \eqref{eq:integralTriIneq} is also bounded above by $\frac{\varepsilon}{4}$.  So to finish, we need to show that there exists $T_0 > 0$ such that the term (**) in \eqref{eq:integralTriIneq} is also  bounded above by $ \frac{\varepsilon}{4}$ for all $x\in X$ and all $T\geq T_0$.

We have
\[
P\left(y\right)
= \prod_{j=1}^{n}\left(\sum_{\alpha_j = -N_j}^{N_j} c_{j,\alpha_j} \mathrm{e}^{2\pi \mathrm{i} \alpha_jy_j}\right)
= \sum_{\alpha\in \ZZ^n\cap[-N,N]} c_\alpha \mathrm{e}^{2\pi \mathrm{i} \alpha\cdot y},
\]
where $N = \left(N_1,\ldots,N_n\right)$,
$[-N,N]=\prod_{j=1}^n [-N_j,N_j]$,
 $c_\alpha = \prod_{j=1}^{n} c_{j,\alpha_j}$ for $\alpha = \left(\alpha_1,\ldots,\alpha_n\right)$, and $\alpha\cdot y = \sum_{j=1}^{n} \alpha_jy_j$.  Thus
\[
P\circ\psi\left(x,t\right) = \sum_{\alpha\in\ZZ^n\cap [-N,N]} c_\alpha \mathrm{e}^{2\pi \mathrm{i} \alpha\cdot \psi\left(x,t\right)}.
\]
Observe that
\[
\frac{1}{T}\int_{0}^{T}c_0\,\mathrm{d}t - \int_{[0,1]^n} c_0 ds = c_0-c_0 = 0
\
\textrm{ and that }
\
\int_{[0,1]^n} c_\alpha \mathrm{e}^{2\pi \mathrm{i} \alpha \cdot s} \,\mathrm{d}s = 0
\]
for all nonzero $\alpha\in\ZZ^n\cap[-N,N]$.  Therefore the term (**) equals
\[
\left|\sum_{\alpha\in\left(\ZZ^n\setminus\{0\}\right)\cap [-N,N]} \frac{c_\alpha}{T} \int_{0}^{T}\mathrm{e}^{2\pi \mathrm{i} \alpha\cdot \psi\left(x,t\right)}\,\mathrm{d}t\right|.
\]
Using the notation $J_k$ from Remark \ref{rem:independence through J_k}, for each nonzero $\alpha\in\ZZ^n\cap[-N,N]$ let
\begin{align*}
\phi_{\alpha,k}\left(x\right)
    &= \sum_{j\in J_k} \alpha_j g_j\left(x\right) \quad\text{for each $k\in\{1,\ldots,\gamma\}$,}\\
d\left(\alpha\right)
    &= \max\{k \in \{1,\ldots,\gamma\} : \text{$\alpha_j \neq 0$ for some $j\in J_k$}\},
\end{align*}
and observe that
\[
\alpha\cdot \psi\left(x,t\right) = \sum_{k=1}^{d\left(\alpha\right)} \phi_{\alpha,k}\left(x\right)t^k.
\]
The set $X$ is compact, the functions $\phi_{\alpha,1},\ldots,\phi_{\alpha,d\left(\alpha\right)}$ are continuous on $X$, and by Remark 
\ref{rem:independence through J_k}, $\phi_{\alpha,d\left(\alpha\right)}$ has no zero in $X$ because $t\mapsto \psi_1\left(x,t\right),\ldots,t\mapsto \psi_n\left(x,t\right)$ are linearly independent over $\QQ$ for each $x\in X$.  Therefore $|\phi_{\alpha,1}|,\ldots,|\phi_{\alpha,d\left(\alpha\right)}|$ are bounded above, and $|\phi_{\alpha,d\left(\alpha\right)}|$ is bounded below by a positive constant.  We may apply Lemma \ref{lemma:Weyl} to fix $T_0 > 0$ such that for all $x\in X$ and $T\geq T_0$, 
the term (**) is bounded above by $ \frac{\varepsilon}{4}$.
\end{proof}

Let us fix the notation in view of Lemma \ref{lem:Param:OscPrep}.
For this consider a cell 
$$A = \{\left(x,t\right) : x\in\Pi_m\left(A\right), t > a\left(x\right)\},$$
 where $\Pi_m\left(A\right)$ is connected and open in $\RR^m$.  Define $f:A\to\CC$ by
\[
f\left(x,t\right) = \sum_{j\in J} f_j\left(x\right) \mathrm{e}^{\mathrm{i}\phi_j\left(x,t\right)},
\]
where $J$ is a nonempty finite index set,  $\left(f_j\right)_{j\in J}$ is a family of analytic functions in $\C^{\exp}\left(\Pi_m\left(A\right)\right)$, $\left(\phi_j\right)_{j\in J}$ is a family of distinct functions on $\Pi_m\left(A\right)\times\RR$ that are polynomials in $t$ with analytic coefficients in $\S\left(\Pi_m\left(A\right)\right)$, and $\phi_j\left(x,0\right) = 0$ for all $j\in J$ and $x\in\Pi_m\left(A\right)$. 


Lemma \ref{lem:Param:OscPrep} below is the parametric analogue of 
the presentation of the function $f\left(t\right)$ in Remark \ref{rem Z-free polynomials} as a nonconstant Laurent polynomial in 
$\mathrm{e}^{2\pi\mathrm{i} \widetilde{p}_1}, \ldots,\mathrm{e}^{2\pi\mathrm{i} \widetilde{p}_\ell} $
where the polynomial map $\left(\widetilde{p}_1\left(t\right), \ldots, \widetilde{p}_\ell\left(t\right)\right)$ is c.u.d. mod $1$. Here in the parametric case  it is technically more convenient 
to present $f\left(x,t\right)$ as a nonconstant Laurent polynomial (with coefficient functions in the parameter $x$) in $\mathrm{e}^{2\pi\mathrm{i}\psi_1\left(x,t\right)}, \ldots, \mathrm{e}^{2\pi\mathrm{i}\psi_n\left(x,t\right)}$, where the map
$\left(
\psi_1\left( x,t \right), \ldots, \psi_n\left(x,t\right)
\right)$ is a monomial map in $t$ that is c.u.d. mod 1 on certain compact sets of $\Pi_m\left(A\right)$.
%
%
\begin{lem}\label{lem:Param:OscPrep}
With the notation just fixed above, we may express $f$ as a composition
\[
f\left(x,t\right) = F\left(x,\psi\left(x,t\right)\right)
\]
on $A$, where for some $n\in\NN$, $\psi = \left(\psi_1,\ldots,\psi_n\right)$ is a monomial map in $t$ with analytic coefficient functions in $\S\left(\Pi_m\left(A\right)\right)$
and $F\left(x,z_1, \ldots, z_n\right)$ is 
a Laurent polynomial in the variables $\mathrm{e}^{2\pi \mathrm{i} z_1}, \ldots , \mathrm{e}^{2\pi\mathrm{i} z_n}$ 
with coefficients $f_j\left(x\right)$, $j\in J$. 
If $J$ is a singleton $\{j_0\}$ and if $\phi_{j_0} = 0$, then $n = 0$ and $F\left(x\right) = f_{j_0}\left(x\right)$.  
Otherwise we have
$n>0$ and 
\begin{enumerate}
\item\label{F par non nul} there exists a set $B\subseteq \Pi_m\left(A\right)$ such that 
$\vol_m\left(\Pi_m \left(A\right) \setminus B\right)=0$
and for any $x\in B$,  $z\mapsto F\left(x,z\right)$ is nonconstant, 
\item\label{Psi CUD}
for any open set $\Omega\subseteq \Pi_m\left(A\right)$ and any real number $\lambda < \vol_m\left(\Omega\right)$, there exists a real number $T_0$ and a compact set $K\subseteq \Omega\cap B$ 
such that $K\times [T_0,+\infty)\subseteq A$, $\vol_m\left(K\right)\ge \lambda$ and $\psi\restriction{ K\times [T_0,+\infty)}$ is c.u.d. mod 1 on $K$. 
\end{enumerate}
\end{lem}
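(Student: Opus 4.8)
The plan is to carry out, uniformly in the parameter $x$, the construction of Remark \ref{rem Z-free polynomials}, but organized degree by degree so that the resulting map $\psi$ is \emph{monomial} in $t$, as required by Proposition \ref{prop:WeylToCUD}. Discarding every index with $f_j\equiv 0$ leaves $f$ unchanged, so I may assume $f_j\not\equiv 0$ for all $j\in J$; if the surviving index set is empty or a singleton $\{j_0\}$ with $\phi_{j_0}=0$, we are in the stated trivial case with $n=0$ and $F(x)=f_{j_0}(x)$ (or $F=0$). Otherwise I would write $\phi_j(x,t)=2\pi\sum_{k=1}^{d}b_{j,k}(x)t^{k}$ with analytic $b_{j,k}\in\S(\Pi_m(A))$ and no constant term, since $\phi_j(x,0)=0$.

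For each degree $k$, I would view the finite family $(b_{j,k})_{j\in J}$ inside the $\QQ$-vector space of real-valued functions on $\Pi_m(A)$ and choose $I_k\subseteq J$ indexing a $\QQ$-basis of its $\QQ$-span, so that $b_{j,k}=\sum_{i\in I_k}q_{j,i,k}\,b_{i,k}$ \emph{identically} on $\Pi_m(A)$ for some $q_{j,i,k}\in\QQ$. Letting $\rho$ be a common denominator of all the $q_{j,i,k}$ and setting $\tilde g_{i,k}:=b_{i,k}/\rho$, I define the monomial components $\psi_{(i,k)}(x,t):=\tilde g_{i,k}(x)\,t^{k}$ for $i\in I_k$, $1\le k\le d$, listed as $\psi=(\psi_1,\dots,\psi_n)$. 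Each $\tilde g_{i,k}$ is analytic and in $\S(\Pi_m(A))$, and since $b_{j,k}=\sum_{i\in I_k}(\rho q_{j,i,k})\tilde g_{i,k}$ with integer coefficients, each $e^{\mathrm{i}\phi_j(x,t)}=\prod_{k,i}\bigl(e^{2\pi\mathrm{i}\psi_{(i,k)}(x,t)}\bigr)^{\rho q_{j,i,k}}$ is a Laurent monomial in the $e^{2\pi\mathrm{i}\psi_a}$ with integer exponent vector $S_j\in\ZZ^n$. This gives $f(x,t)=F(x,\psi(x,t))$ with $F(x,z)=\sum_{j\in J}f_j(x)\prod_a(e^{2\pi\mathrm{i}z_a})^{S_{j,a}}$, and the $S_j$ are pairwise distinct because $S_j=S_{j'}$ would force $\phi_j=\phi_{j'}$.

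To prove (\ref{F par non nul}) I would exploit analyticity and connectedness of $\Pi_m(A)$: for fixed nonzero $m\in\ZZ^{I_k}$ the function $\sum_{i}m_i\tilde g_{i,k}$ is analytic and, by $\QQ$-independence of $(\tilde g_{i,k})_{i\in I_k}$ as functions, not identically zero, hence vanishes on a null set; the countable union $N$ over all such $m$ and all $k$ is null, and off $N$ the numbers $(\tilde g_{i,k}(x))_{i\in I_k}$ are $\QQ$-independent for each $k$ — equivalently, by Remark \ref{rem:independence through J_k}, the functions $t\mapsto\psi_a(x,t)$ are $\QQ$-linearly independent. Fixing $j_1$ with $\phi_{j_1}\neq 0$ (so $S_{j_1}\neq 0$), distinctness of the characters $z\mapsto e^{2\pi\mathrm{i}S_j\cdot z}$ shows that $F(x,\cdot)$ is nonconstant whenever $f_{j_1}(x)\neq 0$; since $f_{j_1}$ is analytic and $\not\equiv 0$, the set $\{f_{j_1}=0\}$ is null, so $B:=\Pi_m(A)\setminus\bigl(N\cup\{f_{j_1}=0\}\bigr)$ has full measure and $F(x,\cdot)$ is nonconstant on $B$.

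For (\ref{Psi CUD}), given $\Omega$ and $\lambda<\vol_m(\Omega)$, I note $\vol_m(\Omega\cap B)=\vol_m(\Omega)>\lambda$, so inner regularity of Lebesgue measure yields a compact $K\subseteq\Omega\cap B$ with $\vol_m(K)\ge\lambda$; since $a$ is continuous on the compact $K\subseteq\Pi_m(A)$, choosing $T_0>\max_K a$ gives $K\times[T_0,+\infty)\subseteq A$. On $K$ the coefficients $\tilde g_{i,k}$ are continuous, the exponents $k\ge 1$ are positive integers, and $t\mapsto\psi_a(x,t)$ are $\QQ$-independent for each $x\in K$ because $K\subseteq B$ avoids $N$; thus Proposition \ref{prop:WeylToCUD} applies and $\psi$ is c.u.d.\ mod $1$ on $K$, the restriction to $t\ge T_0$ being immaterial since c.u.d.\ is an asymptotic condition. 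The main difficulty is the construction of $\psi$: the identity $f=F\circ\psi$ must hold \emph{everywhere} on $A$, which forces me to use only those $\QQ$-relations among the $b_{j,k}$ that hold identically, yet invoking Proposition \ref{prop:WeylToCUD} requires genuine \emph{pointwise} $\QQ$-independence; these opposing demands are reconciled exactly by the analytic dichotomy — a nontrivial identical relation versus a coincidental one confined to a null set — that produces $N$.
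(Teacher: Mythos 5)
Your proposal is correct and follows essentially the same route as the paper: split the phases degree by degree, extract a $\QQ$-basis of the coefficient functions in each degree, clear denominators to obtain a monomial map $\psi$ with integer exponent vectors $S_j$, and then use analyticity and connectedness of $\Pi_m\left(A\right)$ to show the exceptional set (pointwise $\QQ$-relations plus a vanishing coefficient) is null, finishing with inner regularity and Proposition \ref{prop:WeylToCUD}. The only (harmless) deviation is in part (\ref{F par non nul}), where you discard only the zero set of a single $f_{j_1}$ with $S_{j_1}\neq 0$ rather than the zero sets of all the $f_j$ as the paper does; both yield a full-measure $B$ on which $F\left(x,\cdot\right)$ is nonconstant.
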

%
%
\begin{proof}
Since the functions $\phi_j$, $j\in J$, are distinct, it is only possible to have $\phi_j=0$ for all $j\in J$ when $J$ is a singleton $\{j_0\}$, and in this case we have $f\left(x,t\right) = f_{j_0}\left(x\right)$.  We may now assume that $\phi_j\neq 0$ for some $j\in J$.  
In this case, since $\phi_j\left(x,0\right) = 0$ for all $j\in J$ and all $x\in \Pi_m\left(A\right)$,
\[
d := \max\{\deg \phi_j : j\in J\}
\]
is a positive integer.  For each $j\in J$, write
\[
\phi_j\left(x,t\right) = \sum_{k=1}^{d} \phi_{j,k}\left(x\right) t^k
\]
with $\phi_{j,k}\in \S\left(\Pi_m\left(A\right)\right)$.  
For each $k\in\{1,\ldots,d\}$, fix $\Gamma_k\subseteq J$ such that $\left(\phi_{\gamma,k}\right)_{\gamma\in\Gamma_k}$ is a basis over $\QQ$ of the span  over $\QQ$
of the family $\left(\phi_{j,k}\right)_{j\in J}$ (as functions of $x$), and let
\[
\Gamma = \{\left(\gamma,k\right) : k\in\{1,\ldots,d\}, \gamma\in\Gamma_k\}.
\]
We may fix a positive integer $\eta$ such that for each $\left(j,k\right)\in J\times\{1,\ldots,d\}$,
\[
\phi_{j,k} = \sum_{\gamma\in\Gamma_k} \frac{\alpha_{j;\gamma,k}}{\eta}\phi_{\gamma,k}
\]
for a unique tuple of integers $\left(\alpha_{j;\gamma,k}\right)_{\gamma\in\Gamma_k}$.  
With this notation we have
\begin{align*}
f\left(x,t\right)
    &= \sum_{j\in J} f_j\left(x\right) \mathrm{e}^{\mathrm{i} \sum_{k=1}^{d}\phi_{j,k}\left(x\right)t^k} = \sum_{j\in J} f_j\left(x\right) \mathrm{e}^{\mathrm{i}  \sum_{k=1}^{d}\sum_{\gamma\in\Gamma_k} \frac{\alpha_{j;\gamma,k}}{\eta}\phi_{\gamma,k}\left(x\right)t^k}
    \\
    &= \sum_{j\in J} f_j\left(x\right) \prod_{\left(\gamma,k\right)\in\Gamma}\left(\mathrm{e}^{2\pi \mathrm{i}  
    \psi_{\gamma,k}\left(x\right)}\right)^{\alpha_{j;\gamma,k}}= F\left(x, \left(\psi_{\gamma,k}\left(x\right)\right)_{\left(\gamma,k\right)\in \Gamma}\right).
\end{align*}
where for each $\left(\gamma,k\right)\in\Gamma$ 
$\psi_{\gamma,k}\left(x,t\right) = \frac{\phi_{\gamma,k}\left(x\right)t^k}{2\pi\eta}
$ and $$F\left(x,\left(z_{\gamma,k}\right)_{\left(\gamma,k\right)\in \Gamma}\right)= \sum_{j\in J} f_j\left(x\right) 
\prod_{\left(\gamma,k\right)\in\Gamma}\left(\mathrm{e}^{2\pi \mathrm{i}  
   z_{\gamma,k}}\right)^{\alpha_{j;\gamma,k}}.$$
%
  For each $j\in J$, $f_j$ is a nonzero analytic function on the connected and open set $\Pi_m\left(A\right)$, so the set
\[
U := \{x\in\Pi_m\left(A\right) : \text{$f_j(x)\neq 0$ for all $j\in J$}\}
\]
satisfies $\vol\left(\Pi_m\left(A\right)\setminus U\right)=0$. The fact that $\phi_j$, $j\in J$, are distinct functions implies that 
$\left(\left(\alpha_{j;\gamma,k}\right)_{\left(\gamma,k\right)\in\Gamma}\right)_{j\in J}$ 
is a family of distinct tuples in $\ZZ^\Gamma$.
As a consequence, for each $x\in U$ the trigonometric polynomial $z\mapsto F\left(x,z\right)$ is nonconstant.

Observe that since $\left(\phi_{\gamma,k}\right)_{\gamma\in\Gamma_k}$ is independent over $\QQ$ (as functions of $x$), 
for each $k\in\{1,\ldots,d\}$ and nonzero tuple $c = \left(c_{\gamma}\right)\in\ZZ^{\Gamma_k}$, $\sum_{\gamma\in\Gamma_k}c_{\gamma}\phi_{\gamma,k}$ is a nonzero analytic function on $\Pi_m\left(A\right)$, so the set
$
\left\{x\in U : \sum_{\gamma\in\Gamma_k} c_{\gamma} \psi_{\gamma,k}\left(x\right) = 0\right\}
$
cannot have a positive measure, and the set
\[
 B:= U\setminus\left(\bigcup_{k=1}^{d}\bigcup_{c\in\ZZ^{\Gamma_k}
 \setminus\{0\}}\left\{x\in U : \sum_{\gamma\in\Gamma_k} c_{\gamma} \phi_{\gamma,k}\left(x\right) = 0\right\} \right)
\]
satisfies $\vol_m\left(\Pi_m\left(A\right)\setminus B\right) = 0$ as well. This gives \eqref{F par non nul}, since $B\subset U$.

 On the other hand, from the definition of $B$ we see that for each $k\in \{1,\ldots, d\}$, for each $x\in B$, the family of numbers  $\left(\phi_{\gamma,k}\left(x\right)\right)_{\left(\gamma,k\right)\in\Gamma}$ is linearly independent over $\QQ$, 
and by Remark \ref{rem:independence through J_k}, for each $x\in B$
the family of functions $\left(t\mapsto \psi_{\gamma,k}\left(x,t\right)\right)_{\left(\gamma,k\right)\in\Gamma}$ is linearly independent over $\QQ$.
Given an open set $\Omega\subseteq\Pi_m\left(A\right)$ and any positive real number $\lambda$ with $\lambda < \vol_m\left(\Omega\right) = \vol_m\left(\Omega\cap B\right)$, the inner regularity of the Lebesgue measure shows that we may fix a compact set $K\subseteq \Omega \cap B$ with $\vol_m\left(K\right) \geq\lambda$.  Since $K$ is compact and $a\left(x\right)$ is continuous, we may fix $T_0$ sufficiently large so that $K \times[T_0,+\infty) \subseteq A$.  Proposition 
\ref{prop:WeylToCUD} then shows that the restriction of $\psi := \left(\psi_{\gamma,k}\right)_{\left(\gamma,k\right)\in\Gamma}$ to $K\times[T_0,+\infty)$ is c.u.d. mod $1$ on $K$, which completes the proof of \eqref{Psi CUD}.
\end{proof}

The following 
Lemma \ref{lem:Param:Dovetail}
is the parametric version of Proposition \ref{prop:goal1}(\ref{enu: existence of limit}).
It will be used in the proof 
of Proposition \ref{prop:complete}.
 
\begin{lem}\label{lem:Param:Dovetail}
Consider $f\left(x,t\right) = F\left(x,\psi\left(x,t\right)\right)$ as given in Lemma \ref{lem:Param:OscPrep}, with $z\mapsto F\left(x,z\right)$ nonconstant for some $x\in \Pi_m\left(A\right)$.  
Then there exist $\varepsilon>0$, 
$\delta > 0$,
a strictly increasing sequence $\left(t_j\right)_{j\in\NN}$ in $\RR$ diverging to $+\infty$, 
a compact set $K\subset X$ and a sequence $\left(X_j\right)_{j\in\NN}$ of Lebesgue measurable subsets of $K \subseteq \Pi_m\left(A\right)$,
with for any  $j\in \NN$, $\vol_m\left(X_j\right)\ge \delta$,  $ X_{2j+1}\subseteq X_{2j}$  and
such that
for all $j\in\NN$, for all $x_0\in X_{2j}$, $x_1\in X_{2j+1}$, 
$$ 
\vert f\left(x_0,t_{2j}\right)\vert \ge \varepsilon
 \ \mathrm{ and } \ \vert f\left(x_0,t_{2j}\right) - f\left(x_1,t_{2j+1}\right)\vert \ge \varepsilon .$$
\end{lem}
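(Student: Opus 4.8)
The plan is to follow the strategy of Proposition~\ref{prop:goal1}(\ref{enu: existence of limit}), namely to exploit that $z\mapsto F(x,z)$ is a nonconstant trigonometric polynomial, but to carry this out \emph{uniformly over a positive-measure set of parameters} by feeding the parametric distribution results into a Fubini averaging argument. First I would invoke Lemma~\ref{lem:Param:OscPrep}: choosing a nonempty open $\Omega\subseteq\Pi_m(A)$ and $0<\lambda<\vol_m(\Omega)$, part~(\ref{Psi CUD}) yields $T_0$ and a compact $K\subseteq\Omega\cap B$ with $\vol_m(K)\ge\lambda>0$, $K\times[T_0,+\infty)\subseteq A$, and $\psi$ c.u.d.\ mod~$1$ on $K$; since $K\subseteq B$, part~(\ref{F par non nul}) guarantees $z\mapsto F(x,z)$ is nonconstant for every $x\in K$ (this is where the hypothesis on $F$ is used). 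As $(x,z)\mapsto F(x,z)$ is continuous on the compact set $K\times\mathbb{T}^{n}$ ($F$ being $1$-periodic in $z$) with $\diam F(x,\cdot)>0$ for each $x$, the map $x\mapsto\diam F(x,\cdot)$ attains a positive minimum $3\varepsilon_0>0$ on $K$.

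Next I would extract \emph{fixed} torus points. Covering $\mathbb{T}^{n}\times\mathbb{T}^{n}$ by finitely many boxes small enough that $F(x,\cdot)$ oscillates by less than $\varepsilon_0$ across each factor, uniformly in $x\in K$ (uniform continuity), a pigeonhole argument produces fixed $a,b\in\mathbb{T}^{n}$ and a positive-measure set on which $|F(x,a)-F(x,b)|\ge\varepsilon_0$; splitting that set and relabelling $a\leftrightarrow b$ if needed, I may also assume $|F(x,a)|\ge\varepsilon_0/2$ there. Passing to a compact subset by inner regularity and partitioning it into finitely many pieces on each of which $F(\cdot,a)$ and $F(\cdot,b)$ oscillate by less than $\varepsilon_0/8$, I obtain a compact $K_*$ with $\vol_m(K_*)>0$ on which, via a reference point $x_*$, $|F(x_0,a)-F(x_1,b)|\ge\varepsilon_0-\varepsilon_0/4=3\varepsilon_0/4$ for \emph{all} $x_0,x_1\in K_*$, while $|F(x_0,a)|\ge\varepsilon_0/2$ is inherited. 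Controlling this \emph{cross}-difference for distinct parameters $x_0\ne x_1$ is the crux and the main obstacle, since the naive estimate only bounds $|F(x,a)-F(x,b)|$ at a single $x$; the localization to $K_*$ is precisely what resolves it. I would then fix boxes $I_a,I_b\subseteq[0,1)^{n}$ centred at $a,b$, small enough that $\{z\}\in I_a$ forces $|F(x,z)-F(x,a)|<\varepsilon_0/8$ (and likewise for $I_b,b$), uniformly in $x\in K$.

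The engine is the following averaging observation, which is the genuinely new, parametric ingredient. For measurable $E\subseteq K$ and a box $I\subseteq[0,1)^{n}$, Remark~\ref{rem:Param:geometricIntervals} gives, for all large $k$ and every $x\in K$, that $\vol_1(\{t\in[2^{k},2^{k+1}):\{\psi(x,t)\}\in I\})\ge 2^{k-1}\vol_n(I)$; integrating over $x\in E$ and applying Tonelli's theorem to exchange the integrals (legitimate as the sets are measurable), the mean over $t\in[2^{k},2^{k+1}]$ of $t\mapsto\vol_m(\{x\in E:\{\psi(x,t)\}\in I\})$ is at least $\tfrac12\vol_m(E)\vol_n(I)$, so some $t$ in that interval realizes $\vol_m(\{x\in E:\{\psi(x,t)\}\in I\})\ge\tfrac12\vol_m(E)\vol_n(I)$. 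Thus, for $I\in\{I_a,I_b\}$ and any threshold, one finds arbitrarily large times at which a definite proportion of parameters in $E$ have $\psi(\cdot,t)$ landing in $I$ modulo $1$.

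Finally I would build the sequences by induction inside $K_*$. Applying the averaging observation with $E=K_*$, pick $t_{2j}$ (larger than $t_{2j-1}$ and all earlier thresholds, and $\ge T_0$) so that $X_{2j}:=\{x\in K_*:\{\psi(x,t_{2j})\}\in I_a\}$ has measure at least $\delta_a:=\tfrac12\vol_m(K_*)\vol_n(I_a)$; then, with $E=X_{2j}$, pick $t_{2j+1}>t_{2j}$ so that $X_{2j+1}:=\{x\in X_{2j}:\{\psi(x,t_{2j+1})\}\in I_b\}\subseteq X_{2j}$ has measure at least $\delta_b:=\tfrac12\delta_a\vol_n(I_b)$. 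Setting $\delta=\min(\delta_a,\delta_b)$ and $\varepsilon=\varepsilon_0/4$, the triangle inequality gives the two required bounds: for $x_0\in X_{2j}$, $|f(x_0,t_{2j})|\ge|F(x_0,a)|-\varepsilon_0/8\ge\varepsilon_0/2-\varepsilon_0/8\ge\varepsilon$; and for $x_0\in X_{2j}$, $x_1\in X_{2j+1}\subseteq K_*$, $|f(x_0,t_{2j})-f(x_1,t_{2j+1})|\ge|F(x_0,a)-F(x_1,b)|-\varepsilon_0/8-\varepsilon_0/8\ge 3\varepsilon_0/4-\varepsilon_0/4\ge\varepsilon$. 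Forcing the exponents $k$ to grow at each stage makes $(t_j)_{j\in\NN}$ strictly increasing and divergent to $+\infty$, and taking the compact set of the statement to be $K_*$ completes the construction.
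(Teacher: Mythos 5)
Your proof is correct, and its engine is the same as the paper's: once two target boxes in $[0,1)^n$ have been isolated, both arguments use Remark \ref{rem:Param:geometricIntervals} together with a Fubini--Tonelli averaging over the dyadic blocks $[2^k,2^{k+1})$ to select a time $t_j$ at which a definite proportion (in measure) of the parameters lands in the prescribed box, alternating between the two boxes and nesting $X_{2j+1}$ inside $X_{2j}$. The only real divergence is in the preliminary localization of $F$: the paper picks a single $x$ where $z\mapsto F(x,z)$ is nonconstant, chooses $v_0,v_1$ with $F(x,v_0)\neq 0$ and $F(x,v_0)\neq F(x,v_1)$, and then by joint continuity shrinks to a small open $U\ni x$ and boxes $I_0,I_1$ with $\dist\left(F\left(U\times I_0\right),0\right)\geq\varepsilon$ and $\dist\left(F\left(U\times I_0\right),F\left(U\times I_1\right)\right)\geq\varepsilon$, feeding $\Omega=U$ into Lemma \ref{lem:Param:OscPrep}(\ref{Psi CUD}); this kills the cross-difference issue for distinct parameters in one stroke. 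You instead take $K$ first, extract a uniform lower bound on $\diam F(x,\cdot)$ by compactness, pigeonhole to fixed torus points $a,b$ on a positive-measure subset, and then pass to a compact $K_*$ on which $F(\cdot,a)$ and $F(\cdot,b)$ have small oscillation --- more steps and some constant-chasing, but equally valid, and it has the mild advantage of not needing to re-invoke Lemma \ref{lem:Param:OscPrep}(\ref{Psi CUD}) after the neighbourhood of nonconstancy has been identified.
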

\begin{proof}
Since $z\mapsto F\left(x,z\right)$ is nonconstant and $1$-periodic in each of the components  of $z = \left(z_1,\ldots,z_n\right)$, 
one may find $v_0, v_1\in [0,1)^{n}$ such that $f\left(x,v_0\right)\not=f\left(x,v_1\right)$ and thus, assuming for instance that 
$f\left(x,v_0\right)\not=0$,
 one may fix $\varepsilon>0$, 
an open subset $U$ of $\Pi_m\left(A\right)$ containing $x$ 
and boxes $I_0, I_1 \subseteq[0,1)^n$ respectively containing $v_0,v_1$ 
such that $\dist\left(F\left(U\times I_0\right),0\right)\ge \varepsilon$ and 
$\dist\left(
F\left(U\times I_0\right), F\left(U\times I_1
\right)\right)\ge \varepsilon $.  

By Lemma \ref{lem:Param:OscPrep}\eqref{Psi CUD} we may fix a compact set $K\subseteq U$ and $T_0\in\RR$ such that $\vol_m\left(K\right) > 0$, $K\times[T_0,+\infty)\subseteq A$, and $\psi\restriction{K\times[T_0,+\infty)}$ is c.u.d.\ mod $1$ on $K$.  Define
$$
\delta = \frac{1}{2}\vol_m\left(K\right)\vol_n\left(I_0\right)  \min \left\{1, \frac{1}{2}\vol_n\left(I_1\right) \right\}.
$$
Remark \ref{rem:Param:geometricIntervals} shows that we may fix $k_0\in \NN$, 
$2^{k_0} > T_0$, such that for all $k\ge k_0$, all $i\in \{0,1\}$ and all $x\in K$,
$$
\vol_1\left(\{t\in[2^k,2^{k+1}) : \{ \psi\left(x,t\right)\} \in I_i\}\right) \geq 2^{k-1} \vol_n\left(I_i\right).
$$
Let us now construct $t_0, t_1$ and the corresponding sets $X_1\subset X_0 \subset K$. 
For this we consider 
\[
E_0 := \{\left(x,t\right) \in K \times [2^{k_0}, 2^{k_0+1}) : \{ \psi\left(x,t\right) \} \in I_0\}.
\]
Fubini's theorem gives, integrating first in the variable $t$ and then in the variable $x$,
$$ \vol_{m+1}\left(E_0\right) = \int_{x\in K} \vol_1\left(\{t : \left(x,t\right)\in E_0\}\right) \,\mathrm{d}x  
     \geq \vol_m\left(K\right) 2^{k_0-1}\vol_n\left(I_0\right).$$ 
But Fubini's theorem  also gives, integrating first in the variable $x$ and then in the variable $t$,
$$
\vol_{m+1}\left(E_0\right) = \int_{2^{k_0}}^{2^{k_0+1}} \vol_m\left(\{x : \left(x,t\right)\in E_0\}\right) \,\mathrm{d}t.
$$
It follows that we may certainly choose $t_0 \in [2^{k_0}, 2^{k_0+1})$ to  define
$$
X_0 = \left\{x\in K : \left(x,t_0\right)\in E_0\right\}
$$
so that
\begin{equation}\label{eq:X0bound}
\vol_m\left(X_0\right) \geq \frac{\vol_{m+1}\left(E_0\right)}{2^{k_0}} \geq 
\frac{1}{2}\vol_m\left(K\right)\vol_n\left(I_0\right)\ge \delta.
\end{equation}
Now denote $k_0+1$ by $k_1$. Then $2^{k_1} > t_0$. 
We apply the same construction as above but with $I_1$ instead of $I_0$, $[2^{k_1}, 2^{k_1+1})$ instead $[2^{k_0}, 2^{k_0+1})$
and $X_0$ instead of $K$. 
For this we define
$$
E_1 = \{\left(x,t\right) \in X_0 \times [2^{k_1}, 2^{k_1+1}) : \{ \psi\left(x,t\right) \} \in I_1\}
$$
and then we choose, with the same argument as above using Fubini's theorem on $E_1$, some $t_1\in [2^{k_1}, 2^{k_1+1})$ and define
$$
X_1 = \left\{x\in X_0 : \left(x,t_1\right)\in E_1\right\}
$$
so that, in conjunction with \eqref{eq:X0bound}
$$
\vol_m\left(X_1\right) \geq \frac{1}{2} \vol_m\left(X_0\right)\vol_n\left(I_1\right) \geq \frac{1}{4}\vol_m\left(K\right)\vol_n\left(I_0\right)\vol_n\left(I_1\right)\ge \delta. 
$$
 For $j\ge 1$, the pairs $\left(t_{2j}, t_{2j+1}\right)$, $\left(X_{2j},X_{2j+1}\right)$ are defined in the same way, $t_{2j}$ being constructed
 from $t_{2j-1}$, $X_{2j}$ from $K$, $t_{2j+1}$ from $t_{2j}$ and $X_{2j+1}$ from $X_{2j}$. 
\end{proof}

We can finally prove Proposition \ref{prop:complete}. 

\begin{proof}[Proof of Proposition \ref{prop:complete}]
Let $p\in[1,+\infty]$ and $f\in\C^{\exp}\left(X\times\RR\right)$ for a subanalytic set $X\subseteq\RR^{m}$, and suppose that $\left(f_y\right)_{y\in\RR}$ is Cauchy in $L^{p}\left(X\right)$ as $y\to+\infty$.  Since $L^p\left(X\right)$ is complete, there exists a function $h\in L^p\left(X\right)$ such that
\begin{equation}\label{eq:LpComplete}
\lim_{y\to+\infty}\|f_y-h\|_p = 0,
\end{equation}
and there exists a sequence $\left(y_j\right)_{j\in\NN}$ in $\RR$ tending to $+\infty$ such that
\begin{equation}\label{eq:SeqConverge}
\lim_{j\to+\infty} f\left(x,y_j\right) = h\left(x\right) \quad\text{for almost all $x\in X$.}
\end{equation}
(See for instance Theorems 3.11 and 3.12 in Rudin \cite{rudin:realandcomplex}.)

Apply Theorem \ref{thm:prepExpConstr} to $f\left(x,y\right)$ with respect to $y$. Let $\A$ be the collection of cells $A$ given by the preparation that are open in $\RR^{m+1}$ and of the form
\[
A=\{\left(x,y\right) : x\in\Pi_{m}\left(A\right), y > a\left(x\right)\},
\]
and put $X_0=\bigcup\{\Pi_{m}\left(A\right):A\in\A\}$.  Since $\vol_{m}\left(X\setminus X_0\right)=0$, it suffices to focus on one $A\in\A$ and prove that the conclusion of the theorem holds for $f\restriction{A}$.  Write $f$ as a finite sum
\[
f\left(x,y\right) = \sum_{j\in J} T_j\left(x,y\right)
\]
on $A$ with each term of the form $T_j\left(x,y\right) = y^{r_j}\left(\log y\right)^{s_j} g_j\left(x,y\right)$ specified in Remark \ref{rem: superint tend to 0}; thus $g_j\in\C^{\exp}\left(A\right)$ with $|g_j\left(x,y\right)| \leq \eta_j\left(x\right)$ on $A$ for some continuous function $\eta_j:\Pi_m\left(A\right)\to[0,+\infty)$, and $g_j\left(x,y\right) = f_j\left(x\right) \mathrm{e}^{\mathrm{i}\phi_j\left(x,y\right)}$ when $r_j \geq -1$, with $\phi_j\left(x,y\right)$ distinct polynomials in $y^{1/d}$
for some integer $d\ge 0$, such that $\phi_j\left(x,0\right)=0$ for all
$x\in \Pi_m\left(A\right)$. 
  Each function $f_j$ can be taken to be analytic on $A$ by Remark \ref{rem:piecewise analytic} and not identically zero, and $\Pi_{m}\left(A\right)$ is connected since $A$ is a subanalytic cell.  We claim that there exists $g\in\C^{\exp}\left(\Pi_m\left(A\right)\right)$ such that $\lim_{y\to+\infty}f\left(x,y\right) = g\left(x\right)$ for all $x\in\Pi_m\left(A\right)$.  This claim and \eqref{eq:SeqConverge} imply that $g\left(x\right) = h\left(x\right)$ for almost all $x\in\Pi_m\left(A\right)$, and hence $\lim_{y\to+\infty}\|{f_y}\restriction{ \Pi_m\left(A\right)}-g\|_p = 0$ by \eqref{eq:LpComplete}.  So we will be done once we prove the claim.

Let $E = \{\left(r_j,s_j\right) : j\in J\}$, and for each $\left(r,s\right)\in E$ let $J_{\left(r,s\right)} = \{j\in J : \left(r_j,s_j\right) = \left(r,s\right)\}$.  Thus
\begin{equation}\label{eq:Prep:LikeTerms}
f\left(x,y\right) = \sum_{\left(r,s\right)\in E} y^r \left(\log y\right)^s S_{\left(r,s\right)}\left(x,y\right)
\end{equation}
where for each $\left(r,s\right)\in E$,
\begin{equation}
S_{\left(r,s\right)}\left(x,y\right) = \sum_{j\in J_{\left(r,s\right)}} g_j\left(x,y\right).
\end{equation}
For each $\left(r,s\right)\in E$, define $\eta_{\left(r,s\right)}:\Pi_m\left(A\right)\to[0,+\infty)$ by $\eta_{\left(r,s\right)}\left(x\right) = \sum_{j\in J_{\left(r,s\right)}} \eta_j\left(x\right)$, and observe that $\eta_{\left(r,s\right)}$ is continuous and that $\left|S_{\left(r,s\right)}\left(x,y\right)\right|\leq \eta_{\left(r,s\right)}\left(x\right)$ on $A$.

Let $\left(\bar{r},\bar{s}\right)$ be the lexicographic maximum element of $E$.  If $\bar{r} < 0$, then $\lim_{y\to+\infty} f\left(x,y\right) = 0$ for all $x\in\Pi_m\left(A\right)$, and we are done.  If $\bar{r} = \bar{s} = 0$ and $J$ is a singleton, say $J = \{j_0\}$, and $\phi_{j_0} = 0$, then
\[
f\left(x,y\right) = f_{j_0}\left(x\right) + \sum_{\left(r,s\right)\in E \setminus\{\left(0,0\right)\}} y^r \left(\log y\right)^s g_j\left(x,y\right),
\]
with $r < 0$ for all $\left(r,s\right)\in E\setminus\{\left(0,0\right)\}$, so $\lim_{y\to+\infty} f\left(x,y\right) = f_{j_0}\left(x\right)$ on $\Pi_m\left(A\right)$, and we are also done.  The two remaining cases are when $\bar{r} > 0$ or $\bar{s} > 0$, or when $\bar{r} = \bar{s} = 0$ and $\phi_j\neq 0$ for some $j\in J_{\left(0,0\right)}$.  We will complete the proof by showing that these two remaining cases are impossible.

We may assume that $\bar{r}\geq 0$, since this is a common assumption of the two remaining cases.  Notice that  
\[
S_{\left(\bar{r},\bar{s}\right)}\left(x,y^d\right) = \sum_{j\in J_{\left(\bar{r},\bar{s}\right)}} f_j\left(x\right) \mathrm{e}^{\mathrm{i}\phi_j\left(x,y^d\right)}
\]
is of the form hypothesized in Lemma \ref{lem:Param:OscPrep}.  Therefore, we can apply Lemma \ref{lem:Param:Dovetail} and
find
$\varepsilon > 0$, $\delta >0$, a compact set $K\subset \Pi_m\left(A\right)$,
a strictly increasing sequence $\left(y_j\right)_{j\in\NN}$ in $\left(1,+\infty\right)$ tending to $+\infty$ with $K\times [y_0,+\infty)\subset A$, 
a sequence $\left(X_j\right)_{j\in\NN}$ of Lebesgue measurable subsets of $K$ such that for all $j\in \NN$, $\vol_m\left(X_j\right) \ge \delta $, 
$ X_{2j+1}\subset X_{2j} \subset K$ and
$$
  \forall \ x\in X_{2j+1}, \
\vert S_{\left(r,s\right)}\left(x,y_{2j}\right) \vert \ge 2\varepsilon 
  \ \mathrm{ and } \
  \vert S_{\left(r,s\right)}\left(x,y_{2j+1}\right) - S_{\left(r,s\right)}\left(x,y_{2j}\right) \vert \geq 3 \varepsilon.
$$
The set $K$ is compact, each function $\eta_{\left(r,s\right)}$ is continuous, and $\lim_{y\to+\infty} y^{r-\bar{r}}\left(\log y\right)^{s-\bar{s}} = 0$ for all $\left(r,s\right)\in E\setminus\{\left(\bar{r},\bar{s}\right)\}$, so by replacing $\left(y_j\right)_{j\in\NN}$ with a tail of the sequence, we may assume that for all $y \geq y_0$,
$$
\max \left\{\sum_{\left(r,s\right)\in E\setminus\{\left(\bar{r},\bar{s}\right)\}} y^{r-\bar{r}}\left(\log y\right)^{s-\bar{s}} \eta_{\left(r,s\right)}\left(x\right) : x\in K \right\} \le \varepsilon.
$$
Observe that for all $j\in\NN$ and $x\in X_{2j}$,
\[
\left| S_{\left(\bar{r},\bar{s}\right)}\left(x,y_{2j}\right) + \sum_{\left(r,s\right)\in E\setminus\{\left(\bar{r},\bar{s}\right)\}} y_{2j}^{r-\bar{r}} \left(\log y_{2j}\right)^{s-\bar{s}} S_{\left(r,s\right)}\left(x,y_{2j}\right) \right| 
\geq 2\varepsilon - \varepsilon = \varepsilon,
\]
so
\begin{align*}
|f\left(x,y_{2j}\right)| &
    = y_{2j}^{\bar{r}}\left(\log y_{2j}\right)^{\bar{s}}
  \left| S_{\left(\bar{r},\bar{s}\right)}\left(x,y_{2j}\right) + \sum_{\left(r,s\right)\in E\setminus\{\left(\bar{r},\bar{s}\right)\}} y_{2j}^{r-\bar{r}} \left(\log y_{2j}\right)^{s-\bar{s}} S_{\left(r,s\right)}\left(x,y_{2j}\right) \right| \\
 &    \geq y_{2j}^{\bar{r}}\left(\log y_{2j}\right)^{\bar{s}}\varepsilon.
\end{align*}
In consequence, if $\bar{r} > 0$ or $\bar{s} > 0$, then
$$
\|f_{y_{2j}} - h\|_p
\geq \|f_{y_{2j}}\|_p - \|h\|_p
\geq y_{2j}^{\bar{r}}\left(\log y_{2j}\right)^{\bar{s}}\varepsilon\delta - 
\|h\|_p 
\underset{n\to+\infty}{\longrightarrow}
+\infty,
$$
 which contradicts \eqref{eq:LpComplete}.  So we may suppose that $\bar{r} = \bar{s} = 0$ and $\phi_j\neq 0$ for some $j\in J_{\left(0,0\right)}$.  Thus on $A$
 $$
f\left(x,y\right) = S_{\left(0,0\right)}\left(x,y\right) + \sum_{\left(r,s\right)\in E\setminus\{\left(0,0\right)\}} y^{r}\left(\log y\right)^s S_{\left(r,s\right)}\left(x,y\right).
$$ 
It follows that for all $j\in\NN$ and $x\in X_{2j+1}$,
$$
\begin{aligned}
\left|f\left(x,y_{2j}\right) - f\left(x,y_{2j+1}\right)\right| 
& \geq \left|S\left(x,y_{2j}\right) - S\left(x,y_{2j+1}\right) \right|  \\    
&     - \left|\sum_{\left(r,s\right)\in E\setminus\{\left(0,0\right)\}} y_{2j}^{r} \left(\log y_{2j}\right)^s S_{\left(r,s\right)}\left(x,y_{2j}\right) \right| \\
& - \left|\sum_{\left(r,s\right)\in E\setminus\{\left(0,0\right)\}} y_{2j+1}^{r} \left(\log y_{2j+1}\right)^s S_{\left(r,s\right)}\left(x,y_{2j+1}\right)\right| \\
 & \geq 3\varepsilon - \varepsilon - \varepsilon = \varepsilon.
\end{aligned}
$$
Finally we obtain, for all $j\in\NN$,
$$
\|f_{2j} - f_{y_{2j+1}}\|_p \geq \varepsilon\delta,
$$
which contradicts the fact that $\left(f_y\right)_{y\in\RR}$ is Cauchy in $L^p\left(X\right)$ as $y\to+\infty$.
\end{proof}

\bibliographystyle{amsplain}
\bibliography{bibli}

\end{document}